\newtheorem{lemma}{Lemma}
\newtheorem{theorem}{Theorem}
\newtheorem{corollary}{Corollary}
\newtheorem{remark}{Remark}
\newtheorem{proposition}{Proposition}
\newtheorem{definition}{Definition}
\pgfplotsset{compat=1.18}
\numberwithin{equation}{section}
\numberwithin{theorem}{section}
\numberwithin{lemma}{section}
\numberwithin{corollary}{section}
\numberwithin{proposition}{section}
\numberwithin{definition}{section}
\newcommand{\mypar}[1]{\noindent\textit{#1}\hspace{0.1em}}
\newcommand{\bx}{\boldsymbol{x}}
\newcommand{\by}{\boldsymbol{y}}
\newcommand{\bc}{\boldsymbol{c}}
\newcommand{\bg}{\boldsymbol{g}}
\newcommand{\be}{\boldsymbol{e}}
\newcommand{\blambda}{\boldsymbol{\lambda}}
\newcommand{\bsigma}{\boldsymbol{\sigma}}
\newcommand{\re}{\varepsilon}
\renewcommand{\O}{\mathcal{O}}
\newcommand{\bO}{\partial\O}
\renewcommand{\vec}{\boldsymbol}
\renewcommand{\Im}{\operatorname{Im}}
\newcommand{\inc}{\operatorname{inc}}
\newcommand{\app}{\operatorname{app}}
\newcommand{\trinorm}[1]{\vert\!\vert\!\vert #1 \vert\!\vert\!\vert}
\title{Asymptotic models for time-domain scattering by small particles of
	arbitrary shapes} 
\date{}
\author[1]{Maryna Kachanovska}
\author{Adrian Savchuk}
\affil[1]{POEMS, CNRS, Inria, ENSTA, Institut Polytechnique de Paris, 91120 Palaiseau, France}
\begin{document}
	
	\maketitle
	
	\begin{abstract}
		In this work, we investigate time-dependent wave scattering by multiple small particles of arbitrary shape. To approximate the solution of the associated boundary-value problem, we derive an asymptotic model that is valid in the limit as the particle size tends to zero. Our method relies on a boundary integral formulation, semi-discretized in space using a Galerkin approach with appropriately chosen basis functions, s.t. convergence is achieved as the particle size vanishes
		\, rather than by increasing the number of basis functions. Since the computation of the Galerkin matrix involves double integration over particles, the method can become computationally demanding when the number of obstacles is large. To address this, we also derive a simplified model and consider the Born approximation to improve computational efficiency. For the high-order models, we provide an error analysis, supported and validated by numerical experiments.
	\end{abstract}

	\section{Introduction}
	In this work, we consider the problem of the three-dimensional time-dependent scattering by many particles of arbitrary shape, stated as a boundary-value problem for the wave equation. In the first place, we are interested in the asymptotic regime when the diameter of the particles tends to zero, while their number and the distance between particles remains fixed. 
	Our goal is to find an alternative, more computationally advantageous model for this problem. 
	
	Such models have already appeared in the literature, cf. in particular, \cite{sini_time_domain}, \cite{barucq}, \cite{korikov}.  Since the two latter papers propose an asymptotic model based on the matching asymptotic expansion, however, for a single particle only, we concentrate our discussion on the model suggested in \cite{sini_time_domain}, which handles multiple particle case. Its stated convergence rate\footnote{We would like to remark that the final goal of the work \cite{sini_time_domain} is to consider a different, more complicated asymptotic regime, for which this accuracy appears to be  sufficient since it is dominated by other sources of errors.} is $O(\varepsilon^2)$, with $\varepsilon$ being a characteristic size of the particles. However, for some geometric configurations, when, loosely speaking, $\varepsilon$ is not 'small enough', this model can suffer of spurious resonances, which lead to exponentially growing solutions in the time domain. See for a discussion of an analogous issue in two dimensions and \cite{adrian_phd} for an illustration of the existing issue in \cite{sini_time_domain}. 
	
	In the present manuscript we suggest a model that has a higher convergence rate $O(\re^3)$, and avoids geometry restrictions and source regularity requirements of \cite{sini_time_domain}. To do so, we adopt the approach of \cite{MK} to construct an asymptotic model. We reformulate it using a single-layer boundary integral equation and semi-discretize it in space with the help of a suitable 'asymptotic' Galerkin method, so that the convergence of the approximate solution is achieved by decreasing the asymptotic parameter rather than by increasing the number of basis functions.
	This solution is a priori stable in the time domain due to the coercivity properties of the underlying operator.
	
	The main difficulty of this method lies in the choice of appropriate basis functions for the asymptotic Galerkin space. In particular, it was shown in \cite{MK} that for the case of circular particles, using constant basis functions yields quadratic-order convergence for the scattered field. However, this choice is insufficient to ensure convergence in the general case of arbitrarily shaped obstacles.
	An alternative idea, inspired by the works of \cite{sini_frequency_domain, sini_time_domain}, is to use equilibrium densities, i.e., solutions of the single-layer boundary integral equation for the Laplacian with a constant right-hand side, as basis functions. We prove that this choice leads to the third-order convergence of the scattered field in 3D, and confirm this result by the numerical experiments.  
	
	Unlike the case of circles or spheres, where all entries of the Galerkin matrix can be computed explicitly, see \cite{MK}, the new method requires double integration, which is computationally expensive, especially as the number of particles increases.
	To address this issue, we also derive a simplified model that preserves the stability and convergence properties of the GFL model. We compare the GFL model with the simplified model and the Born approximation, and provide a theoretical error analysis, further supported by numerical experiments.
	
	This paper is structured as follows. In Section \ref{sec:prob_set_and_main_result}, we introduce the problem setting, notation, and definitions, as well as the asymptotic models considered in this paper, together with the main result on their stability and convergence. Section \ref{sec:model} presents the derivation of the GFL model along with its error analysis.
	A simplified model and the Born approximation are studied in Section \ref{sec:simplified_models}. Section \ref{sec:numerical_experiments} provides numerical validation of the theoretical results, along with a comparison of the asymptotic models.
	
	\section{Problem setting and main result}
	\label{sec:prob_set_and_main_result}
	\subsection{Problem setting}
	Let $ N \in \mathbb{N} $, and for each $ k = 1, \dots, N $, let $ \Omega_k \subset \mathbb{R}^3 $ be a Lipschitz domain with the boundary $\Gamma_k := \partial \Omega_k$.  Let $\Omega_k$ be pairwise disjoint, i.e. it holds that $\overline{\Omega_k} \cap \overline{\Omega_\ell} = \emptyset$, for all $k \neq \ell$. We denote by $ \boldsymbol{c}_k \in \Omega_k$ a fixed point in $\Omega_k$. 
	
	Let $ \re \in (0, 1] $ be a scaling parameter. The rescaled domains $ \Omega_k^{\re} $ are defined by
	\begin{align}
		\label{eq:rescaled_omega_k}
		\Omega^{\re}_k := \{ \boldsymbol{x} \in \mathbb{R}^3: \boldsymbol{x} = \re (\hat{\boldsymbol{x}} - \boldsymbol{c}_k) + \boldsymbol{c}_k, \,\hat{\boldsymbol{x}} \in \Omega_k \}.
	\end{align}
	By this definition, the rescaled domain is given by $ \Omega^{\re} = \bigcup_{k=1}^N \Omega_k^{\re} $, with boundary $ \Gamma^{\re} = \bigcup_{k=1}^N \Gamma_k^{\re} $. For brevity, we use the notations $\Omega$ and $\Gamma$ instead of $\Omega^1$ and $\Gamma^1$. We assume additionally that  
	\begin{align}
		\label{eq:omega_ball}
		\Omega_k \subseteq B_k, \quad B_k:=B(\boldsymbol{c}_k, r_k), \text{ so that }\Omega\subseteq B:=\cup_{k=1}^N B_k,
	\end{align}
	where $B(\boldsymbol{c}_k, r_k)$ denotes a ball centered at $\boldsymbol{c}_k$ of radius $r_k>0$. Evidently, $r_k\geq \operatorname{diam} \Omega_k/2$.  It is straightforward to see that $\Omega^{\re}_k \subseteq B^{\re}_k:= B(\boldsymbol{c}_k, r^{\re}_k)$, where $r^{\re}_k:=\re r_k$. 
	Additionally, let us assume that for all $k \neq \ell$,  $\overline{B_k} \cap \overline{B_{\ell}} = \emptyset$. This implies that $\overline{B_k^{\re}}  \cap \overline{B_{\ell}^{\re}}=\emptyset$ for all $0<\varepsilon<1$.  
	The minimal distance between balls $B_k^{\re}$ is denoted by
	\begin{align}
		\label{eq:min_distance}
		d_*^{\re}  := \min_{k \neq {\ell}} \operatorname{dist}(B_k^{\re}, B_{\ell}^{\re}).
	\end{align}
	Remark that for all $0<\re<1$, $d_*^{\re}>0$ and $d_*^{\re} \leq \min_{k\neq \ell} \text{dist}(\Gamma^{\re}_k, \Gamma^{\re}_{\ell})$.
	\begin{figure}[h]
		\centering
		\fontsize{10}{12}\selectfont 
		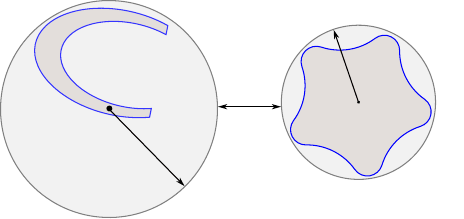
		\caption{An example of domains $\Omega_k$ and $\Omega_{\ell}$, where $d_{k \ell} := \operatorname{dist}(B_k, B_{\ell})$.}
		\vspace{-15pt}
	\end{figure}
	We assume that the field $u^{\operatorname{inc}}: \,\mathbb{R}^3\times \mathbb{R}_{\geq 0}\rightarrow \mathbb{R}$ satisfies the free space wave equation:
	\begin{align}
		\label{eq:fsw}
		\begin{array}{ll}
			\partial^2_t u^{\operatorname{inc}}(\boldsymbol{x}, t) - \Delta u^{\operatorname{inc}}(\boldsymbol{x}, t) = 0, & \quad (\boldsymbol{x}, t) \in \mathbb{R}^3 \times \mathbb{R}_{>0}, \\
			u^{\operatorname{inc}}(\boldsymbol{x}, 0) = u_0(\boldsymbol{x}), \quad \partial_t u^{\operatorname{inc}}(\boldsymbol{x}, 0) = u_1(\boldsymbol{x}), & \quad \boldsymbol{x} \in \mathbb{R}^3,
		\end{array}
	\end{align}
	with real-valued $u_0\in H^2(\mathbb{R}^3), \, u_1\in H^1(\mathbb{R}^3)$ being compactly supported in $B^c:=\mathbb{R}^3\setminus\overline{B}$. 
	We seek the solution to the sound-soft scattering problem. Namely, we look for the total field $ u^{\re, \operatorname{tot}} $ in the form $u^{\re} + u^{\operatorname{inc}} $, where $ u^{\re} $ is the scattered field that satisfies the wave equation in the exterior of the domain $\Omega^{\re}$, namely $
	\Omega^{\re, c}:=\mathbb{R}^3\setminus \overline{\Omega^{\re}}, $
	with the inhomogeneous Dirichlet boundary conditions:
	\begin{align}
		\label{main_problem}
		\begin{array}{ll}
			\partial^2_t u^{\re}(\boldsymbol{x}, t) - \Delta u^{\re}(\boldsymbol{x}, t) = 0, & \quad (\boldsymbol{x}, t) \in \Omega^{\re, c} \times \mathbb{R}_{>0}, \\
			\gamma_0 u^{\re} (\boldsymbol{x}, t)= g^{\re} (\boldsymbol{x}, t):=- \gamma_0 u^{\operatorname{inc}} (\boldsymbol{x}, t), & \quad (\bx, t)\in\Gamma^{\re} \times \mathbb{R}_{>0}, \\
			u^{\re}(\boldsymbol{x}, 0) = 0, \quad \partial_t u^{\re}(\boldsymbol{x}, 0) = 0, & \quad \boldsymbol{x} \in \Omega^{\re, c}. \\
		\end{array}
	\end{align}
	Remark that, because of the finite speed of the wave propagation and the support of the source term, $\left.u^{\inc}(t)\right|_B=0$ for all $t\in [0, \delta)$, with $\delta>0$ independent of $\re$. 
	
	It is well-known that solving the above problem numerically using classical methods, such as Finite Element Methods (FEM) combined with explicit time-stepping schemes, requires meshing finely in the vicinity of the small obstacles $\Omega^{\re}$, which can make efficient computations prohibitively expensive, see \cite{barucq}.
	
	On the other hand, a frequency-domain argument shows that $\| u^{\re} \|_{L^{\infty}(0,T; L^{\infty}(K))}$ is $O(\re)$ for any compact subset $K$ of $\Omega^{c}$. Nonetheless, in many practical applications (cf. e.g. \cite{imaging_time_reversal,dort,dort_original}), simply approximating $u^{\re}$ by zero appears to be insufficient. Therefore, our goal is to construct an approximate solution $u^{\re}_{\operatorname{app}}$ of the scattered filed $u^{\re}$ by deriving a higher-order asymptotic model valid in the regime as $ \re \to 0 $. 
	
	\subsection{Preliminary notation and definitions}
	
	\subsubsection{Functions on $\Gamma^{\re}_k$} First of all, where applicable, by bold symbols (e.g. $\boldsymbol{\varphi}$) we will denote quantities defined on $\Gamma^{\re}$, and with the index $k$ (e.g. $\varphi_k$) and a regular font we will mark their restrictions to $\Gamma_k^{\re}$; in other words, $\varphi_k:=\left.\boldsymbol{\varphi}\right|_{\Gamma_k^{\re}}$. 
	
	\paragraph{$H^{1/2}(\Gamma^{\re})$, $H^{-1/2}(\Gamma^{\re})$ spaces} We define the space $H^{1/2}(\Gamma^{\re})$ as a space of functions
	\begin{align*}%
		H^{1/2}(\Gamma^{\re}):=\{\boldsymbol{\varphi}\in L^2(\Gamma^{\re}):\, \|\boldsymbol{\varphi}\|^2_{H^{1/2}(\Gamma^{\re})}<+\infty\},
	\end{align*} 
	with $\|\boldsymbol{\varphi}\|^2_{H^{1/2}(\Gamma^{\re})}:=\sum\limits_{k=1}^N \|\varphi_k\|^2_{H^{1/2}(\Gamma^{\re}_k)}$.
	Each of the norms $H^{1/2}(\Gamma^{\re}_k)$ is defined as a (usual) Sobolev-Slobodeckij norm: $	\| \psi \|^2_{H^{1/2}(\Gamma^{\re}_k)}:= \| \psi \|^2_{L^2(\Gamma^{\re}_k)} + |\psi|^2_{H^{1/2}(\Gamma^{\re}_k)}$, with
	\begin{align}
		\label{eq:norm_H_half}
		| \psi |_{H^{1/2}(\Gamma^{\re}_k)}^2 = \iint_{\Gamma^{\re}_k\times \Gamma^{\re}_k}\frac{|\psi(\boldsymbol{x})- \psi(\boldsymbol{y})|^2}{|\boldsymbol{x}-\boldsymbol{y}|^3} d\Gamma_{\boldsymbol{x}} d\Gamma_{\boldsymbol{y}}.
	\end{align}
	By $H^{-1/2}(\Gamma^{\re})$, we denote the dual space to $H^{1/2}(\Gamma^{\re})$ with the following norm:
	\begin{align}
		\label{eq:norm_H_minus_half}
		\| \vec{\psi} \|_{H^{-1/2} (\Gamma^{\re})} =  \sup_{\boldsymbol{\varphi}  \in H^{1/2}(\Gamma^{\re}) \setminus \{0\}}  \frac{| \langle \vec{\psi}, \vec{\varphi} \rangle_{-1/2, 1/2} |}{\| \vec{\varphi} \|_{H^{1/2}(\Gamma^{\re})}}, \quad \vec{\psi} \in H^{-1/2}(\Gamma^{\re}),
	\end{align}
	where $\langle \cdot, \cdot \rangle_{-1/2, 1/2}$, refers to the duality pairing between $H^{-1/2}(\Gamma^{\re})$ and $H^{1/2}(\Gamma^{\re})$ spaces, and for $\vec{\varphi}, \vec{\psi} \in L^2(\Gamma^{\re})$, we have that
	\begin{align*}%
		\langle \vec{\varphi}, \vec{\psi} \rangle_{-1/2, 1/2} = \int_{\Gamma^{\re}} {\vec{\varphi}(\bx)} {\vec{\psi}(\bx)} d \Gamma_{\bx}=	\langle \vec{\psi}, \vec{\varphi} \rangle_{1/2, -1/2}.
	\end{align*}
	Often we will write instead $\langle \cdot, \cdot \rangle$. A standard argument shows that
	\begin{align}
		\label{eq:Hminhalf}
		\begin{split}
			\| \vec{\psi} \|^2_{H^{-1/2}(\Gamma^{\re})} = \sum_{k=1}^{N} \| \psi_k \|^2_{H^{-1/2} (\Gamma^{\re}_k)}.
		\end{split}
	\end{align}	
	
	\subsubsection{Fourier-Laplace transform}
	\label{sec:ft}
	Let $X$ be a separable Hilbert space and $f: \mathcal{S}(\mathbb{R}) \to X$ be a tempered $X$-valued distribution; 
	$f$ is a causal if $\operatorname{supp}f\subseteq [0, +\infty)$. 
	This class of distributions is denoted by $\operatorname{CT}(X)$ (see \cite[Section 2.1]{sayas}). 
	For sufficiently regular $ f \in \operatorname{CT}(X) $,  the Fourier-Laplace transform is given by 
	\begin{align}
		\label{eq:fourier_laplace_transforme}
		\hat{f}(\omega) = \int_{0}^{\infty} e^{i \omega t} f(t) \, dt, \quad \omega\in \mathbb{C}^+:=\{z\in \mathbb{C}: \, \Im z>0\},
	\end{align}
	via the Bochner integral. We will need additionally a class of vector-valued distributions  $\operatorname{TD}(X)$ which are defined as follows 
	\begin{align*}%
		&\operatorname{TD}(X) :=\{\varphi\in \operatorname{CT}(X): ~ \exists m\geq 0, ~ \text{and} ~ F\in \mathcal{C}^0_{pol}(\mathbb{R}; X), ~ \text{s.t.} ~ \varphi=\left(\frac{d}{dt}\right)^mF\},  \\
		&\mathcal{C}^0_{pol}(\mathbb{R}; X) =\{f\in \mathcal{C}^0(\mathbb{R}; X): f=0 ~ \text{on} ~(-\infty,0) \\
		&\hspace{140pt}\text{and} ~ \exists p\geq 0: ~ \|(1+t)^{-p}f(t)\|_{L^{\infty}(\mathbb{R}_{>0}; X)}<\infty\},
	\end{align*}
	where $d/dt$ is the distributional derivative. In other words, $\operatorname{TD}(X)$ are distributional derivatives of continuous causal functions of polynomial growth.
	The distributions $\operatorname{TD}(X)$ satisfy the following property (cf. Proposition 3.1.2 and 3.1.3 of \cite{sayas}): 
	\begin{align*}%
		\varphi\in \operatorname{TD}(X) \iff &\omega\mapsto\hat{\varphi}(\omega) \text { is $X$-analytic in }\mathbb{C}^+\\
		&\text{ and } \|\hat{\varphi}(\omega)\|_{X}\leq C (1+|\omega|)^p\max(1,(\Im \omega)^{-k}), ~ \text{for some}  ~ p,k\geq 0. 
	\end{align*}
	In what follows, we will also need a larger space \cite{bamberger_ha_duong} 
	\begin{align*}%
		L'_{+}(X):=\{\varphi\in \mathcal{D}'(X), \,\text{ for which there exists }\sigma_0\geq 0, \text{ s.t. } \mathrm{e}^{-\sigma_0 t}\varphi\in \operatorname{CT}(X)\},
	\end{align*}
	which, roughly speaking, allows for the exponential growth of distributions, and for which we can define the Fourier-Laplace transform in the usual manner. The Fourier-Laplace transforms of such distributions are characterized by the following property, cf. \cite{bamberger_ha_duong} or \cite[Chapter 16]{dautray_lions_vol5}
	\begin{align*}%
		\varphi\in {L}'_+(X) \iff &\exists\, \sigma\geq 0, \text{s.t. } \omega\mapsto\hat{\varphi}(\omega) \text { is $X$-analytic in }\mathbb{C}^+_{\sigma}=\{z\in \mathbb{C}: \Im z>\sigma\}\\
		& \text{ and }
		\|\hat{\varphi}(\omega)\|_{X}\leq C (1+|\omega|)^p,  \quad 
		\text{for some} \quad p\geq 0.  
	\end{align*}
	Let us recall as well that the Laplace transform is injective on $L'_+(X)$.
	
	While all the quantities in this paper are defined for $t\geq 0$, in order to apply the above formalism to them, we extend them by $0$ to $(-\infty,0)$. 
	
	\textbf{Convolutions. }In what follows, we will use the following notation for convolutions. Recall that given two causal tempered distributions $\operatorname{CT}(\mathbb{R})$, $\varphi, \psi$, their convolution is denoted by $\varphi*\psi$. In particular, for $\varphi, \psi$ sufficiently regular, we have 
	\begin{align*}
		(\varphi*\psi)(t)=\int_0^t\varphi(t-\tau)\psi(\tau)d\tau.
	\end{align*}
	We will also use the notation $\varphi(t)*\psi(t)$, where it facilitates the understanding. 
	\subsubsection{Auxiliary notaiton}
	Occasionally, we will write $a \lesssim b$ to indicate that there exists a constant $C>0$ that depends on the shapes of the particles $\Omega_j$ only (i.e. independent of $\re, N, \underline{d}^{\re}_{*}$, $\omega$ in the Fourier-Laplace transform), s.t. $a\leq Cb$. 
	\subsection{Principal results of the paper}
	\label{sec:principal_result}
	
	In the present paper we suggest three asymptotic models. The first one will serve as a basis, for which we will derive all the results. As we see later, an intuition for it comes from the numerical analysis, see \cite{MK}, and the convergence analysis is inspired by similar ideas. The second model is much simpler; it will be obtained as a perturbation of the first model.  Finally, the third model is a Born-style approximation, which appears to have a rather good convergence order in our setting (compared to two dimensions, cf. \cite{hazard_cassier}, where the existing estimates indicate that it converges very slowly). To state these models, let us introduce, for $t, r \geq 0$, the kernel of the retarded potential operator in $\mathbb{R}^3$:
	\begin{align*}%
		\mathcal{G} (r, t) = \frac{\delta(t -r)}{4\pi r}.
	\end{align*} 
	\paragraph{An asymptotic model 1}The approximate scattered field takes the following form:
	\begin{align}%
		\label{eq:ureg_sc}
		u^{\re}_{G,\operatorname{\operatorname{app}}} (\boldsymbol{x}, t) &= \sum_{k=1}^N \int_{\Gamma^{\re}_k} 
		\mathcal{G}( |\boldsymbol{x}-\boldsymbol{y}|,t)*\lambda^{\re}_{G,k}(t)  \sigma_k^{\re}(\boldsymbol{y}) \, d\Gamma_{\boldsymbol{y}}\\
		&= 
		\sum_{k=1}^N \int_{\Gamma^{\re}_k} 
		\frac{\lambda^{\re}_{G,k}(t-|\bx-\by |)}{4\pi | \bx-\by |}  \sigma_k^{\re}(\boldsymbol{y}) \, d\Gamma_{\boldsymbol{y}}, 
		\quad (\boldsymbol{x}, t) \in \Omega^{\re, c} \times \mathbb{R}_{>0},
	\end{align}
	where the time-dependent real-valued functions $ \{ \lambda^{\re}_{G,k}(\cdot) \}_{k=1}^N $ solve the following linear convolutional in time system:
	\begin{align}
		\label{eq:tdsys}
		\begin{split}
			& \sum_{k=1}^N (K^{\re}_{G,\ell k }*\lambda^{\re}_{G,k})(t)= q_{G,\ell }^{\re}(t), \text{ where }\\
			&(K^{\re}_{G,\ell k }*\lambda^{\re}_{G,k})(t)=\int_{\Gamma_k^{\re}}\int_{\Gamma_{\ell}^{\re}}\frac{\lambda^{\re}_{G,k}(t-|\bx-\by|)}{4\pi|\bx-\by|}\sigma_k^{\re}(\bx)\,\sigma_{\ell}^{\re}(\by)d\Gamma_{\bx}\, d\Gamma_{\by},\\
			&q^{\re}_{G, \ell}(t)=-\int_{\Gamma^{\re}_{\ell}} u^{\operatorname{inc}} (\boldsymbol{x}, t)
			\sigma_{\ell}^{\re}(\boldsymbol{x})d\Gamma_{\vec{x}} =\int_{\Gamma^{\re}_{\ell}} {g}^{\re}_{\ell}(\boldsymbol{x},t)
			\sigma_{\ell}^{\re}(\boldsymbol{x})d\Gamma_{\vec{x}},
		\end{split}
	\end{align}
	and $\sigma_k^{\re}\in H^{-1/2}(\Gamma^{\re}_k)$  satisfies the following problem: $ \int_{\Gamma_k^{\re}} \frac{\sigma_k^{\re}(\boldsymbol{y})}{4 \pi | \boldsymbol{x} - \boldsymbol{y}|} \, d\Gamma_{\boldsymbol{y}} = 1 \text{ on }\Gamma_k^{\re}$. This model is derived in Section \ref{sec:model}.
	\begin{remark}
		The quantities functions ${\sigma}^{\re}_k$ depend on $\re$ trivially, see \eqref{eq:sigmakeps}. This means that in practice the equilibrium densities $\sigma_k^{\re}$ can be computed once for $\re=1$ and reused for all $\re \in (0,1)$.
		
		The numerical implementation of the above model relies as well on computing surface integrals $\Gamma_{k}^{\re}\times \Gamma_{\ell}^{\re}$ in \eqref{eq:tdsys}. This can be computationally quite heavy, that is why in what follows we propose a model with similar properties, which avoids evaluation of these surface integrals. 
	\end{remark}
	
	\paragraph{An asymptotic model 2}
	Let $\sigma_k^{\re}$ be like in \eqref{eq:tdsys}.
	Set
	\begin{align}
		\label{eq:ckre}
		c_k^{\varepsilon}:=\int_{\Gamma^{\varepsilon}_k}\sigma^{\varepsilon}_k(\vec{y})d\Gamma_{\vec{y}}\in \mathbb{R}, \quad \vec{p}_k^{\varepsilon}:=\int_{\Gamma^{\varepsilon}_k}(\vec{y}-\bc_k)\sigma_k^{\varepsilon}(\vec{y})d\Gamma_{\vec{y}}\in \mathbb{R}^3.
	\end{align}
	With these definitions, the approximated scattered field takes the following form (where the index '$s$' stands for simplified): 
	\begin{align*}%
		{u}^{\varepsilon}_{s,\operatorname{\operatorname{app}}}(\vec{x},t)&=\sum\limits_{k=1}^N\frac{{\lambda}^{\varepsilon}_{s,k}(t-|\vec{x}-\bc_k|)}{4\pi|\vec{x}-\bc_k|}\left(c_k^{\varepsilon}+\frac{(\vec{x}-\bc_k)}{4\pi|\vec{x}-\bc_k|^3}\cdot \vec{p}_k^{\varepsilon}\right)\\
		&+\sum\limits_{k=1}^N\frac{\partial_t{\lambda}^{\varepsilon}_{s,k}(t-|\vec{x}-\bc_k|)}{4\pi|\vec{x}-\bc_k|^2}(\vec{x}-\bc_k)\cdot\vec{p}_k^{\varepsilon},
	\end{align*}
	where the time-dependent real-valued functions $ \{ \lambda^{\re}_{s,k}(\cdot) \}_{k=1}^N $ solve the following linear convolutional in time system:
	\begin{align}
		\label{eq:tdsys2}
		\begin{split}
			& \sum_{k=1}^N (K^{\re}_{s, \ell k }*\lambda^{\re}_{s, k})(t)={q}_{s}^{\re,\ell }(t), \text{ with }	\\
			&q^{\re}_{s, \ell}(t)=- u^{\operatorname{inc}}(\vec{c}_{\ell},t)c_{\ell}^{\re}-\nabla u^{\operatorname{inc}}(\vec{c}_{\ell}, t)\cdot\vec{p}_{\ell}^{\re}, \\
			& K^{\re}_{s, \ell k }(t) = \mathcal{F}^{-1} (\mathbb{M}^{\re}_{s, \ell k})(t),
		\end{split}
	\end{align}
	and the matrix $\mathbb{M}^{\re}_{s} $ is given by
	\begin{align*}%
		\mathbb{M}^{\re}_{s, \ell k}(\omega)=
		\left\{
		\begin{array}{ll}
			4 \pi i \omega (\rho^{\re}_k)^2 j_0(\omega \rho^{\re}_k) h^{(1)}_0(\omega \rho^{\re}_k), &k = \ell, \\
			4 \pi i \omega \rho^{\re}_k \rho^{\re}_{\ell} j_0(\omega \rho^{\re}_k) j_0(\omega \rho^{\re}_{\ell}) h^{(1)}_0(\omega |\bc_k - \bc_{\ell}| ), &k \neq \ell.
		\end{array}
		\right.
	\end{align*}
	with $\rho^{\re}_k:= c^{\re}_k/4 \pi$, and  $j_0$, $h^{(1)}_0$ being the spherical Bessel and Hankel functions of the first kind \cite[Chapter 10.47]{nist}. This model will be derived in Section \ref{sec:simplified}.
	\begin{remark}
		Numerical realization of this model with the help of convolution quadrature methods \cite{lubich88_1, lubich88_2, lubich94}, compared to \eqref{eq:tdsys}, relies solely on evaluation of $c_k^{\re}$ and $\vec{p}_k^{\re}$. 
	\end{remark}
	\paragraph{An asymptotic model 3 (Born approximation)}
	Let $\sigma_k^{\re}$ be like in \eqref{eq:tdsys} and $c_k^{\re}$ as in \eqref{eq:ckre}. With these definitions, the approximated scattered field is given by: 
	\begin{align}
		\label{eq:tdsysborn}
		{u}^{\varepsilon}_{B,\operatorname{\operatorname{app}}}(\vec{x},t)=-\sum\limits_{k=1}^N\frac{u^{\operatorname{inc}}(\vec{c}_k,t-|\vec{x}-\bc_k|)}{4\pi|\vec{x}-\bc_k|}c_k^{\varepsilon}.
	\end{align}
	\begin{remark}
		Compared to the previous models, the above model is computationally most efficient, and relies solely on knowing the capacitances $c_k^{\re}$.
	\end{remark}
	\paragraph{Well-posedness, stability and the error estimates} 
	Let us introduce the following definitions. The first definition concerns the geometry.
	\begin{definition}
		\label{def:admissible}
		A family $(\Omega^{\re})_{0<\re\leq 1}$ is admissible if $d^{\re}_*>0$ for all $0<\re\leq 1$. 
	\end{definition}
	It is straightforward to see that $\re\mapsto d^{\re}_*$ is increasing in $\re>0$, thus $(\Omega^{\re})_{0<\re\leq 1}$ is admissible if and only if $d^{1}_*>0$.
	
	Next, we need to define a notion of well-posedness for those models that are based on the computation of auxiliary quantities $t\mapsto\vv{\lambda}^{\re}(t)\in \mathbb{R}^N$, namely \eqref{eq:tdsys} and \eqref{eq:tdsys2}.
	\begin{definition}
		\label{def:well_posedness}
		We will call an asymptotic model well-posed if, for any admissible geometry, the following holds true.
		
		There exist $\ell_T,\ell_S, \ell\in \mathbb{N}_0$ (where S and T stand for "space" and "time", resp.), s.t., provided a solution to \eqref{eq:fsw} $u^{\operatorname{inc}}\in \operatorname{TD}(H^{\ell_S}(\mathbb{R}^3))\cap C^{\ell_T}(\mathbb{R}_{\geq 0}; H^{\ell_S}(\mathbb{R}^3))$, the asymptotic model admits a unique solution $\vv{\lambda}^{\re}\in L'_{+}(\mathbb{R}^N)$, and, moreover,  $\vv{\lambda}^{\re}\in \operatorname{TD}(\mathbb{R}^N)\cap C^{\ell}(\mathbb{R}_{\geq 0}; \mathbb{R}^N)$.
	\end{definition}
	%
	To define a stable and convergent models, we will restrict our attention to the behaviour of the scattered field on the compact sets outside\footnote{This is done for technical reasons, since some of our convergence estimates rely on the fact that $K$ does not intersect any of convex hulls of $\Omega^{\re}_k$.} of $B=\cup_{k=1}^NB_k$, cf. \eqref{eq:omega_ball}. 
	\begin{definition}
		We will call an asymptotic model which yields an approximate solution $u^{\re}_{\operatorname{\operatorname{app}}}(\vec{x},t)$  uniformly stable if there exist constants $m, \ell_S, \ell_T\in \mathbb{N}_0$, s.t. the following holds true. \textit{For any admissible geometric configuration} $\Omega^{\re}$, all compacts $K$ with $K\cap \overline{B}=\emptyset$, there exists $C_{\operatorname{geom},K}:=C_{\operatorname{geom}}(d_*^{\re}, \Omega, K)\leq C_{\operatorname{geom}}(d^1_*, \Omega, K)<+\infty$,  s.t. the following bound holds true uniformly for all $0<\varepsilon\leq 1$, $t\geq 0$:
		\begin{align*}%
			\sup\limits_{\vec{x}\in K}|u^{\re}_{\operatorname{\operatorname{app}}}(\vec{x},t)|\leq (1+t)^mC_{\operatorname{geom},K}\|u^{\operatorname{inc}}\|_{H^{\ell_T}(0,t; H^{\ell_S}(\mathbb{R}^3))}.
		\end{align*}
		We will call such a model convergent of order $p$, if, additionally, it holds that 
		\begin{align*}%
			\sup\limits_{\vec{x}\in K}|u^{\re}_{\operatorname{\operatorname{app}}}(\bx,t)-u^{\re}(\bx,t)|\leq \varepsilon^{p+1}(1+t)^mC_{\operatorname{geom},K}\|u^{\operatorname{inc}}\|_{H^{\ell_T}(0,t; H^{\ell_S}(\mathbb{R}^3))},.
		\end{align*}
	\end{definition}
	for all $0<\varepsilon\leq 1$ and $t\geq 0$. From the frequency-domain reasoning, it follows that for a single-particle case $N=1$, for a fixed $\vec{x}\in \Omega^c$, the scattered field behaves as $u^{\re}(\bx,t)=O(\re)$ (with a hidden constant that depends on $t$), thus $p$ in the above definition refers to a relative error. E.g., with this convention, the model of \cite{sini_time_domain} under our assumptions is convergent of order $p=1$.  
	\begin{remark}
		At a first sight, the above notion of stability is quite weak since it does not reflect the decay of scattered waves in compact sets of $\mathbb{R}^3$, allowing for a polynomial growth in time. However,  ensuring that asymptotic models do not exhibit even exponential growth is not fully trivial, cf. \cite{MK} and geometrical conditions in \cite{sini_time_domain}. 
	\end{remark}
	\begin{theorem}
		\label{theorem:convergence_of_models}
		The models \eqref{eq:tdsys}, \eqref{eq:tdsys2} are well-posed; together with the model \eqref{eq:tdsysborn}, they  are uniformly stable. 
		
		The models \eqref{eq:tdsys} and \eqref{eq:tdsys2} are convergent of order $p=2$.
		
		The model \eqref{eq:tdsysborn} is convergent of order $p=1$. 
	\end{theorem}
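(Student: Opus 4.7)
The overall plan is to carry out the entire analysis in the frequency domain via the Fourier–Laplace transform, and then convert the frequency-domain estimates back to the time domain using the characterization of $\operatorname{TD}(X)$ and $L'_+(X)$ recalled in Section \ref{sec:ft}: analyticity in $\mathbb{C}^+$ together with polynomial growth of the norm in $|\omega|$ translates directly into the sought polynomial-in-$t$ stability/convergence bounds. In the frequency domain each of the three models becomes an $N\times N$ algebraic system whose matrix is a Galerkin restriction of the single-layer boundary integral operator $V^{\re}(\omega)$ to the subspace $X_\sigma^{\re}:=\operatorname{span}\{\sigma_1^{\re},\dots,\sigma_N^{\re}\}\subset H^{-1/2}(\Gamma^{\re})$, or a multipole truncation thereof.

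For well-posedness of the GFL model, I would exploit the Bamberger–Ha Duong-type coercivity of $V^{\re}(\omega)$ on $H^{-1/2}(\Gamma^{\re})$, which is inherited by the Galerkin subspace $X_\sigma^{\re}$ and yields invertibility of $\mathbb{M}^{\re}_G(\omega)$ on $\mathbb{C}^+$ together with a polynomial-in-$|\omega|$ bound on the inverse. A scaling argument based on $\sigma_k^{\re}(\bx)=\re^{-1}\sigma_k^1((\bx-\bc_k)/\re+\bc_k)$ is used to separate the $\re$-dependence from the $\omega$-dependence and to make the constant uniform in $\re\in(0,1]$. Model 2 is then treated as a perturbation: identifying $\mathbb{M}^{\re}_s(\omega)$ as the truncated multipole expansion of $\mathbb{M}^{\re}_G(\omega)$ (keeping monopole and dipole contributions), the difference is of size $\re^{3}$ times a polynomial in $|\omega|$, and a Neumann-series argument transfers well-posedness and stability. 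The Born model has no system to invert, and stability follows from direct pointwise bounds on $c_k^{\re}=O(\re)$ and on $u^{\operatorname{inc}}(\bc_k,\cdot)$. In all three cases, uniform stability on a compact $K$ with $\operatorname{dist}(K,\overline B)>0$ then follows by evaluating the retarded single-layer potential away from $\Gamma^{\re}$, which produces only bounded, geometry-dependent factors in $C_{\operatorname{geom},K}$, and by invoking the $\operatorname{TD}$-characterization to pass back to the time domain.

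For convergence of Model 1, I would identify $\vv{\lambda}^{\re}_G$ as a Galerkin approximation, in $X_\sigma^{\re}$, of the exact single-layer density $\boldsymbol{\varphi}^{\re}\in H^{-1/2}(\Gamma^{\re})$ for $V^{\re}(\omega)\boldsymbol{\varphi}^{\re}=\hat g^{\re}$. Céa's lemma then reduces convergence to a best-approximation estimate for $\boldsymbol{\varphi}^{\re}$ by $X_\sigma^{\re}$ in the scaled $H^{-1/2}(\Gamma^{\re})$ norm. Writing the boundary data as a Taylor expansion around $\bc_k$, the leading-order density is precisely a linear combination of the equilibrium densities $\sigma_k^{\re}$ up to a remainder controlled, after the usual $\re^{-1}\mapsto \re$ rescaling from $\Gamma_k^{\re}$ to $\Gamma_k$, by $\re^{2}$ in $H^{-1/2}(\Gamma^{\re})$; the evaluation of the single-layer potential on $K$ contributes one further power of $\re$, giving the $p=2$ rate. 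For Model 2 we compare directly with Model 1: Taylor-expanding the kernel $|\bx-\by|^{-1}$ around $\bc_k,\bc_\ell$ and using the definitions \eqref{eq:ckre} of $c_k^{\re},\vec{p}_k^{\re}$ shows that the residual, when tested against $X_\sigma^{\re}$, is $O(\re^{3})$ in the appropriate norms, which combines with the stability of Model 1 to yield $p=2$. The Born approximation is obtained from Model 2 by further approximating the inversion of $\mathbb{M}^{\re}_s(\omega)$ by its diagonal and dropping dipole terms in the right-hand side; this loses one power of $\re$, yielding $p=1$.

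The main obstacle I anticipate is the uniform-in-$\omega$ control of all constants with at most polynomial growth. The coercivity of $V^{\re}(\omega)$ holds only with a negative power of $\Im\omega$ and positive powers of $|\omega|$, so applying Céa's lemma and the Neumann-series perturbation must be done carefully to avoid accumulating non-polynomial $\omega$-factors that would violate the $\operatorname{TD}$ characterization. A secondary difficulty is the sharp $\re$-scaling analysis: extracting the correct powers of $\re$ in both the best-approximation estimate in $H^{-1/2}(\Gamma^{\re})$ and the multipole-remainder estimates requires a delicate change of variables from $\Gamma^{\re}_k$ to $\Gamma_k$ together with careful bookkeeping of the $\re$-weighted Sobolev–Slobodeckij seminorm \eqref{eq:norm_H_half}, since different components of the norm scale with different powers of $\re$.
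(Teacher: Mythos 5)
Your frequency-domain framework, scaling strategy and the perturbative treatment of models \eqref{eq:tdsys2} and \eqref{eq:tdsysborn} are broadly in the spirit of the paper, but the central quantitative step of your convergence argument for \eqref{eq:tdsys} does not close. You propose to bound the density error by C\'ea's lemma plus a best-approximation estimate in $H^{-1/2}(\Gamma^{\re})$, claim this error is $O(\re^{2})$, and then gain one further power of $\re$ from evaluating the single-layer potential on $K$. Both numbers are off. The part of the exact density not captured by $\operatorname{span}\{\vec{\sigma}_k^{\re}\}$ is, at leading order, the dipole-type density driven by the $O(\re)$ linear part of the trace of $u^{\inc}$; after rescaling it has an $O(1)$ profile, hence $L^2(\Gamma^{\re})$-norm $O(\re)$ and (being essentially mean-free) $H^{-1/2}(\Gamma^{\re})$-norm $O(\re^{3/2})$, not $O(\re^{2})$ -- this is exactly the size of $\hat{\vec{e}}^{\re}_{\perp}$ in \eqref{eq:bound_on_error_components_v1}. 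Moreover the single-layer potential evaluated on $K$ only gains a factor $\re$ on a generic $H^{-1/2}$ density (the $H^{1/2}(\Gamma^{\re})$-norm of the kernel trace is dominated by its $L^2$ part, which scales like $\re$). So your route yields at best $O(\re^{3/2})\cdot O(\re)=O(\re^{5/2})$ for the field, i.e. it falls short of $p=2$; this is precisely the suboptimality the paper points out after \eqref{eq:bound_field_FD}. The missing idea is the splitting \eqref{eq:splitting_density_errorr} of the error via the $S_0$-orthogonal projectors $\mathbb{Q}_{\sigma},\mathbb{Q}_{\perp}$, the proof that the two components scale differently ($\re^{3}$ vs.\ $\re^{3/2}$, using the near-diagonality \eqref{eq:continuity_bound} of the block system), and the refined mapping property \eqref{eq:s2}: on mean-free densities the potential gains $\re^{3/2}$ (one subtracts the constant from the kernel and uses the $\re^{3/2}$ scaling of the $H^{1/2}$-seminorm), which is what restores the total $O(\re^{3})$.

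Two further points would need repair even within your framework. First, C\'ea's constant is not uniform: the coercivity of $S^{\re}_{\omega}$ degenerates like $\re$ on the span of the equilibrium densities (Theorem \ref{theorem:coercivity_bound_principal}, and \eqref{eq:coercivity_bound_1} carries an $\re^{-1}$), so a naive quasi-optimality bound injects an extra $\re^{-1}$ that must be compensated -- in the paper this is done by the $O(\re^{5/2})$ off-diagonal blocks, not by C\'ea. Second, for model \eqref{eq:tdsys2} your Neumann-series perturbation argument cannot be applied for all $\omega\in\mathbb{C}^+$: the perturbation is of size $\re^{2}$ times a polynomial in $|\omega|$ and $(\Im\omega)^{-1}$, so the smallness condition fails at high frequencies, and one needs a separate high-frequency estimate (as in the proof of Proposition \ref{prop:difM1}) to keep the bounds polynomial in $\omega$ and hence compatible with the $\operatorname{TD}$ characterization used to return to the time domain.
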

	\begin{proof}
		The statement about the model \eqref{eq:tdsys} is detailed and proven in Theorems \ref{theorem:well_posedness}, \ref{theorem:point_wise_estimate}. The result for the model \eqref{eq:tdsys2} is proven in Section \ref{sec:simplified}. The results for the Born model \eqref{eq:tdsysborn} can be deduced from the estimates of Section \ref{sec:simplified}, see Appendix \ref{app:born_analysis} for details.
	\end{proof}
	The statement about the convergence of the model \eqref{eq:tdsysborn} for spherical particles can be found in \cite{barucq}, but, up to our knowledge, no proof exists for scattering by multiple particles. It is quite interesting to compare this convergence statement with the known results in 2D, where the Born model in the frequency domain seems to yield a slow convergence (approximating the field with $O(\log^{-2}\re)$ absolute accuracy), see \cite{hazard_cassier}. 
	
	In \cite{sini_frequency_domain}, it was remarked that the first order model can be constructed by knowing particle capacitances only; in other words, the field scattered by spheres and by domains of the same capacitances would agree up to $O(\re^2)$-term. The model \eqref{eq:tdsys2} yields the approximation of the scattered field up $O(\re^3)$-term, and requires knowing capacitances and the first moments $\vec{p}_k^{\re}$ only. It is not difficult to see that in the case when all particles have a reflectional symmetry with respect to coordinate axes centered in $\vec{c}_k$, the first moments $\vec{p}_k^{\re}=\vec{0}$, and in this case approximating particles by spheres of equivalent capacitances yields the model of order $p=2$ \footnote{The authors are grateful to David Hewett (UCL, UK) for this question.}. 
	
	\begin{remark}
		The models \eqref{eq:tdsys} and \eqref{eq:tdsysborn} can be shown to be well-posed and stable for any geometric configuration with  $\operatorname{dist}(\Omega_j^{\re}, \Omega_k^{\re})>0$ rather than $\operatorname{dist}(B^{\re}_j, B^{\re}_k) >0 $; however, such a stability proof for \eqref{eq:tdsys} is more technical, and hence we omit it here. 
	\end{remark}
	
	\section{Derivation of the asymptotic model (\ref{eq:tdsys})}
	\label{sec:model}	
	
	To derive the asymptotic model \eqref{eq:tdsys}, we seek the scattered field $u^{\re}$ as the convolution of the retarded potential with a density $\vec{\lambda}^{\re}: \, \Gamma^{\re}\times \mathbb{R}_{>0}\rightarrow \mathbb{R}$,
	\begin{align}
		\label{eq:retarded_formulation}		
		u^{\re}(\boldsymbol{x}, t) = (\mathcal{S}^{\re} * \blambda^{\re})(\boldsymbol{x}, t) = \sum_{k=1}^{N} \int_{\Gamma^{\re}_k}  \mathcal{G}(|\boldsymbol{x} - \boldsymbol{y}|, t)* \lambda^{\re}_k (\boldsymbol{y}, t)  d\Gamma_{\boldsymbol{y}},
	\end{align}
	for $(\boldsymbol{x}, t) \in \Omega^{\re,c} \times \mathbb{R}_{>0}$. For the moment we will not make precise the function spaces, referring the interested reader to the monographs \cite{sayas, banjai_sayas}, or the seminal work  on well-posedness of TDBIE \cite{bamberger_ha_duong}, see as well \cite{lubich94}. In order to restore $\lambda^{\re}_k$, we apply the trace operator to both sides of \eqref{eq:retarded_formulation}, and using the Dirichlet boundary conditions \eqref{main_problem} and continuity of the single-layer potential across the boundary yields the new problem:
	\begin{align}
		\label{eq:SL_BIE_TD}
		(S^{\re} * \blambda^{\re}) (\boldsymbol{x},t) = \bg^{\re}(\boldsymbol{x}, t), \quad (\boldsymbol{x}, t) \in \Gamma^{\re} \times \mathbb{R}_{>0}.
	\end{align}
	Here, for sufficiently regular $\blambda^{\re}$<
	\begin{align*}%
		(S^{\re} * \blambda^{\re}) (\boldsymbol{x},t) =\int_{\Gamma^{\re}} \frac{\blambda^{\re}(\boldsymbol{y}, t-|\boldsymbol{x}-\boldsymbol{y}|)}{4\pi |\boldsymbol{x}-\boldsymbol{y}|} d\Gamma_{\boldsymbol{y}}\equiv\sum\limits_{k=1}^N\int_{\Gamma_k^{\re}} \frac{\lambda_k^{\re}(\vec{y}, t-|\vec{x}-\vec{y}|)}{4\pi|\vec{x}-\vec{y}|}d\Gamma_{\vec{y}}.
	\end{align*}
	The problem \eqref{eq:SL_BIE_TD} can be shown to be  well-posed in a suitable functional framework (Laplace transformable causal distributions), cf. the seminal work of Bamberger and Ha-Duong  \cite{bamberger_ha_duong} or the monograph \cite{sayas} for more details.  In particular, we have
	\begin{theorem}[\cite{bamberger_ha_duong}, \cite{sayas}]
		For all $\vec{g}^{\re}\in \operatorname{TD}(H^{1/2}(\Gamma^{\re}))$, there exists a unique solution to \eqref{eq:SL_BIE_TD} $\vec{\lambda}^{\re}\in L_+'(H^{-1/2}(\Gamma^{\re}))$. Additionally, this solution lies in $\operatorname{TD}(H^{-1/2}(\Gamma^{\re}))$. 
	\end{theorem}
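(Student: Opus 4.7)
The plan is to prove the theorem by Fourier--Laplace transform, using the classical Bamberger--Ha Duong coercivity of the frequency-domain single-layer operator, and then translating the resulting bounds back to the time domain via the characterization of $\operatorname{TD}(X)$ and $L'_+(X)$ recalled in Section \ref{sec:ft}.

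\medskip

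\noindent\textbf{Step 1: Reduction to the frequency domain.} I would first apply the Fourier--Laplace transform \eqref{eq:fourier_laplace_transforme} to \eqref{eq:SL_BIE_TD}. Since the convolution kernel of $\mathcal{G}$ transforms to the Helmholtz fundamental solution $e^{i\omega|\bx-\by|}/(4\pi|\bx-\by|)$, equation \eqref{eq:SL_BIE_TD} becomes, for each $\omega\in \mathbb{C}^+$, the frequency-domain single-layer equation $\widehat{S}^{\re}(\omega)\widehat{\blambda}^{\re}(\omega)=\widehat{\bg}^{\re}(\omega)$ on $\Gamma^{\re}$, where $\widehat{S}^{\re}(\omega):H^{-1/2}(\Gamma^{\re})\to H^{1/2}(\Gamma^{\re})$ is the Helmholtz single-layer boundary integral operator.

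\medskip

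\noindent\textbf{Step 2: Frequency-domain well-posedness.} The core input is the Bamberger--Ha Duong coercivity estimate, which gives constants $C_1,C_2>0$ (depending on $\Gamma^{\re}$ only) such that, for every $\omega\in\mathbb{C}^+$ and every $\psi\in H^{-1/2}(\Gamma^{\re})$,
\begin{align*}
\Re\!\left(-i\bar{\omega}\,\langle\psi,\widehat{S}^{\re}(\omega)\psi\rangle\right)\;\geq\; C_1\,\frac{\Im\omega}{|\omega|^2}\,\|\psi\|_{H^{-1/2}(\Gamma^{\re})}^{2}.
\end{align*}
By Lax--Milgram applied to the sesquilinear form $-i\bar\omega\,\langle\cdot,\widehat{S}^{\re}(\omega)\cdot\rangle$, I would deduce that $\widehat{S}^{\re}(\omega)$ is boundedly invertible for each $\omega\in\mathbb{C}^+$, with
\begin{align*}
\|\widehat{S}^{\re}(\omega)^{-1}\|_{H^{1/2}(\Gamma^{\re})\to H^{-1/2}(\Gamma^{\re})}\;\leq\; C\,\frac{|\omega|^{2}}{\Im\omega}\,\max(1,|\omega|),
\end{align*}
i.e. a bound polynomial in $|\omega|$ together with a negative power of $\Im\omega$. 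Analyticity of $\omega\mapsto\widehat{S}^{\re}(\omega)^{-1}$ on $\mathbb{C}^+$ follows from analyticity of $\widehat{S}^{\re}(\omega)$ (as an operator-valued map) together with the uniform invertibility just established.

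\medskip

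\noindent\textbf{Step 3: Inverse transform and class membership.} Since $\bg^{\re}\in\operatorname{TD}(H^{1/2}(\Gamma^{\re}))$, by the characterization recalled in Section \ref{sec:ft} the map $\omega\mapsto\widehat{\bg}^{\re}(\omega)$ is $H^{1/2}(\Gamma^{\re})$-analytic in $\mathbb{C}^+$ with $\|\widehat{\bg}^{\re}(\omega)\|_{H^{1/2}(\Gamma^{\re})}\lesssim (1+|\omega|)^{p}\max(1,(\Im\omega)^{-k})$. Composing with the bound from Step 2, the candidate solution $\widehat{\blambda}^{\re}(\omega):=\widehat{S}^{\re}(\omega)^{-1}\widehat{\bg}^{\re}(\omega)$ satisfies the same type of estimate with modified exponents $p',k'$, and is $H^{-1/2}(\Gamma^{\re})$-analytic on $\mathbb{C}^+$. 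Applying again the characterization of Section \ref{sec:ft} in the reverse direction yields a causal distribution $\blambda^{\re}\in\operatorname{TD}(H^{-1/2}(\Gamma^{\re}))$ whose Fourier--Laplace transform is exactly $\widehat{\blambda}^{\re}$; by construction it solves \eqref{eq:SL_BIE_TD}. Since $\operatorname{TD}(H^{-1/2}(\Gamma^{\re}))\subset L'_+(H^{-1/2}(\Gamma^{\re}))$, this gives both the $L'_+$-existence and the stronger $\operatorname{TD}$-regularity. Uniqueness inside $L'_+(H^{-1/2}(\Gamma^{\re}))$ follows from injectivity of the Fourier--Laplace transform on this space (also recalled in Section \ref{sec:ft}) combined with the pointwise invertibility of $\widehat{S}^{\re}(\omega)$.

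\medskip

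\noindent\textbf{Main obstacle.} The substantive ingredient is the Bamberger--Ha Duong coercivity estimate in Step 2; everything else is bookkeeping through the Paley--Wiener-type characterizations of $\operatorname{TD}$ and $L'_+$. Since we only need well-posedness for a fixed $\re$ here (constants are allowed to depend on $\Gamma^{\re}$), the analysis is essentially that of \cite{bamberger_ha_duong,sayas} applied directly, and the theorem reduces to citing those references after the above reduction.
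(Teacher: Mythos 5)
Your proposal follows essentially the same route as the paper, which for this statement offers nothing beyond the citation of \cite{bamberger_ha_duong} and \cite{sayas}: Fourier--Laplace transform, coercivity of $S^{\re}_{\omega}$, Lax--Milgram, analyticity, and the Paley--Wiener-type characterizations of $\operatorname{TD}(X)$ and $L'_+(X)$ to come back to the time domain; the same energy-identity-plus-lifting coercivity is what the paper itself re-derives (with explicit $\re$-dependence) in Proposition \ref{prop:coercivity_bound}. One correction to Step 2: the coercivity estimate as you quote it, with constant $C_1\,\Im\omega/|\omega|^2$, cannot hold uniformly on $\mathbb{C}^+$ --- for $\omega=is$ with $s\to0$ the left-hand side vanishes like $s$ (since $S^{\re}_{is}\to S^{\re}_0$), while your lower bound blows up like $1/s$; moreover the factor $-i\bar\omega$ as written gives the wrong sign for real densities. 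The classical Bamberger--Ha Duong constant carries an additional $\min(1,\Im\omega)^2$-type degeneracy, cf. the factor $(1+|\omega|)^{-1}\min(1,(\Im\omega)^{3})$ obtained in Proposition \ref{prop:coercivity_bound}. This slip does not affect your conclusion, because Steps 2--3 only use that $\|(S^{\re}_{\omega})^{-1}\|$ is bounded polynomially in $|\omega|$ and inverse-polynomially in $\min(1,\Im\omega)$, which the correct estimate does provide; with that repair the argument is the standard one from the cited references.
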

	\subsection{An abstract asymptotic Galerkin method}
	As suggested in \cite{MK}, we will derive an asymptotic model as a semi-discretization of the above problem in space by a suitable Galerkin method, with a set of basis functions given by $\mathcal{V}_G^{\re}$. This method satisfies the following conditions:
	\begin{itemize}[leftmargin=*]
		\item 
		$\text{dim}\mathcal{V}^{\re}_G= N_B$, where $N_B$ is fixed for all $\re \in (0,1)$; 
		\item the basis functions in $\mathcal{V}^{\re}_G$ behave in a trivial manner with respect to $\re$ (the notion of 'trivial' will be clear in Section \ref{sec:asymtpotic_galerkin});
		\item 
		the convergence will be ensured by $\re \to 0$, instead of $N_B \to \infty$.
	\end{itemize}
	The main advantage of this approach lies in the fact that, due to coercivity properties of the single-layer boundary integral operators, such Galerkin semi-discretizations yield stable in the time domain models, see \cite{banjai_sauter}.
	
	As in the classical Galerkin semi-discretization in space, let us choose a finite-dimensional space $\mathcal{V}^{\re}_G \subseteq H^{-1/2}(\Gamma^{\re})$, which is defined as
	$	\mathcal{V}^{\re}_G = \text{span}\{ \boldsymbol{e}_k^{\re}\}_{k=1}^N,
	$
	where the basis linear forms $\boldsymbol{e}_k^{\re}\in H^{-1/2}(\Gamma^{\re})$ are non-trivial on $\Gamma_k^{\re}$ and vanish on $\Gamma^{\re} \setminus \Gamma_k^{\re}$, i.e. $e^{\re}_{k,\ell}=0$ for $\ell\neq k$. 
	We look for the approximate density
	\begin{align}
		\label{eq:approximate_density}
		\blambda^{\re}_G(\boldsymbol{x}, t) = \sum_{k=1}^N \lambda^{\re}_{G, k}(t) \be_k^{\re}(\boldsymbol{x}), \quad (\boldsymbol{x}, t) \in \Gamma^{\re} \times \mathbb{R},
	\end{align}
	where $\{ \lambda^{\re}_{G, k} (\cdot) \}_{k=1}^N$ are unknown time-dependent causal functions.
	
	Testing the boundary equation \eqref{eq:SL_BIE_TD} with $\vec{e}_k^{\re}$ and using \eqref{eq:approximate_density}, for $k=1,\ldots,N$ and $t>0$,  yields the following convolutional in time linear system  
	\begin{align}
		\label{eq:general_asymtotic_model}
		\langle (S^{\re, kk} *  \lambda^{\re}_{G, k} e_{k,k}^{\re})(t), e_{k,k}^{\re} \rangle_{\Gamma^{\re}_k} + \sum_{\ell\neq k} \langle  (S^{\re, k\ell} *  \lambda^{\re}_{G, \ell} e_{\ell,\ell}^{\re})(t), e_{k,k}^{\re} \rangle_{\Gamma^{\re}_k} = \langle g^{\re}_k(\cdot, t), e_{k,k}^{\re}  \rangle_{\Gamma^{\re}_k},
	\end{align}
	where $S^{\re, k\ell}$ denotes the restricted to $\Gamma^{\re}_{k}$ single layer operator defined on $\Gamma^{\re}_{\ell}$, i.e., for causal and sufficiently regular $\varphi$, 
	\begin{align*}%
		(S^{\re, k{\ell}} * \varphi) (\boldsymbol{x}, t) := \int_{\Gamma^{\re}_{\ell}}  \frac{ \varphi(\boldsymbol{y},t-|\bx-\by|)}{4\pi|\bx-\by|} d\Gamma_{\boldsymbol{y}}, \quad (\boldsymbol{x}, t) \in \Gamma^{\re}_k \times \mathbb{R}_{>0}.
	\end{align*}
	Once the approximate densities are found, the field can be approximated by applying the single layer potential to \eqref{eq:approximate_density}:
	\begin{align}
		u^{\re}_{G,\operatorname{app}}(\vec{x},t)=\sum\limits_{k=1}^N \int_{\Gamma^{\re}_k}\frac{\lambda_{G,k}^{\re}(t-|\vec{x}-\vec{y}|)}{4\pi|\vec{x}-\vec{y}|}e_{k,k}^{\re}(\vec{y})d\Gamma_{\vec{y}}, \quad (\vec{x},t)\in \Omega^{\re,c}\times \mathbb{R}_{>0}.
	\end{align}
	
	\subsection{Choice of basis functions for $\mathcal{V}^{\re}_G$}
	\label{sec:asymtpotic_galerkin}
	In this section, we derive basis functions $\vec{e}_k^{\re}$ for $\mathcal{V}^{\re}_G$. This choice is inspired by the ideas from \cite{sini_time_domain}, namely by the proof of Theorem 2.5. For convenience of the reader, we present a very formal derivation of the basis functions here, referring to Theorem 2.5 of \cite{sini_time_domain} for a rigorous statement. 
	
	\paragraph{The choice of basis functions for the single-particle case} We start by considering the single-particle case $N=1$ with $\boldsymbol{c}_1=\boldsymbol{0}$.  Provided sufficiently regular data, the exact causal density $\lambda^{\re}\in C^1(\mathbb{R}; H^{-1/2}(\Gamma^{\re}))$ satisfies the boundary integral equation 
	\begin{align*}%
		\int_{\Gamma^{\re}} \frac{ \lambda^{\re}(\boldsymbol{y},t-|\bx-\by|)}{4 \pi |\boldsymbol{x} - \boldsymbol{y}|}d\Gamma_{\by}=-u^{\operatorname{inc}}(\boldsymbol{x}, t), \quad (\bx, t) \in \Gamma^{\re} \times \mathbb{R}_{>0}.
	\end{align*}
	The key idea is to rewrite the above on a rescaled domain $\Gamma$, by defining $\hat{\by}:=\re^{-1}\by$: 
	\begin{align*}%
		\int_{\Gamma}\frac{\lambda^{\re}(\re\hat{\by}, t-\re|\hat{\bx}-\hat{\by}|)}{4\pi|\hat{\bx}-\hat{\by}|}d\Gamma_{\hat{\by}}=-\re^{-1}u^{\operatorname{inc}}(\re\hat{\bx}, t), \quad (\hat{\bx}, t) \in \Gamma \times \mathbb{R}_{>0}.
	\end{align*}
	Next, we formally approximate 
	\begin{align*}%
		\lambda^{\re}(\re\hat{\by}, t-\re|\hat{\bx}-\hat{\by}|)\approx \lambda^{\re}(\re\hat{\by}, t), \quad u^{\operatorname{inc}}(\re\hat{\bx}, t)\approx u^{\operatorname{inc}}(\mathbf{0}, t),
	\end{align*}
	which yields the new identity 
	\begin{align*}%
		\int_{\Gamma}\frac{\lambda^{\re}(\re\hat{\by}, t)}{4\pi|\hat{\bx}-\hat{\by}|}d\Gamma_{\hat{\by}}\approx -\re^{-1}u^{\operatorname{inc}}(\mathbf{0}, t).
	\end{align*}
	We recognize in the left-hand side of the above the single layer boundary integral operator for the Laplace equation, defined via ${S}_0\varphi(\bx)=\int_{\Gamma}\frac{\varphi(\by)}{4\pi|\bx-\by|}d\Gamma_{\by}, \, \bx\in \Gamma$, while in the right-hand side a function constant on $\Gamma$, and thus 
	\begin{align*}%
		\lambda^{\re}(\re\hat{\by},t)\approx -\re^{-1}u^{\operatorname{inc}}(\mathbf{0}, t){S}_0^{-1}1.
	\end{align*}
	In other words, we will look for $\lambda^{\re}(\re \cdot,t)\in \operatorname{span}\{\sigma\}$, where $\sigma$ is a unique $H^{-1/2}(\Gamma)$ solution to ${S}_0\sigma =1$.  Now, let  $\sigma^{\re}$ denote $\sigma$ defined on $\Gamma^{\re}$, i.e. $\sigma^{\re}$ satisfies
	\begin{align*}%
		\int_{\Gamma^{\re}} \frac{\sigma^{\re}(\boldsymbol{y})}{4\pi |\boldsymbol{x} - \boldsymbol{y}|} d\Gamma_{\boldsymbol{y}} = 1, \quad \boldsymbol{x} \in \Gamma^{\re}.
	\end{align*}
	The scaling argument yields  
	\begin{align}
		\label{eq:scarg}
		\re \int_{\Gamma} \frac{\sigma^{\re}(\re \hat{\boldsymbol{y}})}{4\pi |\hat{\boldsymbol{x}} - \hat{\boldsymbol{y}}|}  d \Gamma_{\hat{\boldsymbol{y}}}= \int_{\Gamma} \frac{\sigma(\hat{\boldsymbol{y}})}{4\pi| \hat{\boldsymbol{x}} - \hat{\boldsymbol{y}} |} d \Gamma_{\hat{\boldsymbol{y}}}, \quad  \hat{\boldsymbol{x}} \in \Gamma,
	\end{align}
	which implies that $ \sigma^{\re}(\re \hat{\boldsymbol{x}}) = \re^{-1} \sigma(\hat{\boldsymbol{x}})  $, for $\hat{\boldsymbol{x}}  \in \Gamma$.
	Thus, we chose the equilibrium density $\sigma^{\re}$ as a basis function for $\mathcal{V}^{\re}_G$, and using the definition of the asymptotic model \eqref{eq:general_asymtotic_model} for the single-particle case yields that
	\begin{align*}%
		\langle (S^{\re} * \lambda^{\re}_G \sigma^{\re}) (\cdot, t) , \sigma^{\re} \rangle_{\Gamma^{\re}} = \langle g^{\re}(\cdot,t), \sigma^{\re} \rangle_{\Gamma^{\re}}, \quad t \in \mathbb{R}_{>0}.
	\end{align*}
	\paragraph{Generalization to the many-particle case}  In the general multiple-particle case, the Galerkin space takes the following form:
	\begin{align}%
		\nonumber
		&\mathcal{V}^{\re}_G = \text{span} \{ \bsigma^{\re}_k \}_{k=1}^N, \text{ where }\\
		\label{eq:bas}
		&\sigma^{\re}_{k,\ell}(\vec{x})=0 \text{ for }\vec{x}\in \Gamma^{\re}_{\ell},  \quad	\text{ and solves }\quad \int_{\Gamma_k^{\re}} \frac{\sigma^{\re}_{k,k}(\boldsymbol{y})}{4 \pi |\boldsymbol{x} - \boldsymbol{y}|} d\Gamma_{\boldsymbol{y}}=1,~\text{for}~ \bx \in \Gamma_k^{\re}.
	\end{align}
	For brevity, let us define $	\sigma_k^{\re}:=\sigma_{k,k}^{\re}$. 
	\begin{lemma}
		\label{lemma:Galerkin_space}
		The Galerkin space $\mathcal{V}^{\re}_G$ is well-defined and its dimension equals $N$. Moreover, $\boldsymbol{\sigma}_k^{\re}\in L^2(\Gamma^\re)$, $k=1, \ldots, N$. 
	\end{lemma}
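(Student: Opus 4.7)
\textbf{Proof plan for Lemma \ref{lemma:Galerkin_space}.} The strategy is to decouple the three claims. Since each $\boldsymbol{\sigma}_k^{\re}$ is by construction supported on $\Gamma_k^{\re}$ and the boundaries $\Gamma_k^{\re}$ are pairwise disjoint, the problem for $\sigma_k^{\re}:=\sigma_{k,k}^{\re}$ reduces to the single-particle equation $S_0^{\re,k}\sigma_k^{\re}=1$ on $\Gamma_k^{\re}$, where $S_0^{\re,k}:H^{-1/2}(\Gamma_k^{\re})\to H^{1/2}(\Gamma_k^{\re})$ denotes the Laplace single-layer operator on $\Gamma_k^{\re}$. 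I would therefore prove everything fibrewise over $k$ and then assemble.

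\textbf{Step 1: well-posedness.} I would invoke the classical fact (going back to Costabel and Nédélec--Planchard) that on a Lipschitz boundary in three dimensions, $S_0^{\re,k}$ is bounded, symmetric and $H^{-1/2}(\Gamma_k^{\re})$-coercive. Since $1\in H^{1/2}(\Gamma_k^{\re})$ (as $\Gamma_k^{\re}$ is bounded), Lax--Milgram yields a unique $\sigma_k^{\re}\in H^{-1/2}(\Gamma_k^{\re})$. Extending by zero to $\Gamma^{\re}\setminus \Gamma_k^{\re}$ produces $\boldsymbol{\sigma}_k^{\re}\in H^{-1/2}(\Gamma^{\re})$ thanks to the decomposition \eqref{eq:Hminhalf}.

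\textbf{Step 2: dimension equals $N$.} From $S_0^{\re,k}\sigma_k^{\re}=1\neq 0$ we have $\sigma_k^{\re}\not\equiv 0$. Given a linear combination $\sum_{k=1}^N\alpha_k\boldsymbol{\sigma}_k^{\re}=0$ in $H^{-1/2}(\Gamma^{\re})$, restricting to each $\Gamma_\ell^{\re}$ (which is legitimate in view of the direct-sum structure of $H^{-1/2}(\Gamma^{\re})$ over disjoint components) gives $\alpha_\ell\sigma_\ell^{\re}=0$, hence $\alpha_\ell=0$. Therefore $\{\boldsymbol{\sigma}_k^{\re}\}_{k=1}^N$ is linearly independent and $\dim \mathcal{V}_G^{\re}=N$.

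\textbf{Step 3: $L^2$ regularity.} This is the only non-trivial point. By the scaling identity \eqref{eq:scarg}, $\sigma_k^{\re}(\re\hat{\bx}+(1-\re)\bc_k)=\re^{-1}\sigma_k(\hat{\bx})$, so it suffices to prove $\sigma_k\in L^2(\Gamma_k)$ for the unscaled particle. For this I would appeal to the Lipschitz-surface regularity theory of Verchota, which asserts that on a bounded Lipschitz boundary in $\mathbb{R}^3$, $S_0:L^2(\Gamma_k)\to H^1(\Gamma_k)$ is an isomorphism. Since $1\in H^1(\Gamma_k)$ is a perfectly admissible right-hand side, applying $S_0^{-1}$ gives $\sigma_k\in L^2(\Gamma_k)$, and rescaling delivers $\sigma_k^{\re}\in L^2(\Gamma_k^{\re})$.

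The only delicate point is Step 3, where one must cite the correct Lipschitz regularity result rather than just the $H^{\pm 1/2}$ mapping theory used in Step 1; the rest is essentially bookkeeping on the disjoint-boundary decomposition.
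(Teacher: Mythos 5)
Your proposal is correct and follows essentially the same route as the paper: unique solvability in $H^{-1/2}(\Gamma_k^{\re})$ from the invertibility/coercivity of the Laplace single-layer operator on a Lipschitz boundary (the paper cites \cite[Corollary 8.13]{mclean}), and $L^2$ regularity from the Lipschitz-surface mapping theory (the paper cites Theorem 3 of \cite{costabel}, which rests on the Verchota isomorphism $S_0:L^2\to H^1$ that you invoke), using that the right-hand side $1$ lies in $H^1(\Gamma_k^{\re})$. Your additional steps — the explicit linear-independence argument over the disjoint boundaries and the scaling reduction to $\re=1$ — are correct but are left implicit in the paper's proof.
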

	\begin{proof}
		Let $k \in \{1, \ldots, N \}$. By the well-known result \cite[Corollary 8.13]{mclean}, the equation in \eqref{eq:bas} admits a unique solution $\sigma^{\re}_{k}\in H^{-1/2}(\Gamma^{\re}_k)$. By Theorem 3 in \cite{costabel} (see also the references therein), $\sigma^{\re}_{k} \in L^2(\Gamma^{\re}_k)$, since $\int_{\Gamma^{\re}_k} \frac{\sigma^{\re}_{k}(\boldsymbol{y})}{4\pi |\boldsymbol{x} - \boldsymbol{y}|} d\Gamma_{\boldsymbol{y}} = 1 \in H^{1}(\Gamma^{\re}_k)$.
	\end{proof}
	
	%
	%
	%
	%
	Plugging in this definition into \eqref{eq:general_asymtotic_model} yields the final asymptotic model \eqref{eq:tdsys}:
	\begin{align}
		\label{eq:final_asymtotic_model}
		&\langle (S^{\re, kk} * \lambda^{\re}_{G,k} \sigma^{\re}_k) (t) , \sigma^{\re}_k \rangle_{\Gamma^{\re}_k} + \sum_{\ell\neq k} \langle (S^{\re, k\ell}  *\lambda^{\re}_{G, \ell} \sigma^{\re}_{\ell})(t) , \sigma^{\re}_k \rangle_{\Gamma^{\re}_k} = \langle g^{\re}(\cdot, t), \sigma^{\re}_k  \rangle_{\Gamma^{\re}_k},\\ 
		&u^{\re}_{G,\operatorname{\operatorname{app}}}(\vec{x},t)=\sum\limits_{k=1}^N\int_{\Gamma_k^{\re}}\frac{\lambda_{G,k}(t-|\vec{x}-\vec{y}|)}{4\pi|\vec{x}-\vec{y}|}\sigma_k^{\re}(\vec{y})d\Gamma_{\vec{y}}, \quad (\vec{x},t)\in \Omega^{\re,c}\times \mathbb{R}_{>0}.
	\end{align}
	
	\section{Stability and error analysis for the asymptotic model (\ref{eq:tdsys}) in the frequency domain}
	Our stability and error analysis relies on the link between the Galerkin semi-discretization in space of \eqref{eq:SL_BIE_TD} and the asymptotic model \eqref{eq:tdsys}. It is carried out in the Fourier-Laplace domain and subsequently translated into the time domain. We begin by reformulating the problem \eqref{eq:SL_BIE_TD} in the frequency domain using the Fourier-Laplace transform. The analysis is then divided into three parts. First, we derive an estimate for the density error in the Fourier-Laplace domain, expressed in terms of the norm of the data $\vec{g}^{\re}$. Next, we use this estimate to obtain a corresponding bound for the scattered field via the retarded potential formulation. Finally, we translate the resulting bounds into the time domain by applying the Plancherel identity. We will demonstrate that, in order to obtain the optimal bound on the scattered field, it is necessary to decompose the density error into two components using an appropriate decomposition of the solution space $H^{-1/2}(\Gamma^{\re})$.
	
	\subsection{Frequency-domain counterparts of (\ref{eq:tdsys}) and main results}
	The frequency-domain counterparts of \eqref{eq:retarded_formulation} with the single layer boundary integral equation \eqref{eq:SL_BIE_TD} in the Fourier-Laplace domain take the following form (we omit the dependence of unknowns and data  on $\omega\in \mathbb{C}^+$): with $G_{\omega}(r) = \frac{e^{i\omega r}}{4\pi r},\,  r > 0$, 
	\begin{align}
		\label{eq:SL_potential}
		&\hat{u}^{\re}(\boldsymbol{x}) = (\mathcal{S}^{\re}_{\omega} \hat{{\blambda}}^{\re})(\boldsymbol{x}) = \sum_{k=1}^{N} \int_{\Gamma^{\re}_k}  G_{\omega}(|\boldsymbol{x} - \boldsymbol{y}|) \hat{\lambda}^{\re}_k (\boldsymbol{y})  d\Gamma_{\boldsymbol{y}}, \quad \boldsymbol{x} \in \Omega^{\re, c}, \\
		\label{eq:SL_BIE_FD}
		&S^{\re}_{\omega} \hat{\boldsymbol{\lambda}}^{\re}(\boldsymbol{x}) 
		= \int_{\Gamma^{\re}} G_{\omega}(|\boldsymbol{x} - \boldsymbol{y}|)\, \hat{\boldsymbol{\lambda}}^{\re}(\boldsymbol{y})\, d\Gamma_{\boldsymbol{y}} 
		= \hat{\boldsymbol{g}}^{\re}(\boldsymbol{x}), \quad \boldsymbol{x} \in \Gamma^{\re}.
	\end{align}
	The Fourier-Laplace transform of the approximate density \eqref{eq:approximate_density} is defined via
	\begin{align}
		\label{eq:approximate_density_frequency_domain}
		\hat{\boldsymbol{\lambda}}^{\re}_G(\boldsymbol{x}, \omega) = \sum_{k=1}^N \hat{\lambda}^{\re}_{G, k}(\omega)\, \boldsymbol{\sigma}_k^{\re}(\boldsymbol{x}), \quad \boldsymbol{x} \in \Gamma^{\re}, 
	\end{align}
	and the final asymptotic model \eqref{eq:final_asymtotic_model} becomes
	\begin{align}
		\label{eq:as_model_fd}
		&\hat{\lambda}^{\re}_{G, k}(\omega)\, \langle S^{\re, kk}_{\omega} \sigma_k^{\re}, \sigma_k^{\re} \rangle_{\Gamma^{\re}_k}
		+ \sum_{\ell \neq k} \hat{\lambda}^{\re}_{G, \ell}(\omega)\, \langle S^{\re, k\ell}_{\omega} \sigma_{\ell}^{\re}, \sigma_k \rangle_{\Gamma^{\re}_k} 
		= \langle \hat{\vec{g}}^{\re}, \sigma_k^{\re} \rangle_{\Gamma^{\re}_k},  \\
		\label{eq:operator_S_kl}
		&S^{\re, k\ell}_{\omega} \varphi(\boldsymbol{x}) := \int_{\Gamma^{\re}_{\ell}} G_{\omega}(|\boldsymbol{x}-\boldsymbol{y}|)\varphi(\boldsymbol{y})d\Gamma_{\boldsymbol{y}}, \quad \boldsymbol{x} \in \Gamma^{\re}_k. 
	\end{align} 
	for $k = 1, \dots, N$. The approximate field then satisfies 
	\begin{align}
		\label{eq:fdsys}
		&\hat{u}^{\re}_{G,\operatorname{app}}(\boldsymbol{x},\omega) = (\mathcal{S}^{\re}_{\omega} \hat{{\blambda}}^{\re}_G)(\boldsymbol{x}) = \sum_{k=1}^{N} \hat{\lambda}^{\re}_{G,k} \int_{\Gamma^{\re}_k}  G_{\omega}(|\boldsymbol{x} - \boldsymbol{y}|) \sigma_k^{\re}(\vec{y}) d\Gamma_{\boldsymbol{y}} , \quad \boldsymbol{x} \in \Omega^{\re,c}.
	\end{align}
	In our analysis, we will not work with the coefficients of the expansion $\hat{\lambda}_{G,k}^{\re}$, but rather with quantities $\hat{\blambda}_G^{\re}$. 
	
	As a first step, we prove the following two results in the frequency domain, which, as we see later, by a standard argument, will yield Theorem \ref{theorem:well_posedness} and Theorem \ref{theorem:point_wise_estimate}. 
	\begin{theorem}
		\label{th:wp}
		Let $\omega\in \mathbb{C}^+$. Provided that $\hat{u}^{\inc}(\omega)\in H^3(B)$, the problem \eqref{eq:as_model_fd} admits a unique solution $\hat{\vec{\lambda}}_G^{\re}(\omega)\in H^{-1/2}(\Gamma^{\re})$, which additionally satisfies:
		\begin{align*}%
			\| \hat{\vec{\lambda}}^{\re}_G(\omega) \|_{H^{-1/2}(\Gamma^{\re})} \leq C (\underline{d}^{\re}_{*})^{-2} (1+  |\omega|)^2 \max(1, (\Im  \omega)^{-3}) \| \hat{{u}}^{inc}(\omega) \|_{H^3(B)}, 
		\end{align*}
		where the constant $C>0$ depends on $\Gamma_j, \, j=1,\ldots,N$ (shape of the scatterers) only. 
	\end{theorem}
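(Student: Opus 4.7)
The strategy is to recast \eqref{eq:as_model_fd} as the coercive Galerkin projection of the single-layer equation \eqref{eq:SL_BIE_FD}, deduce unique solvability from Bamberger--Ha Duong-type coercivity of $S^{\re}_\omega$, and then track the constants carefully to obtain the stated bound. The two sources of inverse powers in the estimate are conceptually distinct: the $\omega$-dependent factor $(1+|\omega|)^2\max(1,(\Im\omega)^{-3})$ originates from the coercivity constant of $S_\omega^{\re}$ after unweighting to the standard $H^{\pm 1/2}$ norms, while the geometric factor $(\underline{d}_*^{\re})^{-2}$ arises from controlling the off-diagonal (particle--particle) couplings in the Galerkin system.

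\textbf{Step 1: Variational setting and existence/uniqueness.} I would first observe that \eqref{eq:as_model_fd} is equivalent to the conforming Galerkin problem: find $\hat{\vec\lambda}^{\re}_G\in \mathcal{V}^{\re}_G$ such that $\langle S^{\re}_\omega\hat{\vec\lambda}^{\re}_G,\vec\varphi\rangle = \langle\hat{\vec g}^{\re},\vec\varphi\rangle$ for all $\vec\varphi\in\mathcal{V}^{\re}_G$. By the classical Bamberger--Ha Duong coercivity (as formulated in \cite{sayas}, Chapter 3) applied on the (disconnected) Lipschitz surface $\Gamma^{\re}$, we have
\begin{equation*}
\Re\langle -i\bar\omega\, S^{\re}_\omega\vec\psi,\vec\psi\rangle \;\gtrsim\; \Im\omega\,\|\vec\psi\|^2_{H^{-1/2}_{|\omega|}(\Gamma^{\re})}, \qquad \vec\psi\in H^{-1/2}(\Gamma^{\re}),
\end{equation*}
uniformly over admissible geometries, where the weighted norm unwinds into the usual $H^{-1/2}$ norm with an extra $\min(1,|\omega|^{-1})$ factor. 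Since $\mathcal{V}^{\re}_G\subset H^{-1/2}(\Gamma^{\re})$, the same coercivity holds on the subspace, and Lax--Milgram yields unique solvability of the Galerkin system.

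\textbf{Step 2: A first stability bound.} Testing the Galerkin equation with $\hat{\vec\lambda}^{\re}_G$ itself, applying coercivity on the left and Cauchy--Schwarz on the right, and converting the weighted norm into $H^{-1/2}(\Gamma^{\re})$, gives an estimate of the form
\begin{equation*}
\|\hat{\vec\lambda}^{\re}_G(\omega)\|_{H^{-1/2}(\Gamma^{\re})} \;\lesssim\; \frac{(1+|\omega|)^a}{\max(1,(\Im\omega)^b)}\,\|\hat{\vec g}^{\re}(\omega)\|_{H^{1/2}(\Gamma^{\re})}
\end{equation*}
with explicit $a,b$ coming from the weight conversion; this part is standard and independent of $\underline{d}_*^{\re}$. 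I would then bound the right-hand side via the trace inequality on $B$ (noting that $\hat{\vec g}^{\re}=-\gamma_0\hat u^{\inc}|_{\Gamma^{\re}}$ and $\Gamma^{\re}\subset B$) to replace $\|\hat{\vec g}^{\re}\|_{H^{1/2}(\Gamma^{\re})}$ by $\|\hat u^{\inc}\|_{H^2(B)}$, so far without any geometric penalty.

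\textbf{Step 3: Sharpening via the Galerkin matrix and the distance factor.} The $(\underline{d}_*^{\re})^{-2}$ and the extra powers of $\Im\omega^{-1}$ require going beyond the blunt Step 2 estimate. My plan is to exploit the block structure of the Galerkin matrix $\mathbb{M}_{k\ell}=\langle S^{\re,k\ell}_\omega\sigma^{\re}_\ell,\sigma^{\re}_k\rangle$: the diagonal blocks are the (positive, geometry-independent) self-interactions associated with the equilibrium densities, while the off-diagonal blocks are smoothing operators whose kernels $G_\omega(|\bx-\by|)$, with $\bx\in\Gamma_k^{\re}$, $\by\in \Gamma_\ell^{\re}$, are real-analytic on a neighborhood of size $\underline{d}_*^{\re}$. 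A Taylor expansion of $G_\omega$ around $\bc_k-\bc_\ell$, combined with the scaling $\sigma^{\re}_k(\re\hat\bx)=\re^{-1}\sigma_k(\hat\bx)$ from \eqref{eq:scarg}, yields sharp bounds on $|\mathbb{M}_{k\ell}|$ in which a factor of $(\underline{d}_*^{\re})^{-1}$ appears per off-diagonal block, together with the intrinsic $\omega$-dependence of $G_\omega$ (responsible for the additional $\Im\omega$-losses through the derivatives of $e^{i\omega r}/r$). Inverting the perturbed system via a Neumann-type argument (or equivalently, diagonal-dominance up to the computed small constant) then yields the stated $(\underline{d}_*^{\re})^{-2}$ scaling, after passing from the coefficients $\hat\lambda^{\re}_{G,k}$ back to the $H^{-1/2}(\Gamma^{\re})$-norm via \eqref{eq:Hminhalf} and the properties of $\sigma_k^{\re}$.

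\textbf{Expected main obstacle.} The routine parts are the coercivity-based well-posedness and the trace-inequality bound on the data. The delicate step is Step 3: obtaining the precise interplay between the $\omega$-dependent coercivity constant (with its $|\omega|$-weighted norms) and the $\underline{d}_*^{\re}$-dependent bounds on the off-diagonal Galerkin blocks, while keeping the data on the right-hand side in the clean form $\|\hat u^{\inc}\|_{H^3(B)}$. In particular, the loss of three powers of $\Im\omega$ (instead of the single power one reads off the bare coercivity estimate) suggests that derivatives of $G_\omega$ with respect to $r$ must be estimated in each of the off-diagonal Taylor terms, and the exponent $3$ of the Sobolev index on $B$ is dictated by how many derivatives of $\hat u^{\inc}$ are needed to carry out this expansion while still being able to return to a trace-style norm on $\Gamma^{\re}$.
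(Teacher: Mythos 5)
Your overall architecture (Galerkin projection of the single-layer equation, coercivity, Lax--Milgram, then data bounds) starts out parallel to the paper, but the way you attribute and extract the constants diverges in a way that creates a real gap. In the paper, the proof of Theorem \ref{th:wp} is simply Lemma \ref{lemma:stability_bound_lambda_G} combined with Proposition \ref{prop:data}: from \eqref{eq:sresigma} one has $\|\hat{\blambda}^{\re}_G\|_{H^{-1/2}(\Gamma^{\re})}\leq \|(\mathbb{S}^{\re}_{\sigma\sigma})^{-1}\|\,\|\mathbb{Q}^*_{\sigma}\hat{\vec{g}}^{\re}\|_{H^{1/2}(\Gamma^{\re})}$, the inverse is controlled by Corollary \ref{corollary:stability_bound}, and $\|\mathbb{Q}^*_{\sigma}\hat{\vec{g}}^{\re}\|$ is controlled by $\|\hat{\vec{g}}^{\re}\|_{L^2(\Gamma^{\re})}$ (Proposition \ref{prop:qsigmaP0}) and then by $\|\hat{u}^{\inc}\|_{H^3(B)}$ via Sobolev embedding and the scaling lemmas. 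Crucially, \emph{both} the factor $(\underline{d}^{\re}_*)^{-2}$ and the factor $\max(1,(\Im\omega)^{-3})$ already sit in the coercivity constant of Theorem \ref{theorem:coercivity_bound_principal}: the distance enters through the multi-particle lifting lemma (Lemma \ref{lifting_many_particles_proof}), whose cut-off functions have gradients of size $(\underline{d}^{\re}_*)^{-1}$, and the three powers of $\Im\omega$ come from $\min(1,|\omega|^2)\,\Im\omega\gtrsim\min(1,(\Im\omega)^3)$ in the energy identity. No off-diagonal Taylor expansion and no Neumann-series inversion of the Galerkin matrix is involved in this theorem. Likewise, the exponent $3$ in $H^3(B)$ comes from the embedding $H^3(B)\hookrightarrow C^{1,\alpha}(\overline{B})$ used in Proposition \ref{prop:data} to get $\re$-explicit bounds on $\|\hat{\vec{g}}^{\re}\|_{L^2(\Gamma^{\re})}$ and $|\hat{\vec{g}}^{\re}|_{H^{1/2}(\Gamma^{\re})}$, not from counting derivatives of $G_{\omega}$ in off-diagonal terms.

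The concrete problems with your plan are the following. In Step 2 you invoke the ``classical'' Bamberger--Ha Duong coercivity and a trace inequality on $B$ ``without any geometric penalty''; but the hidden constants there depend on the surface $\Gamma^{\re}$, hence on $\re$ and on the particle configuration, in an uncontrolled way, while the theorem requires a constant depending on the shapes $\Gamma_j$ only, uniformly in $\re\in(0,1]$. Making those constants explicit is precisely the content of the scaled lifting lemma and of the norm-scaling lemmas, so Step 2 as stated does not deliver the claim. In Step 3, the Neumann-series/diagonal-dominance inversion of the Galerkin matrix requires a smallness condition of the type $\re^{2}N(\underline{d}^{\re}_*)^{-4}(1+|\omega|)^{4}\max(1,(\Im\omega)^{-3})<1$ (compare \eqref{eq:ME} in the paper), which fails for large $|\omega|$ or small $\Im\omega$; the theorem, however, must hold for every $\omega\in\mathbb{C}^+$. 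The paper only uses such a perturbative argument when comparing $\mathbb{M}^{\re}$ with $\mathbb{M}^{\re}_s$ in Proposition \ref{prop:difM1}, and even there it needs a separate high-frequency branch that falls back on the coercivity-based bounds. For Theorem \ref{th:wp} the coercivity estimate restricted to the finite-dimensional space $\mathcal{V}^{\re}_G$ (giving the $\re$-weighted lower bound $\re\|\mathbb{Q}_{\sigma}\cdot\|^2$) yields invertibility and the quantitative bound directly, and this is the step your proposal is missing.
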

	\begin{proof}
		Follows by combining Lemma \ref{lemma:stability_bound_lambda_G} and Proposition \ref{prop:data}.
	\end{proof}
	The next result links the error of the scattered field and the norm of the data.
	\begin{theorem}
		\label{th:conv}
		Let $K\subset B^c$ be compact; $\omega\in \mathbb{C}^+$. Provided that $\hat{u}^{\inc}(\omega)\in H^{3}(B)$, the error between $\hat{u}^{\re}$, defined in \eqref{eq:SL_potential}, and $\hat{u}^{\re}_{G,\operatorname{app}}$, defined in \eqref{eq:fdsys}, is bounded by
		\begin{align}
			\label{eq:bound_field_FD_v0}
			\sup_{\vec{x}\in K}|\hat{u}^{\re}(\bx,\omega) - \hat{u}^{\re}_{\operatorname{app}}(\bx,\omega)| &\leq\re^{3}  C \max(1, (\operatorname{Im}\omega)^{-9})(1 + |\omega|)^{10}  \|\hat{u}^{\inc}(\omega)\|_{H^3(B)},
		\end{align}
		where $C=C_{K,\Omega} N^{5/2} (\underline{d}^{\re}_*)^{-10}$, and $C_{K,\Omega}>0$ depends on $K$ and $\Gamma$.
	\end{theorem}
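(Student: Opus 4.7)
The plan is to start from the identity
\[\hat u^{\re}(\bx) - \hat u^{\re}_{G,\operatorname{app}}(\bx) = \langle \hat{\blambda}^{\re} - \hat{\blambda}^{\re}_G,\, G_\omega(|\bx - \cdot|)\rangle_{\Gamma^{\re}}, \qquad \bx\in K,\]
which reduces the pointwise error bound to an $H^{-1/2}(\Gamma^{\re})$-norm estimate of $\vec e^{\re}:=\hat{\blambda}^{\re}-\hat{\blambda}^{\re}_G$ combined with the regularity of $G_\omega(|\bx-\cdot|)$ on each $\Gamma_k^{\re}$ (smooth because $K\cap\overline B=\emptyset$). Since \eqref{eq:as_model_fd} is the Galerkin projection of \eqref{eq:SL_BIE_FD} onto $\mathcal V_G^{\re}$, the orthogonality $\langle S_\omega^{\re}\vec e^{\re},\vec v\rangle_{\Gamma^{\re}}=0$ holds for every $\vec v\in\mathcal V_G^{\re}$, and the coercivity/inf--sup bounds of $S_\omega^{\re}$ that underpin Theorem \ref{th:wp} will yield a C\'ea-type quasi-optimality estimate for $\vec e^{\re}$. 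A direct Cauchy--Schwarz pairing of $\vec e^{\re}$ against $G_\omega(|\bx-\cdot|)$ would only produce $O(\re^{2})$; the extra factor $\re$ must therefore come from an Aubin--Nitsche-type duality argument that exploits the Galerkin orthogonality.

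\textbf{Primal and dual best-approximation and the doubling.} On the primal side, I will test with the candidate $\vec w := -\sum_k\hat u^{\inc}(\bc_k,\omega)\sigma_k^{\re}$: using the rescaling \eqref{eq:scarg} and a Taylor expansion of $-\gamma_0\hat u^{\inc}$ around each $\bc_k$, the residue on the reference geometry $\Gamma$ will be $O(\re)$ in $H^{-1/2}(\Gamma)$, which unscales to
\[\inf_{\vec w\in\mathcal V_G^{\re}}\|\hat{\blambda}^{\re}-\vec w\|_{H^{-1/2}(\Gamma^{\re})}\lesssim \re^{3/2}\|\hat u^{\inc}\|_{H^3(B)},\]
the half-order surplus coming from the Jacobian of the surface measure. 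For the doubling, introduce the dual solution $\vec z\in H^{-1/2}(\Gamma^{\re})$ of $S_\omega^{\re}\vec z = G_\omega(|\bx-\cdot|)$. Using the symmetry of the bilinear form induced by $S_\omega^{\re}$ and the Galerkin orthogonality, for every $\vec v_h\in\mathcal V_G^{\re}$,
\[|\hat u^{\re}(\bx)-\hat u^{\re}_{G,\operatorname{app}}(\bx)| = |\langle \vec e^{\re},\, S_\omega^{\re}(\vec z - \vec v_h)\rangle|\leq \|\vec e^{\re}\|_{H^{-1/2}(\Gamma^{\re})}\,\|S_\omega^{\re}\|_{H^{-1/2}\to H^{1/2}}\inf_{\vec v_h}\|\vec z - \vec v_h\|_{H^{-1/2}(\Gamma^{\re})}.\]
Because the right-hand side $G_\omega(|\bx-\cdot|)$ of the dual problem is smooth on each $\Gamma_k^{\re}$, the same Taylor-based argument applied around the $\bc_k$ gives an $O(\re^{3/2})$ dual best-approximation, and the two factors multiply to the required $\re^3$.

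\textbf{Bookkeeping and main obstacle.} The factors $(\underline d^{\re}_*)^{-10}$, $(1+|\omega|)^{10}$ and $\max(1,(\Im\omega)^{-9})$ will arise from two applications of the stability bound of Theorem \ref{th:wp} (one for the primal $\vec e^{\re}$, one for the dual $\vec z$) combined with the polynomial $|\omega|$-growth of $\|S_\omega^{\re}\|_{H^{-1/2}\to H^{1/2}}$; the $N^{5/2}$ reflects Cauchy--Schwarz over the $N$ particles together with the splitting \eqref{eq:Hminhalf}. I expect the main obstacle to be the $\omega$-explicit quantitative tracking of the two-level best-approximation estimates uniformly over $\omega\in\mathbb C^+$, which is necessary so that the subsequent translation to the time domain via Plancherel produces only polynomial growth in $|\omega|$ and a controlled blow-up as $\Im\omega\downarrow 0$. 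A secondary difficulty is the multi-particle coupling through the off-diagonal operators $S^{\re,k\ell}_\omega$: small inter-particle distances degrade the inf--sup constant, which is precisely what forces the large negative powers of $\underline d^{\re}_*$.
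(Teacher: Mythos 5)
Your strategy (Aubin--Nitsche duality, with the sought $\re^3$ coming from the product of a primal Galerkin error $O(\re^{3/2})$ and a dual best-approximation $O(\re^{3/2})$) is a genuinely different route from the paper's, but as written it has a serious gap at exactly the point you flag as an ``obstacle.'' The C\'ea-type quasi-optimality $\|\hat{\vec e}^{\re}\|_{H^{-1/2}}\lesssim \inf_{\vec w\in\mathcal V_G^{\re}}\|\hat{\blambda}^{\re}-\vec w\|_{H^{-1/2}}$ that you invoke is \emph{not} available with an $\re$-uniform constant. By Theorem~\ref{theorem:coercivity_bound_principal} the coercivity of $S^{\re}_{\omega}$ is anisotropic: it scales like $\re\,c_S$ on $\mathcal V^{\re}_G$ but like $c_S$ on $H^{-1/2}_{\perp}(\Gamma^{\re})$. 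Feeding the worst-case constant $\re c_S$ into C\'ea loses a factor $\re^{-1}$ and turns your claimed $O(\re^{3/2})$ into $O(\re^{1/2})$, which after the duality doubling gives only $O(\re^{2})$. This is precisely the ``temptation'' the paper warns against right after \eqref{eq:ere}: the naive operator-norm (equivalently, naive C\'ea) route is sub-optimal. To recover the sharp densities one must split $\hat{\vec e}^{\re}=\mathbb{Q}_{\sigma}\hat{\vec e}^{\re}+\mathbb{Q}_{\perp}\hat{\vec e}^{\re}$ and exploit the near-block-diagonal structure $\mathbb{S}^{\re}_{\sigma\sigma}$, $\mathbb{S}^{\re}_{\sigma\perp}$, $\mathbb{S}^{\re}_{\perp\sigma}$, $\mathbb{S}^{\re}_{\perp\perp}$ with operator norms that carry \emph{different} powers of $\re$ (Theorem~\ref{theorem:estimates_norms}); that machinery is what replaces the quasi-optimality constant you are assuming. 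The same objection applies to your dual best-approximation estimate for $\vec z$: bounding $\|\mathbb{Q}_{\perp}\vec z\|_{H^{-1/2}}$ requires the degradation-free $\mathbb{Q}_{\perp}(S^{\re}_{\omega})^{-1}\mathbb{Q}^*_{\perp}$ bound of \eqref{eq:Q_perp_S_inv_Q_perp}, not a ``Taylor-based argument.''

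For contrast, the paper never introduces a dual unknown. Once $\|\mathbb{Q}_{\sigma}\hat{\vec e}^{\re}\|=O(\re^3)$ and $\|\mathbb{Q}_{\perp}\hat{\vec e}^{\re}\|=O(\re^{3/2})$ are in hand (\eqref{eq:bound_on_error_components_v1}), the field error is estimated directly: Lemma~\ref{lem:slp} gives $\|\mathcal S^{\re}_{\omega}\mathbb{Q}_{\sigma}\|_{H^{-1/2}\to L^{\infty}(K)}=O(\re)$ and $\|\mathcal S^{\re}_{\omega}\mathbb{Q}_{\perp}\|_{H^{-1/2}\to L^{\infty}(K)}=O(\re^{3/2})$, the latter coming from $|G_{\omega}(|\bx-\cdot|)|_{H^{1/2}(\Gamma^{\re})}\lesssim\re^{3/2}$. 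Pairing these with the two density-error scalings yields $\re^3\cdot\re+\re^{3/2}\cdot\re^{3/2}=O(\re^3)$; no dual solve is needed. (Incidentally, the naive Cauchy--Schwarz you mention gives $O(\re^{5/2})$, not $O(\re^2)$, since the worst density bound is $O(\re^{3/2})$ and $\|G_{\omega}(|\bx-\cdot|)\|_{H^{1/2}(\Gamma^{\re})}\sim\re$; this is noted in the remark following \eqref{eq:bound_field_FD}.) If you want to salvage the duality route, you would have to run the Aubin--Nitsche pairing \emph{block-wise} in the $\mathbb{Q}_{\sigma}/\mathbb{Q}_{\perp}$ decomposition of both $\hat{\vec e}^{\re}$ and $\vec z$, at which point it collapses onto the paper's direct argument and buys no simplification.
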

	\begin{proof}
		See Section \ref{sec:fd_bounds}.
	\end{proof}
	\begin{remark}
		The above result yields convergence in the frequency domain, and next we use the Plancherel identity to recover the time-domain bounds. 
	\end{remark}
	The rest of this section is dedicated to the proofs of these results. In order to avoid repeating some arguments, we chose to concentrate on the proof of convergence of the model, which relies on refined results about the behaviour of the single-layer boundary integral operator; a rougher version of these results implies the well-posedness. To derive (and even state) these results, we start by introducing the necessary tools. 
	
	\subsection{Decompositions of spaces}
	\label{sec:orthogonal_projectors}
	\subsubsection{Decomposition of spaces orthogonal with respect to the $L^2$-scalar product}
	\label{sec:decompL2}
	Just like in \cite{MK}, the heart of the analysis will lie in the decomposition of the space $H^{-1/2}(\Gamma^{\re})$ into two subspaces: space of obstacle-wise constant functions  $\mathbb{S}_0$, see \eqref{eq:S0} (which is also a subspace of $H^{1/2}(\Gamma^{\re})$) and linear functionals vanishing on $\mathbb{S}_0$, namely $H^{-1/2}_{\perp}(\Gamma^{\re}$). However, such a decomposition is well-suited for the analysis on spherical particles (where, in particular, $\mathbb{S}_0$ coincides with $\mathcal{V}_G^{\re}$), while is less straightforward to use in the analysis for general obstacles. Thus, at some point, we will need to employ an auxiliary decomposition, analogous to the above, but which will allow us to treat independently $H^{-1/2}_{\perp}(\Gamma^{\re})$ and $\mathcal{V}^{\re}_G$, see Section \ref{sec:decompVg}.
	
	We define the space of functions $\mathbb{S}_0$, constant on each of the obstacles:   
	\begin{align}%
		\label{eq:S0}
		\vec{1}_k^{\re}(\vec{x}):=\left\{
		\begin{array}{ll}
			1, & \vec{x}\in \Gamma_k^{\re},\\
			0, & \text{ otherwise, }
		\end{array}
		\right. \text{ so that }	\mathbb{S}_0:=\operatorname{span}\{\mathbf{1}_k^{\re},\, k=1, \ldots, N\}.	
	\end{align} 
	Let $\mathbb{P}_0: \, H^{1/2}(\Gamma^{\re})\rightarrow H^{1/2}(\Gamma^{\re})$ be an $L^2(\Gamma^{\re})$-orthogonal projector on $\mathbb{S}_0$;  $\mathbb{P}_{\perp}:=\operatorname{Id}-\mathbb{P}_0$. With 
	\begin{align*}%
		H^{1/2}_{\perp}(\Gamma^{\re}):=\{\vec{\psi} \in H^{1/2}(\Gamma^{\re}): \, (\vec{\psi}, \vec{\varphi})_{L^2(\Gamma^{\re})}=0 \text{ for all }\vec{\varphi}\in \mathbb{S}_0\},
	\end{align*}
	it holds that 
	\begin{align}
		\label{eq:p0perp}
		\operatorname{Ker}\mathbb{P}_0=\operatorname{Im} \mathbb{P}_{\perp}=H^{1/2}_{\perp}(\Gamma^{\re}), \qquad \operatorname{Im}\mathbb{P}_0=\operatorname{Ker} \mathbb{P}_{\perp}=\mathbb{S}_0.
	\end{align}
	The following result basically summarizes the above.
	\begin{proposition}
		\label{prop:norm_equivalence} 
		The space $H^{\frac{1}{2}}(\Gamma^{\re})$ writes as a direct sum: $
		H^{\frac{1}{2}}(\Gamma^{\re}) = \mathbb{S}_0\dotplus H^{\frac{1}{2}}_{\perp} (\Gamma^{\re}),
		$ 
		with the decomposition orthogonal with respect to the $L^2(\Gamma^{\re})$- and $H^{\frac{1}{2}}(\Gamma^{\re})$-scalar products. As a corollary, 
		$\|\varphi\|^2_{H^{\frac{1}{2}}(\Gamma^{\re})}=\|\mathbb{P}_0\vec{\varphi}\|_{L^2(\Gamma^{\re})}^2+\|\mathbb{P}_{\perp}\vec{\varphi}\|_{H^{\frac{1}{2}}(\Gamma^{\re})}^2$. 
	\end{proposition}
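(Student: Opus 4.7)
The plan is to reduce everything to the observation that every element of $\mathbb{S}_0$ has vanishing $H^{1/2}$-seminorm on each component $\Gamma^{\re}_k$, because its restriction to $\Gamma^{\re}_k$ is constant. Once this is in place, the $L^2$- and $H^{1/2}$-orthogonality statements decouple nicely, and the Pythagorean identity follows directly from \eqref{eq:norm_H_half} applied componentwise.

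First I would fix the algebraic direct sum. The space $\mathbb{S}_0$ has finite dimension $N$, and since $\mathbf{1}_k^{\re}\in H^{1/2}(\Gamma^{\re})$, it is a closed subspace of $L^2(\Gamma^{\re})$. Thus the $L^2$-orthogonal projector $\mathbb{P}_0$ is well-defined on $L^2(\Gamma^{\re})$, and because $\mathbb{S}_0\subset H^{1/2}(\Gamma^{\re})$ its image lies in $H^{1/2}(\Gamma^{\re})$ as well, so $\mathbb{P}_{\perp}=\operatorname{Id}-\mathbb{P}_0$ maps $H^{1/2}(\Gamma^{\re})$ into $H^{1/2}_{\perp}(\Gamma^{\re})$. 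Writing $\vec{\varphi}=\mathbb{P}_0\vec{\varphi}+\mathbb{P}_{\perp}\vec{\varphi}$ then gives existence of the decomposition, and uniqueness is immediate because $\mathbb{S}_0\cap H^{1/2}_{\perp}(\Gamma^{\re})=\{0\}$ (using $L^2$-orthogonality and that elements of $\mathbb{S}_0$ are characterized by their $L^2$-pairing with $\mathbf{1}_k^{\re}$).

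Next I would verify orthogonality in both scalar products simultaneously. For $\vec{\psi}\in\mathbb{S}_0$ and $\vec{\eta}\in H^{1/2}_{\perp}(\Gamma^{\re})$, the $L^2$-pairing vanishes by the very definition of $H^{1/2}_{\perp}(\Gamma^{\re})$. The $H^{1/2}$-scalar product expands, via \eqref{eq:norm_H_half}, as the sum over $k$ of $(\psi_k,\eta_k)_{L^2(\Gamma^{\re}_k)}$ plus the polarized Sobolev--Slobodeckij semi-inner products; the former terms sum to zero by the previous point, while on each $\Gamma^{\re}_k$ the function $\psi_k$ is a constant, so $\psi_k(\bx)-\psi_k(\by)\equiv 0$ and the seminorm contribution vanishes identically. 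This establishes the $H^{1/2}$-orthogonality of the two summands.

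Finally, the Pythagorean identity is just the expansion of $\|\vec{\varphi}\|^2_{H^{1/2}(\Gamma^{\re})}=\|\mathbb{P}_0\vec{\varphi}+\mathbb{P}_{\perp}\vec{\varphi}\|^2_{H^{1/2}(\Gamma^{\re})}$ using this orthogonality. To obtain the sharper form with $\|\mathbb{P}_0\vec{\varphi}\|_{L^2(\Gamma^{\re})}$ rather than $\|\mathbb{P}_0\vec{\varphi}\|_{H^{1/2}(\Gamma^{\re})}$ on the right-hand side, I would use once more that $\mathbb{P}_0\vec{\varphi}$ is constant on each $\Gamma^{\re}_k$, so its $H^{1/2}$-seminorm on each component vanishes and $\|\mathbb{P}_0\vec{\varphi}\|_{H^{1/2}(\Gamma^{\re})}=\|\mathbb{P}_0\vec{\varphi}\|_{L^2(\Gamma^{\re})}$. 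There is no real obstacle here; the only point one must not overlook is that the finite-dimensionality of $\mathbb{S}_0$ together with the constancy of its elements on each $\Gamma^{\re}_k$ is what makes the $H^{1/2}$-orthogonality and the collapse of the seminorm free of charge from the $L^2$-definition of $\mathbb{P}_0$.
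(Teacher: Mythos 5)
Your proof is correct and coincides with the argument the paper intends: the proposition is stated there without a written proof, as an immediate summary of the definitions of $\mathbb{P}_0$, $\mathbb{P}_\perp$ and $H^{1/2}_{\perp}(\Gamma^{\re})$, and your verification supplies exactly the expected steps. In particular, your key observation that elements of $\mathbb{S}_0$ are constant on each $\Gamma^{\re}_k$, so their Sobolev--Slobodeckij seminorm contributions vanish and $\|\mathbb{P}_0\vec{\varphi}\|_{H^{1/2}(\Gamma^{\re})}=\|\mathbb{P}_0\vec{\varphi}\|_{L^2(\Gamma^{\re})}$, is precisely what makes both the $H^{1/2}$-orthogonality and the stated Pythagorean identity follow from the $L^2$-orthogonality built into the definition of $\mathbb{P}_0$.
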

	We will need a counterpart of the above decomposition on the space $H^{-1/2}(\Gamma^{\re})$. For this let us define adjoint projectors, $\mathbb{P}_0^*,\; \mathbb{P}_{\perp}^*: \, H^{-1/2}(\Gamma^{\re})\rightarrow H^{-1/2}(\Gamma^{\re})$, via 
	\begin{align}
		\label{eq:P0Phi}
		\langle \mathbb{P}_0^*\vec{\varphi}, \vec{\psi}\rangle_{-1/2,1/2,\Gamma^{\re}}=\langle \vec{\varphi}, \mathbb{P}_0\vec{\psi}\rangle_{-1/2,1/2,\Gamma^{\re}}, \quad \mathbb{P}_{\perp}^*=\mathbb{I}-\mathbb{P}_0^*,
	\end{align}
	and the dual counterpart of $H^{1/2}_{\perp}$:
	$$H^{-1/2}_{\perp} (\Gamma^{\re}) := \{ \psi \in H^{-1/2}(\Gamma^{\re}):\quad \langle \psi, \varphi \rangle_{-1/2, \, 1/2} =0, \quad \forall \varphi\in \mathbb{S}_0\}. $$
	From the definition \eqref{eq:P0Phi}, it follows that  $\mathbb{P}_0^*\vec{\varphi}=\mathbb{P}_0\vec{\varphi}$ for $\vec{\varphi}\in H^{1/2}(\Gamma^{\re})$, in other words $\mathbb{P}_0^*$ is an extension by density of $\mathbb{P}_0$ to $H^{-1/2}(\Gamma^{\re})$.
	
	By \cite[p. 23, Lemma 2.10]{mclean}, from the above definitions and \eqref{eq:p0perp}, it is clear that
	\begin{align*}%
		&\operatorname{Ker} \, \mathbb{P}^*_{\perp} = \operatorname{Im }\mathbb{P}^*_0=\mathbb{S}_0, &&\operatorname{Im}\mathbb{P}^*_{\perp} = \operatorname{Ker} \mathbb{P}^*_0=H^{-1/2}_{\perp}(\Gamma^{\re}). 
	\end{align*}
	As a corollary,  we have the following result.
	\begin{proposition}
		\label{ortho_decompositionHm12}
		$
		H^{-\frac{1}{2}}(\Gamma^{\re})=\mathbb{S}_0\dotplus H^{-\frac{1}{2}}_{\perp} (\Gamma^{\re}), $ where $\dotplus$ is a direct sum. 
	\end{proposition}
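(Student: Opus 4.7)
The plan is to leverage the projector machinery already assembled in the paragraphs preceding the proposition: everything needed is essentially packaged in the definitions of $\mathbb{P}_0^*$ and $\mathbb{P}_{\perp}^*$ together with the kernel/image identifications inherited from McLean Lemma 2.10. The statement to establish is twofold: (i) every $\vec{\varphi}\in H^{-1/2}(\Gamma^{\re})$ admits a decomposition as a sum of an element of $\mathbb{S}_0$ and an element of $H^{-1/2}_{\perp}(\Gamma^{\re})$, and (ii) the decomposition is unique, i.e. $\mathbb{S}_0\cap H^{-1/2}_{\perp}(\Gamma^{\re})=\{0\}$.

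For existence, my first step is to verify that $\mathbb{P}_0^*$ is well-defined as a bounded linear operator on $H^{-1/2}(\Gamma^{\re})$. This follows from Proposition \ref{prop:norm_equivalence}, which gives $H^{1/2}$-continuity of $\mathbb{P}_0$ (the decomposition there is $H^{1/2}$-orthogonal, so $\|\mathbb{P}_0\|_{H^{1/2}\to H^{1/2}}\leq 1$); duality then shows that the formula $\langle \mathbb{P}_0^*\vec{\varphi},\vec{\psi}\rangle=\langle \vec{\varphi},\mathbb{P}_0\vec{\psi}\rangle$ defines a bounded functional on $H^{1/2}(\Gamma^{\re})$. By construction $\mathbb{P}_0^*+\mathbb{P}_{\perp}^*=\mathbb{I}$, so every $\vec{\varphi}\in H^{-1/2}(\Gamma^{\re})$ writes as $\vec{\varphi}=\mathbb{P}_0^*\vec{\varphi}+\mathbb{P}_{\perp}^*\vec{\varphi}$, where the first summand lies in $\operatorname{Im}\mathbb{P}_0^*=\mathbb{S}_0$ and the second in $\operatorname{Im}\mathbb{P}_{\perp}^*=H^{-1/2}_{\perp}(\Gamma^{\re})$, by the identifications stated in the preceding paragraph.

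For directness of the sum, I would argue as follows. Let $\vec{\varphi}\in \mathbb{S}_0\cap H^{-1/2}_{\perp}(\Gamma^{\re})$. Since $\vec{\varphi}\in \mathbb{S}_0$, it writes as $\vec{\varphi}=\sum_{k=1}^N\alpha_k\mathbf{1}_k^{\re}$ with real $\alpha_k$. On the other hand, $\vec{\varphi}\in H^{-1/2}_{\perp}(\Gamma^{\re})$ means $\langle \vec{\varphi},\mathbf{1}_j^{\re}\rangle_{-1/2,1/2}=0$ for every $j$; using that the $\mathbf{1}_k^{\re}$ are supported on mutually disjoint pieces $\Gamma_k^{\re}$ and that the duality pairing extends the $L^2$ inner product, I get $0=\langle \vec{\varphi},\mathbf{1}_j^{\re}\rangle=\alpha_j|\Gamma_j^{\re}|$, hence $\alpha_j=0$ for all $j$ and $\vec{\varphi}=0$. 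Alternatively, $(\mathbb{P}_0^*)^2=\mathbb{P}_0^*$ (obtained by adjointing $\mathbb{P}_0^2=\mathbb{P}_0$), so $\vec{\varphi}\in \operatorname{Im}\mathbb{P}_0^*$ is fixed by $\mathbb{P}_0^*$, while $\vec{\varphi}\in H^{-1/2}_{\perp}(\Gamma^{\re})=\operatorname{Ker}\mathbb{P}_0^*$ gives $\mathbb{P}_0^*\vec{\varphi}=0$, so again $\vec{\varphi}=0$.

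I do not anticipate a serious obstacle here; the statement is more of a bookkeeping corollary of the projector identities already recorded. The only mildly delicate point is verifying that $\mathbb{P}_0$ is $H^{1/2}$-bounded so that the dual projector is a legitimate bounded operator on $H^{-1/2}(\Gamma^{\re})$, but this is precisely what the $H^{1/2}$-orthogonality in Proposition \ref{prop:norm_equivalence} supplies at no cost.
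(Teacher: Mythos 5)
Your argument is correct and follows essentially the same route as the paper, which presents the proposition as an immediate corollary of the identities $\operatorname{Im}\mathbb{P}_0^*=\operatorname{Ker}\mathbb{P}_{\perp}^*=\mathbb{S}_0$ and $\operatorname{Im}\mathbb{P}_{\perp}^*=\operatorname{Ker}\mathbb{P}_0^*=H^{-1/2}_{\perp}(\Gamma^{\re})$ obtained from \eqref{eq:p0perp} and \cite[Lemma 2.10]{mclean}; you merely spell out the bookkeeping (existence via $\mathbb{P}_0^*+\mathbb{P}_{\perp}^*=\mathbb{I}$, triviality of the intersection via the $L^2$ pairing with the indicators $\mathbf{1}_k^{\re}$ or via idempotence of $\mathbb{P}_0^*$), together with the $H^{1/2}$-boundedness of $\mathbb{P}_0$ supplied by Proposition \ref{prop:norm_equivalence}.
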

	\subsubsection{Decomposition of spaces orthogonal with respect to the single-layer induced scalar product}
	\label{sec:decompVg} As discussed in Section \ref{sec:decompL2}, we will also need an alternative decomposition of the spaces $H^{\pm \frac{1}{2}}(\Gamma^{\re})$. This decomposition, as we will see later (Section \ref{sec:analysis_freq_domain}), is well-adapted to the analysis of the Galerkin asymptotic method. In particular, let us introduce the following, localized on each obstacle, scalar product induced by the single layer boundary integral operator for the Laplace equation:
	\begin{align}
		\label{scalar_product}
		(\vec{\psi}, \vec{\varphi})_{S_0} &:= \sum_{k=1}^N \langle \psi_k, \overline{S^{\re, kk}_{0}\varphi_k } \rangle_{-1/2, \, 1/2, \, \Gamma^{\re}_k}, \quad \vec{\psi}, \, \vec{\varphi} \in H^{-1/2}(\Gamma^{\re}), \text{ where } \\
		\nonumber
		S^{\re, kk}_{0} \varphi (\boldsymbol{x}) &= \int_{\Gamma^{\re}_k} \frac{\varphi(\boldsymbol{y})}{4\pi | \boldsymbol{x} - \boldsymbol{y} |} d\Gamma_{\boldsymbol{y}}, \quad \boldsymbol{x} \in \Gamma^{\re}_k, \quad \varphi\in H^{-1/2}(\Gamma^{\re}_k).
	\end{align}
	Remark that the expression \eqref{scalar_product} indeed induces a scalar product in $H^{-1/2}(\Gamma^{\re})$. This follows from the continuity and coercivity of $S_0^{\re,kk}$, see \cite[Theorem 6.2.2]{steinbach}:
	\begin{align}
		\label{eq:ellipticityS0}
		C_{\re}\|\psi\|^2_{H^{-1/2}(\Gamma_k^{\re})}\geq  \langle \psi, \overline{S_0^{\re,kk}\psi}\rangle_{H^{-1/2}(\Gamma^{\re}_k), H^{1/2}(\Gamma^{\re}_k)}\geq c_{\re}\|\psi\|^2_{H^{-1/2}(\Gamma_k^{\re})}.
	\end{align}
	Let us define the orthogonal projection operator $\mathbb{Q}_{\sigma}: H^{-1/2}(\Gamma^{\re}) \to H^{-1/2}(\Gamma^{\re})$ on the $N$-dimensional space $\mathcal{V}^{\re}_G$: for all $\vec{\psi} \in H^{-1/2}(\Gamma^{\re})$, $\mathbb{Q}_{\sigma} \vec{\psi}:=\vec{\psi}_{\sigma}$, where $\vec{\psi}_{\sigma}$ is a unique element in $\mathcal{V}^{\re}_G$ such that
	\begin{align*}
		(\vec{\psi} - \vec{\psi}_{\sigma}, \vec{\varphi})_{S_0} = 0, \quad \forall \vec{\varphi} \in \mathcal{V}^{\re}_G,
	\end{align*}
	and $\mathbb{Q}_{\perp}=\mathbb{I}-\mathbb{Q}_{\sigma}$. We have the following explicit characterization of $\mathbb{Q}_{\sigma}$. 
	\begin{lemma}
		\label{lem:expl_char}
		For $\vec{\psi}\in H^{-1/2}(\Gamma^{\re})$, $
		\mathbb{Q}_{\sigma}\vec{\psi}=\sum\limits_{k=1}^N \langle \vec{\psi},\vec{1}_k^{\re}\rangle \left(c_{k}^{\re}\right)^{-1} \vec{\sigma}_k^{\re}.$\\ Here $c^{\re}_k$ is the capacity: $c^{\re}_k:=\int_{\Gamma_k^{\re}}\sigma_{k}^{\re}(\vec{x})d\Gamma_{\vec{x}}>0$.
	\end{lemma}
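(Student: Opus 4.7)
The plan is to exploit the fact that $\mathbb{Q}_\sigma$ is characterized by the Galerkin condition $(\vec{\psi}-\mathbb{Q}_\sigma\vec{\psi},\vec{\varphi})_{S_0}=0$ for all $\vec{\varphi}\in\mathcal{V}_G^{\re}$. Writing the candidate $\mathbb{Q}_\sigma\vec{\psi}=\sum_k\alpha_k\vec{\sigma}_k^{\re}$ and testing against each $\vec{\sigma}_j^{\re}$, the assertion reduces to computing (i) the Gram matrix $G_{jk}:=(\vec{\sigma}_k^{\re},\vec{\sigma}_j^{\re})_{S_0}$ and (ii) the right-hand side $b_j:=(\vec{\psi},\vec{\sigma}_j^{\re})_{S_0}$, and then solving the $N\times N$ linear system $G\vec{\alpha}=\vec{b}$.

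For (i), I would use that each $\vec{\sigma}_k^{\re}$ is supported only on $\Gamma_k^{\re}$, i.e.\ $\sigma_{k,m}^{\re}=0$ for $m\ne k$. Expanding the definition of the localized scalar product \eqref{scalar_product}, only the term with $m=k=j$ survives, so $G_{jk}=\delta_{jk}\langle\sigma_k^{\re},\overline{S_0^{\re,kk}\sigma_k^{\re}}\rangle_{\Gamma_k^{\re}}$. Invoking the defining equation $S_0^{\re,kk}\sigma_k^{\re}=1$ on $\Gamma_k^{\re}$ (equation \eqref{eq:bas}) and noting that $\sigma_k^{\re}$ is real-valued, this collapses to $G_{kk}=\int_{\Gamma_k^{\re}}\sigma_k^{\re}\,d\Gamma=c_k^{\re}$. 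So the Gram matrix is diagonal with entries $c_k^{\re}$. Strict positivity $c_k^{\re}>0$ follows immediately from the coercivity \eqref{eq:ellipticityS0} applied to $\sigma_k^{\re}\ne 0$, which also guarantees the Gram matrix is invertible and the projector is well-defined.

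For (ii), the same support argument collapses the sum over $m$ in \eqref{scalar_product} to the single term $m=j$, giving $b_j=\langle\psi_j,\overline{S_0^{\re,jj}\sigma_j^{\re}}\rangle_{\Gamma_j^{\re}}=\langle\psi_j,1\rangle_{\Gamma_j^{\re}}$. Since $\vec{1}_j^{\re}$ is supported on $\Gamma_j^{\re}$ and equals $1$ there, this is precisely $\langle\vec{\psi},\vec{1}_j^{\re}\rangle_{-1/2,1/2;\Gamma^{\re}}$. Solving $c_j^{\re}\alpha_j=\langle\vec{\psi},\vec{1}_j^{\re}\rangle$ yields the claimed formula.

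There is no real obstacle here; the only subtle point is being careful that the duality pairing $\langle\psi_j,1\rangle_{\Gamma_j^{\re}}$ makes sense (which it does, since $1\in H^{1/2}(\Gamma_j^{\re})$ because $\Gamma_j^{\re}$ is compact Lipschitz) and that the scalar product \eqref{scalar_product} is well-defined for a general $\vec{\psi}\in H^{-1/2}(\Gamma^{\re})$, which follows from the mapping property $S_0^{\re,kk}:H^{-1/2}(\Gamma_k^{\re})\to H^{1/2}(\Gamma_k^{\re})$ combined with the coercivity bound \eqref{eq:ellipticityS0}.
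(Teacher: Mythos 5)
Your argument is correct and coincides with the paper's own proof: both test the Galerkin condition against each $\vec{\sigma}_{\ell}^{\re}$, use the support property together with $S_0^{\re,\ell\ell}\sigma_{\ell}^{\re}=1$ from \eqref{eq:bas} to reduce the $(\cdot,\cdot)_{S_0}$-pairings to $\langle\psi_{\ell},1\rangle$ and $\alpha_{\ell}c_{\ell}^{\re}$, and obtain positivity of $c_{\ell}^{\re}$ from the coercivity \eqref{eq:ellipticityS0}. Your explicit Gram-matrix phrasing is just a repackaging of the same computation, so nothing is missing.
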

	\begin{proof}
		Let $\vec{\psi}\in H^{-1/2}(\Gamma^{\re})$. By definitions of $\mathbb{Q}_{\sigma}$ and $\mathcal{V}^{\re}_G$, it follows that $
		\vec{\psi}_{\sigma}=\mathbb{Q}_{\sigma}\vec{\psi}=\sum\limits_{k=1}^N \alpha_k^{\re}\vec{\sigma}_k^{\re}, \, \alpha^{\re}_k\in \mathbb{C}$.  
		To find the coefficients $\alpha_k^{\re}$, we use the definition of $\mathbb{Q}_{\sigma}$: 
		\begin{align}
			\label{eq:vpsi}
			(\vec{\psi}_{\sigma}, \vec{\sigma}_{\ell}^{\re})_{S_0}=	(\vec{\psi}, \vec{\sigma}_{\ell}^{\re})_{S_0}, \quad \ell=1, \ldots, N. 
		\end{align}
		Next, since $\sigma_{\ell,m}=0$ for $m\neq \ell$, we have that:
		\begin{align*}%
			(\vec{\psi}, \vec{\sigma}_{\ell}^{\re})_{S_0}=\langle \psi_{\ell}, S_0^{\re,\ell\ell}\sigma^{\re}_{\ell}\rangle_{-1/2,1/2,\Gamma^{\re}_{\ell}}=\langle \psi_{\ell},1\rangle_{-1/2,1/2, \Gamma_{\ell}^{\re}}.
		\end{align*}
		Using the same identity with $\vec{\psi}$ replaced by $\vec{\psi}_{\sigma}$, and inserting the result into \eqref{eq:vpsi}, we recover the coefficients $\alpha_k^{\re}$:
		\begin{align*}%
			\alpha_{\ell}^{\re}\langle \sigma_{\ell}^{\re},1\rangle_{-1/2,1/2, \Gamma_{\ell}^{\re}}=\langle \psi_{\ell},1\rangle_{-1/2,1/2, \Gamma_{\ell}^{\re}}.
		\end{align*}
		It remains to remark that	by \eqref{eq:ellipticityS0}
		\begin{align}
			\label{eq:remark_cap_positive}
			c_{\ell}^{\re}:=\langle \sigma_{\ell}^{\re},1\rangle_{-1/2,1/2, \Gamma_{\ell}^{\re}}= \langle \sigma_{\ell}^{\re},S_0^{\re,\ell\ell}\sigma_{\ell}^{\re}\rangle_{-1/2,1/2, \Gamma_{\ell}^{\re}}>0. 
		\end{align}
	\end{proof}
	From the above lemma it follows immediately that 
	\begin{align}
		\label{eq:KerQsigma}
		\begin{split}
			&\text{Ker} \, \mathbb{Q}_{\sigma} = \Im \, \mathbb{Q}_{\perp} = H^{-1/2}_{\perp}(\Gamma^{\re}), \qquad  \text{Ker} \, \mathbb{Q}_{\perp} = \Im \, \mathbb{Q}_{\sigma} = \mathcal{V}^{\re}_G.
		\end{split}
	\end{align}
	We will also need the following, immediate from  \eqref{eq:KerQsigma}, properties:
	\begin{align}
		\label{eq:qperp_pperp}
		(a)\quad	\mathbb{Q}_{\perp}\mathbb{P}_{\perp}^*=(\mathbb{I}-\mathbb{Q}_{\sigma})\mathbb{P}_{\perp}^*=\mathbb{P}_{\perp}^*,\qquad 
		(b)\quad \mathbb{Q}_{\sigma}\mathbb{P}_{0}^*=\mathbb{Q}_{\sigma}.
	\end{align}
	Remark that the above projection operators have been defined on the dual space $H^{-\frac{1}{2}}(\Gamma^{\re})$. We will also need to define the adjoint projectors on the space $H^{1/2}(\Gamma^{\re})$: for all  $\vec{\varphi}\in H^{1/2}(\Gamma^{\re}), \, \vec{\psi}\in H^{-1/2}(\Gamma^{\re})$, 
	\begin{align*}%
		&\langle \vec{\psi}, \overline{\mathbb{Q}^{*}_{\sigma} \vec{\varphi}} \rangle_{-1/2, \, 1/2} := \langle \mathbb{Q}_{\sigma} \vec{\psi},  \overline{\vec{\varphi}} \rangle_{-1/2, \, 1/2}, \\
		&\langle \vec{\psi}, \overline{\mathbb{Q}^{*}_{\perp} \vec{\varphi}} \rangle_{-1/2, \, 1/2} := \langle \mathbb{Q}_{\perp} \vec{\psi},  \overline{\vec{\varphi}} \rangle_{-1/2, \, 1/2}.
	\end{align*}
	By \cite[p. 23, Lemma 2.10]{mclean}, from the above definition and \eqref{eq:KerQsigma} it follows that 
	\begin{align}
		\label{eq:kerqperp}
		\begin{split}
			&\Im \, \mathbb{Q}^*_{\sigma} = \text{Ker} \, \mathbb{Q}^*_{\perp} = \mathbb{S}_0, \qquad \Im \, \mathbb{Q}^*_{\perp} = \text{Ker} \, \mathbb{Q}^*_{\sigma} =  ( \mathcal{V}^{\re}_G )^a,
		\end{split}
	\end{align}
	where $( \mathcal{V}^{\re}_G )^a=\{\vec{\varphi}\in H^{1/2}(\Gamma^{\re}): \langle \vec{\psi}, \vec{\varphi}\rangle=0, \text{ for all }\vec{\psi}\in \mathcal{V}^{\re}_G\}$. The above implies that 
	\begin{align}
		\label{eq:qperp_adj}
		\mathbb{Q}_{\perp}^*=\mathbb{Q}_{\perp}^*\mathbb{P}_{\perp}.
	\end{align}
	We also have an explicit characterization of the adjoint operators, see \eqref{eq:remark_cap_positive}.  
	\begin{lemma}
		\label{lem:adjoint}
		For $\vec{\varphi} \in H^{1/2}(\Gamma^{\re})$, 
		$
		\mathbb{Q}_{\sigma}^*\vec{\varphi}=\sum\limits_{k=1}^N \langle \vec{\sigma}^{\re}_k, \vec{\varphi}\rangle (c_k^{\re})^{-1}\vec{1}^{\re}_k.
		$
	\end{lemma}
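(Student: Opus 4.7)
The plan is to read off the coefficients of $\mathbb{Q}_{\sigma}^{*}\vec{\varphi}$ in the basis $\{\vec{1}_k^{\re}\}_{k=1}^N$ of $\mathbb{S}_0$ by dualizing Lemma \ref{lem:expl_char}. By \eqref{eq:kerqperp}, $\operatorname{Im}\mathbb{Q}_{\sigma}^{*} = \mathbb{S}_0$, hence one may write $\mathbb{Q}_{\sigma}^{*}\vec{\varphi} = \sum_{k=1}^N \beta_k \vec{1}_k^{\re}$ for some scalars $\beta_k = \beta_k(\vec{\varphi})$ depending linearly on $\vec{\varphi}$. The task reduces to identifying these coefficients.

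Next, I would plug this ansatz together with the formula
$\mathbb{Q}_{\sigma}\vec{\psi} = \sum_{k=1}^N \langle \vec{\psi}, \vec{1}_k^{\re}\rangle\, (c_k^{\re})^{-1}\, \vec{\sigma}_k^{\re}$
from Lemma \ref{lem:expl_char} into the defining identity of the adjoint, $\langle \vec{\psi}, \overline{\mathbb{Q}_{\sigma}^{*}\vec{\varphi}}\rangle = \langle \mathbb{Q}_{\sigma}\vec{\psi}, \overline{\vec{\varphi}}\rangle$, valid for all $\vec{\psi}\in H^{-1/2}(\Gamma^{\re})$. Since $\vec{1}_k^{\re}$ is real-valued, the left-hand side equals $\sum_k \overline{\beta_k}\,\langle \vec{\psi}, \vec{1}_k^{\re}\rangle$, while the right-hand side reduces to $\sum_k (c_k^{\re})^{-1}\langle \vec{\sigma}_k^{\re}, \overline{\vec{\varphi}}\rangle\, \langle \vec{\psi}, \vec{1}_k^{\re}\rangle$. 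The functionals $\vec{\psi}\mapsto\langle \vec{\psi}, \vec{1}_k^{\re}\rangle$, $k=1,\dots,N$, are linearly independent on $H^{-1/2}(\Gamma^{\re})$ (they are supported on the pairwise disjoint pieces $\Gamma_k^{\re}$), so matching coefficients yields $\overline{\beta_k} = (c_k^{\re})^{-1}\langle \vec{\sigma}_k^{\re}, \overline{\vec{\varphi}}\rangle$. Conjugating, using that $c_k^{\re}\in\mathbb{R}$ (and strictly positive by \eqref{eq:remark_cap_positive}, which legitimizes inversion) together with $\vec{\sigma}_k^{\re}$ being real, gives $\beta_k = (c_k^{\re})^{-1}\langle \vec{\sigma}_k^{\re}, \vec{\varphi}\rangle$, which is the claimed identity.

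I do not foresee a serious obstacle: the argument is essentially one line of bookkeeping on top of Lemma \ref{lem:expl_char}, and the only mildly delicate point is to track the complex conjugates introduced by the definition of the adjoint duality pairing. An equally short, more symmetric route is to test the defining identity directly with $\vec{\psi} = \vec{\sigma}_\ell^{\re}$, exploiting $\mathbb{Q}_{\sigma}\vec{\sigma}_\ell^{\re} = \vec{\sigma}_\ell^{\re}$ (by \eqref{eq:KerQsigma}) and $\langle \vec{\sigma}_\ell^{\re}, \vec{1}_k^{\re}\rangle = \delta_{k\ell}\, c_k^{\re}$, which extracts $\beta_\ell$ in a single step without invoking the linear-independence argument.
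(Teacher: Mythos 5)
Your proposal is correct and follows essentially the same route as the paper: the paper's proof also plugs the formula of Lemma \ref{lem:expl_char} into the defining identity $\langle \vec{\psi}, \overline{\mathbb{Q}_{\sigma}^{*}\vec{\varphi}}\rangle = \langle \mathbb{Q}_{\sigma}\vec{\psi}, \overline{\vec{\varphi}}\rangle$ for arbitrary $\vec{\psi}\in H^{-1/2}(\Gamma^{\re})$ and reads off $\mathbb{Q}_{\sigma}^{*}\vec{\varphi}=\sum_k \langle \vec{\sigma}^{\re}_k, \vec{\varphi}\rangle (c_k^{\re})^{-1}\vec{1}^{\re}_k$ from the non-degeneracy of the pairing, using the realness of $\vec{\sigma}_k^{\re}$, $\vec{1}_k^{\re}$, $c_k^{\re}$ exactly as you do. Your extra step of first positing the ansatz in $\mathbb{S}_0$ via \eqref{eq:kerqperp} and matching coefficients (or testing with $\vec{\sigma}_{\ell}^{\re}$) is harmless bookkeeping on top of the same argument.
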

	\begin{proof}
		Let $\vec{\varphi}_{\sigma}:=	\mathbb{Q}_{\sigma}^*\vec{\varphi}$, $\vec{\psi}\in H^{-1/2}(\Gamma^{\re})$. By definition of $\mathbb{Q}_{\sigma}^*$ and by Lemma \ref{lem:expl_char}:
		\begin{align*}%
			\langle \vec{\psi},\overline{\vec{\varphi}_{\sigma}}\rangle =\langle \mathbb{Q}_{\sigma} \vec{\psi},\overline{\vec{\varphi}}\rangle = \langle \sum_{k=1}^N \langle \vec{\psi}, \vec{1}^{\re}_k \rangle (c^{\re}_k)^{-1} \vec{\sigma}^{\re}_k, \overline{\vec{\varphi}} \rangle = \langle \vec{\psi}, \overline{\sum_{k=1}^{N} \langle \vec{\sigma}^{\re}_k, \vec{\varphi}  \rangle (c^{\re}_k)^{-1} \vec{1}^{\re}_k} \rangle.
		\end{align*}	
	\end{proof}
	With the above lemma we see that for real-valued $\vec{\varphi}$, $\mathbb{Q}_{\sigma}^*\vec{\varphi}$ is real-valued, and therefore, for all $\vec{\varphi}\in H^{1/2}(\Gamma^{\re})$, $\vec{\psi}\in H^{-1/2}(\Gamma^{\re})$,
	\begin{align*}%
		&\langle \vec{\psi}, \mathbb{Q}^{*}_{\sigma} \vec{\varphi} \rangle_{-1/2, \, 1/2} = \langle \mathbb{Q}_{\sigma} \vec{\psi},  \vec{\varphi} \rangle_{-1/2, \, 1/2}, \qquad 
		\langle \vec{\psi}, \mathbb{Q}^{*}_{\perp} \vec{\varphi} \rangle_{-1/2, \, 1/2} = \langle \mathbb{Q}_{\perp} \vec{\psi},  \vec{\varphi} \rangle_{-1/2, \, 1/2}.	
	\end{align*}
	We will use these observations without referring to them explicitly. 
	\subsubsection{Properties and relations between projectors}
	So far we have looked only at the algebraic properties of the subspaces and projectors. In this section we will establish relations between them and consider some topological (i.e. norm-related) properties. We start with an important fact about the space $H^{1/2}_{\perp}(\Gamma^{\re})$ that follows from Corollary \ref{cor:equiv_norm}.
	\begin{proposition}
		\label{prop:norm_equiv_perp}
		There exists $C>0$, s.t. for all $\re\in (0,1]$,  $\|\mathbb{P}_{\perp}\vec{\varphi}\|_{H^{1/2}(\Gamma^{\re})}\leq C |\vec{\varphi}|_{H^{1/2}(\Gamma^{\re})}$, for all $\vec{\varphi}\in H^{1/2}(\Gamma^{\re})$. 
	\end{proposition}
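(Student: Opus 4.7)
The plan is to leverage two facts: (i) the $H^{1/2}$-seminorm is invariant under the addition of constants on each individual obstacle, and (ii) functions with zero mean on each $\Gamma^{\re}_k$ satisfy a Poincaré--Wirtinger-type bound whose constant can be made uniform in $\re \in (0,1]$ via a scaling argument, which is precisely what Corollary~\ref{cor:equiv_norm} should supply.

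First, I would unfold the $H^{1/2}$-norm by $\|\mathbb{P}_{\perp}\vec{\varphi}\|^2_{H^{1/2}(\Gamma^{\re})} = \|\mathbb{P}_{\perp}\vec{\varphi}\|^2_{L^2(\Gamma^{\re})} + |\mathbb{P}_{\perp}\vec{\varphi}|^2_{H^{1/2}(\Gamma^{\re})}$ and control the two pieces separately. Because $\mathbb{P}_0\vec{\varphi} \in \mathbb{S}_0$ is piecewise constant on each $\Gamma^{\re}_k$, its contribution to the Sobolev--Slobodeckij double integral \eqref{eq:norm_H_half} vanishes on each piece $\Gamma^{\re}_k \times \Gamma^{\re}_k$, so the seminorm satisfies the identity $|\mathbb{P}_{\perp}\vec{\varphi}|_{H^{1/2}(\Gamma^{\re})} = |\vec{\varphi}|_{H^{1/2}(\Gamma^{\re})}$. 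This already handles the seminorm part and is the key reason the bound can involve only $|\vec{\varphi}|_{H^{1/2}}$ on the right-hand side.

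For the $L^2$-part, I would note that $\mathbb{P}_{\perp}\vec{\varphi}$ has zero mean on each $\Gamma^{\re}_k$ by construction of the $L^2$-orthogonal projector $\mathbb{P}_0$. The plan is then to invoke Corollary~\ref{cor:equiv_norm}, which (one expects) provides a uniform Poincaré--Wirtinger-type inequality: there is $C>0$ independent of $\re \in (0,1]$ such that for any $\vec{\psi}\in H^{1/2}(\Gamma^{\re})$ with $\int_{\Gamma^{\re}_k}\psi_k\,d\Gamma = 0$ for every $k$, one has $\|\vec{\psi}\|_{L^2(\Gamma^{\re})} \leq C|\vec{\psi}|_{H^{1/2}(\Gamma^{\re})}$. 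If one had to produce such an inequality by hand, I would pass to the reference geometry via $\hat{\vec{x}} = \re^{-1}(\vec{x}-\vec{c}_k) + \vec{c}_k$, use the classical Poincaré--Wirtinger inequality on the fixed Lipschitz surface $\Gamma_k$ for zero-mean functions, and exploit the scalings $\|\psi_k\|^2_{L^2(\Gamma^{\re}_k)} = \re^{2}\|\hat{\psi}_k\|^2_{L^2(\Gamma_k)}$ and $|\psi_k|^2_{H^{1/2}(\Gamma^{\re}_k)} = \re\,|\hat{\psi}_k|^2_{H^{1/2}(\Gamma_k)}$, which together yield $\|\psi_k\|^2_{L^2(\Gamma^{\re}_k)} \leq C_k\,\re\,|\psi_k|^2_{H^{1/2}(\Gamma^{\re}_k)} \leq C_k |\psi_k|^2_{H^{1/2}(\Gamma^{\re}_k)}$ for $\re \leq 1$. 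Summing in $k$ and combining with the seminorm identity above closes the estimate.

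The only genuinely delicate point is ensuring that the Poincaré--Wirtinger constant is \emph{uniform in $\re$}; the scaling computation shows this is automatic (indeed, one even gains a factor of $\re$), and it is this observation that makes the projector $\mathbb{P}_{\perp}$ a bounded operator from $H^{1/2}(\Gamma^{\re})$ equipped with just the seminorm into the full $H^{1/2}$-norm, uniformly in the small parameter. Everything else is bookkeeping.
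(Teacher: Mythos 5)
Your proposal is correct and takes essentially the same route as the paper: split $\|\mathbb{P}_{\perp}\vec{\varphi}\|^2_{H^{1/2}(\Gamma^{\re})}$ into the $L^2$-part and the seminorm (cf.\ Corollary~\ref{corollary:split_norm_H_half}), use that subtracting obstacle-wise constants leaves the seminorm unchanged, and control the zero-mean $L^2$-part by a Poincar\'e--Wirtinger bound uniform in $\re$ (in fact with an $\re^{1/2}$ gain), exactly as in Corollary~\ref{cor:equiv_norm} and Lemma~\ref{lem:L^2_to_half}. The only cosmetic difference is that the paper obtains the Poincar\'e-type bound directly on $\Gamma^{\re}_k$ via Cauchy--Schwarz and the scalings of $\operatorname{diam}$ and surface measure, whereas you rescale to the reference surface and invoke the classical zero-mean inequality there; both give the same constant behaviour.
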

	The rest of the section will be dedicated to the interplay of the above property and the projectors introduced in Section \ref{sec:decompVg}. 
	In what follows, it will be convenient to remark that from an elementary scaling argument, for $\bx \in \Gamma^{\re}_k$, it follows that 
	\begin{align}
		\label{eq:sigmakeps}
		\sigma_k^{\re}(\vec{x})=\re^{-1}\sigma_k^1\left(\frac{\vec{x}}{\re}\right), \quad \text{thus},\quad \|\sigma_k^{\re}\|_{L^2(\Gamma_k^{\re})}^2=\|\sigma_k^1\|_{L^2(\Gamma_k)}\quad \text{and} \quad c^{\re}_k=\re c_k^1.
	\end{align}
	It appears that the norms $H^{-1/2}(\Gamma^{\re})$ and $L^2(\Gamma^{\re})$ are equivalent on the space $\mathcal{V}^{\re}_G$ with equivalence constants independent of $\re$.  
	\begin{proposition}
		\label{prop:L^2_H_minus_half_bound}
		There exists $C>0$, s.t. for all $\re \in (0, 1]$, it holds that
		\begin{align*}%
			\|\vec{\psi} \|_{H^{-1/2}(\Gamma^{\re})}\leq 	\|\vec{\psi}  \|_{L^2(\Gamma^{\re})} \leq C \|\vec{\psi} \|_{H^{-1/2}(\Gamma^{\re})}, \quad \forall \vec{\psi}\in \mathcal{V}_G^{\re}.
		\end{align*}
	\end{proposition}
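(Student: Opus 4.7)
The lower bound $\|\vec{\psi}\|_{H^{-1/2}(\Gamma^{\re})}\leq \|\vec{\psi}\|_{L^2(\Gamma^{\re})}$ requires no use of the structure of $\mathcal{V}_G^{\re}$: for every $\vec{\varphi}\in H^{1/2}(\Gamma^{\re})$ one has $\|\vec{\varphi}\|_{L^2(\Gamma^{\re})}\leq \|\vec{\varphi}\|_{H^{1/2}(\Gamma^{\re})}$ by the very definition of the $H^{1/2}$-norm in the excerpt, and a Cauchy--Schwarz estimate applied inside the duality supremum in \eqref{eq:norm_H_minus_half} closes that direction. So the real content is the reverse bound, with a constant $C$ that is uniform in $\re\in(0,1]$.

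For this, I expand $\vec{\psi}=\sum_{k=1}^N\alpha_k\vec{\sigma}_k^{\re}\in \mathcal{V}_G^{\re}$. Since $\vec{\sigma}_k^{\re}$ is supported on $\Gamma_k^{\re}$ and those pieces are disjoint, the decomposition \eqref{eq:Hminhalf} together with the analogous $L^2$-decomposition gives
\begin{align*}
\|\vec{\psi}\|^2_{L^2(\Gamma^{\re})}=\sum_{k=1}^N \alpha_k^2\|\sigma_k^{\re}\|^2_{L^2(\Gamma_k^{\re})},\qquad
\|\vec{\psi}\|^2_{H^{-1/2}(\Gamma^{\re})}=\sum_{k=1}^N \alpha_k^2\|\sigma_k^{\re}\|^2_{H^{-1/2}(\Gamma_k^{\re})}.
\end{align*}
Consequently it suffices to prove a per-obstacle bound $\|\sigma_k^{\re}\|_{L^2(\Gamma_k^{\re})}^2\leq C_k\|\sigma_k^{\re}\|_{H^{-1/2}(\Gamma_k^{\re})}^2$ with $C_k$ independent of $\re$, and then take $C=\max_k C_k$ (the max is over the finite set $k=1,\ldots,N$, and each $C_k$ depends only on the shape $\Gamma_k$).

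The uniformity in $\re$ of this per-obstacle inequality is the main obstacle, because the $H^{-1/2}(\Gamma_k^{\re})$-norm does \emph{not} scale as a single power of $\re$: the two ingredients of $\|\cdot\|_{H^{1/2}(\Gamma_k^{\re})}^2$ in \eqref{eq:norm_H_half} scale as $\re^2$ and $\re$ respectively, so a brute-force scaling argument would produce an $\re^{-1/2}$ factor. I would overcome this by computing the $L^2$-norm via the explicit scaling \eqref{eq:sigmakeps}, which gives $\|\sigma_k^{\re}\|^2_{L^2(\Gamma_k^{\re})}=\|\sigma_k^1\|^2_{L^2(\Gamma_k)}$, and by lower-bounding the $H^{-1/2}$-norm with a single test function, namely $\vec{1}_k^{\re}$. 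For the constant function one has $|\vec{1}_k^{\re}|_{H^{1/2}(\Gamma_k^{\re})}=0$, hence $\|\vec{1}_k^{\re}\|_{H^{1/2}(\Gamma_k^{\re})}=|\Gamma_k^{\re}|^{1/2}=\re|\Gamma_k|^{1/2}$, and the duality pairing yields $\langle \sigma_k^{\re},\vec{1}_k^{\re}\rangle=c_k^{\re}=\re c_k^1$ by \eqref{eq:sigmakeps}. Inserting into the definition \eqref{eq:norm_H_minus_half} of the $H^{-1/2}$-norm gives
\begin{align*}
\|\sigma_k^{\re}\|_{H^{-1/2}(\Gamma_k^{\re})}\;\geq\;\frac{|\langle \sigma_k^{\re},\vec{1}_k^{\re}\rangle|}{\|\vec{1}_k^{\re}\|_{H^{1/2}(\Gamma_k^{\re})}}\;=\;\frac{c_k^1}{|\Gamma_k|^{1/2}},
\end{align*}
a quantity strictly positive (recall \eqref{eq:remark_cap_positive}) and, crucially, independent of $\re$. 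Combined with the $\re$-independence of $\|\sigma_k^{\re}\|_{L^2(\Gamma_k^{\re})}$, this supplies the uniform constant $C_k$ and completes the proof.
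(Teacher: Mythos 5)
Your proof is correct and follows essentially the same route as the paper's: reduce, via the disjoint supports and \eqref{eq:Hminhalf}, to a per-obstacle bound for $\sigma_k^{\re}$, use the $\re$-invariance of $\|\sigma_k^{\re}\|_{L^2(\Gamma_k^{\re})}$ from \eqref{eq:sigmakeps}, and lower-bound the dual norm by the $\re$-independent quantity $c_k^1/|\Gamma_k|^{1/2}$. The only difference is presentational: where you test the $H^{-1/2}$-norm directly against $\vec{1}_k^{\re}$ (whose $H^{1/2}$-norm reduces to its $L^2$-norm since the seminorm vanishes), the paper reaches the same lower bound through $\|\mathbb{P}_0^*\sigma_k^{\re}\|_{L^2(\Gamma_k^{\re})}=c_k^{\re}/|\Gamma_k^{\re}|^{1/2}$, the identity $\|\mathbb{P}_0^*\cdot\|_{H^{-1/2}}=\|\mathbb{P}_0^*\cdot\|_{L^2}$ and the uniform boundedness of $\mathbb{P}_0^*$, so your argument is the same estimate unpacked and slightly more self-contained.
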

	\begin{proof}
		See Proposition \ref{prop:L^2_H_minus_half_mbound_proof}.	
	\end{proof}
	\paragraph{Relations between projectors and bounds on their norms}
	The property below shows that the norms of $\mathbb{Q}_{\sigma}$, $\mathbb{Q}_{\perp}$ are uniformly bounded in $\re>0$. 
	\begin{proposition}
		\label{prop:norm_projectors_Q}
		There exists $C>0$, s.t. for all $0<\re\leq 1$, 
		\begin{align*}%
			\|\mathbb{Q}_{\sigma}\|_{\mathcal{L}(H^{-1/2}(\Gamma^{\re}),H^{-1/2}(\Gamma^{\re}))}+	\|\mathbb{Q}_{\perp}\|_{\mathcal{L}(H^{-1/2}(\Gamma^{\re}),H^{-1/2}(\Gamma^{\re}))}\leq C.
		\end{align*}
	\end{proposition}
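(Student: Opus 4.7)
The plan is to bound $\mathbb{Q}_\sigma$ first and then obtain the bound on $\mathbb{Q}_\perp=\mathbb{I}-\mathbb{Q}_\sigma$ by the triangle inequality. The strategy relies on three ingredients already established in the paper: the explicit formula of Lemma \ref{lem:expl_char}, the equivalence of the $L^2$ and $H^{-1/2}$ norms on $\mathcal{V}_G^{\re}$ from Proposition \ref{prop:L^2_H_minus_half_bound}, and the trivial scaling \eqref{eq:sigmakeps}. Together these reduce the problem to an $\re$-independent estimate on the reference obstacles $\Gamma_k$.

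Concretely, given $\vec{\psi}\in H^{-1/2}(\Gamma^{\re})$, I would start from
\begin{align*}
	\mathbb{Q}_\sigma \vec{\psi}=\sum_{k=1}^N \langle \psi_k, 1\rangle_{-1/2,1/2;\Gamma_k^{\re}}\,(c_k^{\re})^{-1}\vec{\sigma}_k^{\re},
\end{align*}
and observe that $\mathbb{Q}_\sigma\vec{\psi}\in\mathcal{V}_G^{\re}$, so by Proposition \ref{prop:L^2_H_minus_half_bound} it suffices to bound $\|\mathbb{Q}_\sigma\vec{\psi}\|_{L^2(\Gamma^{\re})}$. Since the basis supports $\Gamma_k^{\re}$ are pairwise disjoint, the $L^2$ norm splits as
\begin{align*}
	\|\mathbb{Q}_\sigma\vec{\psi}\|_{L^2(\Gamma^{\re})}^2=\sum_{k=1}^N \frac{|\langle \psi_k,1\rangle_{-1/2,1/2;\Gamma_k^{\re}}|^2}{(c_k^{\re})^2}\,\|\sigma_k^{\re}\|_{L^2(\Gamma_k^{\re})}^2.
\end{align*}
The dual pairing is controlled by $\|\psi_k\|_{H^{-1/2}(\Gamma_k^{\re})}\|1\|_{H^{1/2}(\Gamma_k^{\re})}$. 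Because the Sobolev--Slobodeckij seminorm of a constant vanishes, $\|1\|_{H^{1/2}(\Gamma_k^{\re})}^2=|\Gamma_k^{\re}|=\re^2|\Gamma_k|$, and by the scaling \eqref{eq:sigmakeps} one has $c_k^{\re}=\re c_k^1$ and $\|\sigma_k^{\re}\|_{L^2(\Gamma_k^{\re})}=\|\sigma_k^1\|_{L^2(\Gamma_k)}$.

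Substituting these three identities, the factors of $\re$ cancel exactly and one obtains
\begin{align*}
	\|\mathbb{Q}_\sigma\vec{\psi}\|_{L^2(\Gamma^{\re})}^2\le \sum_{k=1}^N \frac{|\Gamma_k|\,\|\sigma_k^1\|_{L^2(\Gamma_k)}^2}{(c_k^1)^2}\,\|\psi_k\|_{H^{-1/2}(\Gamma_k^{\re})}^2\le C\,\|\vec{\psi}\|_{H^{-1/2}(\Gamma^{\re})}^2,
\end{align*}
where in the last step I use \eqref{eq:Hminhalf} and the constant $C$ depends only on the shapes $\Gamma_k$. Combining with Proposition \ref{prop:L^2_H_minus_half_bound} gives a uniform bound on $\|\mathbb{Q}_\sigma\|$, and then $\|\mathbb{Q}_\perp\|\le 1+\|\mathbb{Q}_\sigma\|$. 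The only subtle point, which is really the crux of the argument, is ensuring that the $\re$-dependence from $c_k^{\re}$, $\|1\|_{H^{1/2}(\Gamma_k^{\re})}$ and the scaling of $\sigma_k^{\re}$ all balance out; once one writes out these three explicit $\re$-powers, the cancellation is immediate, and no additional geometric information (e.g.\ on $d_*^{\re}$) is needed.
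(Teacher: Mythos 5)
Your proposal is correct and follows essentially the same route as the paper's proof: the explicit formula of Lemma \ref{lem:expl_char}, the duality bound $|\langle\psi_k,1\rangle|\leq\|\psi_k\|_{H^{-1/2}(\Gamma_k^{\re})}\|1\|_{H^{1/2}(\Gamma_k^{\re})}$ with $\|1\|_{H^{1/2}(\Gamma_k^{\re})}\lesssim\re$, the scaling \eqref{eq:sigmakeps} to cancel the powers of $\re$, and the triangle inequality for $\mathbb{Q}_{\perp}$. Passing through the $L^2$ norm of $\mathbb{Q}_{\sigma}\vec{\psi}$ via the easy direction of Proposition \ref{prop:L^2_H_minus_half_bound} is only a cosmetic rephrasing of what the paper does directly.
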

	\begin{proof}
		Let $\boldsymbol{\psi} \in H^{-1/2}(\Gamma^{\re})$. By Lemma \ref{lem:expl_char}, we have
		\begin{align*}%
			\| \mathbb{Q}_{\sigma} \boldsymbol{\psi} \|^2_{-1/2,\Gamma^{\re}} &\leq \sum_{k=1}^N \frac{|\langle \boldsymbol{\psi}, \boldsymbol{1}^{\re}_k \rangle|^2}{(c_k^{\re})^2}  \| {\sigma}^{\re}_k \|^2_{L^2(\Gamma^{\re}_k)} \leq \sum_{k=1}^N C_k^{\re}\| {\psi}_k \|^2_{H^{-\frac{1}{2}}(\Gamma^{\re}_k)},
		\end{align*}
		where $C_k^{\re}:=\| {1}^{\re}_{k,k}\|^2_{L^2(\Gamma^{\re}_k)}\| {\sigma}^{\re}_k \|^2_{L^2(\Gamma^{\re}_k)}(c_k^{\re})^{-2}$. 
		Using $\| {1}^{\re}_{k,k}\|^2_{L^2(\Gamma^{\re}_k)}\lesssim \re^2$ and \eqref{eq:sigmakeps},  we conclude that $C_k^{\re}\lesssim 1$, and this yields the stated bound for $\mathbb{Q}_{\sigma}$. The uniform bound on $\mathbb{Q}_{\perp}$ follows from  $\mathbb{Q}_{\perp}=\mathbb{I}-\mathbb{Q}_{\sigma}$, the triangle inequality and the bound on $\mathbb{Q}_{\sigma}$. 
	\end{proof}
	The following result allows us to extract information about $\mathbb{P}_0^*\vec{\varphi}, \, \mathbb{P}_{\perp}^*\vec{\varphi}$ from $\mathbb{Q}_{\sigma}\vec{\varphi}$, $\mathbb{Q}_{\perp}\vec{\varphi}$. Its proof can be found in Corollary \ref{proof:bound_control}. 
	\begin{proposition}
		\label{prop:bound_control}
		There exists $C>0$, s.t. for all $0<\re\leq 1$, $\vec{\varphi}\in H^{-1/2}(\Gamma^{\re})$, 
		\begin{align*}%
			&\|\mathbb{Q}_{\sigma}\boldsymbol{\varphi}\|_{H^{-1/2}(\Gamma^{\re})} \leq C \|\mathbb{P}_0^*\boldsymbol{\varphi}\|_{L^2(\Gamma^{\re})} , \\
			&\|\mathbb{Q}_{\perp}\boldsymbol{\varphi}\|_{H^{-1/2}(\Gamma^{\re})} \leq C(\re^{1/2}\|\mathbb{P}_0^*\boldsymbol{\varphi}\|_{L^2(\Gamma^{\re})}+
			\|\mathbb{P}_{\perp}^*\boldsymbol{\varphi}\|_{H^{-1/2}(\Gamma^{\re})}).
		\end{align*}
	\end{proposition}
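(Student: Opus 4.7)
The strategy is to exploit the explicit formula for $\mathbb Q_\sigma$ from Lemma \ref{lem:expl_char} combined with the norm equivalence on $\mathcal V_G^\re$ (Proposition \ref{prop:L^2_H_minus_half_bound}) and the elementary scalings $c_k^\re=\re c_k^1$, $\|\sigma_k^\re\|_{L^2(\Gamma_k^\re)}=\|\sigma_k^1\|_{L^2(\Gamma_k)}$ and $\|\vec 1_k^\re\|_{L^2(\Gamma_k^\re)}^2=\re^2|\Gamma_k|$. A key observation used throughout is that, since $\vec 1_k^\re\in\mathbb S_0$ annihilates $H^{-1/2}_\perp(\Gamma^\re)$, $\langle \vec\varphi,\vec 1_k^\re\rangle=(\mathbb P_0^*\vec\varphi,\vec 1_k^\re)_{L^2(\Gamma_k^\re)}$. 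For the first inequality I would combine Lemma \ref{lem:expl_char}, Proposition \ref{prop:L^2_H_minus_half_bound}, the $L^2$-orthogonality of the $\vec 1_k^\re$ and Cauchy--Schwarz to get
\begin{align*}
\|\mathbb Q_\sigma\vec\varphi\|_{H^{-1/2}(\Gamma^\re)}^2 \lesssim \|\mathbb Q_\sigma\vec\varphi\|_{L^2(\Gamma^\re)}^2 \leq \sum_{k=1}^N \frac{\|\vec 1_k^\re\|_{L^2(\Gamma_k^\re)}^2\|\sigma_k^\re\|_{L^2(\Gamma_k^\re)}^2}{(c_k^\re)^2}\|\mathbb P_0^*\vec\varphi\|_{L^2(\Gamma_k^\re)}^2,
\end{align*}
and then plug in the scalings to verify that the prefactor of each term is independent of $\re$.

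For the second inequality, my plan is to split $\mathbb Q_\perp\vec\varphi=\mathbb Q_\perp\mathbb P_0^*\vec\varphi+\mathbb Q_\perp\mathbb P_\perp^*\vec\varphi$ and use \eqref{eq:qperp_pperp}(a) to rewrite $\mathbb Q_\perp\mathbb P_\perp^*\vec\varphi=\mathbb P_\perp^*\vec\varphi$. It then suffices to show $\|\mathbb Q_\perp\mathbb P_0^*\vec\varphi\|_{H^{-1/2}(\Gamma^\re)}\lesssim\re^{1/2}\|\mathbb P_0^*\vec\varphi\|_{L^2(\Gamma^\re)}$. Writing $\mathbb P_0^*\vec\varphi=\sum_k\alpha_k\vec 1_k^\re$ and applying Lemma \ref{lem:expl_char}, a direct computation yields, on each $\Gamma_k^\re$,
\begin{align*}
\mathbb Q_\perp\mathbb P_0^*\vec\varphi = \alpha_k\Bigl(1-\frac{|\Gamma_k^\re|}{c_k^\re}\sigma_k^\re\Bigr),
\end{align*}
which has zero mean on $\Gamma_k^\re$ (since $\int_{\Gamma_k^\re}\sigma_k^\re=c_k^\re$), and whose $L^2(\Gamma_k^\re)$-norm is bounded by $C|\alpha_k|\re$, hence by $C\|\mathbb P_0^*\vec\varphi\|_{L^2(\Gamma_k^\re)}$ after inserting the scalings.

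The main obstacle is then extracting the $\re^{1/2}$-factor, which I would obtain by transferring the mean-zero property onto the test function. Since $\mathbb Q_\perp\mathbb P_0^*\vec\varphi\in H^{-1/2}_\perp(\Gamma^\re)$, for every $\psi\in H^{1/2}(\Gamma^\re)$ one has
\begin{align*}
|\langle\mathbb Q_\perp\mathbb P_0^*\vec\varphi,\psi\rangle|=|\langle\mathbb Q_\perp\mathbb P_0^*\vec\varphi,\mathbb P_\perp\psi\rangle|\leq\|\mathbb Q_\perp\mathbb P_0^*\vec\varphi\|_{L^2(\Gamma^\re)}\|\mathbb P_\perp\psi\|_{L^2(\Gamma^\re)}.
\end{align*}
Rescaling each $\Gamma_k^\re$ to the fixed reference $\Gamma_k$ via $\hat\bx=\bx/\re$, using the identities $\|\varphi\|_{L^2(\Gamma_k^\re)}^2=\re^2\|\hat\varphi\|_{L^2(\Gamma_k)}^2$ and $|\varphi|_{H^{1/2}(\Gamma_k^\re)}^2=\re|\hat\varphi|_{H^{1/2}(\Gamma_k)}^2$, and applying a fractional Poincar\'e inequality to the zero-mean $\widehat{\mathbb P_\perp\psi}$ on each $\Gamma_k$ gives $\|\mathbb P_\perp\psi\|_{L^2(\Gamma^\re)}\lesssim\re^{1/2}|\mathbb P_\perp\psi|_{H^{1/2}(\Gamma^\re)}$, which by Proposition \ref{prop:norm_equiv_perp} is further bounded by $\re^{1/2}\|\psi\|_{H^{1/2}(\Gamma^\re)}$. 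Taking the supremum over $\psi$ and combining with the $L^2$-bound on $\mathbb Q_\perp\mathbb P_0^*\vec\varphi$ derived above completes the argument.
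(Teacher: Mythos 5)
Your proof is correct, and it follows the paper's overall strategy; the only genuine difference is where the factor $\re^{1/2}$ is harvested. For the first inequality you essentially unwind the paper's proof of Proposition \ref{prop:norm_projectors_Q} (the paper instead cites that proposition together with the identity $\mathbb{Q}_\sigma=\mathbb{Q}_\sigma\mathbb{P}_0^*$ from \eqref{eq:qperp_pperp}(b)); both give the same bound. For the second inequality the paper uses your exact splitting $\mathbb{Q}_\perp\vec{\varphi}=\mathbb{Q}_\perp\mathbb{P}_0^*\vec{\varphi}+\mathbb{Q}_\perp\mathbb{P}_\perp^*\vec{\varphi}$, but handles the first term via Proposition \ref{prop:bound_Q_perp_P_dual_0}: there one writes $\mathbb{Q}_\perp\mathbb{P}_0^*=\mathbb{P}_\perp^*\mathbb{Q}_\sigma\mathbb{P}_0^*$, so everything reduces to the scaling $\|\mathbb{P}_{\perp}^*\sigma_k^{\re}\|_{H^{-1/2}(\Gamma^{\re}_k)}\lesssim \re^{1/2}\|\mathbb{P}_{\perp}^*\sigma_k^1\|_{H^{-1/2}(\Gamma_k)}$ of \eqref{eq:pperpsigma}, i.e.\ to Corollary \ref{cor:pperp}, which is itself a duality argument on the test function. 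You instead keep the explicit residual $\alpha_k\bigl(1-|\Gamma_k^{\re}|(c_k^{\re})^{-1}\sigma_k^{\re}\bigr)$, note its obstacle-wise zero mean and uniformly bounded $L^2$ norm, and run the duality step directly, gaining $\re^{1/2}$ from $\|\mathbb{P}_\perp\psi\|_{L^2(\Gamma^{\re})}\lesssim\re^{1/2}|\psi|_{H^{1/2}(\Gamma^{\re})}$ — your rescaling-plus-fractional-Poincar\'e derivation is precisely Lemma \ref{lem:L^2_to_half}, so you are inlining that lemma rather than invoking Corollary \ref{cor:pperp}. The mechanism is identical (the gain always comes from mean-zero $H^{1/2}$ test functions being $O(\re^{1/2})$-small in $L^2$); your version is a bit more self-contained, while the paper's factorization through $\mathbb{P}_\perp^*\sigma_k^{\re}$ lets it reuse \eqref{eq:pperpsigma} elsewhere. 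Two minor remarks: your treatment of the second term via $\mathbb{Q}_\perp\mathbb{P}_\perp^*=\mathbb{P}_\perp^*$ (from \eqref{eq:qperp_pperp}(a)) is even slightly cleaner than the paper's appeal to the uniform bound on $\|\mathbb{Q}_\perp\|$; and the last step does not need Proposition \ref{prop:norm_equiv_perp}, since $|\mathbb{P}_\perp\psi|_{H^{1/2}(\Gamma^{\re})}=|\psi|_{H^{1/2}(\Gamma^{\re})}\le\|\psi\|_{H^{1/2}(\Gamma^{\re})}$ is immediate because constants have vanishing seminorm.
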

	The next result enables us to work with the projector $\mathbb{P}_{\perp}$ instead of $\mathbb{Q}_{\perp}^*$. 
	\begin{proposition}
		\label{prop:norm_dual_Q_perp}
		There exist $C,C'>0$, s.t., for all $0<\re\leq 1$, 
		\begin{align*}%
			\|\mathbb{Q}_{\perp}^*\vec{\varphi}\|_{H^{1/2}(\Gamma^{\re})}\leq C 	\|\mathbb{P}_{\perp}\vec{\varphi}\|_{H^{1/2}(\Gamma^{\re})}\leq C' |\vec{\varphi}|_{H^{1/2}(\Gamma^{\re})}, \qquad \forall \vec{\varphi}\in H^{1/2}(\Gamma^{\re}).
		\end{align*}
	\end{proposition}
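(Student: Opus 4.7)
The plan is to deduce the first inequality from the identity \eqref{eq:qperp_adj} together with the uniform bound on $\mathbb{Q}_{\perp}$ furnished by Proposition \ref{prop:norm_projectors_Q}, and to obtain the second inequality as a direct invocation of Proposition \ref{prop:norm_equiv_perp}. No new hard estimate should be required.

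More precisely, for the first inequality I would start from \eqref{eq:qperp_adj}, which gives $\mathbb{Q}_{\perp}^{*}\vec{\varphi} = \mathbb{Q}_{\perp}^{*}\mathbb{P}_{\perp}\vec{\varphi}$ for every $\vec{\varphi}\in H^{1/2}(\Gamma^{\re})$. Taking $H^{1/2}(\Gamma^{\re})$-norms then yields
\begin{equation*}
\|\mathbb{Q}_{\perp}^{*}\vec{\varphi}\|_{H^{1/2}(\Gamma^{\re})} \leq \|\mathbb{Q}_{\perp}^{*}\|_{\mathcal{L}(H^{1/2}(\Gamma^{\re}),H^{1/2}(\Gamma^{\re}))}\,\|\mathbb{P}_{\perp}\vec{\varphi}\|_{H^{1/2}(\Gamma^{\re})},
\end{equation*}
so the crux is to prove that the operator norm of $\mathbb{Q}_{\perp}^{*}$ on $H^{1/2}(\Gamma^{\re})$ is uniformly bounded in $\re\in(0,1]$. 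I would obtain this by duality: since $\mathbb{Q}_{\perp}^{*}$ is by construction the adjoint of $\mathbb{Q}_{\perp}\in \mathcal{L}(H^{-1/2}(\Gamma^{\re}),H^{-1/2}(\Gamma^{\re}))$ with respect to the duality pairing $\langle\cdot,\cdot\rangle_{-1/2,1/2}$, the standard argument
\begin{equation*}
\|\mathbb{Q}_{\perp}^{*}\vec{\varphi}\|_{H^{1/2}(\Gamma^{\re})} = \sup_{\vec{\psi}\neq 0}\frac{|\langle \vec{\psi},\mathbb{Q}_{\perp}^{*}\vec{\varphi}\rangle|}{\|\vec{\psi}\|_{H^{-1/2}(\Gamma^{\re})}} = \sup_{\vec{\psi}\neq 0}\frac{|\langle \mathbb{Q}_{\perp}\vec{\psi},\vec{\varphi}\rangle|}{\|\vec{\psi}\|_{H^{-1/2}(\Gamma^{\re})}} \leq \|\mathbb{Q}_{\perp}\|_{\mathcal{L}(H^{-1/2}(\Gamma^{\re}),H^{-1/2}(\Gamma^{\re}))}\,\|\vec{\varphi}\|_{H^{1/2}(\Gamma^{\re})}
\end{equation*}
shows that $\|\mathbb{Q}_{\perp}^{*}\|_{\mathcal{L}(H^{1/2},H^{1/2})} \leq \|\mathbb{Q}_{\perp}\|_{\mathcal{L}(H^{-1/2},H^{-1/2})}$, and the right-hand side is bounded by a constant $C$ independent of $\re$ thanks to Proposition \ref{prop:norm_projectors_Q}.

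The second inequality $\|\mathbb{P}_{\perp}\vec{\varphi}\|_{H^{1/2}(\Gamma^{\re})}\leq C' |\vec{\varphi}|_{H^{1/2}(\Gamma^{\re})}$ is then a direct restatement of Proposition \ref{prop:norm_equiv_perp}. Combining both steps gives the claim with constants independent of $\re\in(0,1]$.

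I do not anticipate a real obstacle here, since the $\re$-uniformity is already absorbed by the two auxiliary propositions being invoked; the only subtlety is to ensure that the duality identification $\|\mathbb{Q}_{\perp}^{*}\|_{H^{1/2}\to H^{1/2}} = \|\mathbb{Q}_{\perp}\|_{H^{-1/2}\to H^{-1/2}}$ is used correctly and that the identity \eqref{eq:qperp_adj} is applied before estimating in norm (otherwise one would lose the factor coming from $|\cdot|_{H^{1/2}}$ in the second inequality).
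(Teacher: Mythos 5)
Your proposal is correct and follows essentially the same route as the paper: the identity \eqref{eq:qperp_adj}, the adjoint-norm relation $\|\mathbb{Q}_{\perp}^{*}\|_{\mathcal{L}(H^{1/2},H^{1/2})}=\|\mathbb{Q}_{\perp}\|_{\mathcal{L}(H^{-1/2},H^{-1/2})}$ (which you recover, in the needed direction, by the duality/reflexivity argument), Proposition \ref{prop:norm_projectors_Q}, and then Proposition \ref{prop:norm_equiv_perp} for the second inequality. No gap.
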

	\begin{proof}
		The first bound follows from \eqref{eq:qperp_adj}, namely  $\mathbb{Q}_{\perp}^*=\mathbb{Q}_{\perp}^*\mathbb{P}_{\perp}$, the identity $\|\mathbb{Q}_{\perp}^*\|_{\mathcal{L}(H^{1/2}(\Gamma^{\re}),H^{1/2}(\Gamma^{\re}))}=\|\mathbb{Q}_{\perp}\|_{\mathcal{L}(H^{-1/2}(\Gamma^{\re}),H^{-1/2}(\Gamma^{\re}))}$ and Proposition \ref{prop:norm_projectors_Q}. The second bound is immediate from Proposition \ref{prop:norm_equiv_perp}.
	\end{proof}
	Similarly, we will also need a simplified stability bound for $\mathbb{Q}_{\sigma}^*$. 
	\begin{proposition}
		\label{prop:qsigmaP0}
		There exists $C>0$, s.t. for all $\re \in (0,1)$, all $\vec{\varphi}\in H^{1/2}(\Gamma^{\re})$, 
		\begin{align*}%
			\|\mathbb{Q}_{\sigma}^*\vec{\varphi}\|_{H^{1/2}(\Gamma^{\re})} = \|\mathbb{Q}_{\sigma}^*\vec{\varphi}\|_{L^2(\Gamma^{\re})} \leq  C \|\vec{\varphi}\|_{L^2(\Gamma^{\re})}.
		\end{align*}
	\end{proposition}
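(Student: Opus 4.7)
The plan is to combine the explicit formula for $\mathbb{Q}_\sigma^*$ furnished by Lemma \ref{lem:adjoint} with the elementary scaling identities \eqref{eq:sigmakeps}. First I would observe that since $\mathbb{Q}_\sigma^*\vec{\varphi} = \sum_{k=1}^N \alpha_k^\re \vec{1}_k^\re$ with $\alpha_k^\re = \langle \vec{\sigma}_k^\re,\vec{\varphi}\rangle/c_k^\re$, the function $\mathbb{Q}_\sigma^*\vec{\varphi}$ is constant on each $\Gamma_k^\re$. Hence, by the very definition \eqref{eq:norm_H_half} of the $H^{1/2}$-seminorm, $|\mathbb{Q}_\sigma^*\vec{\varphi}|_{H^{1/2}(\Gamma_k^\re)} = 0$ for every $k$, and so by the decomposition of the $H^{1/2}(\Gamma^\re)$-norm over the connected components, $\|\mathbb{Q}_\sigma^*\vec{\varphi}\|_{H^{1/2}(\Gamma^\re)} = \|\mathbb{Q}_\sigma^*\vec{\varphi}\|_{L^2(\Gamma^\re)}$, which yields the claimed equality.

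For the inequality, I would exploit the disjointness of the supports of the $\vec{1}_k^\re$ to compute
\begin{align*}
\|\mathbb{Q}_\sigma^*\vec{\varphi}\|^2_{L^2(\Gamma^\re)} = \sum_{k=1}^N \frac{|\langle \sigma_k^\re, \varphi_k\rangle_{L^2(\Gamma_k^\re)}|^2}{(c_k^\re)^2}\,|\Gamma_k^\re|,
\end{align*}
and then estimate $|\langle \sigma_k^\re,\varphi_k\rangle_{L^2(\Gamma_k^\re)}| \leq \|\sigma_k^\re\|_{L^2(\Gamma_k^\re)}\|\varphi_k\|_{L^2(\Gamma_k^\re)}$ by Cauchy--Schwarz. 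The key point is that the resulting $\re$-dependent prefactor $\|\sigma_k^\re\|^2_{L^2(\Gamma_k^\re)}|\Gamma_k^\re|/(c_k^\re)^2$ is in fact $\re$-independent. By \eqref{eq:sigmakeps}, $\|\sigma_k^\re\|^2_{L^2(\Gamma_k^\re)} = \|\sigma_k^1\|^2_{L^2(\Gamma_k)}$ and $c_k^\re = \re c_k^1$, while $|\Gamma_k^\re| = \re^2 |\Gamma_k|$, so the $\re^2$ factors cancel and the prefactor reduces to $\|\sigma_k^1\|^2_{L^2(\Gamma_k)}|\Gamma_k|/(c_k^1)^2$, a constant depending only on the reference shape.

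Setting $C := \max_{k} \|\sigma_k^1\|^2_{L^2(\Gamma_k)} |\Gamma_k|/(c_k^1)^2$ and summing, I obtain $\|\mathbb{Q}_\sigma^*\vec{\varphi}\|^2_{L^2(\Gamma^\re)} \leq C \sum_{k=1}^N \|\varphi_k\|^2_{L^2(\Gamma_k^\re)} = C\|\vec{\varphi}\|^2_{L^2(\Gamma^\re)}$, as required. There is no real obstacle here: the whole argument is a bookkeeping exercise once one notices that the range of $\mathbb{Q}_\sigma^*$ sits in $\mathbb{S}_0$, so the $H^{1/2}$-seminorm is automatically killed, and that the scaling relations in \eqref{eq:sigmakeps} make all $\re$-dependence disappear.
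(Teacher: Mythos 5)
Your proposal is correct and follows essentially the same route as the paper: the equality of norms comes from observing that $\mathbb{Q}_\sigma^*\vec{\varphi}$ lies in $\mathbb{S}_0$ (so its $H^{1/2}$-seminorm vanishes), and the inequality is obtained from Lemma~\ref{lem:adjoint}, Cauchy--Schwarz, and the scaling identities \eqref{eq:sigmakeps} showing that the prefactor $\|\sigma_k^\re\|^2_{L^2(\Gamma_k^\re)}\,|\Gamma_k^\re|/(c_k^\re)^2$ is $\re$-independent. The paper simply packages the last step as ``conclude like in the proof of Proposition~\ref{prop:norm_projectors_Q}''; you spell out the same scaling cancellation explicitly.
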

	\begin{proof}
		The first identity follows from $\Im \mathbb{Q}_{\sigma}^*=\mathbb{S}_0$, cf. \eqref{eq:kerqperp}. The stated inequality follows by using Lemma \ref{lem:adjoint} and the Cauchy-Schwarz inequality:
		\begin{align*}%
			\|\mathbb{Q}_{\sigma}^*\vec{\varphi}\|_{L^2(\Gamma^{\re})}^2\leq \sum\limits_{k=1}^N C_k^{\re}\|{\varphi}_k\|_{L^2(\Gamma_k^{\re})}^2,\, C_k^{\re}:=\|{\sigma}_k^{\re}\|^2_{L^2(\Gamma_k^{\re})}(c^{\re}_k)^{-2} \|1_{k,k}^{\re}\|_{L^2(\Gamma^{\re}_k)}^2,
		\end{align*}
		and we conclude like in the proof of Proposition \ref{prop:norm_projectors_Q}. 
	\end{proof}

	\paragraph{A remark on notation} Sometimes we will need to apply $\mathbb{P}_{0}$, $\mathbb{P}_{\perp}$ etc. to forms or functions defined on a single boundary $\Gamma_k^{\re}$. The distinction between the projectors on spaces on  $\Gamma_k^{\re}$ and on $\Gamma^{\re}$ is not reflected in the notation.
	\subsection{Density error}
	\label{sec:analysis_freq_domain}
	The goal of this section is the error analysis for the scattered field, namely the proof of Theorem \ref{th:conv}. The well-posedness result of Theorem \ref{th:wp} will be a by-product of the construction. 
	The starting point of the proof of Theorem \ref{th:conv} is obtaining an appropriate bound on the error of the density, which will be further translated into the error of the scattered field, by applying the single-layer potential to this error, cf. \eqref{eq:fdsys} and \eqref{eq:SL_potential}. 
	
	We study the density error $\hat{\boldsymbol{e}}^{\re}$, namely the difference between the exact density $\hat{\vec{\lambda}}^{\re}$ satisfying  \eqref{eq:SL_BIE_FD} and approximate density $\hat{\vec{\lambda}}^{\re}_G$, see \eqref{eq:approximate_density_frequency_domain}, i.e. 
	\begin{align*}%
		\hat{\vec{e}}^{\re} := \hat{\vec{\lambda}}^{\re} - \hat{\vec{\lambda}}^{\re}_G.
	\end{align*}
	Our first goal is to obtain a bound on the density error $\hat{\vec{e}}^{\re}$ that is explicit in terms of $\re$, $\omega$, $N$ and $d_*^{\re}$. From now on we fix the frequency $\omega\in \mathbb{C}^+$. 
	
	We remark that $\hat{\vec{e}}^{\re}$ satisfies the following SL BIE for the Helmholtz problem
	\begin{align}
		\label{eq:BIE_density_error}
		S^{\re}_{\omega} \hat{\vec{e}}^{\re} = \hat{\vec{g}}^{\re}  - \hat{\vec{g}}^{\re}_G, \quad \text{where} \quad \hat{\vec{g}}^{\re}_G := S^{\re}_{\omega} \hat{\vec{\lambda}}^{\re}_G.
	\end{align}
	At this point, one would be tempted to write 
	\begin{align}
		\label{eq:ere}
		\hat{\vec{e}}^{\re}=\left(S_{\omega}^{\re}\right)^{-1}(\hat{\vec{g}}^{\re}-\hat{\vec{g}}_G^{\re}),
	\end{align}
	and next use the usual operator-norm bound
	$%
	\|\hat{\vec{e}}^{\re}\| \leq \|(S_{\omega}^{\re})^{-1}\|\|\hat{\vec{g}}^{\re}-\hat{\vec{g}}_G^{\re}\|.
	$
	It can be shown, cf. \cite{MK}, that this strategy would lead to non-optimal error estimates. Instead, we proceed as follows. We split the error into two components, using the projectors  $\mathbb{Q}_{\sigma}$ and $\mathbb{Q}_{\perp}$, defined in Section \ref{sec:orthogonal_projectors}:
	\begin{align}
		\label{eq:splitting_density_errorr}
		\hat{\vec{e}}^{\re} =  \hat{\vec{e}}^{\re}_{\sigma} + \hat{\vec{e}}^{\re}_{\perp}, \quad \hat{\vec{e}}^{\re}_{\sigma}:=\mathbb{Q}_{\sigma} \hat{\vec{e}}^{\re} \in \mathcal{V}^{\re}_G, \quad  \hat{\vec{e}}^{\re}_{\perp}:=  \mathbb{Q}_{\perp} \hat{\vec{e}}^{\re}  \in H^{-1/2}_{\perp}(\Gamma^{\re}),
	\end{align}
	and estimate these two components separately. We will show that they have a different scaling with respect to $\re$, which has an incidence on the upper bound of the error between the approximate and the exact scattered fields. The choice of the above decomposition for the error is discussed in more detail in Section \ref{sec:continuity_bound}, see Remark \ref{remark:choice_of_decomposition}.
	
	\textit{Decomposition of \eqref{eq:BIE_density_error}. }
	Using the properties of the orthogonal projectors $\mathbb{Q}_{\sigma}$, $\mathbb{Q}_{\perp}$ and their adjoints $\mathbb{Q}^*_{\sigma}$, $\mathbb{Q}^*_{\perp}$, see \eqref{eq:kerqperp}, we introduce the following notations for the restrictions of $S^{\re}_{\omega}$ to the spaces $\mathcal{V}^{\re}_G$ and $H^{-1/2}_{\perp}(\Gamma^{\re})$:
	\begin{align}
		\label{eq:restrictions_single_layer}
		&\mathbb{S}^{\re}_{\sigma \sigma} := \mathbb{Q}^*_{\sigma} S^{\re}_{\omega}|_{\mathcal{V}^{\re}_G} : \mathcal{V}^{\re}_G \rightarrow  \mathbb{S}_0,  &&\mathbb{S}^{\re}_{\sigma \perp} := \mathbb{Q}^*_{\sigma} S^{\re}_{\omega}|_{H^{-1/2}_{\perp}(\Gamma^{\re})} : H^{-1/2}_{\perp}(\Gamma^{\re}) \rightarrow \mathbb{S}_0, \nonumber \\                                                                                                         
		&\mathbb{S}^{\re}_{\perp \sigma} := \mathbb{Q}^*_{\perp} S^{\re}_{\omega}|_{\mathcal{V}^{\re}_G} : \mathcal{V}^{\re}_G \rightarrow  (\mathcal{V}^{\re}_G)^a, 
		&&\mathbb{S}^{\re}_{\perp \perp} := \mathbb{Q}^*_{\perp} S^{\re}_{\omega}|_{H^{-1/2}_{\perp}(\Gamma^{\re})}  : H^{-1/2}_{\perp}(\Gamma^{\re}) \rightarrow  (\mathcal{V}^{\re}_G)^a.
	\end{align}
	We equip the corresponding domain spaces with the $H^{-1/2}(\Gamma^{\re})$-norm, while the range spaces with the $H^{1/2}(\Gamma^{\re})$-norm. 
	\begin{remark}
		Using \eqref{eq:restrictions_single_layer}, in particular, the asymptotic Galerkin model rewrites:
		\begin{align}
			\label{eq:sresigma}
			\mathbb{S}^{\re}_{\sigma\sigma}\hat{\blambda}^{\re}_G=\mathbb{Q}^{*}_{\sigma}\hat{\bg}^{\re}.	
		\end{align}
		To see this, we recall the definition of the asymptotic Galerkin method: 
		\begin{align*}%
			\text{ find } \hat{\blambda}^{\re}_G\in \mathcal{V}_{G}^{\re}, \text{ s.t.	}\langle \vec{\varphi}, \overline{\mathbb{S}^{\re}_{\omega}\hat{\blambda}^{\re}_G}\rangle_{-1/2, 1/2}=\langle \vec{\varphi}, \overline{\hat{\vec{g}}^{\re}}\rangle_{-1/2, 1/2}, \quad \forall\vec{\varphi} \in \mathcal{V}^{\re}_G.
		\end{align*}
		In particular, since  $\mathcal{V}^{\re}_G=\left\{\mathbb{Q}_{\sigma}\vec{\vec{\psi}},\, \vec{\psi}\in H^{-1/2}(\Gamma^{\re})\right\}$, taking in the above  $\vec{\varphi}=\mathbb{Q}_{\sigma}\vec{\psi}$, $\vec{\psi}\in H^{-1/2}(\Gamma^{\re})$ yields
		\begin{align*}%
			\langle \vec{\psi}, \overline{\mathbb{Q}_{\sigma}^*\mathbb{S}^{\re}_{\omega}\hat{\blambda}^{\re}_G}\rangle_{H^{-1/2}(\Gamma^{\re}), H^{1/2}(\Gamma^{\re})}=\langle \vec{\psi}, \overline{\mathbb{Q}_{\sigma}^*\hat{\vec{g}}^{\re}}\rangle_{H^{-1/2}(\Gamma^{\re}), H^{1/2}(\Gamma^{\re})}, \quad \vec{\forall }\vec{\psi}\in H^{-1/2}(\Gamma^{\re}),
		\end{align*}
		and as $\hat{\blambda}^{\re}_G\in \mathcal{V}_G^{\re}$,  \eqref{eq:sresigma} follows.
	\end{remark}	
	Applying the splitting \eqref{eq:splitting_density_errorr} and the notation \eqref{eq:restrictions_single_layer} to \eqref{eq:BIE_density_error} yields
	\begin{align}
		&\mathbb{S}^{\re}_{\sigma\sigma} \hat{\vec{e}}^{\re}_{\sigma} + \mathbb{S}^{\re}_{\sigma \perp} \hat{\vec{e}}^{\re}_{\perp} =0, \label{first_equation} \\
		&\mathbb{S}^{\re}_{\perp \sigma} \hat{\vec{e}}^{\re}_{\sigma} + \mathbb{S}^{\re}_{\perp \perp} \hat{\vec{e}}^{\re}_{\perp} = \mathbb{Q}^{*}_{\perp} (\hat{\vec{g}}^{\re} - \hat{\vec{g}}^{\re}_{G}), \label{second_equation}
	\end{align}
	where in the first equation we used that  $\mathbb{Q}^*_{\sigma}(\hat{\vec{g}}^{\re} - \hat{\vec{g}}^{\re}_G) = 0$. Indeed, $$\mathbb{Q}^*_{\sigma}(\hat{\vec{g}}^{\re} - \hat{\vec{g}}^{\re}_G)=\mathbb{Q}^*_{\sigma}(\hat{\vec{g}}^{\re} - S^{\re}_{\omega}\hat{\vec{\lambda}}_{G}^{\re})=\mathbb{Q}^*_{\sigma}\hat{\vec{g}}^{\re} - S^{\re}_{\sigma\sigma}\hat{\vec{\lambda}}_{G}^{\re}=0,$$
	as follows from \eqref{eq:sresigma}. 
	\begin{remark}
		The projectors $\mathbb{Q}_{\sigma}$ and $\mathbb{Q}_{\perp}$ are chosen so that the matrix in \eqref{first_equation}, \eqref{second_equation} becomes close to diagonal, see the estimates of Theorem \ref{theorem:estimates_norms}.	
	\end{remark}

	\textit{An expression for $\hat{\vec{e}}^{\re}_{\sigma}$. }From  \eqref{first_equation} we have that
	\begin{align*}
		\hat{\vec{e}}^{\re}_{\sigma} = - (\mathbb{S}^{\re}_{\sigma \sigma})^{-1} \mathbb{S}^{\re}_{ \sigma \perp } \hat{\vec{e}}^{\re}_{\perp}.
	\end{align*}
	Thus, obtaining an upper bound on $\hat{\vec{e}}_{\sigma}^{\re}$ amounts to bounding the operators involved in the above expression, as well as finding an upper bound for $\hat{\vec{e}}^{\re}_{\perp}$. To obtain this latter bound, it would be natural to use \eqref{second_equation}; however, a more efficient way is to follow the path of \cite{MK}, which allows to have better final bounds in terms of $\omega$. \\
	\textit{An expression of $\hat{\vec{e}}^{\re}_{\perp}$. }
	To obtain a close to optimal bound for the component orthogonal to the constants $\hat{\vec{e}}^{\re}_{\perp}$, we recall the original identity \eqref{eq:ere},
	\begin{align*}%
		\hat{\vec{e}}^{\re} = ({S}^{\re}_{\omega})^{-1} (\hat{\vec{g}}^{\re} - \hat{\vec{g}}^{\re}_G ),
	\end{align*}
	and next apply the orthogonal projector $\mathbb{Q}_{\perp}$ to the both sides of the above identity:
	\begin{align*}%
		\hat{\vec{e}}^{\re}_{\perp}\equiv\mathbb{Q}_{\perp} \hat{\vec{e}}^{\re} = \mathbb{Q}_{\perp} (S^{\re}_{\omega})^{-1} (\hat{\vec{g}}^{\re} - \hat{\vec{g}}^{\re}_G ) &= \mathbb{Q}_{\perp} (S^{\re}_{\omega})^{-1} ( \mathbb{Q}^*_{\sigma} (\hat{\vec{g}}^{\re} - \hat{\vec{g}}^{\re}_G) + \mathbb{Q}^*_{\perp} (\hat{\vec{g}}^{\re} - \hat{\vec{g}}^{\re}_G))  \\
		&= \mathbb{Q}_{\perp} (S^{\re}_{\omega})^{-1} \mathbb{Q}^*_{\perp} (\hat{\vec{g}}^{\re} - \hat{\vec{g}}^{\re}_G),
	\end{align*}
	where we used again the Galerkin orthogonality to argue that $\mathbb{Q}^*_{\sigma} (\hat{\vec{g}}^{\re} - \hat{\vec{g}}^{\re}_G) = 0$. \\
	\textit{A strategy. }	
	We thus end up with the two identities:
	\begin{align*}%
		&\hat{\vec{e}}^{\re}_{\sigma} = - (\mathbb{S}^{\re}_{\sigma \sigma})^{-1} \mathbb{S}^{\re}_{ \sigma \perp } \hat{\vec{e}}^{\re}_{\perp}, \qquad
		\hat{\vec{e}}^{\re}_{\perp} = \mathbb{Q}_{\perp} (S^{\re}_{\omega})^{-1} \mathbb{Q}^*_{\perp} (\hat{\vec{g}}^{\re} - \hat{\vec{g}}^{\re}_G),
	\end{align*}
	and therefore
	\begin{align}
		\label{eq:error_bounds}
		\begin{split}
			&\| \hat{\vec{e}}^{\re}_{\sigma} \|_{H^{-1/2}(\Gamma^{\re})} \leq \| (\mathbb{S}^{\re}_{\sigma \sigma})^{-1} \| \| \mathbb{S}^{\re}_{ \sigma \perp } \| \| \hat{\vec{e}}^{\re}_{\perp} \|_{H^{-1/2}(\Gamma^{\re})}, \\
			&\| \hat{\vec{e}}^{\re}_{\perp} \|_{H^{-1/2}(\Gamma^{\re})} \leq \| \mathbb{Q}_{\perp} (S^{\re}_{\omega})^{-1} \mathbb{Q}^*_{\perp} \| \| \mathbb{Q}^*_{\perp} (\hat{\vec{g}}^{\re} - \hat{\vec{g}}^{\re}_G) \|_{H^{1/2}(\Gamma^{\re})}. 
		\end{split}
	\end{align}
	
	Therefore, we see that estimating the error components in terms of $\re$ amounts to obtaining estimates on the operator norms and the data $\|\mathbb{Q}_{\perp}^*(\hat{\vec{g}}^{\re}-\hat{\vec{g}}_G^{\re})\|_{H^{1/2}(\Gamma^{\re})}$. 	
	
	\textit{Bounds on operator norms and data. }
	The following theorem summarizes the upper bounds on the norms of the operators that appear in \eqref{eq:error_bounds}. 
	\begin{theorem}[Estimates on operator norms]
		\label{theorem:estimates_norms}
		There exists $C>0$, which depends only on $\Gamma_j$, $j=1,\ldots,N$, s.t. for all $\re \in (0,1)$ and $\omega \in \mathbb{C}^+$,
		\begin{align}
			\|(\mathbb{S}^{\re}_{\sigma \sigma})^{-1}\| &\leq \re^{-1}\times C (\underline{d}^{\re}_*)^{-2} (1+|\omega|)^2 \max(1, (\operatorname{Im}\omega)^{-3}), \label{eq:coercivity_bound_1} \\
			\|\mathbb{Q}_{\perp} (S^{\re}_{\omega})^{-1} \mathbb{Q}^*_{\perp} \| &\leq C(\underline{d}^{\re}_*)^{-2} (1+ |\omega|)^2 \max(1, (\operatorname{Im}\omega)^{-3} ) , \label{eq:Q_perp_S_inv_Q_perp}\\
			\label{eq:continuity_bound}
			\|\mathbb{S}^{\re}_{\perp\sigma}\|,\; \| \mathbb{S}^{\re}_{\sigma \perp} \| &\leq \re^{5/2}\times C N(\underline{d}^{\re}_*)^{-2} (1+|\omega|)^2, 
		\end{align}
		where $
		\underline{d}^{\re}_* = \min(1, d^{\re}_*).$
	\end{theorem}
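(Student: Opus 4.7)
The plan is to establish the three bounds one by one, each drawing on a different tool: coercivity of the Helmholtz single-layer operator for \eqref{eq:coercivity_bound_1} and \eqref{eq:Q_perp_S_inv_Q_perp}, and the explicit formula for $\mathbb{Q}_\sigma^*$ combined with the mean-zero property of $H^{-1/2}_\perp(\Gamma^{\re})$ for \eqref{eq:continuity_bound}. Throughout, the $\re$-dependence will be tracked via the scaling relation $\sigma_k^{\re}(\boldsymbol{x})=\re^{-1}\sigma_k^1(\boldsymbol{x}/\re)$ and $c_k^{\re}=\re c_k^1$ from \eqref{eq:sigmakeps}, and the $\omega$-factors via a standard Bamberger--Ha Duong--type coercivity estimate for $S_\omega^\re$ in $\mathbb{C}^+$ of the form $\operatorname{Re}(-i\bar\omega\langle S_\omega^\re\vec{\psi},\overline{\vec{\psi}}\rangle)\gtrsim \min(1,(\operatorname{Im}\omega)^3)(1+|\omega|)^{-2}\|\vec{\psi}\|^2_{H^{-1/2}(\Gamma^{\re})}$ (the $(\underline{d}^{\re}_*)^{-2}$ factor will enter only where genuinely needed, through the off-diagonal analysis).

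For \eqref{eq:coercivity_bound_1}, I would observe that for $\vec{\psi}\in\mathcal{V}^{\re}_G$ one has $\mathbb{Q}_\sigma\vec{\psi}=\vec{\psi}$, hence $\langle\mathbb{S}^{\re}_{\sigma\sigma}\vec{\psi},\overline{\vec{\psi}}\rangle=\langle\mathbb{Q}^*_\sigma S_\omega^\re\vec{\psi},\overline{\vec{\psi}}\rangle=\langle S_\omega^\re\vec{\psi},\overline{\vec{\psi}}\rangle$. Applying coercivity and dualizing yields $\|\mathbb{S}^{\re}_{\sigma\sigma}\vec{\psi}\|_{H^{1/2}(\Gamma^{\re})}\gtrsim c_\omega\|\vec{\psi}\|_{H^{-1/2}(\Gamma^{\re})}$; the extra factor $\re^{-1}$ appears because the target space $\mathbb{S}_0$ is measured in $H^{1/2}$ while the natural norm of an obstacle-wise constant function scales like $\re$ (since $\|\mathbf{1}_k^\re\|_{L^2(\Gamma_k^\re)}\sim\re$), and we must invert this scaling when passing from the lower bound on $\langle\cdot,\vec{\psi}\rangle$ to the operator norm. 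For \eqref{eq:Q_perp_S_inv_Q_perp}, given $\vec{g}\in H^{1/2}_\perp$-type subspace (image of $\mathbb{Q}^*_\perp$), I would set $\vec{\psi}=(S_\omega^\re)^{-1}\mathbb{Q}^*_\perp\vec{g}$, apply coercivity to estimate $\|\vec{\psi}\|_{H^{-1/2}}$ in terms of $\|\mathbb{Q}^*_\perp\vec{g}\|$, and then use Proposition \ref{prop:norm_projectors_Q} (boundedness of $\mathbb{Q}_\perp$) to transfer the bound to $\|\mathbb{Q}_\perp\vec{\psi}\|_{H^{-1/2}}$; no $\re^{-1}$ appears here because both the test function and the datum live on $\perp$-spaces, so the scaling works out.

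The main obstacle is the bound \eqref{eq:continuity_bound} with the crucial $\re^{5/2}$ factor. By Lemma \ref{lem:adjoint}, I would write $\mathbb{S}^{\re}_{\sigma\perp}\vec{\psi}=\sum_k\langle S_\omega^\re\vec{\sigma}_k^\re,\vec{\psi}\rangle(c_k^\re)^{-1}\mathbf{1}_k^\re$, swap the pairing, and exploit that $\vec{\psi}\in H^{-1/2}_\perp(\Gamma^\re)$ so that adding any constant to $S_\omega^\re\vec{\sigma}_k^\re$ on each $\Gamma_\ell^\re$ does not affect the coefficient; this yields $|\langle S_\omega^\re\vec{\sigma}_k^\re,\vec{\psi}\rangle|\leq\|\mathbb{P}_\perp S_\omega^\re\vec{\sigma}_k^\re\|_{H^{1/2}(\Gamma^\re)}\|\vec{\psi}\|_{H^{-1/2}(\Gamma^\re)}$, and Proposition \ref{prop:norm_equiv_perp} reduces the right factor to the seminorm $|S_\omega^\re\vec{\sigma}_k^\re|_{H^{1/2}(\Gamma^\re)}$. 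The analysis then splits into diagonal and off-diagonal contributions. For the diagonal piece on $\Gamma_k^\re$, the scaling computation gives $|\psi|^2_{H^{1/2}(\Gamma_k^\re)}=\re|\hat\psi|^2_{H^{1/2}(\Gamma_k)}$ after $\boldsymbol{y}\mapsto\re\hat{\boldsymbol{y}}$, and the rescaled function equals $S^1_{\re\omega}\sigma_k^1$; Taylor expanding $(e^{i\re\omega r}-1)/r$ and using that the constant term has zero seminorm yields $|\hat\psi|_{H^{1/2}(\Gamma_k)}\lesssim(\re|\omega|)^2$, so $|\psi|_{H^{1/2}(\Gamma_k^\re)}\lesssim\re^{5/2}|\omega|^2$. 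For the off-diagonal piece on $\Gamma_\ell^\re$ with $\ell\neq k$, the kernel $G_\omega(|\boldsymbol{x}-\boldsymbol{y}|)$ is smooth on $\Gamma_\ell^\re\times\Gamma_k^\re$ thanks to $d_*^\re>0$, and a Taylor expansion around $(\boldsymbol{c}_\ell,\boldsymbol{c}_k)$ produces a constant leading term that is killed by $\mathbb{P}_\perp$, leaving a remainder of size $\re^{5/2}(\underline{d}^\re_*)^{-2}(1+|\omega|)^2$ after combining the small diameters of the obstacles with the Sobolev seminorm evaluation (which contributes an additional $\re$-power via the surface-measure scaling). Summing over $k$ and $\ell$ produces the $N$ factor; combining with $(c_k^\re)^{-1}\|\mathbf{1}_k^\re\|_{H^{1/2}}\lesssim 1$ yields the stated bound. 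The estimate for $\mathbb{S}^\re_{\perp\sigma}$ follows by the same analysis applied to its adjoint, since $S^\re_\omega$ is symmetric.
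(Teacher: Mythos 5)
Your treatment of the bounds \eqref{eq:coercivity_bound_1} and \eqref{eq:Q_perp_S_inv_Q_perp} rests on an input that you assume as a black box but which is precisely the nontrivial content here: a Bamberger--Ha Duong coercivity estimate for $S^{\re}_{\omega}$ that is \emph{uniform in $\re$} (and in $N$, $d^{\re}_*$) on all of $H^{-1/2}(\Gamma^{\re})$. In the shrinking-obstacle regime this is not available, and the paper has to quantify how the coercivity constant depends on $\re$ and $d^{\re}_*$: it is anisotropic, degenerating like $\re$ on $\mathbb{S}_0$ (equivalently on $\mathcal{V}^{\re}_G$) while remaining uniform on $H^{-1/2}_{\perp}(\Gamma^{\re})$, and it carries the factor $(\underline{d}^{\re}_*)^{2}$ coming from the many-particle lifting with cut-off gradients of size $(\underline{d}^{\re}_*)^{-1}$; see Propositions \ref{prop:lifting_lemma}, \ref{prop:coercivity_bound} and Theorem \ref{theorem:coercivity_bound_principal}. (The degeneration is real: lifting $\vec{1}_k^{\re}$ costs $H^1$-energy of order $\re^{1/2}$ while $\|\vec{1}_k^{\re}\|_{H^{1/2}(\Gamma^{\re}_k)}\sim\re$, so $L^{\re}_{\mathbb{S}_0}\sim\re^{-1/2}$.) As a consequence, your explanation of the $\re^{-1}$ in \eqref{eq:coercivity_bound_1} is not a valid mechanism: with a genuinely uniform coercivity, the duality estimate $\|\mathbb{S}^{\re}_{\sigma\sigma}\vec{\psi}\|_{H^{1/2}}\geq |\langle\vec{\psi},\overline{\mathbb{S}^{\re}_{\sigma\sigma}\vec{\psi}}\rangle|/\|\vec{\psi}\|_{H^{-1/2}}$ would yield a bound with \emph{no} $\re^{-1}$ at all, and no extra factor is generated by "measuring $\mathbb{S}_0$ in $H^{1/2}$", since the test function in the quotient is normalized in $H^{-1/2}$. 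The $\re^{-1}$ actually comes from the degenerate coercivity $\gtrsim c_S\,\re\,\|\vec{\eta}_{\sigma}\|^2_{H^{-1/2}(\Gamma^{\re})}$ on $\mathcal{V}^{\re}_G$, and the $(\underline{d}^{\re}_*)^{-2}$ in both \eqref{eq:coercivity_bound_1} and \eqref{eq:Q_perp_S_inv_Q_perp} comes from that same coercivity constant, not from any off-diagonal analysis (there is none in these two bounds), so your plan has no mechanism producing it there. For \eqref{eq:Q_perp_S_inv_Q_perp} there is a second problem once the correct anisotropic coercivity is used: bounding the full $\|\vec{\psi}\|_{H^{-1/2}}$ first and then applying boundedness of $\mathbb{Q}_{\perp}$ loses a factor $\re^{-1}$ through the weak $\re$-scaled part of the coercivity; to get the $\re$-uniform bound you must estimate $\|\mathbb{Q}_{\perp}\vec{\psi}\|_{H^{-1/2}}$ directly, exploiting that $\langle\vec{\psi},\overline{S^{\re}_{\omega}\vec{\psi}}\rangle=\langle\mathbb{Q}_{\perp}\vec{\psi},\overline{\vec{\varphi}}\rangle$, so that only the $\perp$-component appears on the right-hand side, as the paper does.

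Your argument for \eqref{eq:continuity_bound}, by contrast, is essentially sound and is essentially the paper's proof in dual form: instead of estimating the bilinear form $\langle S^{\re}_{\omega}\mathbb{Q}_{\perp}\vec{\lambda},\mathbb{Q}_{\sigma}\vec{v}\rangle$, you use the explicit formula of Lemma \ref{lem:adjoint} to reduce everything to $\mathbb{P}_{\perp}S^{\re}_{\omega}\vec{\sigma}^{\re}_k$; the diagonal part is handled by removing $S_0^{\re,kk}\sigma^{\re}_k=1$ and the constant $i\omega c^{\re}_k/4\pi$ (both killed by the seminorm/$\mathbb{P}_{\perp}$) and by the $C^1$ bound on the residual kernel together with the $\re^{3/2}$-scaling of the $H^{1/2}$-seminorm (the paper's kernel $\mathcal{K}_{\omega}$, Lemma \ref{lemma:kernel_bound} and Lemma \ref{lem:normh12}), and the off-diagonal part by smoothness of $G_{\omega}$ away from the diagonal with the $(\underline{d}^{\re}_*)^{-2}$ gradient bound; your rescaled-domain presentation of the diagonal term is only a cosmetic variation and produces the same $\re^{5/2}$. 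So the third bound would go through, but \eqref{eq:coercivity_bound_1}--\eqref{eq:Q_perp_S_inv_Q_perp} as proposed have a genuine gap: they need the explicit-in-$\re$ lifting/coercivity analysis, which cannot be replaced by a citation to a uniform off-the-shelf estimate.
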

	\begin{proof}
		The stability bounds \eqref{eq:coercivity_bound_1} and \eqref{eq:Q_perp_S_inv_Q_perp} are proven in Section \ref{sec:coercivity_bound}, while the continuity bound \eqref{eq:continuity_bound} is proven in Section \ref{sec:continuity_bound}.
	\end{proof}
	Let us now present an auxiliary bound on the data, which will be used in the sequel. We first derive an intermediate result in the frequency domain, which will then be improved after obtaining the corresponding bounds in the time domain.
	\begin{theorem}[Bound on the data]
		\label{theorem:bounds_data} 
		There exists $C>0$, which depends only on $\Gamma_j$, $j=1,\ldots,N$, s.t. for all $\re \in (0,1)$ and $\omega \in \mathbb{C}^+$, the following bound holds true:
		\begin{align*}%
			\|\mathbb{Q}^{*}_{\perp}( \hat{\vec{g}}^{\re}-\hat{\vec{g}}^{\re}_G)\|_{H^{1/2}(\Gamma^{\re})}  
			\leq C_{\omega, N, d^{\re}_*} (\re^{3/2} \| \hat{\vec{g}}^{\re} \|_{L^2(\Gamma^{\re})} + |\hat{\vec{g}}^{\re}|_{H^{1/2}(\Gamma^{\re})}),
		\end{align*}
		with  $	C_{\omega, N, d^{\re}_*} := C N (\underline{d}^{\re}_{*})^{-4} (1+|\omega|)^4 \max(1, (\operatorname{Im} \omega)^{-3})$.
	\end{theorem}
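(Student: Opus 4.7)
The plan is to exploit the Galerkin structure to avoid estimating the seminorm $|\hat{\vec{g}}^{\re}_G|_{H^{1/2}(\Gamma^{\re})}$ directly. Since $\hat{\vec{\lambda}}^{\re}_G \in \mathcal{V}^{\re}_G$, the operator-block notation of \eqref{eq:restrictions_single_layer} combined with $\mathbb{I} = \mathbb{Q}^*_{\sigma} + \mathbb{Q}^*_{\perp}$ splits the approximate data as
\begin{align*}
\hat{\vec{g}}^{\re}_G = S^{\re}_{\omega} \hat{\vec{\lambda}}^{\re}_G = \mathbb{S}^{\re}_{\sigma\sigma} \hat{\vec{\lambda}}^{\re}_G + \mathbb{S}^{\re}_{\perp\sigma} \hat{\vec{\lambda}}^{\re}_G.
\end{align*}
The first summand lies in $\mathbb{S}_0 = \mathrm{Ker}\,\mathbb{Q}^*_{\perp}$ and the second in $(\mathcal{V}^{\re}_G)^a = \mathrm{Im}\,\mathbb{Q}^*_{\perp}$, see \eqref{eq:kerqperp}, so applying $\mathbb{Q}^*_{\perp}$ collapses the right-hand side to $\mathbb{Q}^*_{\perp} \hat{\vec{g}}^{\re}_G = \mathbb{S}^{\re}_{\perp\sigma} \hat{\vec{\lambda}}^{\re}_G$. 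This yields the clean starting identity
\begin{align*}
\mathbb{Q}^*_{\perp} (\hat{\vec{g}}^{\re} - \hat{\vec{g}}^{\re}_G) = \mathbb{Q}^*_{\perp} \hat{\vec{g}}^{\re} - \mathbb{S}^{\re}_{\perp\sigma} \hat{\vec{\lambda}}^{\re}_G,
\end{align*}
which I would take as the workhorse of the argument.

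Next I would bound the two pieces independently. For the first, Proposition \ref{prop:norm_dual_Q_perp} provides $\|\mathbb{Q}^*_{\perp} \hat{\vec{g}}^{\re}\|_{H^{1/2}(\Gamma^{\re})} \lesssim |\hat{\vec{g}}^{\re}|_{H^{1/2}(\Gamma^{\re})}$, which reproduces the seminorm contribution on the right-hand side of the theorem. For the off-diagonal piece I would use the operator-norm inequality $\|\mathbb{S}^{\re}_{\perp\sigma} \hat{\vec{\lambda}}^{\re}_G\|_{H^{1/2}(\Gamma^{\re})} \leq \|\mathbb{S}^{\re}_{\perp\sigma}\|\,\|\hat{\vec{\lambda}}^{\re}_G\|_{H^{-1/2}(\Gamma^{\re})}$, then control $\hat{\vec{\lambda}}^{\re}_G$ through the Galerkin identity \eqref{eq:sresigma}, namely $\hat{\vec{\lambda}}^{\re}_G = (\mathbb{S}^{\re}_{\sigma\sigma})^{-1} \mathbb{Q}^*_{\sigma} \hat{\vec{g}}^{\re}$, and finally invoke Proposition \ref{prop:qsigmaP0} to get $\|\mathbb{Q}^*_{\sigma} \hat{\vec{g}}^{\re}\|_{H^{1/2}(\Gamma^{\re})} \lesssim \|\hat{\vec{g}}^{\re}\|_{L^2(\Gamma^{\re})}$.

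All that remains is to multiply the scalars coming from Theorem \ref{theorem:estimates_norms}: the continuity bound \eqref{eq:continuity_bound} contributes a factor $\re^{5/2} N (\underline{d}^{\re}_*)^{-2}(1+|\omega|)^2$, while the stability bound \eqref{eq:coercivity_bound_1} contributes $\re^{-1}(\underline{d}^{\re}_*)^{-2}(1+|\omega|)^2 \max(1,(\Im\omega)^{-3})$. Their product yields the prefactor $\re^{3/2} N (\underline{d}^{\re}_*)^{-4}(1+|\omega|)^4 \max(1,(\Im\omega)^{-3})$ in front of $\|\hat{\vec{g}}^{\re}\|_{L^2(\Gamma^{\re})}$, which is exactly $C_{\omega, N, d^{\re}_*}$ as announced.

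The conceptual core of the argument is really the algebraic identity in the first step; once it is in place, everything else is a direct appeal to the propositions already established. The point one has to see is that Galerkin orthogonality effectively recycles the opaque residual $\hat{\vec{g}}^{\re}_G$ into the explicit off-diagonal quantity $\mathbb{S}^{\re}_{\perp\sigma} \hat{\vec{\lambda}}^{\re}_G$, which is amenable to the operator-norm bound \eqref{eq:continuity_bound}. Without this substitution one would have to attack $|S^{\re}_{\omega} \hat{\vec{\lambda}}^{\re}_G|_{H^{1/2}}$ directly, presumably by Taylor-expanding the Helmholtz Green function on each $\Gamma^{\re}_k$ to quantify how constant $S^{\re,k\ell}_{\omega} \sigma^{\re}_{\ell}$ is on $\Gamma^{\re}_k$, which appears substantially more technical and would essentially duplicate the work already packaged into Theorem \ref{theorem:estimates_norms}.
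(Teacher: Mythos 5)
Your argument is correct and coincides with the paper's own proof: the paper likewise bounds $\|\mathbb{Q}^*_{\perp}\hat{\vec{g}}^{\re}\|_{H^{1/2}(\Gamma^{\re})}$ by Proposition \ref{prop:norm_dual_Q_perp} and uses $\mathbb{Q}^*_{\perp}\hat{\vec{g}}^{\re}_G=\mathbb{S}^{\re}_{\perp\sigma}\hat{\vec{\lambda}}^{\re}_G$ together with the continuity bound \eqref{eq:continuity_bound} and the stability of $\hat{\vec{\lambda}}^{\re}_G$ (Lemma \ref{lemma:stability_bound_lambda_G}, which is exactly your chain $(\mathbb{S}^{\re}_{\sigma\sigma})^{-1}\mathbb{Q}^*_{\sigma}\hat{\vec{g}}^{\re}$ with Corollary \ref{corollary:stability_bound} and Proposition \ref{prop:qsigmaP0}). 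The only cosmetic difference is that you rederive the identity $\mathbb{Q}^*_{\perp}\hat{\vec{g}}^{\re}_G=\mathbb{S}^{\re}_{\perp\sigma}\hat{\vec{\lambda}}^{\re}_G$ via the block decomposition and inline the stability lemma, whereas the paper cites them directly.
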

	\begin{proof}
		See Section \ref{sec:data_bound}.
	\end{proof}
	Inserting the bounds of Theorems \ref{theorem:estimates_norms} and \ref{theorem:bounds_data} into \eqref{eq:error_bounds} yields the following bounds:
	%
	\begin{equation}
		\label{eq:bound_on_error_components_v0}
		\begin{aligned}
			&\| \hat{\vec{e}}^{\re}_{\sigma} \|_{H^{-1/2}(\Gamma^{\re})} \leq \re^{3/2} C N^2 (\underline{d}^{\re}_{*})^{-10} (1+|\omega|)^{10} \max(1, (\Im \omega)^{-9}) \trinorm{\hat{\vec{g}}^{\re}}_{3/2},\\
			&\| \hat{\vec{e}}^{\re}_{\perp} \|_{H^{-1/2}(\Gamma^{\re})} \leq C N (\underline{d}^{\re}_{*})^{-6} (1+|\omega|)^6 \max(1, (\Im \omega)^{-6})\trinorm{\hat{\vec{g}}^{\re}}_{3/2},
		\end{aligned}
	\end{equation}
	where we used a temporary notation $\trinorm{\hat{\vec{g}}^{\re}}_{\alpha}:=\re^{\alpha} \|  \hat{\vec{g}}^{\re} \|_{L^2(\Gamma^{\re})} +  |  \hat{\vec{g}}^{\re} |_{H^{1/2}(\Gamma^{\re})}$. The following result makes the bounds \eqref{eq:bound_on_error_components_v0} fully explicit in  $\re>0$.
	\begin{proposition}
		\label{prop:data}
		Let $\hat{u}^{\inc}\in H^3(B)$. Then there exists $C>0$, s.t. for all $0<\re\leq 1$,  $\trinorm{\hat{\vec{g}}^{\re}}_{3/2}\leq C \re^{3/2}\|\hat{u}^{\inc}\|_{H^3(B)}$. 
	\end{proposition}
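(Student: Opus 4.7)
} Since $\hat{\vec{g}}^{\re}=-\gamma_0\hat{u}^{\inc}|_{\Gamma^{\re}}$ and $\hat{u}^{\inc}\in H^3(B)$, the plan is to bound the two pieces of $\trinorm{\hat{\vec{g}}^{\re}}_{3/2}=\re^{3/2}\|\hat{\vec{g}}^{\re}\|_{L^2(\Gamma^{\re})}+|\hat{\vec{g}}^{\re}|_{H^{1/2}(\Gamma^{\re})}$ separately by a direct scaling argument to the reference particles $\Gamma_k$, exploiting the smoothness of $\hat{u}^{\inc}$ through the Sobolev embedding $H^3(B)\hookrightarrow W^{1,\infty}(B)$ in three dimensions. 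The key observation is that the $L^2$-norm only sees the smallness of the surface measure (order $\re^2$), whereas the seminorm, thanks to its translation-invariance, picks up an extra factor of $\re$ from the Lipschitz variation of $\hat{u}^{\inc}$ across each $\Gamma_k^{\re}$.

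For the $L^2$-contribution I would bound $\|\hat{u}^{\inc}\|_{L^2(\Gamma_k^{\re})}^2\leq\|\hat{u}^{\inc}\|_{L^\infty(B)}^2\operatorname{area}(\Gamma_k^{\re})$ and use $\operatorname{area}(\Gamma_k^{\re})\lesssim \re^2$, so that summing over $k$ gives $\|\hat{\vec{g}}^{\re}\|_{L^2(\Gamma^{\re})}\lesssim \re\,\|\hat{u}^{\inc}\|_{L^\infty(B)}$. Multiplying by the prefactor $\re^{3/2}$ yields an $\re^{5/2}$-bound, which is a fortiori an $\re^{3/2}$-bound since $\re\le 1$. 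The Sobolev embedding $H^3(B)\hookrightarrow L^\infty(B)$ closes this step.

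For the $H^{1/2}$-seminorm, I would exploit the Lipschitz bound $|\hat{u}^{\inc}(\bx)-\hat{u}^{\inc}(\by)|\leq\|\nabla\hat{u}^{\inc}\|_{L^\infty(B_k)}|\bx-\by|$ inside the Slobodeckij double integral \eqref{eq:norm_H_half}, reducing the question to controlling
\begin{equation*}
I_k^{\re}:=\iint_{\Gamma_k^{\re}\times\Gamma_k^{\re}}\frac{d\Gamma_{\bx}\,d\Gamma_{\by}}{|\bx-\by|}.
\end{equation*}
Changing variables $\bx=\re(\hat{\bx}-\bc_k)+\bc_k$, $\by=\re(\hat{\by}-\bc_k)+\bc_k$, the surface measure contributes $\re^2\cdot\re^2$, and the kernel contributes $\re^{-1}$, yielding $I_k^{\re}=\re^3 I_k^{1}$ where $I_k^{1}$ is finite because $|\cdot|^{-1}$ is integrable against a 2D surface measure near the diagonal in $\mathbb{R}^3$. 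Summing over $k$ and invoking the stronger embedding $H^3(B)\hookrightarrow W^{1,\infty}(B)$ (which holds since $3>5/2$) delivers $|\hat{\vec{g}}^{\re}|_{H^{1/2}(\Gamma^{\re})}\lesssim \re^{3/2}\|\hat{u}^{\inc}\|_{H^3(B)}$ with a constant depending only on the reference shapes $\Gamma_j$ and on $N$. Adding the two bounds finishes the proof.

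The only mild subtlety is the scaling of $I_k^{\re}$; once that is done, everything is routine. No geometric smallness of $d_*^{\re}$ enters, which is consistent with the statement being purely about data regularity rather than about inter-particle interactions.
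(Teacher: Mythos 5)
Your proof is correct and follows essentially the same approach as the paper: bound the $L^2$-piece by the smallness of the surface measure together with $H^3(B)\hookrightarrow L^\infty(B)$, and bound the $H^{1/2}$-seminorm by the Lipschitz/gradient control and the $\re^3$-scaling of $\iint_{\Gamma_k^\re\times\Gamma_k^\re}|\bx-\by|^{-1}$, closing with $H^3(B)\hookrightarrow W^{1,\infty}(B)$. The paper simply packages these two scaling facts as Lemmas \ref{lem:norml2} and \ref{lem:normh12} and invokes them directly (together with the Sobolev embedding into $C^{1,\alpha}$), whereas you rederive them inline; the substance is identical.
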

	\begin{proof}
		See Section \ref{sec:data_bound2}.
	\end{proof}
	Thus, our goal in this section is to prove the following bounds:
	\begin{equation}
		\label{eq:bound_on_error_components_v1}
		\begin{aligned}
			&\| \hat{\vec{e}}^{\re}_{\sigma} \|_{H^{-1/2}(\Gamma^{\re})} \leq \re^{3}\times C N^2 (\underline{d}^{\re}_{*})^{-10} (1+|\omega|)^{10} \max(1, (\Im \omega)^{-9}) \|\hat{u}^{\inc}\|_{H^3(B)},\\
			&\| \hat{\vec{e}}^{\re}_{\perp} \|_{H^{-1/2}(\Gamma^{\re})} \leq \re^{3/2}\times C N (\underline{d}^{\re}_{*})^{-6} (1+|\omega|)^6 \max(1, (\Im \omega)^{-6})\|\hat{u}^{\inc}\|_{H^3(B)}.
		\end{aligned}
	\end{equation}
	We see that $\hat{\vec{e}}^{\re}_{\sigma}$ and $\hat{\vec{e}}_{\perp}^{\re}$ have a different scaling with respect to $\re$ as $\re\rightarrow 0$. It is this property that will allow to obtain an optimal error bound for the scattered field, see the discussion before \eqref{eq:ere}. 
	\subsection{Proofs of the technical results in Section \ref{sec:analysis_freq_domain}}	
	\subsubsection{Proof of the stability bound on the operator norms $\| (\mathbb{S}^{\re}_{\sigma \sigma})^{-1} \|$, $\| \mathbb{Q}_{\perp} (S^{\re}_{\omega})^{-1} \mathbb{Q}^*_{\perp} \|$}
	\label{sec:coercivity_bound}
	In this section, we prove stability bounds \eqref{eq:coercivity_bound_1} and \eqref{eq:Q_perp_S_inv_Q_perp}. Our strategy follows the strategy of similar estimates for circles in \cite{MK}, and relies on deriving a coercivity bound for the single layer boundary integral operator $S^{\re}_{\omega}$ with an explicit coercivity constant. To do so, we follow a classical approach based on the lifting lemma adapted to the many-particle case (an idea due to \cite{hassan_stamm}), which yields bounds explicit in $\re$. Let us define the following energy norm, for $\vec{v}\in H^1(\mathbb{R}^3)$:
	\begin{align}%
		\label{eq:energy_norm}
		\|{v}\|_{a, \mathbb{R}^3}^2:=a^2\|v\|^2_{L^2(\mathbb{R}^3)}+\|\nabla v\|_{L^2(\mathbb{R}^3)}^2.
	\end{align}
	For $V\in \{\mathbb{S}_0, \, H^{1/2}_{\perp}(\Gamma^{\re})\}$ we define the quantity (well-defined by the same argument as in proof of Proposition 2.5.1 in \cite{sayas})
	\begin{align}
		\label{eq:lva}
		L_V^{\re}(a):=\sup\limits_{\vec{\lambda}\in V\setminus\{0\}}\inf_{\substack{\Lambda^{\re} \in H^1(\mathbb{R}^3): \\  \gamma_0\Lambda^{\re}=\vec{\lambda}}}  \frac{\| \Lambda^{\re} \|_{a,\mathbb{R}^3}}{\| \vec{\lambda} \|_{H^{1/2}(\Gamma^{\re})}}\geq 0,
	\end{align}
	The proof of the result below resembles the proof of Proposition 4.4 in \cite{MK}. See Appendix \ref{app:lifting_lemma} for details.
	\begin{proposition}
		\label{prop:lifting_lemma}
		There exists $C>0$, s.t. for all $a>0$,  $0<\re\leq 1$,
		\begin{align*}%
			L_{\mathbb{S}_0}^{\re}(a)\leq C_a \re^{-1/2} ,\quad 
			L_{H^{1/2}_{\perp}(\Gamma^{\re})}^{\re}(a)\leq C_a,
		\end{align*}
		where $C_a=C(\underline{d}^{\re}_*)^{-1} (1+a)^{1/2} \max(1, a^{-1})$.
	\end{proposition}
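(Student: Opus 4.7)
\medskip
\noindent\textbf{Proof plan.} The strategy is to construct an explicit lifting for each $\vec{\lambda}\in V$ by gluing together particle-wise liftings with smooth cutoff functions whose supports are pairwise disjoint; the latter is possible precisely because $d_*^{\re}>0$. This reduces the estimate to a single-particle problem on a reference-size obstacle, after which we invoke the classical trace inverse theorem and unwind the scaling. The two cases $V=\mathbb{S}_0$ and $V=H^{1/2}_\perp(\Gamma^{\re})$ differ only in how the lifting of $\lambda_k$ is chosen on the reference particle; this is where the extra $\re^{-1/2}$ in the $\mathbb{S}_0$ case comes from.

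\medskip
\noindent\textbf{Step 1 (cutoffs and localized ansatz).} Pick $\chi_k^{\re}\in C_c^\infty(\mathbb{R}^3)$ with $\chi_k^{\re}\equiv 1$ on $B_k^{\re}$, $\operatorname{supp}\chi_k^{\re}\subset B(\boldsymbol{c}_k,r_k^{\re}+d_*^{\re}/4)$, and $\|\nabla \chi_k^{\re}\|_{L^\infty}\lesssim (d_*^{\re})^{-1}$. By definition of $d_*^{\re}$ the supports of the $\chi_k^{\re}$ are pairwise disjoint. Given a particle-wise lifting $\tilde{\Lambda}_k^{\re}$ of $\lambda_k$ (still to be chosen), we set $\Lambda^{\re}:=\sum_k \chi_k^{\re}\tilde{\Lambda}_k^{\re}$, so that $\gamma_0\Lambda^{\re}=\vec{\lambda}$. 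The disjointness of supports gives $\|\Lambda^{\re}\|_{a,\mathbb{R}^3}^2=\sum_k\|\chi_k^{\re}\tilde{\Lambda}_k^{\re}\|_{a,\mathbb{R}^3}^2$, and the Leibniz rule yields, for each $k$,
\begin{align*}
\|\chi_k^{\re}\tilde{\Lambda}_k^{\re}\|_{a,\mathbb{R}^3}^2
\;\lesssim\; \bigl(a^2+(d_*^{\re})^{-2}\bigr)\|\tilde{\Lambda}_k^{\re}\|_{L^2(\mathbb{R}^3)}^2
\;+\;\|\nabla \tilde{\Lambda}_k^{\re}\|_{L^2(\mathbb{R}^3)}^2.
\end{align*}

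\medskip
\noindent\textbf{Step 2 (rescaling to the reference particle).} Set $\hat{x}=\re^{-1}(x-\boldsymbol{c}_k)+\boldsymbol{c}_k$ and $\hat{\tilde{\Lambda}}_k(\hat{x})=\tilde{\Lambda}_k^{\re}(x)$, $\hat{\lambda}_k(\hat{x})=\lambda_k(x)$ on $\Gamma_k$. Elementary volume/surface rescalings give $\|\tilde{\Lambda}_k^{\re}\|_{L^2}^2=\re^3\|\hat{\tilde{\Lambda}}_k\|_{L^2}^2$, $\|\nabla \tilde{\Lambda}_k^{\re}\|_{L^2}^2=\re\|\nabla \hat{\tilde{\Lambda}}_k\|_{L^2}^2$, $\|\lambda_k\|_{L^2(\Gamma_k^{\re})}^2=\re^2\|\hat{\lambda}_k\|_{L^2(\Gamma_k)}^2$, and $|\lambda_k|_{H^{1/2}(\Gamma_k^{\re})}^2=\re^2|\hat{\lambda}_k|_{H^{1/2}(\Gamma_k)}^2$. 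On the reference particle $\Omega_k$ of unit size, the classical trace-inverse theorem supplies a compactly supported $\hat{\tilde{\Lambda}}_k\in H^1(\mathbb{R}^3)$ with $\gamma_0\hat{\tilde{\Lambda}}_k=\hat{\lambda}_k$ and $\|\hat{\tilde{\Lambda}}_k\|_{L^2}^2+\|\nabla \hat{\tilde{\Lambda}}_k\|_{L^2}^2\lesssim \|\hat{\lambda}_k\|_{H^{1/2}(\Gamma_k)}^2$, the constant depending only on the shape $\Gamma_k$.

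\medskip
\noindent\textbf{Step 3 (case split).} For $V=H^{1/2}_\perp(\Gamma^{\re})$ the mean-zero property of $\lambda_k$ is preserved under the rescaling, so the Poincaré-type estimate on $\Gamma_k$ gives $\|\hat{\lambda}_k\|_{L^2(\Gamma_k)}\lesssim|\hat{\lambda}_k|_{H^{1/2}(\Gamma_k)}$ with a shape-only constant; together with the trace-inverse bound and the rescaling identities, this converts the right-hand side of Step 1 into (a constant times $(a^2+(d_*^{\re})^{-2})$-weighted) $|\lambda_k|_{H^{1/2}(\Gamma_k^{\re})}^2$. Summing over $k$, using $\|\vec{\lambda}\|_{H^{1/2}(\Gamma^{\re})}^2=\sum_k\|\lambda_k\|_{H^{1/2}(\Gamma_k^{\re})}^2$, and optimizing in the balance between the two terms of the energy norm (which produces the factor $(1+a)^{1/2}\max(1,a^{-1})$), yields the bound $L^{\re}_{H^{1/2}_\perp(\Gamma^{\re})}(a)\leq C_a$.

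For $V=\mathbb{S}_0$, writing $\lambda_k\equiv \lambda_k^{const}$ we have the exact identity $\|\lambda_k\|_{H^{1/2}(\Gamma_k^{\re})}^2=|\lambda_k^{const}|^2|\Gamma_k^{\re}|\sim \re^2|\lambda_k^{const}|^2$ (the $H^{1/2}$-seminorm vanishes on constants). Instead of the generic trace inverse, we take the direct lifting $\tilde{\Lambda}_k^{\re}=\lambda_k^{const}\tilde{\chi}_k^{\re}$, where $\tilde{\chi}_k^{\re}$ is a cutoff equal to $1$ on $\Omega_k^{\re}$ with $\operatorname{supp}\tilde{\chi}_k^{\re}\subset B_k^{\re}$ and $\|\nabla \tilde{\chi}_k^{\re}\|_{L^\infty}\lesssim \re^{-1}$. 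Then $\|\tilde{\Lambda}_k^{\re}\|_{L^2}^2\sim |\lambda_k^{const}|^2\re^3$ and $\|\nabla \tilde{\Lambda}_k^{\re}\|_{L^2}^2\sim |\lambda_k^{const}|^2\re$; plugging into Step 1 and dividing by $\|\lambda_k\|_{H^{1/2}(\Gamma_k^{\re})}^2\sim\re^2|\lambda_k^{const}|^2$ produces an extra $\re^{-1}$ inside the square of the ratio, i.e. the announced $\re^{-1/2}$ after taking the square root.

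\medskip
\noindent\textbf{Main obstacle.} The construction itself is standard; the delicate point is bookkeeping. One must ensure that the $(d_*^{\re})^{-1}$ factor enters the final bound only to the first power (it appears just through $\|\nabla \chi_k^{\re}\|_{L^\infty}$, and disjointness of supports prevents any overlap loss) and that the $a$-dependence reduces to $(1+a)^{1/2}\max(1,a^{-1})$. The $\max(1,a^{-1})$ factor reflects the degeneracy of $\|\cdot\|_{a,\mathbb{R}^3}$ as $a\to 0$, where the $L^2$ part of the energy becomes uninformative, so one must rely on the $H^1$-seminorm of the trace-inverse lifting in a balanced way; this is the only place where the two cases must be handled with care to match the stated constants, and where the analysis closely parallels that of Proposition~4.4 in \cite{MK}.
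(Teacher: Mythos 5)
The localization via disjointly supported cutoffs at scale $d_*^{\re}$ and the resulting particle-wise reduction are exactly the paper's strategy (Lemma~\ref{lifting_many_particles_proof}), and the $\mathbb{S}_0$/$H^{1/2}_\perp$ case split is the right way to see where the $\re^{-1/2}$ comes from. However, there is a genuine gap in the single-particle step, and it is precisely the place you flagged as ``delicate'' and then did not resolve.

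Your Step 2 replaces the particle-wise lifting by the \emph{classical} trace-inverse $\hat{\tilde{\Lambda}}_k$ with only an $H^1$ bound, $\|\hat{\tilde{\Lambda}}_k\|_{H^1}\lesssim \|\hat\lambda_k\|_{H^{1/2}(\Gamma_k)}$. When you then evaluate this lifting in the $a$-dependent energy norm, the best you can get is
\begin{align*}
\|\hat{\tilde{\Lambda}}_k\|_{a\re,\mathbb{R}^3}^2
\;\leq\; \max\bigl(1,(a\re)^2\bigr)\,\|\hat{\tilde{\Lambda}}_k\|_{H^1}^2
\;\lesssim\; \max\bigl(1,(a\re)^2\bigr)\,\|\hat\lambda_k\|_{H^{1/2}(\Gamma_k)}^2,
\end{align*}
i.e.\ a prefactor $\max(1,a\re)$ after the square root. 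The paper instead lifts $\lambda_k$ as the solution of $-\Delta u + a^2u=0$ with Dirichlet data $\lambda_k$ (Proposition~\ref{prop:sayas}, i.e.\ Sayas Prop.~2.5.1), which captures the exponential localization of $u$ near $\Gamma_k$ when $a$ is large and yields the \emph{square root} improvement $\max\bigl(1,(a\re)^{1/2}\bigr)$. Unwinding the rescaling and the cutoff, your Step 3 then produces, for $\vec\lambda\in H^{1/2}_\perp(\Gamma^{\re})$, a bound of the type $L^{\re}_{H^{1/2}_\perp}(a)\lesssim a\re + (\underline{d}^{\re}_*)^{-1}$, which for $a\to\infty$ grows like $a$, whereas the stated $C_a$ grows only like $a^{1/2}$. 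So the generic trace-inverse does not prove the Proposition as stated. (The same deficiency appears in your $\mathbb{S}_0$ construction if the cutoff width is pinned at $\re$: you get $(a^2\re + \re^{-1})^{1/2}$, which exceeds $C_a\,\re^{-1/2}$ for $a\gg\re^{-2}$.) To close this gap without invoking the BVP lifting you would have to multiply the $H^1$ extension by an additional cutoff at distance scale $\sim (a\re)^{-1}$ from $\Gamma_k$ and invoke a thin-shell $L^2$ estimate to gain the missing $a^{1/2}$; this extra step is absent from your proposal, and your hand-waving reference to ``optimizing the balance between the two terms of the energy norm'' does not supply it.

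One secondary slip: the $H^{1/2}$-seminorm scales as $|\lambda_k|^2_{H^{1/2}(\Gamma_k^{\re})}=\re\,|\hat\lambda_k|^2_{H^{1/2}(\Gamma_k)}$, not $\re^2$; compare Lemma~\ref{lem:scaling_norm}. This does not change the nature of the gap above, but it would matter once you try to tighten the bookkeeping.

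Finally, a remark on the route itself: the paper's intermediate Lemma~\ref{lemma:lifting} proves a single-particle estimate directly in the $a$-dependent energy norm by rescaling the \emph{BVP}, thereby avoiding the need to piece together an $H^1$ extension with a frequency-dependent cutoff. Your approach can in principle be salvaged along the lines sketched above, but as written it does not establish the claimed $(1+a)^{1/2}$ dependence.
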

	This result enables us to prove the proposition below.
	
	\begin{proposition}
		\label{prop:coercivity_bound}
		For all $0<\re\leq 1$, $\omega\in \mathbb{C}^+$, the operator $S^{\re}_{\omega}:$ $ H^{-1/2}(\Gamma^{\re}) \rightarrow$ $H^{1/2}(\Gamma^{\re})$
		is continuous and invertible. Moreover, for all $ \vec{\eta} \in H^{-1/2}(\Gamma^{\re})$,
		\begin{align*}
			-\Im\langle {\vec{\vec{\eta}}}, \overline{ \omega S^{\re}_{\omega} \vec{\vec{\eta}}}\rangle_{-1/2,1/2,\Gamma^{\re}} 
			&\gtrsim (\underline{d}^{\re}_*)^{2} (1+|\omega|)^{-1} \min(1, (\Im \omega)^3)\\
			&\times \left(\re \| \mathbb{P}^*_0 \vec{\eta} \|^2_{H^{-1/2}(\Gamma^{\re})} +  \| \mathbb{P}^*_{\perp} \vec{\eta} \|^2_{H^{-1/2}(\Gamma^{\re})} \right).
		\end{align*} 
	\end{proposition}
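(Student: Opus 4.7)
My strategy is to combine the classical energy identity for the Helmholtz single-layer potential with the lifting lemma (Proposition \ref{prop:lifting_lemma}). Set $u := \mathcal{S}^{\re}_{\omega}\vec{\eta} \in H^1(\mathbb{R}^3)$, the regularity being valid for $\omega \in \mathbb{C}^+$. Since $u$ solves $(-\Delta - \omega^2) u = 0$ in $\mathbb{R}^3 \setminus \Gamma^{\re}$, has continuous trace $\gamma_0 u = S^{\re}_\omega \vec{\eta}$, and carries the normal-derivative jump $-\vec{\eta}$, Green's formula yields, for every $v \in H^1(\mathbb{R}^3)$,
\begin{equation*}
\int_{\mathbb{R}^3}\left(\nabla u \cdot \nabla \bar v - \omega^2 u \bar v\right)dx \;=\; \langle \vec{\eta}, \overline{\gamma_0 v}\rangle_{-1/2,1/2,\Gamma^{\re}}.
\end{equation*}
Testing with $v = u$ and recalling that $\overline{\omega S^{\re}_\omega \vec{\eta}} = \bar\omega\,\overline{S^{\re}_\omega \vec{\eta}}$, so that the duality pairing becomes $\bar\omega\,z$ with $z := \|\nabla u\|_{L^2}^2 - \omega^2\|u\|_{L^2}^2$, a short computation using $\bar\omega\omega^2 = |\omega|^2\omega$ gives
\begin{equation*}
-\Im\langle \vec{\eta}, \overline{\omega S^{\re}_\omega \vec{\eta}}\rangle \;=\; \Im\omega \left( \|\nabla u\|_{L^2(\mathbb{R}^3)}^2 + |\omega|^2 \|u\|_{L^2(\mathbb{R}^3)}^2 \right) \;=\; \Im\omega\,\|u\|_{|\omega|,\mathbb{R}^3}^2.
\end{equation*}

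To bound $\|u\|_{|\omega|,\mathbb{R}^3}^2$ below in terms of both projections of $\vec{\eta}$, I apply the same Green identity with $v = \Lambda$ any lift of a trial trace $\vec\lambda \in H^{1/2}(\Gamma^{\re})$. Cauchy--Schwarz on the right-hand side (bundling $\|\nabla u\|\|\nabla\Lambda\|$ with $|\omega|^2\|u\|\|\Lambda\|$) gives $|\langle\vec{\eta}, \overline{\vec\lambda}\rangle| \leq \|u\|_{|\omega|,\mathbb{R}^3}\,\|\Lambda\|_{|\omega|,\mathbb{R}^3}$, and taking the infimum over lifts together with Proposition \ref{prop:lifting_lemma} yields
\begin{equation*}
|\langle\vec{\eta}, \overline{\vec\lambda}\rangle| \;\leq\; \|u\|_{|\omega|,\mathbb{R}^3}\, L_V^{\re}(|\omega|)\, \|\vec\lambda\|_{H^{1/2}(\Gamma^{\re})}, \qquad V \in \{\mathbb{S}_0,\, H^{1/2}_\perp(\Gamma^{\re})\}.
\end{equation*}
I then exploit this in two ways. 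First, choosing $\vec\lambda = \mathbb{P}^*_0 \vec{\eta} \in \mathbb{S}_0 \subset H^{1/2}$ evaluates the pairing to $\|\mathbb{P}^*_0\vec{\eta}\|_{L^2(\Gamma^{\re})}^2$; on $\mathbb{S}_0$ the $H^{1/2}$-norm reduces to the $L^2$-norm (the Sobolev--Slobodeckij seminorm vanishes on obstacle-wise constants), so squaring and inserting $L_{\mathbb{S}_0}^{\re}(|\omega|) \lesssim (\underline{d}^{\re}_*)^{-1}(1+|\omega|)^{1/2}\max(1,|\omega|^{-1})\re^{-1/2}$ delivers $\|u\|_{|\omega|,\mathbb{R}^3}^2 \gtrsim (\underline{d}^{\re}_*)^2(1+|\omega|)^{-1}\min(1,|\omega|^2)\,\re\,\|\mathbb{P}^*_0\vec{\eta}\|_{L^2}^2$. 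Second, since $\mathbb{P}^*_\perp \vec{\eta}$ annihilates $\mathbb{S}_0$, one has $\|\mathbb{P}^*_\perp\vec{\eta}\|_{H^{-1/2}} = \sup_{\vec\lambda \in H^{1/2}_\perp\setminus\{0\}} |\langle\vec{\eta}, \vec\lambda\rangle|/\|\vec\lambda\|_{H^{1/2}}$, so the same inequality applied with $V = H^{1/2}_\perp$ (whose lifting bound has no $\re^{-1/2}$ loss) produces the analogous estimate for the $\mathbb{P}^*_\perp$-component, now without the $\re$ factor.

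Summing the two lower bounds (with factor $1/2$), using that $\|\cdot\|_{L^2(\Gamma^{\re})}$ and $\|\cdot\|_{H^{-1/2}(\Gamma^{\re})}$ are equivalent on $\mathbb{S}_0$ with constants independent of $\re$ (by a direct scaling computation on each $\mathbf{1}^{\re}_k$), and multiplying by $\Im\omega$ yields a lower bound of the form $(\underline{d}^{\re}_*)^2(1+|\omega|)^{-1}\min(1,|\omega|^2)\,\Im\omega\cdot\left(\re\|\mathbb{P}^*_0\vec{\eta}\|^2_{H^{-1/2}} + \|\mathbb{P}^*_\perp\vec{\eta}\|^2_{H^{-1/2}}\right)$. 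A short case analysis on the relative sizes of $|\omega|$ and $\Im\omega$ (noting $|\omega| \geq \Im\omega$) shows $\min(1,|\omega|^2)\Im\omega \geq \min(1,(\Im\omega)^3)$, which completes the stated estimate. Continuity of $S^{\re}_\omega: H^{-1/2}(\Gamma^{\re}) \to H^{1/2}(\Gamma^{\re})$ is classical, and invertibility follows from injectivity (immediate from the coercivity just proved) combined with the Fredholm index-zero property of $S^{\re}_\omega$ viewed as a compact perturbation of the invertible operator $S^{\re}_0$.

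The main technical obstacle is tracking the $\re$-scaling coherently: the $\re^{-1/2}$ factor lost in the lifting estimate for $\mathbb{S}_0$ is precisely what produces the $\re$ weighting of the $\mathbb{P}^*_0$-term in the final bound, and this disparity is what makes the splitting of $\vec{\eta}$ into $\mathbb{P}^*_0$- and $\mathbb{P}^*_\perp$-components indispensable rather than using a single global lift. A secondary difficulty is the phase bookkeeping that distills the complex-valued energy identity into a real lower bound scaling as $\Im\omega$; this is why the conjugation of $\omega$ built into the statement is essential.
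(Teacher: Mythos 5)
Your proposal is correct and takes essentially the same route as the paper: the Bamberger--Ha-Duong energy identity $-\Im\langle \vec{\eta},\overline{\omega S^{\varepsilon}_{\omega}\vec{\eta}}\rangle=\Im\omega\,\|\mathcal{S}^{\varepsilon}_{\omega}\vec{\eta}\|^2_{|\omega|,\mathbb{R}^3}$, followed by dual characterizations of $\|\mathbb{P}^*_0\vec{\eta}\|$ and $\|\mathbb{P}^*_{\perp}\vec{\eta}\|$ and the lifting bounds of Proposition \ref{prop:lifting_lemma} applied separately to $\mathbb{S}_0$ and $H^{1/2}_{\perp}(\Gamma^{\varepsilon})$, which is exactly how the $\varepsilon$-weight on the $\mathbb{P}^*_0$-term arises in the paper as well. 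The only (minor) divergence is the invertibility argument, where you use injectivity from coercivity plus a compact-perturbation/Fredholm index-zero argument relative to $S^{\varepsilon}_0$, whereas the paper concludes via continuity, coercivity and Lax--Milgram; both are standard and valid.
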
 
	\begin{proof}
		\mypar{Step 1. Proof of coercivity estimate.}
		Let us define $v^{\re}_{\vec{\eta}} := \mathcal{S}^{\re}_{\omega} \vec{\eta} \in H^1(\mathbb{R}^3)$.
		We will use the following identity that follows from the Green's formula and the definition of the layer potentials: for any $\Lambda\in H^1(\mathbb{R}^3)$, s.t. $\gamma_0 \Lambda=\vec{\lambda}$, 
		\begin{align}
			\label{eq:identity}
			\begin{split}
				\langle \vec{\eta}, \vec{\lambda}\rangle_{-1/2,1/2,\Gamma^{\re}}&=\int_{\mathbb{R}^3\setminus\Gamma^{\re}} \Delta v^{\re}_{\vec{\eta}}\, \Lambda\, d\vec{x}-\int_{\mathbb{R}^3\setminus\Gamma^{\re}} \nabla  v^{\re}_{\vec{\eta}}\,\nabla \Lambda d\vec{x}\\
				&=-\omega^2\int_{\mathbb{R}^3} v^{\re}_{\vec{\eta}}\, \Lambda d\vec{x}-\int_{\mathbb{R}^3} \nabla  v^{\re}_{\vec{\eta}}\,\nabla \Lambda d\vec{x}=:a_{\omega}(v^{\re}_{\eta}, {\Lambda}),
			\end{split}
		\end{align}
		see the proof of \cite[Proposition 2.6.1, after (2.17)]{sayas}. The above with ${\Lambda}= \overline{\omega \mathcal{S}^{\re}_{\omega} \vec{\eta}}$ yields
		\begin{align}
			\label{eq:etaSeta}
			\begin{split}
				-\Im\langle \vec{\eta}, \overline{\omega S^{\re}_{\omega} \vec{\eta}}\rangle 
				&=-\Im \left(\overline{\omega} \int_{\mathbb{R}^3}  |\nabla v^{\re}_{\vec{\eta}} (\boldsymbol{x})|^2  -  |\omega|^2\omega \int_{\mathbb{R}^3} |v^{\re}_{\vec{\eta}}(\boldsymbol{x})|^2\right) \\
				&= \Im \omega  \| v^{\re}_{\vec{\eta}} \|^2_{|\omega|, \mathbb{R}^3}.
			\end{split}
		\end{align}
		It remains to relate the norms of $v^{\re}_{\vec{\eta}}$ and $\vec{\eta}$. With Proposition \ref{ortho_decompositionHm12}, $\vec{\eta} = \mathbb{P}^*_{0} \vec{\eta} + \mathbb{P}^*_{\perp} \vec{\eta}$ and we estimate these quantities separately. With Proposition \ref{prop:norm_equivalence}, 
		\begin{align*}%
			\|\mathbb{P}_0^*\vec{\eta} \|_{H^{-1/2}(\Gamma^{\re})} &=\sup_{\vec{\lambda}\in H^{-1/2}(\Gamma^{\re})\setminus\{0\}}\frac{|\langle  \vec{\eta},\mathbb{P}_0\vec{\lambda}\rangle_{-1/2,1/2}|}{\|\vec{\lambda}\|_{H^{-1/2}(\Gamma^{\re})}}= \sup_{\vec{\lambda}\in \mathbb{S}_0\setminus\{0\}}\frac{|\langle  \vec{\eta},\mathbb{P}_0\vec{\lambda}\rangle_{-1/2,1/2}|}{\|\mathbb{P}_0\vec{\lambda}\|_{L^2(\Gamma^{\re})}}, 
		\end{align*}
		and, using \eqref{eq:identity} and next the Cauchy-Schwarz inequality, 
		\begin{align}%
			\nonumber
			\|\mathbb{P}_0^*\vec{\eta} \|_{H^{-1/2}(\Gamma^{\re})} &=\sup_{\vec{\lambda}\in \mathbb{S}_0\setminus\{0\}}\inf_{\substack{\Lambda^{\re} \in H^1(\mathbb{R}^3): \\  \gamma_0\Lambda^{\re}=\vec{\lambda}}}\frac{|a_{\omega}(v^{\re}_{\vec{\eta}}, \Lambda^{\re})|}{\|\vec{\lambda}\|_{L^2(\Gamma^{\re})}}\\
			\label{eq:b0}
			&\leq \| v^{\re}_{\vec{\eta}} \|_{|\omega|, \mathbb{R}^3} \sup_{\vec{\lambda}\in \mathbb{S}_0\setminus\{0\}}\inf_{\substack{\Lambda^{\re} \in H^1(\mathbb{R}^3): \\  \gamma_0\Lambda^{\re}=\vec{\lambda}}}\frac{\|\Lambda^{\re}\|_{|\omega|,\mathbb{R}^3}}{\|\vec{\lambda}\|_{L^2(\Gamma^{\re})}}=\| v^{\re}_{\vec{\eta}} \|_{|\omega|, \mathbb{R}^3} L^{\re}_{\mathbb{S}_0}(|\omega|),
		\end{align}		
		see \eqref{eq:lva}. Proceeding in a similar manner for $\mathbb{P}_{\perp}^*\vec{\eta}$, we conclude that 
		\begin{align}%
			\label{eq:b1}
			\|\mathbb{P}_{\perp}^*\vec{\eta} \|_{H^{-1/2}(\Gamma^{\re})}\leq \| v^{\re}_{\vec{\eta}} \|_{|\omega|, \mathbb{R}^3} L_{H^{1/2}_{\perp}(\Gamma^{\re})}^{\re}(|\omega|).
		\end{align}
		Summing up the bounds (\eqref{eq:b0}), (\eqref{eq:b1}), using the result in \eqref{eq:etaSeta}, and next employing Proposition \ref{prop:lifting_lemma} yields 
		\begin{align*}
			-\Im\langle \vec{\eta}, \overline{\omega S^{\re}_{\omega} \vec{\eta}}\rangle \gtrsim (\underline{d}^{\re}_*)^{2} (1+|\omega|)^{-1} \min(1,  |\omega|^{2})\Im \omega (\re	\|\mathbb{P}_0^*\vec{\eta} \|^2+	\|\mathbb{P}_{\perp}^*\vec{\eta} \|^2).
		\end{align*}
		The stated coercivity estimate is obtained with $\min(1,  |\omega|^{2})\Im \omega\gtrsim \min(1, (\Im\omega)^3)$. 

		\mypar{Step 2. Proof that $S^{\re}_{\omega}$ is continuous and invertible. }
		Once we argue that $S^{\re}_{\omega}: \, H^{-1/2}(\Gamma^{\re})\rightarrow H^{1/2}(\Gamma^{\re})$ is continuous and coercive, the Lax-Milgram lemma, cf. \cite[Lemma 2.32]{mclean} implies that it is invertible. 
		The continuity and coercivity of the single layer boundary integral operator were established in \cite{bamberger_ha_duong} for a single obstacle, based on the energy-type argument easily extendable to the multiple-obstacle case. Since an optimal continuity estimate is not important in what follows, we omit the proof.
	\end{proof}
	We will use the coercivity bound of Proposition \ref{prop:coercivity_bound} in a modified form, see below.
	\begin{theorem}
		\label{theorem:coercivity_bound_principal}
		For all $\vec{\eta} \in H^{-1/2}(\Gamma^{\re})$, all $\re \in (0,1]$ and $\omega\in \mathbb{C}^+$,
		\begin{align*}%
			-\Im \langle {\vec{\vec{\eta}}},e^{-i \operatorname{Arg}(\omega)} \overline{  S^{\re}_{\omega} \vec{\vec{\eta}}}\rangle &\gtrsim  c_S \left(\re \| \mathbb{Q}_{\sigma}\vec{\eta} \|^2_{H^{-1/2}(\Gamma^{\re})} +  \|  \mathbb{Q}_{\perp}\vec{\eta} \|^2_{H^{-1/2}(\Gamma^{\re})} \right),\\
			c_S&=(\underline{d}^{\re}_*)^{2} (1+|\omega|)^{-2} \min(1, (\Im \omega)^3).
		\end{align*}
	\end{theorem}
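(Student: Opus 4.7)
The plan is to deduce Theorem \ref{theorem:coercivity_bound_principal} from the already-established Proposition \ref{prop:coercivity_bound} by two essentially independent manipulations: a rescaling of the complex factor on the left-hand side, and a change of projectors on the right-hand side using Proposition \ref{prop:bound_control}. No new analytical input is needed beyond these two propositions and the decomposition results of Section \ref{sec:orthogonal_projectors}.

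First, I would rewrite the left-hand side. Writing $\omega=|\omega|e^{i\operatorname{Arg}(\omega)}$, so that $\overline{\omega}=|\omega|e^{-i\operatorname{Arg}(\omega)}$, bilinearity of the duality pairing yields
\begin{align*}
-\Im\langle \vec{\eta},\, e^{-i\operatorname{Arg}(\omega)}\overline{S^{\re}_{\omega}\vec{\eta}}\rangle \;=\; \frac{1}{|\omega|}\bigl(-\Im\langle \vec{\eta},\, \overline{\omega S^{\re}_{\omega}\vec{\eta}}\rangle\bigr).
\end{align*}
Applying Proposition \ref{prop:coercivity_bound} to the right-hand side and using the elementary inequality $|\omega|^{-1}(1+|\omega|)^{-1}\geq (1+|\omega|)^{-2}$ (which follows from $|\omega|\leq 1+|\omega|$) produces exactly the prefactor $c_S$ from the statement, but with the $L^2$-orthogonal projectors $\mathbb{P}_0^*,\mathbb{P}_{\perp}^*$ in place of $\mathbb{Q}_\sigma,\mathbb{Q}_\perp$ on the right-hand side.

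Second, I would translate the resulting bound from $\mathbb{P}_0^*,\mathbb{P}_{\perp}^*$ to $\mathbb{Q}_{\sigma},\mathbb{Q}_{\perp}$. Squaring the two estimates of Proposition \ref{prop:bound_control} and combining them with weights $\re$ and $1$ gives
\begin{align*}
\re\|\mathbb{Q}_{\sigma}\vec{\eta}\|_{H^{-1/2}(\Gamma^{\re})}^2 + \|\mathbb{Q}_{\perp}\vec{\eta}\|_{H^{-1/2}(\Gamma^{\re})}^2 \;\lesssim\; \re\|\mathbb{P}_0^*\vec{\eta}\|_{L^2(\Gamma^{\re})}^2 + \|\mathbb{P}_{\perp}^*\vec{\eta}\|_{H^{-1/2}(\Gamma^{\re})}^2.
\end{align*}
It then remains to replace $\|\mathbb{P}_0^*\vec{\eta}\|_{L^2}$ by $\|\mathbb{P}_0^*\vec{\eta}\|_{H^{-1/2}}$ with an $\re$-uniform constant. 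Since the Slobodeckij seminorm \eqref{eq:norm_H_half} vanishes on obstacle-wise constants, Proposition \ref{prop:norm_equivalence} implies $\|\vec{\varphi}\|_{H^{1/2}(\Gamma^{\re})}=\|\vec{\varphi}\|_{L^2(\Gamma^{\re})}$ for every $\vec{\varphi}\in\mathbb{S}_0$. Testing $\mathbb{P}_0^*\vec{\eta}\in \mathbb{S}_0\subset H^{1/2}(\Gamma^{\re})$ against itself, normalized, in the definition of the $H^{-1/2}$-norm then immediately yields $\|\mathbb{P}_0^*\vec{\eta}\|_{L^2(\Gamma^{\re})}\leq \|\mathbb{P}_0^*\vec{\eta}\|_{H^{-1/2}(\Gamma^{\re})}$. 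Plugging this into the previous display and combining with Step 1 gives the claim.

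I do not foresee a substantial obstacle: the whole derivation is routine bookkeeping on top of Proposition \ref{prop:coercivity_bound} and Proposition \ref{prop:bound_control}. The only slightly subtle point is the $L^2$ versus $H^{-1/2}$ comparison on $\mathbb{S}_0$, but this is $\re$-independent and therefore does not contaminate the constants in $c_S$; the factor $(1+|\omega|)^{-2}$ in $c_S$ (as compared with $(1+|\omega|)^{-1}$ in Proposition \ref{prop:coercivity_bound}) is the sole degradation, and it arises precisely from the $|\omega|^{-1}$ introduced in Step 1.
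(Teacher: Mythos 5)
Your proposal is correct and follows essentially the same route as the paper, which obtains the theorem by combining Proposition \ref{prop:coercivity_bound}, Proposition \ref{prop:bound_control}, and the elementary bound $|\omega|^{-1}\geq(1+|\omega|)^{-1}$ after factoring $e^{-i\operatorname{Arg}(\omega)}=\overline{\omega}/|\omega|$ out of the pairing. The only point you spell out that the paper leaves implicit is the comparison $\|\mathbb{P}_0^*\vec{\eta}\|_{L^2(\Gamma^{\re})}\leq\|\mathbb{P}_0^*\vec{\eta}\|_{H^{-1/2}(\Gamma^{\re})}$ on $\mathbb{S}_0$ (in fact an $\re$-uniform identity, cf.\ \eqref{eq:p00}), and your argument for it is sound.
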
 
	\begin{proof}
		The result is immediate by combining Proposition \ref{prop:coercivity_bound}, Proposition \ref{prop:bound_control} and the bound  $|\omega|^{-1}\geq (1+|\omega|)^{-1}$. 
	\end{proof}
	The above results yield a stability bound for the asymptotic Galerkin method \eqref{eq:coercivity_bound_1}.
	\begin{corollary}
		\label{corollary:stability_bound}
		For all $\re\in (0,1]$ and $\omega \in \mathbb{C}^+$, the operator $\mathbb{S}^{\re}_{\sigma\sigma}: \, \mathcal{V}^{\re}_G\rightarrow \mathbb{S}_0$ is invertible, and satisfies
		\begin{align*}%
			&\| (\mathbb{S}^{\re}_{\sigma \sigma})^{-1} \|_{H^{1/2}(\Gamma^{\re})\rightarrow H^{-1/2}(\Gamma^{\re})} \lesssim \re^{-1}  (\underline{d}^{\re}_{*})^{-2} (1+  |\omega|  )^2 \max(1, (\Im \, \omega)^{-3}).
		\end{align*}
	\end{corollary}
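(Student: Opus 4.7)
The plan is to derive the corollary as a direct consequence of the refined coercivity estimate in Theorem \ref{theorem:coercivity_bound_principal} by restricting test and trial functions to the finite-dimensional subspace $\mathcal{V}^{\re}_G$. Note first that the essential simplification on $\mathcal{V}^{\re}_G$ is that, by \eqref{eq:KerQsigma}, for any $\vec{\eta}\in \mathcal{V}^{\re}_G$ one has $\mathbb{Q}_{\sigma}\vec{\eta}=\vec{\eta}$ and $\mathbb{Q}_{\perp}\vec{\eta}=0$, so the orthogonal component of the coercivity estimate disappears and only the factor $\re$ (with its unfavorable sign in the exponent of $\re$) remains.

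The first step is the following computation. For $\vec{\eta}\in \mathcal{V}^{\re}_G$, using $\vec{\eta}=\mathbb{Q}_{\sigma}\vec{\eta}$ and the definition of the adjoint projector together with \eqref{eq:restrictions_single_layer}, I would rewrite
\begin{align*}
\langle \vec{\eta}, \overline{S^{\re}_{\omega}\vec{\eta}}\rangle_{-1/2,1/2}
= \langle \mathbb{Q}_{\sigma}\vec{\eta}, \overline{S^{\re}_{\omega}\vec{\eta}}\rangle_{-1/2,1/2}
= \langle \vec{\eta}, \overline{\mathbb{Q}^*_{\sigma}S^{\re}_{\omega}\vec{\eta}}\rangle_{-1/2,1/2}
= \langle \vec{\eta}, \overline{\mathbb{S}^{\re}_{\sigma\sigma}\vec{\eta}}\rangle_{-1/2,1/2}.
\end{align*}
Applying Theorem \ref{theorem:coercivity_bound_principal} to this $\vec{\eta}$ and using that $\mathbb{Q}_{\perp}\vec{\eta}=0$ then yields
\begin{align*}
c_S\,\re\,\|\vec{\eta}\|^2_{H^{-1/2}(\Gamma^{\re})}
\lesssim \bigl|\langle \vec{\eta}, \overline{\mathbb{S}^{\re}_{\sigma\sigma}\vec{\eta}}\rangle_{-1/2,1/2}\bigr|,
\end{align*}
with $c_S=(\underline{d}^{\re}_*)^{2}(1+|\omega|)^{-2}\min(1,(\Im\omega)^3)$ as in the theorem.

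The second step is to bound the right-hand side using the duality pairing inequality
\begin{align*}
\bigl|\langle \vec{\eta}, \overline{\mathbb{S}^{\re}_{\sigma\sigma}\vec{\eta}}\rangle_{-1/2,1/2}\bigr|
\leq \|\vec{\eta}\|_{H^{-1/2}(\Gamma^{\re})}\,\|\mathbb{S}^{\re}_{\sigma\sigma}\vec{\eta}\|_{H^{1/2}(\Gamma^{\re})},
\end{align*}
and to divide by $\|\vec{\eta}\|_{H^{-1/2}(\Gamma^{\re})}$ (non-zero case), which produces the discrete inf-sup/coercivity inequality
\begin{align*}
\|\mathbb{S}^{\re}_{\sigma\sigma}\vec{\eta}\|_{H^{1/2}(\Gamma^{\re})}
\gtrsim \re\,(\underline{d}^{\re}_*)^{2}(1+|\omega|)^{-2}\min(1,(\Im\omega)^3)\,\|\vec{\eta}\|_{H^{-1/2}(\Gamma^{\re})}.
\end{align*}

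The third step finishes the argument: the lower bound implies that $\mathbb{S}^{\re}_{\sigma\sigma}$ is injective on $\mathcal{V}^{\re}_G$, and since by Lemma \ref{lemma:Galerkin_space} $\dim \mathcal{V}^{\re}_G=\dim \mathbb{S}_0=N$, injectivity between finite-dimensional spaces of equal dimension gives invertibility. Inverting the above inequality and writing $\vec{\eta}=(\mathbb{S}^{\re}_{\sigma\sigma})^{-1}\vec{\varphi}$ for arbitrary $\vec{\varphi}\in\mathbb{S}_0$ yields the claimed operator-norm bound. I do not expect any genuine obstacle here: all the analytic work has already been absorbed into Theorem \ref{theorem:coercivity_bound_principal}, and the only thing to double-check is that the passage from $S^{\re}_{\omega}$ to $\mathbb{S}^{\re}_{\sigma\sigma}$ is legitimate, which is precisely what the adjoint identity in the first step provides.
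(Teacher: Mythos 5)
Your proof is correct and follows essentially the same route as the paper's: both reduce to showing $\langle \vec{\eta}, \overline{\mathbb{S}^{\re}_{\sigma\sigma}\vec{\eta}}\rangle = \langle \vec{\eta}, \overline{S^{\re}_{\omega}\vec{\eta}}\rangle$ for $\vec{\eta}\in\mathcal{V}^{\re}_G$ and applying Theorem~\ref{theorem:coercivity_bound_principal}. The only cosmetic difference is that the paper phrases the argument as an inf-sup condition for the bilinear form on $\mathbb{S}_0\times\mathcal{V}^{\re}_G$, choosing the test function $\mathbb{P}_0^*\vec{\eta}_{\sigma}$ and invoking \eqref{eq:qperp_pperp}(b), whereas you bypass this by using the adjoint identity $\langle\mathbb{Q}_{\sigma}\vec{\eta},\overline{S^{\re}_{\omega}\vec{\eta}}\rangle = \langle\vec{\eta},\overline{\mathbb{Q}_{\sigma}^*S^{\re}_{\omega}\vec{\eta}}\rangle$ together with $\mathbb{Q}_{\sigma}\vec{\eta}=\vec{\eta}$ directly, then conclude via the duality pairing and equality of dimensions; this is a slightly more streamlined presentation of the same argument.
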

	\begin{proof}
		Recall that $\mathbb{S}^{\re}_{\sigma \sigma} = \left.\mathbb{Q}^*_{\sigma} S^{\re}_{\omega}\right|_{\mathcal{V}^{\re}_G}: \mathcal{V}^{\re}_G \to \mathbb{S}_0$. Since  $\dim(\mathcal{V}^{\re}_G) = \dim(\mathbb{S}_0) < +\infty$, it is sufficient to show that $a(\vec{\varphi}, \, \vec{\eta}):=\langle \vec{\varphi}, \overline{\mathbb{S}^{\re}_{\sigma \sigma}\vec{\eta}}\rangle_{-1/2,1/2}$ satisfies an $\operatorname{inf}$-$\operatorname{sup}$-condition on $\mathbb{S}_0\times \mathcal{V}_G^{\re}$, see \cite[Section 3.4.2]{boffi}. In particular, for $\vec{\eta}_{\sigma}\in \mathcal{V}^{\re}_G$, we write
		\begin{align}
			\label{eq:ets}
			\langle \mathbb{P}_0^*\vec{\eta}_{\sigma}, \overline{\mathbb{S}^{\re}_{\sigma\sigma}\vec{\eta}_{\sigma}}\rangle_{-1/2,1/2} = \langle \mathbb{Q}_{\sigma}\mathbb{P}_0^*\vec{\eta}_{\sigma}, \overline{\mathbb{S}^{\re}\vec{\eta}_{\sigma}}\rangle_{-1/2,1/2} =\langle \vec{\eta}_{\sigma}, \overline{\mathbb{S}^{\re}\vec{\eta}_{\sigma}}\rangle_{-1/2,1/2},
		\end{align}
		where we used \eqref{eq:qperp_pperp}(b) and $\mathbb{Q}_{\sigma}\vec{\eta}_{\sigma}=\vec{\eta}_{\sigma}$. By Theorem \ref{theorem:coercivity_bound_principal},
		\begin{align*}
			\left|	\langle \mathbb{P}_0^*\vec{\eta}_{\sigma}, \overline{\mathbb{S}^{\re}_{\sigma\sigma}\vec{\eta}_{\sigma}}\rangle_{-1/2,1/2} \right|\gtrsim c_s\re \|\vec{\eta}_{\sigma}\|^2_{H^{-1/2}(\Gamma^{\re})},
		\end{align*}
		which implies that $\mathbb{S}^{\re}_{\sigma\sigma}$ is invertible. The stability bound follows by remarking that $|\langle \vec{\eta}_{\sigma}, \overline{\mathbb{S}^{\re}_{\sigma\sigma}\vec{\eta}_{\sigma}}\rangle_{-1/2,1/2, \Gamma^{\re}}|$ satisfies the above coercivity bound, see \eqref{eq:ets}. 
	\end{proof}
	Similarly, we obtain the stability bound \eqref{eq:Q_perp_S_inv_Q_perp}.  
	\begin{corollary}
		There exists a constant $C>0$, s.t. for all $\re \in (0,1]$ and $\omega \in \mathbb{C}^+$ the following bound holds true:
		\begin{align*}%
			\| \mathbb{Q}_{\perp} (S^{\re}_{\omega})^{-1} \mathbb{Q}_{\perp}^* \|_{\mathcal{L}(H^{1/2}(\Gamma^{\re}), H^{-1/2} (\Gamma^{\re}))} \leq C(\underline{d}^{\re}_{*})^{-2} (1+ |\omega| )^2 \max(1, (\Im \, \omega)^{-3}).
		\end{align*}
	\end{corollary}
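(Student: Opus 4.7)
The plan is to adapt the argument of Corollary \ref{corollary:stability_bound} to the present setting, exploiting the fact that the bracketing projectors $\mathbb{Q}_{\perp}$ and $\mathbb{Q}_{\perp}^*$ allow us to extract only the $H^{-1/2}_{\perp}(\Gamma^{\re})$-part from the coercivity estimate of Theorem \ref{theorem:coercivity_bound_principal}, avoiding the $\re^{-1}$ penalty that would arise from the $\mathbb{Q}_{\sigma}$-component. First, fix $\vec{g}\in H^{1/2}(\Gamma^{\re})$. Proposition \ref{prop:coercivity_bound} ensures $S^{\re}_{\omega}: H^{-1/2}(\Gamma^{\re})\to H^{1/2}(\Gamma^{\re})$ is invertible, so we may set
\begin{align*}
    \vec{\eta} := (S^{\re}_{\omega})^{-1} \mathbb{Q}_{\perp}^{*}\vec{g} \in H^{-1/2}(\Gamma^{\re}),
\end{align*}
and the task reduces to bounding $\|\mathbb{Q}_{\perp}\vec{\eta}\|_{H^{-1/2}(\Gamma^{\re})}$ by $\|\vec{g}\|_{H^{1/2}(\Gamma^{\re})}$, uniformly in $\re$.

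Second, I apply Theorem \ref{theorem:coercivity_bound_principal} to $\vec{\eta}$, keeping only the (non-negative) $\mathbb{Q}_{\perp}$-contribution on the right-hand side:
\begin{align*}
    -\Im\langle \vec{\eta}, e^{-i\operatorname{Arg}(\omega)}\overline{S^{\re}_{\omega}\vec{\eta}}\rangle_{-1/2,1/2} \gtrsim c_S\,\|\mathbb{Q}_{\perp}\vec{\eta}\|_{H^{-1/2}(\Gamma^{\re})}^{2},
\end{align*}
where $c_S = (\underline{d}^{\re}_{*})^{2}(1+|\omega|)^{-2}\min(1,(\Im\omega)^{3})$. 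Substituting $S^{\re}_{\omega}\vec{\eta} = \mathbb{Q}_{\perp}^{*}\vec{g}$, and using the definition of the adjoint projector together with $\mathbb{Q}_{\perp}^{2}=\mathbb{Q}_{\perp}$, the left-hand side rewrites as
\begin{align*}
    -\Im\Bigl(e^{-i\operatorname{Arg}(\omega)}\langle \vec{\eta}, \overline{\mathbb{Q}_{\perp}^{*}\vec{g}}\rangle_{-1/2,1/2}\Bigr)
    = -\Im\Bigl(e^{-i\operatorname{Arg}(\omega)}\langle \mathbb{Q}_{\perp}\vec{\eta}, \overline{\vec{g}}\rangle_{-1/2,1/2}\Bigr),
\end{align*}
whose modulus is bounded by $\|\mathbb{Q}_{\perp}\vec{\eta}\|_{H^{-1/2}(\Gamma^{\re})}\|\vec{g}\|_{H^{1/2}(\Gamma^{\re})}$ via the duality pairing.

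Combining the previous two displays and dividing through by $\|\mathbb{Q}_{\perp}\vec{\eta}\|_{H^{-1/2}(\Gamma^{\re})}$ (or noting the inequality is trivial if this norm vanishes) yields
\begin{align*}
    \|\mathbb{Q}_{\perp}\vec{\eta}\|_{H^{-1/2}(\Gamma^{\re})} \lesssim c_{S}^{-1}\,\|\vec{g}\|_{H^{1/2}(\Gamma^{\re})} = (\underline{d}^{\re}_{*})^{-2}(1+|\omega|)^{2}\max(1,(\Im\omega)^{-3})\|\vec{g}\|_{H^{1/2}(\Gamma^{\re})},
\end{align*}
which is exactly the claimed bound. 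There is no real obstacle here beyond the bookkeeping: the key structural observation, which is why the argument is clean, is that sandwiching $(S^{\re}_{\omega})^{-1}$ between $\mathbb{Q}_{\perp}$ and $\mathbb{Q}_{\perp}^{*}$ matches the geometry of the coercivity bound, so that only the $\mathbb{Q}_{\perp}$-term on the right of Theorem \ref{theorem:coercivity_bound_principal} is needed and the factor $\re$ in front of $\|\mathbb{Q}_{\sigma}\vec{\eta}\|^{2}$ never appears in the estimate.
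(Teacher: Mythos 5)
Your proof is correct and follows essentially the same route as the paper's: set $\vec{\eta}=(S^{\re}_{\omega})^{-1}\mathbb{Q}_{\perp}^{*}\vec{g}$, apply the coercivity bound of Theorem \ref{theorem:coercivity_bound_principal} keeping only the $\|\mathbb{Q}_{\perp}\vec{\eta}\|^{2}$ term, rewrite $\langle \vec{\eta},\overline{\mathbb{Q}_{\perp}^{*}\vec{g}}\rangle=\langle \mathbb{Q}_{\perp}\vec{\eta},\overline{\vec{g}}\rangle$ by the definition of the adjoint, and conclude by duality. This matches the paper's argument; no gap.
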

	\begin{proof}
		Let $\vec{\varphi}\in H^{1/2}(\Gamma^{\re})$, then, by Proposition \ref{prop:coercivity_bound}, there exists $\vec{\eta} \in H^{-1/2}(\Gamma^{\re})$ such that $ \vec{\eta} =  (S^{\re}_{\omega})^{-1} \mathbb{Q}^*_{\perp} \vec{\varphi}$.
		By Theorem \ref{theorem:coercivity_bound_principal}, 
		\begin{align}
			\label{eq:qperp_bound}
			\| \mathbb{Q}_{\perp} \vec{\eta} \|^2
			_{H^{-1/2}(\Gamma^{\re})}\leq C^{-1} (\underline{d}^{\re}_*)^{-2} (1+|\omega|)^{2} \max(1, (\Im \omega)^{-3}) \left|\langle \vec{\eta}, \overline{S^{\re}_{\omega}\vec{\eta}}\rangle_{-1/2, 1/2} \right|.
		\end{align}
		With $S^{\re}_{\omega}\vec{\eta}=\mathbb{Q}^*_{\perp} \vec{\varphi}$, we replace in the above
		\begin{align*}%
			\langle \vec{\eta}, \overline{S^{\re}_{\omega}\vec{\eta}}\rangle_{-1/2, 1/2} =	\langle \vec{\eta}, \overline{\mathbb{Q}^*_{\perp} \vec{\varphi}}\rangle_{-1/2, 1/2}=\langle \mathbb{Q}_{\perp}\vec{\eta}, \overline{\vec{\varphi}}\rangle_{-1/2, 1/2},
		\end{align*}
		which implies, by recalling that $\vec{\eta}=(S^{\re}_{\omega})^{-1} \mathbb{Q}^*_{\perp}\vec{\varphi}$, the desired bound
		\begin{align*}%
			\| \mathbb{Q}_{\perp} (S^{\re}_{\omega})^{-1} \mathbb{Q}^*_{\perp}\vec{\varphi}\|_{H^{-1/2}(\Gamma^{\re})}\leq  C^{-1} (\underline{d}^{\re}_*)^{-2} (1+|\omega|)^{2} \max(1, (\Im \omega)^{-3})\|\vec{\varphi}\|_{H^{1/2}(\Gamma^{\re})}.
		\end{align*}	
	\end{proof}
	\subsubsection{Proof of the continuity bound on the operator norms \eqref{eq:continuity_bound}}
	\label{sec:continuity_bound} 
	In this section, we obtain a bound on the operator norm of the 'off-diagonal' terms \eqref{eq:continuity_bound}. We start with the operator $\mathbb{S}_{\sigma\perp}^{\re}$. First, remark that
	\begin{align*}%
		\mathbb{S}_{\sigma\perp}^{\re}=\mathbb{Q}_{\sigma}^*\left.S^{\re}_{\omega}\right|_{H^{-1/2}_{\perp}(\Gamma^{\re})}:\, H^{-1/2}_{\perp}(\Gamma^{\re})\rightarrow \mathbb{S}_0
	\end{align*}
	satisfies: for all $\vec{\varphi}\in H^{-1/2}_{\perp}(\Gamma^{\re})$, $
	\mathbb{S}_{\sigma\perp}^{\re}\vec{\varphi}=\mathbb{Q}_{\sigma}^* S^{\re}_{\omega}\mathbb{Q}_{\perp}\vec{\varphi},$ 
	see \eqref{eq:qperp_pperp}(a). Therefore, $$
	\|\mathbb{S}_{\sigma\perp}^{\re}\|_{\mathcal{L}(H^{-1/2}_{\perp}(\Gamma^{\re}), H^{1/2}(\Gamma^{\re}))}\leq \|\mathbb{Q}_{\sigma}^* S^{\re}_{\omega}\mathbb{Q}_{\perp}\|_{\mathcal{L}(H^{-1/2}(\Gamma^{\re}), H^{1/2}(\Gamma^{\re}))},$$
	and we will estimate this latter norm, recalling the following identity:
	\begin{align}
		\label{eq:split0}
		\| \mathbb{Q}_{\sigma}^*\mathbb{S}^{\re}_{\omega}\mathbb{Q}_{\perp} \|_{\mathcal{L}( H^{-1/2}(\Gamma^{\re}), H^{1/2}(\Gamma^{\re}))} &= \sup_{\vec{\lambda}  \in H^{-1/2}(\Gamma^{\re}) \setminus \{0\}} \frac{\| \mathbb{Q}^*_{\sigma} S^{\re}_{\omega} \mathbb{Q}_{\perp} \vec{\lambda} \|_{H^{1/2}(\Gamma^{\re})}}{\| \vec{\lambda} \|_{H^{-1/2}(\Gamma^{\re})}} \nonumber \\
		&= \sup_{\vec{\lambda}, \vec{v}  \in H^{-1/2}(\Gamma^{\re}) \setminus \{0\}} \frac{|\langle  S^{\re}_{\omega} \mathbb{Q}_{\perp}\vec{\lambda}, \mathbb{Q}_{\sigma}\vec{v}\rangle_{1/2, -1/2}|}{\| \vec{\lambda} \|_{H^{-1/2}(\Gamma^{\re})} \| \vec{v} \|_{H^{-1/2}(\Gamma^{\re})}}. 
	\end{align}
	
	In what follows, let us set $\vec{\lambda}_{\perp}:=\mathbb{Q}_{\perp}\vec{\lambda}$ and $\vec{v}_{\sigma}:=\mathbb{Q}_{\sigma}\vec{v}$. 
	The duality product in the above can be split into two terms, which correspond to  self- and cross-interactions:
	\begin{align}
		\label{eq:split}
		\langle  S^{\re}_{\omega} \mathbb{Q}_{\perp}\vec{\lambda}, \mathbb{Q}_{\sigma}\vec{v}\rangle_{1/2, -1/2}=\underbrace{\sum\limits_{k=1}^N\langle S^{\re,kk}_{\omega}\lambda_{\perp,k}, v_{\sigma,k}\rangle_{\Gamma^{\re}_k}}_{C_{si}}+\underbrace{\sum\limits_{k=1}^N\sum\limits_{\ell\neq k}\langle S^{\re,k\ell}_{\omega}\lambda_{\perp,\ell}, v_{\sigma,k}\rangle_{\Gamma^{\re}_k}}_{C_{ci}}.
	\end{align}
	Recall that $S_{\omega}^{\re,k\ell}$ is defined in \eqref{eq:operator_S_kl}. The above terms $C_{si}$ and $C_{ci}$ will be handled in different manners. 	
	
	\paragraph{Estimating self-interacting terms $C_{si}$}
	By density, it suffices to consider $C_{si}$ only for functions $\vec{v}\in L^2(\Gamma^{\re})$. In this case, the duality product writes
	
	\begin{align}
		\label{eq:skk}
		\langle  S^{\re,kk}_{\omega} \lambda_{\perp,k}, v_{\sigma,k}\rangle_{1/2, -1/2}=\int_{\Gamma^{\re}_k}\int_{\Gamma^{\re}_k}G_{\omega}(|\bx-\by|){\lambda}_{\perp,k}(\vec{y})d\Gamma_{\vec{y}}\; v_{\sigma,k}(\vec{x})d\Gamma_{\vec{x}}.
	\end{align}
	Let us now introduce the following new integral kernel (stems from the Taylor expansion of $\mathrm{e}^{i\omega r}$ around $\omega=0$): 
	\begin{align*}%
		\mathcal{K}_{\omega}(r) &:= G_{\omega}(r) - G_0(r) - \frac{i \omega}{4 \pi} = \frac{e^{i\omega r} - 1 - i \omega r}{4\pi r}, \quad r>0,
	\end{align*}
	and the associated operator in $\mathcal{L}(H^{-1/2}(\Gamma^{\re}_k), H^{1}(\mathbb{R}^{3}))$:
	\begin{align}
		\label{eq:Edefvol}
		&\mathcal{E}_{\omega}^{\re,k}\varphi=\int_{\Gamma^{\re}_k}\mathcal{K}_{\omega}(|\vec{x}-\vec{y}|)\varphi(\vec{y})d\Gamma_{\vec{y}},\quad \vec{x}\in \mathbb{R}^3,\\
		\nonumber
		&E_{\omega}^{\re,kk}=\gamma_0 \mathcal{E}_{\omega}^{\re,k}, \quad \text{so that}\quad E_{\omega}^{\re,kk}\varphi(\vec{x})=\int_{\Gamma^{\re}_k}\mathcal{K}_{\omega}(|\vec{x}-\vec{y}|)\varphi(\vec{y})d\Gamma_{\vec{y}},\quad 
		\vec{x}\in \Gamma^{\re}_k.
	\end{align}
	With this new notation, we can rewrite
	\begin{align*}%
		(S^{\re,kk}_{\omega}\varphi)(\bx) = (S_{0}^{\re,kk}\varphi)(\bx)+\frac{i\omega}{4\pi}\int_{\Gamma^{\re}_k}\varphi(\by) d\Gamma_{\by} + (E^{\re,kk}_{\omega}\varphi)(\bx), \quad \bx \in \Gamma^{\re}_k, 
	\end{align*}
	and inserting the above expression with $\varphi=\lambda_{\perp,k}$ into \eqref{eq:skk} yields 
	\begin{align*}%
		\langle  S^{\re,kk}_{\omega} \lambda_{\perp,k}, v_{\sigma,k}\rangle&=\langle S_{0}^{\re,kk}\lambda_{\perp,k}, v_{\sigma,k}\rangle+\frac{i\omega}{4\pi}\int_{\Gamma_k^{\re}}\lambda_{\perp,k}(\vec{y})d\Gamma_{\vec{y}}  \langle 1, v_{\sigma,k}\rangle+\langle E_{\omega}^{\re,kk} \lambda_{\perp,k}, v_{\sigma,k}\rangle\\
		&=\langle E_{\omega}^{\re,kk} \mathbb{P}_{\perp}^*\lambda_{\perp,k}, v_{\sigma,k}\rangle.
	\end{align*}
	In the above we used that $\langle S_0^{\re,kk}\lambda_{\perp,k}, v_{\sigma,k}\rangle=\langle \lambda_{\perp,k}, S_0^{\re,kk}v_{\sigma,k}\rangle=\langle \lambda_{\perp,k},1\rangle=0$ (recall that $\vec{v}_{\sigma}\in \mathcal{V}_{G}^{\re}$) and $\vec{\lambda}_{\perp}\in H^{-1/2}(\Gamma^{\re})$. Next, we rewrite the above by using the symmetry of the integral kernel of $E_{\omega}^{\re,kk}$ and the density argument: 
	\begin{align}
		\label{eq:key_ident}
		\langle  S^{\re,kk}_{\omega} \lambda_{\perp,k}, v_{\sigma,k}\rangle_{1/2, -1/2}=	\langle \lambda_{\perp,k}, \mathbb{P}_{\perp}E_{\omega}^{\re,kk}v_{\sigma,k}\rangle_{-1/2,1/2}.	
	\end{align}
	It remains thus to find a suitable estimate on the operator norm $E^{\re,kk}_{\omega}$. While right away we make use of $\mathbb{P}_{\perp}E_{\omega}^{\re,kk}$, for future computations we will need an estimate on the operator norm of $E_{\omega}^{\re,kk}$ as well. 	
	\begin{proposition}
		\label{prop:Ekk}
		There exists $C>0$ which depends only on $\Gamma_j, \, j=1,\ldots,N$, s.t. for all $\re\in (0,1)$, $\omega \in \mathbb{C}^+$ and $k=1,\ldots, N$, it holds that 
		\begin{align}
			\label{eq:pe}
			&\|\mathbb{P}_{\perp}E^{\re,kk}_{\omega} \|_{\mathcal{L}(L^2(\Gamma^{\re}_k),H^{1/2}(\Gamma^{\re}_k))} \leq C \re^{5/2} |\omega|^2,\\
			\label{eq:pel2}
			&\|E^{\re,kk}_{\omega} \|_{\mathcal{L}(L^2(\Gamma^{\re}_k),L^2(\Gamma^{\re}_k))} \leq C \re^{3} |\omega|^2.
		\end{align}
	\end{proposition}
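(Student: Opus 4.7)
The plan is to prove both bounds by elementary estimates on the kernel $\mathcal{K}_\omega$ combined with direct integration over $\Gamma^{\re}_k$, which has diameter $\lesssim \re$ and surface area $\lesssim \re^2$. The role of $\mathbb{P}_{\perp}$ in \eqref{eq:pe} is essential: by Proposition \ref{prop:norm_equiv_perp}, it allows us to replace the full $H^{1/2}$-norm by the Sobolev--Slobodeckij semi-norm \eqref{eq:norm_H_half}, which only sees kernel \emph{differences} and therefore benefits from a uniform bound on $\mathcal{K}_\omega'$.

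First, I would establish two pointwise estimates, uniform in $\omega \in \mathbb{C}^+$, $r>0$:
\begin{align*}
|\mathcal{K}_\omega(r)| \leq \tfrac{|\omega|^2 r}{8\pi}, \qquad |\mathcal{K}_\omega'(r)| \leq \tfrac{|\omega|^2}{8\pi}.
\end{align*}
Both follow from the integral form of the Taylor remainder. For the first, write $e^{i\omega r}-1-i\omega r = -\omega^2 \int_0^r (r-s)e^{i\omega s}\,ds$, and use $|e^{i\omega s}| = e^{-(\Im\omega) s}\leq 1$ since $\omega \in \mathbb{C}^+$ and $s\geq 0$. For the second, a direct differentiation yields $\mathcal{K}_\omega'(r) = [(i\omega r - 1)e^{i\omega r}+1]/(4\pi r^2)$; setting $h(r):=(i\omega r - 1)e^{i\omega r}+1$ one checks by differentiation that $h(r) = -\omega^2\int_0^r s\,e^{i\omega s}\,ds$, whence $|h(r)|\leq |\omega|^2 r^2/2$.

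For \eqref{eq:pel2}, I would use the first kernel bound to obtain, for $\vec{x}\in\Gamma^{\re}_k$,
\begin{align*}
|E^{\re,kk}_\omega\varphi(\vec{x})| \leq C|\omega|^2 \int_{\Gamma^{\re}_k} |\vec{x}-\vec{y}|\,|\varphi(\vec{y})|\,d\Gamma_\vec{y} \leq C|\omega|^2\,\re\,\|\varphi\|_{L^1(\Gamma^{\re}_k)} \leq C|\omega|^2\re^2\,\|\varphi\|_{L^2(\Gamma^{\re}_k)},
\end{align*}
using $|\vec{x}-\vec{y}|\leq\operatorname{diam}\Gamma^{\re}_k\lesssim \re$, Cauchy--Schwarz, and $|\Gamma^{\re}_k|^{1/2}\lesssim \re$. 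Integrating the square in $\vec{x}$ over $\Gamma^{\re}_k$ then yields $\|E^{\re,kk}_\omega\varphi\|_{L^2(\Gamma^{\re}_k)}\leq C|\omega|^2\re^3\|\varphi\|_{L^2(\Gamma^{\re}_k)}$, which is \eqref{eq:pel2}.

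For \eqref{eq:pe}, Proposition \ref{prop:norm_equiv_perp} reduces the task to estimating the semi-norm $|E^{\re,kk}_\omega\varphi|_{H^{1/2}(\Gamma^{\re}_k)}$. Using $|\mathcal{K}_\omega'|\leq C|\omega|^2$ together with the $1$-Lipschitz property $\bigl||\vec{x}-\vec{z}|-|\vec{y}-\vec{z}|\bigr|\leq|\vec{x}-\vec{y}|$, I deduce
\begin{align*}
|E^{\re,kk}_\omega\varphi(\vec{x})-E^{\re,kk}_\omega\varphi(\vec{y})| \leq C|\omega|^2 |\vec{x}-\vec{y}|\,\|\varphi\|_{L^1(\Gamma^{\re}_k)} \leq C|\omega|^2\re\,|\vec{x}-\vec{y}|\,\|\varphi\|_{L^2(\Gamma^{\re}_k)}.
\end{align*}
Inserted into \eqref{eq:norm_H_half}, this gives
\begin{align*}
|E^{\re,kk}_\omega\varphi|^2_{H^{1/2}(\Gamma^{\re}_k)} \leq C|\omega|^4\re^2\,\|\varphi\|_{L^2(\Gamma^{\re}_k)}^2 \iint_{\Gamma^{\re}_k\times\Gamma^{\re}_k}\frac{d\Gamma_\vec{x}\,d\Gamma_\vec{y}}{|\vec{x}-\vec{y}|},
\end{align*}
and the scaling $\vec{x}=\vec{c}_k+\re(\hat{\vec{x}}-\vec{c}_k)$ reduces the remaining double integral to $\re^3$ times a shape-dependent constant. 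Hence $|E^{\re,kk}_\omega\varphi|_{H^{1/2}(\Gamma^{\re}_k)}\lesssim |\omega|^2 \re^{5/2}\|\varphi\|_{L^2(\Gamma^{\re}_k)}$, yielding \eqref{eq:pe}.

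The main obstacle, if any, is recognizing the correct setup for \eqref{eq:pe}: a direct estimate of the full $H^{1/2}$-norm of $E^{\re,kk}_\omega\varphi$ would lose the optimal $\re^{1/2}$ gain, since the $L^2$ part already scales only as $\re^3|\omega|^2$ rather than $\re^{5/2}|\omega|^2$. It is precisely the mean-value cancellation afforded by $\mathbb{P}_\perp$ that allows one to discard the $L^2$ contribution and let the kernel-difference bound $|\mathcal{K}_\omega(|\vec{x}-\vec{z}|)-\mathcal{K}_\omega(|\vec{y}-\vec{z}|)|$ govern the estimate, together with the shape-uniform integrability of $1/|\vec{x}-\vec{y}|$ on the rescaled surface.
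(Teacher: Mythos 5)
Your proposal is correct and mirrors the paper's proof in structure: reduce $\|\mathbb{P}_\perp E^{\re,kk}_\omega\varphi\|_{H^{1/2}}$ to the seminorm $|E^{\re,kk}_\omega\varphi|_{H^{1/2}(\Gamma^{\re}_k)}$ via Proposition \ref{prop:norm_equiv_perp}, control that seminorm by a Lipschitz bound stemming from the uniform bound on $\mathcal{K}_\omega'$, and extract the $\re$-scalings from the area of $\Gamma^{\re}_k$ and the scaling $\iint_{\Gamma^{\re}_k\times\Gamma^{\re}_k}|\bx-\by|^{-1}d\Gamma_{\bx}d\Gamma_{\by}\lesssim\re^3$. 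Two small refinements distinguish your route from the paper's. First, you obtain $\sup_{r\geq 0}|\mathcal{K}_\omega'(r)|\lesssim|\omega|^2$ (and the companion pointwise bound $|\mathcal{K}_\omega(r)|\lesssim|\omega|^2 r$) via explicit Taylor-remainder integrals and the causality estimate $|e^{i\omega s}|\leq 1$ for $s\geq 0$; the paper's Lemma \ref{lemma:kernel_bound} reaches the same bound through analyticity of $z\mapsto\frac{ze^{iz}-e^{iz}+1}{4\pi z^2}$ and the maximum principle, which is less elementary. Second, for \eqref{eq:pe} the paper passes through the volume extension $\mathcal{E}^{\re,k}_\omega\varphi$ and invokes Lemma \ref{lem:normh12} on $\|\nabla\mathcal{E}^{\re,k}_\omega\varphi\|_{L^\infty(B^{\re}_k)}$ (requiring a brief differentiation under the integral), whereas you derive a Lipschitz constant for the trace $E^{\re,kk}_\omega\varphi$ directly from $\sup|\mathcal{K}_\omega'|$ and the reverse triangle inequality, then substitute it into the Sobolev--Slobodeckij seminorm. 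This is the same estimate with the auxiliary lemma unrolled, and it is arguably slightly cleaner since it never leaves the boundary.
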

	The proof of this proposition is based on the following auxiliary result. 
	\begin{lemma}
		\label{lemma:kernel_bound}
		The function $(\vec{x}, \vec{y})\mapsto \mathcal{K}_{\omega}(|\bx-\by|)\in C^1(\mathbb{R}^3\times\mathbb{R}^3)$. 
		Moreover, there exist $C_k, C_k'>0$, s.t. for all $\re \in (0,1)$ and $\omega\in \mathbb{C}^+$, it holds 
		\begin{align}
			\label{eq:Kbound}	
			(a)~\max_{\vec{x}, \, \vec{y}\in \Gamma^{\re}_k}|\mathcal{K}_{\omega}(|\vec{x}-\vec{y}|)|\leq C_k\varepsilon|\omega|^2,\quad
			(b)	~ \max_{\vec{x}, \vec{y}\in \mathbb{R}^3}|\nabla_{\vec{x}}\mathcal{K}_{\omega}(|\vec{x}-\vec{y}|)|\leq C_k'|\omega|^2.
		\end{align}
	\end{lemma}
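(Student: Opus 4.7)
The plan is to use Taylor's theorem with integral remainder for $z\mapsto e^{iz}$, leveraging the fundamental estimate $|e^{iz}|\leq 1$ that holds whenever $\Im z \geq 0$. The key observation is that $\mathcal{K}_\omega(r)$ is obtained from $G_\omega(r)$ by subtracting exactly the constant and linear terms of the exponential series, so it vanishes to order $\omega^2 r^2$ near $r=0$. This two-order vanishing is precisely what will extract the factor $|\omega|^2$ in both bounds. The regularity assertion then reduces to checking that $\mathcal{K}_\omega$ and $\mathcal{K}_\omega'$, viewed as functions of $r$, extend continuously through $r=0$, which is visible from the Taylor expansion.

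For (a), I would start from the second-order Taylor remainder
\begin{equation*}
e^{iz}-1-iz=-z^2\int_0^1 (1-t)\,e^{itz}\,dt,
\end{equation*}
valid for every $z\in\mathbb{C}$. Taking $z=\omega r$ with $\omega\in\mathbb{C}^+$ and $r\geq 0$, the estimate $|e^{it\omega r}|=e^{-tr\Im\omega}\leq 1$ for $t\in[0,1]$ gives $|e^{i\omega r}-1-i\omega r|\leq |\omega|^2 r^2/2$. Dividing by $4\pi r$ yields the pointwise bound $|\mathcal{K}_\omega(r)|\leq |\omega|^2 r/(8\pi)$, and since $|\vec x-\vec y|\leq 2\re r_k$ whenever $\vec x,\vec y\in\Gamma^{\re}_k$, (a) follows with $C_k=r_k/(4\pi)$.

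For (b), the chain rule gives $\nabla_{\vec x}\mathcal{K}_\omega(|\vec x-\vec y|)=\mathcal{K}'_\omega(|\vec x-\vec y|)\,(\vec x-\vec y)/|\vec x-\vec y|$, so it suffices to show $|\mathcal{K}'_\omega(r)|\lesssim |\omega|^2$ uniformly in $r>0$. A direct quotient rule computation gives
\begin{equation*}
\mathcal{K}'_\omega(r)=\frac{(i\omega r-1)e^{i\omega r}+1}{4\pi r^2}.
\end{equation*}
Setting $g(z):=(iz-1)e^{iz}+1$, one verifies $g(0)=g'(0)=0$ and $g''(z)=-(1+iz)e^{iz}$, so the integral Taylor formula yields $g(z)=-z^2\int_0^1(1-t)(1+itz)e^{itz}\,dt$.

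The main obstacle is that this representation only delivers the crude bound $|g(\omega r)|\leq |\omega r|^2(1+|\omega r|)/2$, which carries an extra power of $|\omega|$ when $|\omega|r$ is large. I would overcome this by splitting into two regimes: when $|\omega|r\leq 1$, the Taylor estimate already gives $|\mathcal{K}'_\omega(r)|\lesssim |\omega|^2$; when $|\omega|r>1$ I would instead use the trivial bound $|g(z)|\leq |z|+2$ coming from $|e^{iz}|\leq 1$, and absorb the singular denominator by means of $1/r\leq |\omega|$ and $1/r^2\leq|\omega|^2$, again recovering $|\mathcal{K}'_\omega(r)|\lesssim|\omega|^2$. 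Combining the two cases yields (b). The $C^1$ regularity claim is then handled by inspecting the explicit formulas: both $\mathcal{K}_\omega(r)$ and $\mathcal{K}'_\omega(r)$ are analytic for $r>0$ and extend continuously as $r\to 0^+$ through the Taylor expansions $\mathcal{K}_\omega(r)=-\omega^2 r/(8\pi)+O(|\omega|^3 r^2)$ and $\mathcal{K}'_\omega(r)=-\omega^2/(8\pi)+O(|\omega|^3 r)$.
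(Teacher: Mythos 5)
Your proof is correct, and it reaches both bounds by a genuinely more elementary route than the paper. The paper's proof first establishes the uniform bound $|\mathcal{K}'_{\omega}(r)|\leq C|\omega|^2$ for all $r\geq 0$, $\omega\in\mathbb{C}^+$ by writing $\mathcal{K}'_{\omega}(r)=\omega^2 f(\omega r)$ and invoking a maximum-principle argument for the function $f$, analytic in $\mathbb{C}^+$, continuous up to the real axis and decaying like $C/|z|$ at infinity; the estimate (a) is then deduced from this derivative bound via the mean-value theorem together with $\mathcal{K}_{\omega}(0)=0$, the factor $\varepsilon$ coming from $\operatorname{diam}\Gamma^{\re}_k$, and (b) follows from the same chain-rule identity you use. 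You instead obtain (a) directly from the second-order integral Taylor remainder of $e^{iz}$ combined with $|e^{itz}|\leq 1$ on $\overline{\mathbb{C}^+}$, and you obtain the derivative bound for (b) from the Taylor remainder of $g(z)=(iz-1)e^{iz}+1$ together with the case split $|\omega|r\leq 1$ versus $|\omega|r>1$, where in the second regime the trivial bound $|g(z)|\leq|z|+2$ and $1/r\leq|\omega|$ absorb the singular denominator. Your computations ($g(0)=g'(0)=0$, $g''(z)=-(1+iz)e^{iz}$, and the quotient-rule formula for $\mathcal{K}'_{\omega}$) check out, and the trade-off is clear: your argument is self-contained, gives explicit constants (e.g.\ $C_k=r_k/(4\pi)$), and avoids any complex-analytic machinery, while the paper's packaging through the maximum principle is shorter once one accepts that tool. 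On the regularity assertion you do essentially what the paper does, reducing it to the continuity of $\mathcal{K}_{\omega}$ and $\mathcal{K}'_{\omega}$ up to $r=0$ via their expansions at the origin, so your treatment is at the same level of detail as the paper's on that point.
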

	\begin{proof}
		From the definition of $r\mapsto\mathcal{K}_{\omega}(r)$ it is immediate that it is $C^{\infty}(0, \infty)$, therefore, in our case, it is sufficient to check that $r\mapsto \mathcal{K}_{\omega}'(r)$ admits a limit as $r\rightarrow 0$. The desired result is obtained by a direct computation for
		\begin{align}
			\label{eq:komega}
			\mathcal{K}_{\omega}'(r)=\omega^2\frac{i\omega r \mathrm{e}^{i\omega r}-\mathrm{e}^{i\omega r}+1}{4\pi \omega^2 r^2}.
		\end{align}
		\mypar{Proof of \eqref{eq:Kbound}(a).} 
		By the mean-value theorem, we have 
		\begin{align}
			\label{eq:uppbound}
			\max_{\vec{x}, \, \vec{y}\in \Gamma^{\re}_k}|\mathcal{K}_{\omega}(|\vec{x}-\vec{y}|)|\leq \max_{r\in [0, \operatorname{diam}\Gamma_k^{\re}]}|\mathcal{K}_{\omega}'(r)|\operatorname{diam}\Gamma_k^{\re},
		\end{align}
		where we used $
		\max_{\vec{x}, \, \vec{y}\in \Gamma^{\re}_k}|\mathcal{K}_{\omega}(|\vec{x}-\vec{y}|)|=\max_{r\in [0, \operatorname{diam}\Gamma_k^{\re}]}|\mathcal{K}_{\omega}(r)|$ and  $\mathcal{K}_{\omega}(0)=0$.
		Next, a direct computation \eqref{eq:komega} shows that $\mathcal{K}'_{\omega}=\omega^2 f(\omega r)$,  
		with $f(z)=\frac{z\mathrm{e}^{iz}-\mathrm{e}^{iz}+1}{4\pi z^2}$. The function $z\mapsto f(z)$ is analytic in $\mathbb{C}^+$,  continuous up to the boundary of $\mathbb{C}^+$ (i.e. well-defined on $\mathbb{R}$), is uniformly bounded by $C/|z|$, $C>0$, for $z\in \mathbb{C}^+\cup\mathbb{R}$ s.t. $|z|\rightarrow+\infty$. By the maximum principle, $z\mapsto|f(z)|$ is bounded by a constant. 	Thus, 
		\begin{align}
			\label{eq:uniform_bound}
			|\mathcal{K}'_{\omega}(r)|\leq C|\omega|^2, \quad \forall \omega\in \mathbb{C}^+, \quad  r\geq 0.
		\end{align}
		With \eqref{eq:uppbound}, this yields the desired bound.
		
		\mypar{Proof of \eqref{eq:Kbound}(b).}	
		We start by remarking that 
		\begin{align}
			\label{eq:grad_K}
			\nabla_{\vec{x}}\mathcal{K}_{\omega}(|\vec{x}-\vec{y}|)=\mathcal{K}'_{\omega}(|\vec{x}-\vec{y}|)\frac{(\vec{x}-\vec{y})}{|\vec{x}-\vec{y}|}.
		\end{align}
		Using \eqref{eq:uniform_bound} yields the desired bound \eqref{eq:Kbound}(b). 
		
	\end{proof}
	\begin{proof}[Proof of Proposition \ref{prop:Ekk}]
		\mypar{Proof of \eqref{eq:pe}. }
		Let $\varphi \in L^2(\Gamma^{\re}_k)$. We use the norm equivalence result of Proposition \ref{prop:norm_equivalence}:
		\begin{align}
			\label{eq:Pk}
			\|\mathbb{P}_{\perp}E^{\re,kk}_{\omega}\varphi\|_{H^{1/2}(\Gamma^{\re})}\lesssim |E^{\re,{kk}}_{\omega}\varphi|_{H^{1/2}(\Gamma^{\re}_k)}.
		\end{align}
		To estimate the $H^{1/2}(\Gamma^{\re}_k)$-seminorm, we make use of Lemma \ref{lem:normh12}. To justify its use, let us remark that for $\varphi\in L^2(\Gamma^{\re}_k)$,  $\mathcal{E}^{\re,k}_{\omega}\varphi\in C^1(\overline{B^{\re}_k})$, cf. Lemma \ref{lemma:kernel_bound}:
		\begin{align}
			\label{eq:Erek}
			|E^{\re,kk}_\omega \varphi |_{H^{1/2}(\Gamma^{\re}_k)}\lesssim \varepsilon^{3/2} \|\nabla \mathcal{E}^{\re,k}_{\omega}\varphi\|_{L^{\infty}(B^{\re}_k)}.
		\end{align}
		Using \eqref{eq:uniform_bound}(b) allows us to interchange $\nabla$ and integration, and we obtain
		\begin{align*}%
			(\nabla \mathcal{E}^{\re,k}_{\omega}\varphi)(\vec{x})=\int_{\Gamma^{\re}_k} \nabla_{\vec{x}}\mathcal{K}_{\omega}(|\bx-\by|)\varphi(\vec{y})d\Gamma_{\vec{y}} \Rightarrow \|\nabla \mathcal{E}^{\re,k}_{\omega}\varphi\|_{L^{\infty}(B_k^{\re})}\leq C_k \re |\omega|^2\|\varphi\|_{L^2(\Gamma^{\re}_k)},
		\end{align*}
		where the last bound follows from \eqref{eq:Kbound}(b), the Cauchy-Schwarz inequality and the bound $\|  1 \|_{L^2(\Gamma^{\re}_k)} \lesssim \re$. Finally, combining the above bound with \eqref{eq:Erek} yields 
		\begin{align*}%
			|E^{\re,kk}_{\omega}\varphi|_{H^{1/2}(\Gamma^{\re}_k)}\leq C_k \varepsilon^{5/2}|\omega|^2\|\varphi\|_{L^2(\Gamma^{\re}_k)},
		\end{align*}
		which, with \eqref{eq:Pk} results in the desired statement.\\
		\mypar{Proof of \eqref{eq:pel2}.} Let $\varphi\in L^2(\Gamma^{\re}_k)$. We make use of Lemma \ref{lem:norml2}  which gives
		\begin{align*}%
			\|E_{\omega}^{\re,kk}\varphi\|_{L^2(\Gamma^{\re}_k)}\lesssim \re	\|\mathcal{E}_{\omega}^{\re,k}\varphi\|_{L^{\infty}(B^{\re}_k)}.
		\end{align*}
		We proceed like in the previous step but use now  \eqref{eq:uniform_bound}(a). 
	\end{proof}
	Now we can plug the bound of Proposition \ref{prop:Ekk} into the identity \eqref{eq:key_ident}. Applying Proposition \ref{prop:L^2_H_minus_half_bound}, in order to control the $L^2$ norm of $v_{\sigma, k}$ via the $H^{-1/2}$ norm, we obtain 
	\begin{align*}%
		|\langle S^{\re,kk}_{\omega} \lambda_{\perp, k}, v_{\sigma, k} \rangle_{\Gamma^{\re}_k}| &= |\langle \lambda_{\perp,k}, \mathbb{P}_{\perp} E^{\re,k}_{\omega} v_{\sigma, k} \rangle_{\Gamma^{\re}_k}|  \\
		&\leq \| \mathbb{P}_{\perp} E^{\re,kk}_{\omega} \|_{L^2(\Gamma^{\re}_k) \rightarrow H^{1/2}(\Gamma^{\re}_k)} \| \lambda_{\perp, k} \|_{H^{-1/2}(\Gamma^{\re}_k)} \| v_{\sigma, k} \|_{L^2(\Gamma^{\re}_k)} \\
		&\leq C_k \re^{5/2} |\omega|^2  \| \lambda_{\perp, k} \|_{H^{-1/2}(\Gamma^{\re}_k)} \| v_{\sigma, k} \|_{H^{-1/2}(\Gamma^{\re}_k)}.
	\end{align*}
	Combining this bound with the definition of $C_{si}$ in \eqref{eq:split} and using the $\ell_2$ Cauchy-Schwarz inequality yields the following key bound: 
	\begin{align}
		\label{eq:Csi}
		C_{si} &\leq C\re^{5/2}|\omega|^2 \|\vec{\lambda}_{\perp}\|_{H^{-1/2}(\Gamma^{\re})}\|\vec{v}_{\sigma}\|_{H^{-1/2}(\Gamma^{\re})}.
	\end{align}
	\paragraph{Estimating cross-interacting terms $C_{ci}$}
	
	We start with one of the terms, for $\ell\neq k$. By the symmetry of the integral kernel and density argumentm
	\begin{align*}%
		\langle S_{\omega}^{\re,\ell k}\lambda_{\perp,k}, v_{\sigma,\ell}\rangle_{\Gamma^{\re}_{\ell}}=\langle \lambda_{\perp,k}, S_{\omega}^{\re,k \ell}v_{\sigma,\ell}\rangle_{\Gamma^{\re}_k}=\langle \lambda_{\perp,k}, \mathbb{P}_{\perp}S_{\omega}^{\re,k \ell}v_{\sigma,\ell}\rangle_{\Gamma^{\re}_k}.
	\end{align*}
	Applying the norm equivalence of Proposition \ref{prop:norm_equivalence} yields
	\begin{equation}
		\label{eq:somega0}
		\begin{aligned}
			|\langle \mathbb{P}_{\perp}S_{\omega}^{\re,k \ell}v_{\sigma,\ell},  \lambda_{\perp,k} \rangle_{\Gamma^{\re}_k}| \lesssim 
			|\mathbb{P}_{\perp}S_{\omega}^{\re,k\ell}v_{\sigma,\ell}|_{H^{1/2}(\Gamma^{\re}_k)}\|\lambda_{\perp,k}\|_{H^{-1/2}(\Gamma^{\re}_k)}.
		\end{aligned}
	\end{equation}
	Since $|\mathbb{P}_{\perp}v|_{H^{1/2}(\Gamma^{\re}_k)}=|v-\mathbb{P}_0v|_{H^{1/2}(\Gamma^{\re}_k)}=|v|_{H^{1/2}(\Gamma^{\re}_k)}$, we also have that 
	\begin{align*}%
		|\langle \mathbb{P}_{\perp}S_{\omega}^{\re,k \ell}v_{\sigma,\ell},  \lambda_{\perp,k} \rangle_{\Gamma^{\re}_k}| \lesssim 
		|S_{\omega}^{\re,k\ell}v_{\sigma,\ell}|_{H^{1/2}(\Gamma^{\re}_k)}\|\lambda_{\perp,k}\|_{H^{-1/2}(\Gamma^{\re}_k)}.
	\end{align*}
	Finally, with Lemma \ref{lem:normh12}, 
	\begin{align}
		\label{eq:somega}	
		\begin{split}
			|S_{\omega}^{\re,k\ell}v_{\sigma,\ell}|_{H^{1/2}(\Gamma^{\re}_k)} &\leq C \varepsilon^{3/2}\left\|\nabla_{\vec{x}}\int_{\Gamma_{\ell}^{\re}}G_{\omega}(|\bx-\by|)v_{\sigma,\ell}(\vec{y})d\Gamma_{\vec{y}}\right\|_{L^{\infty}(B_k^{\re})} \\
			&\leq C \varepsilon^{5/2}\max_{\vec{x}\in B_{\ell}^{\re}, \vec{y}\in B_k^{\re}}|\nabla_{\vec{x}}G_{\omega}(|\vec{x}-\vec{y}|)|\|v_{\sigma, {\ell}}\|_{L^2(\Gamma^{\re}_{\ell})},
		\end{split}
	\end{align}
	where in the last inequality we used the fact that $\vec{x}\rightarrow G_{\omega}(|\vec{x}-\vec{y}|)$ is a $C^{\infty}$ function, the Cauchy-Schwarz inequality and the bound $\|1\|_{L^2(\Gamma^{\re}_{\ell})}\lesssim \varepsilon$. Since 
	\begin{align*}%
		\nabla_{\bx} G_{\omega}(|\bx-\by|)=\frac{i\omega \mathrm{e}^{i\omega|\bx-\by|}}{4\pi|\bx-\by|}\frac{(\bx-\by)}{|\bx-\by|}-\frac{\mathrm{e}^{i\omega|\bx-\by|}}{4\pi|\bx-\by|^3}(\vec{x}-\vec{y}),
	\end{align*}
	it is straightforward to see that for $\omega\in \mathbb{C}^+$,
	\begin{align}
		\label{eq:bound_grad_G}
		|\nabla_{\vec{x}}G_{\omega}(|\bx-\by|)|\lesssim \max(1,|\omega|) |\vec{x}-\vec{y}|^{-1}\left(1+|\vec{x}-\vec{y}|^{-1}\right)\lesssim (1+|\omega|)(\underline{d}^{\re}_*)^{-2}.
	\end{align}
	Combining the above and \eqref{eq:somega} into \eqref{eq:somega0}, we conclude that 
	\begin{align*}%
		|\langle S_{\omega}^{\re,k\ell}\lambda_{\perp,\ell}, v_{\sigma,k}\rangle| &\lesssim \re^{5/2} (\underline{d}^{\re}_*)^{-2}\max(1,|\omega|) \|v_{\sigma,\ell}\|_{L^2(\Gamma^{\re}_{\ell})}\|\lambda_{\perp, k}\|_{H^{-1/2}(\Gamma^{\re}_k)}.
	\end{align*}
	The above, Proposition \ref{prop:L^2_H_minus_half_bound},  \eqref{eq:split} and the $\ell_2$ Cauchy-Schwarz inequality yield:
	\begin{align}
		\nonumber
		C_{ci} &\leq C \varepsilon^{5/2} \max(1,|\omega|) (\underline{d}^{\re}_{*})^{-2} \sum_{\ell=1}^N \|\lambda_{\perp,\ell}\|_{H^{-1/2}(\Gamma^{\re}_{\ell})} \sum_{k=1}^N \| v_{\sigma, k} \|_{H^{-1/2}(\Gamma^{\re}_k)} \\
		\label{eq:cci}
		&\leq  C \varepsilon^{5/2} N \max(1,|\omega|) (\underline{d}^{\re}_{*})^{-2} \| \vec{v}_{\sigma} \|_{H^{-1/2}(\Gamma^{\re})} \| \vec{\lambda}_{\perp} \|_{H^{-1/2}(\Gamma^{\re})}.
	\end{align}
	\paragraph{Proof of the bound \eqref{eq:continuity_bound}}
	We combine the bounds \eqref{eq:Csi} and \eqref{eq:cci} into \eqref{eq:split0}: 
	\begin{align*}%
		\| \mathbb{Q}_{\sigma}^*\mathbb{S}^{\re}_{\omega}\mathbb{Q}_{\perp} \|_{\mathcal{L}( H^{-1/2}(\Gamma^{\re}), H^{1/2}(\Gamma^{\re}))} &\leq
		C\re^{5/2}N (1+|\omega|)^2(\underline{d}^{\re}_{*})^{-2} \times \\
		& \times\sup_{\vec{\lambda}, \vec{v}  \in H^{-1/2}(\Gamma^{\re}) \setminus \{0\}} \frac{ \|\mathbb{Q}_{\perp}\vec{\lambda}\|_{H^{-1/2}(\Gamma^{\re})} \|\mathbb{Q}_{\sigma}\vec{v}\|_{H^{-1/2}(\Gamma^{\re})}}{\| \vec{\lambda} \|_{H^{-1/2}(\Gamma^{\re})} \| \vec{v} \|_{H^{-1/2}(\Gamma^{\re})}},
	\end{align*}
	and the desired bound is immediate from Proposition \ref{prop:norm_projectors_Q}.
	
	The bound for $\mathbb{S}_{\perp\sigma}^{\re}$ can be obtained similarly, and thus we omit its proof.  
	\begin{remark}
		\label{remark:choice_of_decomposition}
		From the above proof, it follows that that the behaviour of the self-interacting terms $\langle S^{\re, kk}_{\omega} \lambda_{\perp,k}, v_{\sigma,k} \rangle_{\Gamma^{\re}_k}$  is very sensitive to the proper choice of the space decomposition of the solution space $H^{-1/2}(\Gamma^{\re})$; at the same time, it can be shown that for the cross-interacting terms $\langle S^{\re, k \ell}_{\omega} \lambda_{\perp, \ell}, v_{\sigma,k} \rangle_{\Gamma^{\re}_k}$, replacing $v_{\sigma,k}\in \mathcal{V}^{\re}_G$ by elements of $\mathbb{S}_0$ would  still yield an estimate of order $\re^{5/2}$.
	\end{remark}
	
	\subsubsection{Proof of Theorem \ref{theorem:bounds_data}}
	\label{sec:data_bound}
	The goal of this section is to prove that
	\begin{align*}%
		\|\mathbb{Q}^{*}_{\perp}( \hat{\vec{g}}^{\re}-\hat{\vec{g}}^{\re}_G)\|_{H^{1/2}(\Gamma^{\re})}  
		\lesssim C_{\omega, N, d^{\re}_*} (\re^{3/2} \|\hat{\vec{g}}^{\re} \|_{L^2(\Gamma^{\re})} + |  \hat{\vec{g}}^{\re} |_{H^{1/2}(\Gamma^{\re})} ),
	\end{align*}
	with $C_{\omega, N, d^{\re}_*} := N (\underline{d}^{\re}_{*})^{-4} (1+|\omega|)^4 \max(1, (\operatorname{Im} \omega)^{-3})$.
	The proof relies on the following auxiliary result. 
	\begin{lemma}[A stability bound on $\hat{\vec{\lambda}}^{\re}_G$]
		\label{lemma:stability_bound_lambda_G}
		There exists a constant $C>0$, s.t. for all $\re \in (0,1)$ and $\omega \in \mathbb{C}^+$, the Galerkin density $\hat{\vec{\lambda}}^{\re}_G$, defined in \eqref{eq:approximate_density_frequency_domain}, satisfies
		\begin{align*}%
			\| \hat{\vec{\lambda}}^{\re}_G \|_{H^{-1/2}(\Gamma^{\re})} \leq\re^{-1} \times C (\underline{d}^{\re}_{*})^{-2} (1+  |\omega|)^2 \max(1, (\Im  \omega)^{-3}) \| \hat{\vec{g}}^\re \|_{L^2(\Gamma^{\re})}. 
		\end{align*}
	\end{lemma}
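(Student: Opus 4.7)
The plan is to directly combine the Galerkin equation \eqref{eq:sresigma} with the already-established bounds on the inverse of the diagonal block $\mathbb{S}^{\re}_{\sigma\sigma}$ and on the adjoint projector $\mathbb{Q}^*_{\sigma}$. Concretely, I would start from the reformulation
$$
\mathbb{S}^{\re}_{\sigma\sigma}\hat{\vec{\lambda}}^{\re}_G = \mathbb{Q}^{*}_{\sigma}\hat{\vec{g}}^{\re},
$$
which was derived in the remark following \eqref{eq:restrictions_single_layer}. Since $\hat{\vec{\lambda}}^{\re}_G \in \mathcal{V}^{\re}_G$ and $\mathbb{S}^{\re}_{\sigma\sigma}: \mathcal{V}^{\re}_G \to \mathbb{S}_0$ is invertible by Corollary \ref{corollary:stability_bound}, I can write
$$
\hat{\vec{\lambda}}^{\re}_G = (\mathbb{S}^{\re}_{\sigma\sigma})^{-1} \mathbb{Q}^{*}_{\sigma}\hat{\vec{g}}^{\re},
$$
and hence
$$
\|\hat{\vec{\lambda}}^{\re}_G\|_{H^{-1/2}(\Gamma^{\re})} \le \|(\mathbb{S}^{\re}_{\sigma\sigma})^{-1}\|_{H^{1/2}(\Gamma^{\re}) \to H^{-1/2}(\Gamma^{\re})} \, \|\mathbb{Q}^{*}_{\sigma}\hat{\vec{g}}^{\re}\|_{H^{1/2}(\Gamma^{\re})}.
$$

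The next step is to insert the two available bounds. Corollary \ref{corollary:stability_bound} yields exactly the prefactor
$$
\|(\mathbb{S}^{\re}_{\sigma\sigma})^{-1}\|_{H^{1/2}(\Gamma^{\re}) \to H^{-1/2}(\Gamma^{\re})} \lesssim \re^{-1}(\underline{d}^{\re}_{*})^{-2}(1+|\omega|)^{2}\max(1,(\Im\omega)^{-3}),
$$
which already carries all the $\re$, $\omega$ and $\underline{d}^{\re}_{*}$ dependence appearing in the statement. For the data factor I invoke Proposition \ref{prop:qsigmaP0}, which gives
$$
\|\mathbb{Q}^{*}_{\sigma}\hat{\vec{g}}^{\re}\|_{H^{1/2}(\Gamma^{\re})} = \|\mathbb{Q}^{*}_{\sigma}\hat{\vec{g}}^{\re}\|_{L^2(\Gamma^{\re})} \le C\|\hat{\vec{g}}^{\re}\|_{L^2(\Gamma^{\re})},
$$
with a constant independent of $\re \in (0,1]$. (The first equality uses $\operatorname{Im}\mathbb{Q}^*_{\sigma}=\mathbb{S}_0$ from \eqref{eq:kerqperp} to replace the $H^{1/2}$-norm by the $L^2$-norm on obstacle-wise constants, while the inequality follows from the explicit formula in Lemma \ref{lem:adjoint} together with the scaling $c^{\re}_k = \re c^{1}_k$ and $\|\sigma^{\re}_k\|^2_{L^2(\Gamma^{\re}_k)}=\|\sigma^{1}_k\|^2_{L^2(\Gamma_k)}$ from \eqref{eq:sigmakeps}.)

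Multiplying these two estimates gives the claimed bound. There is no genuine obstacle: this lemma is essentially an assembly step whose role is to package the abstract Galerkin stability of $\mathbb{S}^{\re}_{\sigma\sigma}$ together with the $\re$-uniform continuity of $\mathbb{Q}^*_{\sigma}: L^2 \to L^2$ into a usable $L^2$-norm right-hand side estimate. The only point worth checking carefully is the $\re$-uniformity of Proposition \ref{prop:qsigmaP0}, which is the reason why no additional $\re$-factor degrades the final rate; all the $\re^{-1}$ loss comes solely from inverting $\mathbb{S}^{\re}_{\sigma\sigma}$ on the space $\mathcal{V}^{\re}_G$, reflecting the fact that the equilibrium densities $\sigma^{\re}_k$ scale like $\re^{-1}$ under the rescaling \eqref{eq:sigmakeps}.
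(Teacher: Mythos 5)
Your proposal is correct and coincides with the paper's own argument: the paper likewise writes $\hat{\vec{\lambda}}^{\re}_G=(\mathbb{S}^{\re}_{\sigma\sigma})^{-1}\mathbb{Q}^*_{\sigma}\hat{\vec{g}}^{\re}$ via \eqref{eq:sresigma} and then combines Corollary \ref{corollary:stability_bound} with Proposition \ref{prop:qsigmaP0}. Your additional remarks on why $\mathbb{Q}^*_{\sigma}$ is $\re$-uniformly bounded match the ingredients used there.
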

	\begin{proof}
		From \eqref{eq:sresigma} it follows that  $\|\blambda^{\re}_G\|_{H^{-1/2}(\Gamma^{\re})}\leq \|\left(\mathbb{S}_{\sigma\sigma}^{\re}\right)^{-1}\|\|\mathbb{Q}_{\sigma}^*\vec{\hat{g}}^{\re}\|_{H^{1/2}(\Gamma^{\re})}$. It remains to combine Corollary \ref{corollary:stability_bound} and Proposition \ref{prop:qsigmaP0}. 
	\end{proof}
	\begin{proof}[Proof of Theorem \ref{theorem:bounds_data}]
		We begin by applying the triangle inequality 
		\begin{align*}%
			\| \mathbb{Q}^*_{\perp} (\hat{\vec{g}}^{\re} - \hat{\vec{g}}^{\re}_G) \|_{H^{1/2}(\Gamma^{\re})} \leq \| \mathbb{Q}^*_{\perp} \hat{\vec{g}}^{\re} \|_{H^{1/2}(\Gamma^{\re})} + \| \mathbb{Q}^*_{\perp} \hat{\vec{g}}^{\re}_G \|_{H^{1/2}(\Gamma^{\re})}.
		\end{align*}
		By Proposition \ref{prop:norm_dual_Q_perp}, for the first term in the rhs of the above, it holds that
		\begin{align*}%
			\| \mathbb{Q}^*_{\perp} \hat{\vec{g}}^{\re} \|_{H^{1/2}(\Gamma^{\re})} \lesssim  | \hat{\vec{\vec{g}}}^{\re}|_{H^{1/2}(\Gamma^{\re})}.
		\end{align*}
		To estimate the second term, we use that  $\hat{\vec{g}}^{\re}_G={S}^{\re}_{\omega}\hat{\blambda}^{\re}_G$, see \eqref{eq:BIE_density_error}, and $\hat{\blambda}_G^{\re}\in \mathcal{V}^{\re}_G$: 
		\begin{align*}%
			&\mathbb{Q}_{\perp}^*\hat{\vec{g}}^{\re}_G=\mathbb{Q}_{\perp}^*S^{\re}_{\omega}\hat{\blambda}_G^{\re}=\mathbb{S}_{\perp\sigma}^{\re}\hat{\blambda}_G^{\re} \implies \|	\mathbb{Q}_{\perp}^*\hat{\vec{g}}^{\re}_G\|_{H^{1/2}(\Gamma^{\re})}\leq \|\mathbb{S}_{\perp\sigma}^{\re}\|\|\hat{\blambda}_G^{\re}\|_{H^{-1/2}(\Gamma^{\re})}.
		\end{align*}
		The desired inequality then follows by combining the bound \eqref{eq:continuity_bound} and Lemma \ref{lemma:stability_bound_lambda_G}.
	\end{proof}
	\subsubsection{Proof of Proposition \ref{prop:data}}
	\label{sec:data_bound2}
	Since $\hat{u}^{\inc}\in H^3(B)$, by the Sobolev embedding \cite[Theorem 4.12, part 2]{adams}, $\hat{u}^{\inc}\in C^{1,\alpha}(\overline{B})$, $\alpha<1/2$. By Lemma \ref{lem:norml2},
	\begin{align*}%
		\|\hat{\vec{g}}^{\re} \|^2_{L^2(\Gamma^{\re})} &\leq C_0\re \sum_{k=1}^N  \|\hat{u}^{\operatorname{inc}} \|^2_{L^{\infty}(B^{\re}_k)} \leq C_0 \re N \|\hat{u}^{\operatorname{inc}} \|^2_{H^{2}(B)}.
	\end{align*}
	By Proposition \ref{prop:norm_equiv_perp} and Lemma \ref{lem:normh12},  
	\begin{align*}%
		| \mathbb{P}_{\perp}\hat {\vec{g}}^{\re} |^2_{H^{1/2}(\Gamma^{\re})} 
		\leq C_{\perp} \sum_{k=1}^N \re^{3} \| \nabla \hat{u}^{\operatorname{inc}} \|_{L^{\infty}(B^{\re}_k)} ^2\leq \re^{3} \times C_{\perp} N \| \hat{u}^{\operatorname{inc}}\|^2_{H^3(B)}.
	\end{align*}
	\subsection{Proof of Theorem \ref{th:conv}}
	\label{sec:fd_bounds}
	Using the splitting of the density error \eqref{eq:splitting_density_errorr}, we introduce the field error in the Fourier-Laplace domain via the single layer potential operator, see \eqref{eq:SL_potential},  
	\begin{align}%
		\label{eq:urex}
		\hat{e}^{\re}(\boldsymbol{x}):=	\hat{u}^{\re}(\boldsymbol{x}) - \hat{u}^{\re}_{\operatorname{app}}(\boldsymbol{x}) =  (\mathcal{S}^{\re}_{\omega} \hat{\vec{e}}^{\re}_{\sigma})(\bx) + (\mathcal{S}^{\re}_{\omega} \hat{\vec{e}}^{\re}_{\perp})(\bx), \quad \bx \in \Omega^{\re,c}.
	\end{align}
	Now that we have bounds on $\hat{\vec{e}}^{\re}_{\sigma}$, $\hat{\vec{e}}^{\re}_{\perp}$ derived in  \eqref{eq:bound_on_error_components_v1}, it remains to understand how the single-layer potential behaves on the corresponding subspaces. 
	\begin{lemma}
		\label{lem:slp}
		Let $K\subset B^c$ be a compact set. Then there exists a constant $C_K>0$, s.t. for all $\re \in (0,1)$ and $\omega \in \mathbb{C}^+$, $\mathcal{S}^{\re}_{\omega}: \, H^{-1/2}(\Gamma^{\re})\rightarrow H^{1}(\Omega^{\re,c})$ satisfies
		\begin{align}%
			\label{eq:s1}
			&\|\mathcal{S}^{\re}_{\omega}\mathbb{Q}_{\sigma}\|_{H^{-1/2}(\Gamma^{\re})\rightarrow L^{\infty}(K)}\leq C_KN^{1/2}\re,\\
			\label{eq:s2}
			&\|\mathcal{S}^{\re}_{\omega}\mathbb{Q}_{\perp}\|_{H^{-1/2}(\Gamma^{\re})\rightarrow L^{\infty}(K)}\leq C_KN^{1/2}\re^{3/2}(1+|\omega|).
		\end{align}
	\end{lemma}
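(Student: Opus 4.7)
The plan is to bound the pointwise value of the single layer potential on $K$ via the duality pairing on each $\Gamma_k^{\re}$, exploiting two structural ingredients that are specific to the images of the projectors $\mathbb{Q}_{\sigma}$ and $\mathbb{Q}_{\perp}$. For $\bx \in K$ and $\by \in B$, I use that $|G_{\omega}(|\bx-\by|)|\leq (4\pi\operatorname{dist}(K,B))^{-1}=:C_K$ uniformly in $\omega \in \mathbb{C}^+$ (since $|e^{i\omega r}|\leq 1$ there), so $\by\mapsto G_{\omega}(|\bx-\by|)$ is a smooth, uniformly bounded test function on $\Gamma^{\re}$, while $|\nabla_{\by}G_{\omega}(|\bx-\by|)|\lesssim C_K(1+|\omega|)$ by the computation already used to prove \eqref{eq:bound_grad_G}.

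For \eqref{eq:s1}, I would avoid duality and estimate directly
\begin{align*}
|(\mathcal{S}^{\re}_{\omega}\mathbb{Q}_{\sigma}\vec{\varphi})(\bx)| \leq C_K\,\|\mathbb{Q}_{\sigma}\vec{\varphi}\|_{L^1(\Gamma^{\re})} \leq C_K\,|\Gamma^{\re}|^{1/2}\|\mathbb{Q}_{\sigma}\vec{\varphi}\|_{L^2(\Gamma^{\re})}.
\end{align*}
Since $|\Gamma^{\re}|\lesssim N\re^{2}$, since the $L^2$ and $H^{-1/2}$ norms are equivalent on $\mathcal{V}_G^{\re}$ by Proposition \ref{prop:L^2_H_minus_half_bound}, and since $\mathbb{Q}_{\sigma}$ is uniformly bounded on $H^{-1/2}(\Gamma^{\re})$ by Proposition \ref{prop:norm_projectors_Q}, this chain yields \eqref{eq:s1}.

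For \eqref{eq:s2}, I would split obstacle by obstacle and use duality,
\begin{align*}
(\mathcal{S}^{\re}_{\omega}\mathbb{Q}_{\perp}\vec{\varphi})(\bx) = \sum_{k=1}^{N}\langle (\mathbb{Q}_{\perp}\vec{\varphi})_k,\, G_{\omega}(|\bx-\cdot|)\rangle_{-1/2,\,1/2,\,\Gamma_k^{\re}}.
\end{align*}
The image of $\mathbb{Q}_{\perp}$ lies in $H^{-1/2}_{\perp}(\Gamma^{\re})$ by \eqref{eq:KerQsigma}, so each restriction $(\mathbb{Q}_{\perp}\vec{\varphi})_k$ annihilates the constant $\vec{1}_k^{\re}$, and I may subtract from the test function the constant $G_{\omega}(|\bx-\bc_k|)$ without changing the value of the pairing. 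Setting $h_k(\by):=G_{\omega}(|\bx-\by|)-G_{\omega}(|\bx-\bc_k|)$ and applying Cauchy--Schwarz in duality, it suffices to show $\|h_k\|_{H^{1/2}(\Gamma_k^{\re})}\lesssim C_K\,\re^{3/2}(1+|\omega|)$. The mean-value theorem on the segment $[\bc_k,\by]\subset\overline{B_k^{\re}}$ gives $|h_k(\by)|\lesssim C_K(1+|\omega|)\re$ on $\Gamma_k^{\re}$ and hence $\|h_k\|_{L^2(\Gamma_k^{\re})}\lesssim C_K(1+|\omega|)\re^{2}$ from $|\Gamma_k^{\re}|^{1/2}\lesssim \re$; the seminorm equals $|G_{\omega}(|\bx-\cdot|)|_{H^{1/2}(\Gamma_k^{\re})}$ since subtracting a constant does not affect it, and Lemma \ref{lem:normh12} bounds this quantity by $C\re^{3/2}\|\nabla_{\by}G_{\omega}(|\bx-\cdot|)\|_{L^{\infty}(B_k^{\re})}\leq C_K(1+|\omega|)\re^{3/2}$, which dominates the $L^2$-part for $\re\leq 1$. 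Summing over $k$ via $\ell^2$ Cauchy--Schwarz and using Proposition \ref{prop:norm_projectors_Q} to peel off the projector delivers \eqref{eq:s2}.

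The main obstacle is exactly the extra $\re^{1/2}$ gain in \eqref{eq:s2} over \eqref{eq:s1}, which is what eventually propagates through \eqref{eq:bound_on_error_components_v1} into the cubic order of convergence for the scattered field. The critical point is that one must measure the corrected test function $h_k$ in $H^{1/2}(\Gamma_k^{\re})$ rather than in $L^2(\Gamma_k^{\re})$: the inequality $\|\cdot\|_{L^2}\gtrsim\|\cdot\|_{H^{-1/2}}$ goes the wrong way, so bounding $(\mathbb{Q}_{\perp}\vec{\varphi})_k$ in $L^2$ would demand regularity we do not have. The Sobolev--Slobodeckij seminorm estimate of Lemma \ref{lem:normh12} is precisely the tool that converts the $L^{\infty}$ gradient bound on $G_{\omega}$ into the $\re^{3/2}$ scaling, and the factor $(1+|\omega|)$ is the only place where frequency growth enters.
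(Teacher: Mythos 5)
Your proposal is correct and follows essentially the same route as the paper: \eqref{eq:s1} is obtained by pairing $\mathbb{Q}_{\sigma}\vec{\varphi}\in L^2(\Gamma^{\re})$ against the uniformly bounded kernel and converting surface measure ($|\Gamma^{\re}|^{1/2}\lesssim N^{1/2}\re$) plus Propositions \ref{prop:L^2_H_minus_half_bound} and \ref{prop:norm_projectors_Q} into the stated bound, and \eqref{eq:s2} by exploiting that $\mathbb{Q}_{\perp}\vec{\varphi}$ annihilates obstacle-wise constants, so only the $H^{1/2}$-seminorm of $G_{\omega}(|\bx-\cdot|)$ enters, which Lemma \ref{lem:normh12} together with \eqref{eq:bound_grad_G} converts into the $\re^{3/2}(1+|\omega|)$ factor. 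The only cosmetic difference is that you subtract the point value $G_{\omega}(|\bx-\bc_k|)$ and control the resulting $L^2$-part by the mean-value theorem, whereas the paper applies $\mathbb{P}_{\perp}$ to the kernel and invokes Proposition \ref{prop:norm_equiv_perp}; both yield the same estimate.
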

	\begin{proof}
		Let $\bx\in K$. We fix $\vec{v}\in L^2(\Gamma^{\re})$ and next use the density argument. Since, in this case,  $\mathbb{Q}_{\sigma}\vec{v}\in L^2(\Gamma^{\re})$, and $|G_{\omega}(\vec{x}-\vec{y})|\leq (4\pi | \vec{x}-\vec{y} |)^{-1}$, it holds that 
		\begin{align*}%
			\left|	(\mathcal{S}^{\re}_{\omega}\mathbb{Q}_{\sigma}\vec{v})(\vec{x})\right|&=\left|(G_{\omega}(|\bx - \cdot|), \mathbb{Q}_{\sigma}\vec{v})_{L^2(\Gamma^{\re})}\right|  \leq \| G_{\omega} (|\bx - \cdot|) \|_{L^2(\Gamma^{\re})} \| \mathbb{Q}_{\sigma} \vec{v} \|_{L^2(\Gamma^{\re})} \\&\lesssim\frac{\re N^{1/2} }{4\pi\operatorname{dist}(\vec{x},\Gamma^{\re})}\|\mathbb{Q}_{\sigma}\vec{v}\|_{L^2(\Gamma^{\re})}
			,
		\end{align*}
		where we used $\|f\|_{L^2(\Gamma^{\re}_k)}\leq C_k \re \|f\|_{L^{\infty}(\Gamma^{\re}_k)}$, by Lemma \ref{lem:norml2}. Propositions \ref{prop:L^2_H_minus_half_bound} and  \ref{prop:norm_projectors_Q} yield the desired estimate \eqref{eq:s1}. \\
		To prove \eqref{eq:s2}, we proceed similarly. Since $\Im \mathbb{Q}_{\perp}=H^{-1/2}_{\perp}(\Gamma^{\re})$, cf. \eqref{eq:KerQsigma}, we have
		\begin{align*}%
			\left| (\mathcal{S}^{\re}_{\omega}\mathbb{Q}_{\perp}\vec{v})(\vec{x}) \right|
			&= \left|  \left\langle \mathbb{P}_{\perp} G_{\omega}(|\vec{x} - \cdot|), \mathbb{Q}_{\perp}\vec{v} \right\rangle_{H^{1/2}(\Gamma^{\re}), H^{-1/2}(\Gamma^{\re})} \right|
			\\&\lesssim |G_{\omega}(|\vec{x} - \cdot|)|_{H^{1/2}(\Gamma^{\re})}\|\mathbb{Q}_{\perp}\vec{v}\|_{H^{-1/2}(\Gamma^{\re})}, 
		\end{align*}
		cf. Proposition \ref{prop:norm_equiv_perp}.  
		With Lemma \ref{lem:normh12}, we have that
		\begin{align*}%
			|G_{\omega}(|\vec{x} - \cdot|)|_{H^{1/2}(\Gamma^{\re}_k)} &\leq C \varepsilon^{3/2}\|\nabla G_{\omega}(|\vec{x}-\cdot|)\|_{L^{\infty}(B^{\re}_k)} \\ 
			&\leq C\re^{3/2}(1+|\omega|)\max(1, \operatorname{dist}(\vec{x},\Gamma^{\re})^{-2}),
		\end{align*}
		where in the last inequality we made use of \eqref{eq:bound_grad_G}.
		This yields the final bound.
	\end{proof}
	Combining Lemma \ref{lem:slp} and \eqref{eq:bound_on_error_components_v1} into \eqref{eq:urex} yields the desired bound:
	\begin{align}
		\label{eq:bound_field_FD}
		\begin{split}
			|\hat{{e}}^{\re}(\boldsymbol{x})| &\leq C_K \re^{3}N^{3/2}(\underline{d}^{\re}_*)^{-6}
			\left(1+\re N(\underline{d}^{\re}_*)^{-4}\right)(1+|\omega|)^{10}\max(1, (\operatorname{Im}\omega)^{-9})\\
			&\times  \|\hat{u}^{\inc}\|_{H^3(B)},
		\end{split}
	\end{align}
	where $C_K>0$ depends on $K$ and the shape of the scatterers $\Gamma_j,\, j=1,\ldots, N$.
	\begin{remark} 
		Splitting the error into two parts allows to prove a higher convergence order w.r.t. $\re$  for the approximated scattered field, compared to the approach without the splitting. Indeed, using the worst of the two bounds for the error components yields $\| \hat{\vec{e}}^{\re}  \|_{H^{-1/2}(\Gamma^{\re})}= O(\re^{3/2})$, which implies that \( |(\mathcal{S}^{\re}_{\omega} \hat{\vec{e}}^{\re})(\bx)| = O(\re^{5/2}) \) as \( \re \to 0 \). On the other hand, when applying the above splitting into two terms, we obtain an improved estimate of order $O(\re^3)$.
	\end{remark} 
	Now we have necessary frequency-domain ingredients to prove the main statement of the paper for the model \eqref{eq:tdsys} in the time domain. 
	\section{Proof of Theorem \ref{theorem:convergence_of_models} for the model (\ref{eq:tdsys})}
	\label{sec:error_analysis}
	In the section that follows we aim at proving the following two results, which are refinement of the statement of Theorem \ref{theorem:convergence_of_models}, for the model \eqref{eq:tdsys}. 
	\subsection{Well-posedness}
	Since our well-posedness result is stated for the coefficients  $\lambda_{G,k}^{\re}$ of the expansion of the approximate density into the basis $\mathcal{V}^{\re}_G$ rather than the density itself, it will be convenient to introduce an auxiliary operator, which establishes correspondance between both. More precisely, for $\vv{v}\in \mathbb{C}^N$,   
	\begin{align}
		\label{eq:defIe}
		\mathcal{I}^{\re}: \, \mathbb{C}^N\rightarrow H^{-1/2}(\Gamma^{\re}), \quad \mathcal{I}^{\re}\vv{v}=\sum\limits_{j=1}^Nv_j\vec{\sigma}_j^{\re},
	\end{align}
	so that \eqref{eq:approximate_density} rewrites  $\vec{\lambda}^{\re}_G(t)=\mathcal{I}^{\re}\vv{\lambda^{\re}_{G}(t)}$. By Lemma \ref{lemma:Galerkin_space} $\mathcal{I}^{\re}: \,\mathbb{C}^N\rightarrow \mathcal{V}^{\re}_G$ is invertible. Moreover, since $ \|\vec{\sigma}^{\re}_j\|_{L^2(\Gamma^{\re})} = \|\vec{\sigma}^1_j\|_{L^2(\Gamma)}$, see \eqref{eq:sigmakeps}, there exist $c, C>0$, s.t.
	\begin{align}
		\label{eq:vecV0}
		\|\vv{v}\|_{\mathbb{C}^N} \lesssim	\|\mathcal{I}^{\re}\vv{v}\|_{L^2(\Gamma^{\re})} \lesssim  \|\vv{v}\|_{\mathbb{C}^N}.
	\end{align} 
	With Proposition \ref{prop:L^2_H_minus_half_bound}, the above rewrites, for $c, C>0$ independent of $\re$,
	\begin{align}
		\label{eq:vecV2}
		c\|\vv{v}\|_{\mathbb{C}^N} \leq	\|\mathcal{I}^{\re}\vv{v}\|_{H^{-1/2}(\Gamma^{\re})} \leq C\|\vv{v}\|_{\mathbb{C}^N}.
	\end{align}
	\begin{theorem}[Well-posedness of \eqref{eq:tdsys}]
		\label{theorem:well_posedness}
		Assume that $u^{\operatorname{inc}} \in TD(H^3(B))\cap C^{3}(\mathbb{R}_{\geq 0};H^3(B))$. Then the asymptotic model \eqref{eq:tdsys} admits a unique solution in $L'_+(\mathbb{R}^N)$. Moreover, the coefficients $\vv{\lambda}^{\re}_G\in \operatorname{TD}(\mathbb{R}^N)\cap C(\mathbb{R}_{\geq 0};\mathbb{R}^N)$ and satisfy the following stability bound:
		\begin{align*}%
			\|\vv{\lambda}^{\re}_G \|_{L^{\infty}(0,T;\mathbb{R}^N)} \leq \re^{-1} \times C(T, N, (\underline{d}^{\re}_{*})^{-1})\| u^{\operatorname{inc}} \|_{H^3(0,T; H^3(B))},\quad \text{ for all }T>0,
		\end{align*}
		where $C=C(T, N, (\underline{d}^{\re}_{*})^{-1})$ depends  polynomially on $N$, $(\underline{d}^{\re}_{*})^{-1}$, $T$.
	\end{theorem}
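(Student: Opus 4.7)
The plan is to transfer the frequency-domain analysis of Section \ref{sec:analysis_freq_domain} into the time domain via the Fourier--Laplace calculus recalled in Section \ref{sec:ft}. Since $u^{\operatorname{inc}}\in\operatorname{TD}(H^3(B))$, $\omega\mapsto\hat u^{\operatorname{inc}}(\omega)$ is $H^3(B)$-analytic on $\mathbb{C}^+$ with polynomial growth there. The entries of the $N\times N$ Galerkin matrix $\omega\mapsto\langle S^{\re,k\ell}_{\omega}\sigma^{\re}_{\ell},\sigma^{\re}_k\rangle_{\Gamma^{\re}_k}$ are entire in $\omega$, and the matrix is invertible for every $\omega\in\mathbb{C}^+$ by Corollary \ref{corollary:stability_bound}. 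Cramer's rule then shows that $\omega\mapsto\hat{\vv{\lambda}}^{\re}_G(\omega)$ is analytic on $\mathbb{C}^+$. Combining Theorem \ref{th:wp} with the equivalence \eqref{eq:vecV2} (which is uniform in $\re$) yields the quantitative frequency-domain bound
\begin{align*}
\|\hat{\vv{\lambda}}^{\re}_G(\omega)\|_{\mathbb{C}^N}\lesssim \re^{-1}(\underline{d}^{\re}_*)^{-2}(1+|\omega|)^2\max(1,(\Im\omega)^{-3})\,\|\hat u^{\operatorname{inc}}(\omega)\|_{H^3(B)}.
\end{align*}

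The combination of analyticity and polynomial growth places $\hat{\vv{\lambda}}^{\re}_G$ in the class characterizing Fourier--Laplace transforms of $\operatorname{TD}(\mathbb{R}^N)$ distributions recalled in Section \ref{sec:ft}, so $\vv{\lambda}^{\re}_G\in\operatorname{TD}(\mathbb{R}^N)$. Any candidate solution in the larger space $L'_+(\mathbb{R}^N)$ would, after multiplication by $e^{-\sigma_0 t}$, possess a Fourier--Laplace transform satisfying the same linear frequency-domain identity on some shifted upper half-plane; the invertibility of the Galerkin matrix together with the injectivity of the Laplace transform on $L'_+$ then force this candidate to coincide with $\vv{\lambda}^{\re}_G$, yielding uniqueness in $L'_+(\mathbb{R}^N)$.

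For the time-domain stability bound, I invert the Laplace transform along a horizontal line $\{\Im\omega=\sigma\}$ with $\sigma>0$ and apply Cauchy--Schwarz in $\omega_r=\Re\omega$ against an auxiliary weight $(1+|\omega_r|)^{-1-\delta}$; this reduces matters to controlling $\|(1+|\omega|)^3\hat u^{\operatorname{inc}}(\omega_r+i\sigma)\|_{L^2(\mathbb{R}_{\omega_r};H^3(B))}$, which is handled via Plancherel and three integrations by parts in $t$ (justified by the $C^3$-in-time hypothesis combined with the causal extension of $u^{\operatorname{inc}}$ by $0$ to $\mathbb{R}_{<0}$). The outcome is bounded by $e^{\sigma T}\|u^{\operatorname{inc}}\|_{H^3(0,T;H^3(B))}$ multiplied by the prefactor $\re^{-1}(\underline{d}^{\re}_*)^{-2}\sigma^{-3}$ inherited from the frequency bound. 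A standard optimization $\sigma\sim 1/T$ converts $\sigma^{-3}e^{\sigma T}$ into a polynomial in $T$, giving the required estimate, and the continuity $\vv{\lambda}^{\re}_G\in C(\mathbb{R}_{\geq 0};\mathbb{R}^N)$ follows from dominated convergence applied to the inversion integral (which converges absolutely thanks to the weight $(1+|\omega_r|)^{-1-\delta}$). An equivalent route is to invoke directly the transfer results of \cite[Section 3.3]{sayas}, which package exactly this optimization.

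The main obstacle is this last step: the factor $\max(1,(\Im\omega)^{-3})$ must be reconciled with the $e^{\sigma t}$ appearing in the Bromwich integral, and careful bookkeeping is required to ensure that the explicit scalings $\re^{-1}$ and $(\underline{d}^{\re}_*)^{-2}$ from Theorem \ref{th:wp}, together with the $\re$-independent norm-equivalence constants from \eqref{eq:vecV2}, are preserved without spurious blow-up. The polynomial dependence on $N$ in the final constant enters through the $\ell^2$-type estimates used implicitly when passing between $\mathbb{R}^N$-norms of coefficients and $H^{-1/2}(\Gamma^{\re})$-norms of the associated elements of $\mathcal{V}^{\re}_G$ via $\mathcal{I}^{\re}$.
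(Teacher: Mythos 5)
Your proposal follows essentially the same route as the paper: pass to the Fourier–Laplace domain, use Theorem \ref{th:wp} (whose proof relies on Lemma \ref{lemma:stability_bound_lambda_G}) and the isomorphism estimate \eqref{eq:vecV2} for the frequency bound, establish analyticity of $\omega\mapsto\hat{\vv{\lambda}}^{\re}_G(\omega)$ on $\mathbb{C}^+$, identify $\vv{\lambda}^{\re}_G$ as a member of $\operatorname{TD}(\mathbb{R}^N)$ via the characterization in Section \ref{sec:ft}, and then transfer to $L^\infty(0,T)$ by a Plancherel/causality argument requiring one extra time derivative. The only cosmetic differences are that you obtain analyticity via Cramer's rule on the $N\times N$ Galerkin matrix rather than by quoting the Dunford–Schwartz analyticity of $(S^{\re}_\omega)^{-1}$, and you estimate $L^\infty$ directly from the Bromwich inversion integral with a Cauchy–Schwarz weight rather than first obtaining an $L^2(0,T)$ bound and upgrading via $\|v\|_{L^\infty}\leq T^{1/2}\|\partial_t v\|_{L^2}$; both bookkeepings land at the same $H^3$-in-time requirement.
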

	
	\begin{proof}
		Remark that $\vec{g}^{\re}\in \operatorname{TD}(H^{1/2}(\Gamma^{\re}))\cap C^3(\mathbb{R}_{\geq 0}; H^{1/2}(\Gamma^{\re}))$, by assumption on $u^{\operatorname{inc}}$. To prove the existence and uniqueness, we apply the Fourier-Laplace transform to \eqref{eq:tdsys}, which yields \eqref{eq:operator_S_kl}. 
		By Theorem \ref{th:wp}, the problem \eqref{eq:operator_S_kl} admits a unique solution for all $\omega \in \mathbb{C}^{+}$. By assumption on $u^{\operatorname{inc}}$, $\omega \mapsto \hat{\vec{g}}^{\re}$ is analytic on $\mathbb{C}^+$. Moreover, the mapping $\omega \mapsto (S^{\re}_{\omega})^{-1}$ is analytic on $\mathbb{C}^{+}$, cf. \cite[Lemma 13, p. 592]{dunford_schwartz}. Therefore, the density $\omega\mapsto \hat{\vec{\lambda}}^{\re}_G(\omega)$ is analytic on $\mathbb{C}^+$.
		
		
		
		Moreover, since $\hat{\blambda}^{\re}$ satisfies polynomial stability bounds in the Laplace domain, cf. Theorem \ref{th:wp} (because $\hat{u}^{\inc}$ satisfies them as well), it is a Fourier-Laplace transform of a distribution in $\operatorname{TD}(H^{-1/2}(\Gamma^{\re}))$. See Section \ref{sec:ft} for more details.
		
		Since $\vv{\lambda}^{\re}_G=\left(\mathcal{I}^{\re}\right)^{-1}\blambda_G^{\re}$, the desired result follows for the coefficients of expansion of $\blambda^{\re}_G$, cf. \eqref{eq:defIe}. 
		
		To prove the time-domain stability result and a more precise regularity on $\vec{\lambda}^{\re}_G$ (and thus on $\vv{\lambda}^{\re}_G$), we invoke the Plancherel identity combined with the causality argument.   A complete proof can be found in \cite[Section 4.5.2]{MK} or \cite[Proof of Proposition 3.2.2]{sayas}. 
		Note that the Plancherel identity provides an $L^2$ bound on the scattered field in the time domain. To obtain a bound in the $L^{\infty}$ norm, we use the fact that for an $X$-valued function $t \mapsto v(t)$, of a  Banach space $X$,  with $v(0)=0$, it holds true that $\| v \|_{L^{\infty}(0,T; X)} \leq T^{1/2} \| \partial_t v \|_{L^2(0,T; X)}$. This implies that, in the final estimate, the data must possess one additional order of time regularity; in this case the solution belongs to $C([0,T];X)$ for all $T>0$, because it is $H^1(0,T;X)$ by the Plancherel identity, and because of the embedding $H^1(0,T; X)$ into $C([0, T]; X)$.	
	\end{proof}
	\subsection{Convergence}
	\begin{theorem}[Pointwise estimate on the field error]
		\label{theorem:point_wise_estimate}
		Let $K\subset B^c $ be a compact set, and $ u^{\operatorname{inc}} \in C^{11}(\mathbb{R}_{\geq 0}; H^3(B)) $. Then, for all $T>0$, 
		\begin{align*}%
			\|u^{\re}- u^{\re}_{G, \operatorname{\operatorname{app}}}\|_{L^{\infty}(0,T; L^{\infty}(K))} &\leq C(T, N, (\underline{d}^{\re}_{*})^{-1}) \re^3 \| u^{\operatorname{inc}} \|_{H^{11}(0, T;H^3(B))},
		\end{align*}
		where $C=C(T, N, (\underline{d}^{\re}_{*})^{-1})$ depends  polynomially on $N$, $(\underline{d}^{\re}_{*})^{-1}$, $T$.
	\end{theorem}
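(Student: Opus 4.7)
My plan is to translate the frequency-domain error estimate of Theorem \ref{th:conv} into the desired time-domain bound via the Plancherel identity on a shifted line $\Im\omega=\sigma>0$, combined with a causal $L^{2}$-to-$L^{\infty}$ argument that pays one extra time derivative. This follows the strategy sketched at the end of the proof of Theorem \ref{theorem:well_posedness} and is the standard Bamberger--Ha-Duong--Sayas scheme.

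First, for fixed $\bx\in K$, set $\hat{e}^{\re}(\bx,\omega):=\hat{u}^{\re}(\bx,\omega)-\hat{u}^{\re}_{G,\operatorname{app}}(\bx,\omega)$. Theorem \ref{th:conv} yields
\begin{align*}
|\hat{e}^{\re}(\bx,\omega)|\leq \re^{3}\,C_{N,\underline{d}^{\re}_{*},K}\,(1+|\omega|)^{10}\max(1,(\Im\omega)^{-9})\,\|\hat{u}^{\inc}(\omega)\|_{H^{3}(B)},\quad \omega\in\mathbb{C}^{+},
\end{align*}
with $C_{N,\underline{d}^{\re}_{*},K}$ polynomial in $N$ and $(\underline{d}^{\re}_{*})^{-1}$. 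Under the hypothesis $u^{\inc}\in C^{11}(\mathbb{R}_{\geq 0};H^{3}(B))$ with causal zero-extension, $\hat{e}^{\re}(\bx,\cdot)\in L'_{+}(\mathbb{R})$, so $e^{\re}(\bx,\cdot)$ can be recovered from its Fourier--Laplace transform on any line $\Im\omega=\sigma>0$.

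Next, I would apply Plancherel on that line:
\begin{align*}
\int_{0}^{T}|e^{\re}(\bx,t)|^{2}\,dt\leq e^{2\sigma T}\int_{0}^{\infty}e^{-2\sigma t}|e^{\re}(\bx,t)|^{2}\,dt=\frac{e^{2\sigma T}}{2\pi}\int_{\mathbb{R}}|\hat{e}^{\re}(\bx,\xi+i\sigma)|^{2}\,d\xi.
\end{align*}
Expanding $(1+|\omega|)^{10}$ as a polynomial and using the identity $(-i\omega)^{k}\hat{u}^{\inc}(\omega)=\widehat{\partial_{t}^{k}u^{\inc}}(\omega)$ converts this weight into sums of Fourier--Laplace transforms of $\partial_{t}^{k}u^{\inc}$ for $k\leq 10$; a second application of Plancherel (after a standard causal extension of $u^{\inc}\big|_{[0,T]}$ with controlled polynomial growth) bounds the resulting integrals by $\|\partial_{t}^{k}u^{\inc}\|^{2}_{L^{2}(0,T;H^{3}(B))}$ up to polynomial-in-$T$ prefactors. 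Choosing $\sigma=1/T$ absorbs $e^{2\sigma T}\max(1,\sigma^{-18})$ into a factor of order $T^{18}$. Finally, since $t\mapsto e^{\re}(\bx,t)$ is causal and vanishes at $t=0$, the inequality $\|v\|_{L^{\infty}(0,T)}\leq T^{1/2}\|\partial_{t}v\|_{L^{2}(0,T)}$, applied to $v=e^{\re}(\bx,\cdot)$, promotes the $L^{2}$-in-time bound into an $L^{\infty}$-in-time bound, at the cost of one additional time derivative. This yields the announced regularity count $11=10+1$, and the supremum over $\bx\in K$ is free since the frequency-domain estimate is already pointwise in $\bx$ with $K$-uniform constants.

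The main obstacle is pure bookkeeping: redistributing the $(1+|\omega|)^{10}$ weight between time derivatives of $u^{\inc}$ and its $H^{3}(B)$-norm, performing a causal extension of $u^{\inc}$ beyond $t=T$ so that the $L^{2}(\mathbb{R}_{\geq 0})$-in-time objects on the right can be replaced by the $H^{11}(0,T;H^{3}(B))$-norm on the left, and verifying that the final prefactor is polynomial in $T$, $N$ and $(\underline{d}^{\re}_{*})^{-1}$. No analytical ingredient beyond Theorem \ref{th:conv} and the classical causality framework of Section \ref{sec:ft} is required.
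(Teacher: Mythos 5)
Your proposal matches the paper's own approach: the paper's proof is a one-line citation of exactly this Bamberger--Ha-Duong--Sayas scheme (Plancherel on a shifted line with $\sigma\sim 1/T$ plus the causal $L^{2}$-to-$L^{\infty}$ inequality costing one extra time derivative), spelled out in more detail in its proof of Theorem~\ref{theorem:well_posedness} and delegated to \cite[Section 4.5.2]{MK} and \cite[Proposition 3.2.2]{sayas}. Your regularity bookkeeping ($10$ from the $(1+|\omega|)^{10}$ weight plus $1$ from the $L^{\infty}$ step giving $H^{11}$ in time) and the causal-extension remark both agree with what the paper uses.
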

	\begin{proof}
		Follows by combining \eqref{eq:bound_field_FD} together with the Plancherel identity and the causality argument, a complete proof can be bound in \cite[Section 4.5.2]{MK} or \cite[Proof of Proposition 3.2.2]{sayas}.
	\end{proof}
	
	\section{Derivation and analysis of the simplified model (\ref{eq:tdsys2})}
	\label{sec:simplified_models}
	\subsection{Motivation}
	While the model (\ref{eq:tdsys}) provides an approximation of the far-field with an absolute error of $O(\re^3)$, its numerical realization appears to be quite involved. It can be done with the help of e.g. the boundary element method in space combined with an appropriate discretization scheme in time, e.g. Convolution Quadrature method \cite{lubich88_1, lubich88_2, lubich94}. In this context, such a numerical realization relies on 
	\begin{itemize}[leftmargin=*]
		\item meshing the domains $\Omega^{\varepsilon}$ and defining an appropriate ('simple') boundary element basis with functions supported on a few elements of the meshes;
		\item computation of the densities $\sigma_k^{\varepsilon}$ expressed in such a basis;
		\item evaluation of the double surface integrals in $K^{\varepsilon}_{G, \ell k}$;
		\item evaluation of the single surface integrals occurring in the expressions for the scattered field and in the right-hand side of the system \eqref{eq:final_asymtotic_model}.
	\end{itemize}
	The obvious bottleneck in the above is the evaluation of the entries of the double surface integrals in $K^{\varepsilon}_{G, \ell k}$. The goal of this section is to suggest an alternative model, which does not rely on such computations. 
	\subsection{Derivation and analysis of the model (\ref{eq:tdsys2})}
	\label{sec:simplified}
	To derive the model under consideration, let us re-state it and explain the intuition behind it. Given
	\begin{align}
		\label{eq:ckre2}
		c_k^{\varepsilon}:=\int_{\Gamma^{\varepsilon}_k}\sigma^{\varepsilon}_k(\vec{y})d\Gamma_{\vec{y}}, \quad \vec{p}_k^{\varepsilon}:=\int_{\Gamma^{\varepsilon}_k}(\vec{y}-\bc_k)\sigma_k^{\varepsilon}(\vec{y})d\Gamma_{\vec{y}}=\int_{\Gamma^{\varepsilon}}\vec{y}\sigma_k^{\varepsilon}(\vec{y})d\Gamma_{\vec{y}}-\vec{c}_k c_k^{\varepsilon},
	\end{align}
	we approximate the scattered field by
	\begin{align}
		\label{eq:scatap}
		\begin{split}
			{u}^{\varepsilon}_{s, \operatorname{app}}(\vec{x},t)&=\sum\limits_{k=1}^N\frac{\lambda^{\re}_{s, k}(t-|\vec{x}-\bc_k|)}{4\pi|\vec{x}-\bc_k|}\left(c_k^{\varepsilon}+\frac{(\vec{x}-\bc_k)}{4\pi|\vec{x}-\bc_k|^3}\cdot \vec{p}_k^{\varepsilon}\right)\\
			&+\sum\limits_{k=1}^N\frac{\partial_t\lambda^{\re}_{s, k}(t-|\vec{x}-\bc_k|)}{4\pi|\vec{x}-\bc_k|^2}(\vec{x}-\bc_k)\cdot\vec{p}_k^{\varepsilon},
		\end{split}
	\end{align}
	where the time-dependent functions $\lambda^{\re}_{s, k}: \, \mathbb{R}_{>0}\rightarrow \mathbb{R}$ solve 
	\begin{align}
		\label{eq:tdsys2p}
		& \sum_{k=1}^N (K^{\re}_{s, \ell k}*\lambda^{\re}_{s, k})(t)= - u^{\operatorname{inc}}(\bc_{\ell}, t)c_{\ell}^{\re}-\nabla u^{\operatorname{inc}}(\bc_{\ell}, t)\cdot\vec{p}_{\ell}^{\re}, \text{ with }\\
		\nonumber
		&K^{\re}_{s, \ell k }(t) = \mathcal{F}^{-1} (\mathbb{M}^{\re}_{s, \ell k})(t).
	\end{align}
	and the matrix $\mathbb{M}^{\re}_{s} $ is given by, with $\rho^{\re}_k:= c^{\re}_k/4 \pi$,  
	\begin{align*}%
		\mathbb{M}^{\re}_{s, \ell k}(\omega)=
		\left\{
		\begin{array}{ll}
			4 \pi i \omega (\rho^{\re}_k)^2 j_0(\omega \rho^{\re}_k) h^{(1)}_0(\omega \rho^{\re}_k), &k = \ell, \\
			4 \pi i \omega \rho^{\re}_k \rho^{\re}_{\ell} j_0(\omega \rho^{\re}_k) j_0(\omega \rho^{\re}_{\ell}) h^{(1)}_0(\omega |\bc_k - \bc_{\ell}| ), &k \neq \ell.
		\end{array}
		\right.
	\end{align*}
	To explain the idea behind the above, we resort to \cite{sini_frequency_domain}. As argued in this paper, to obtain the $O(\varepsilon^2)$-accurate model in the far-field, it is sufficient to know capacitance of the obstacles only. In other words, the fields scattered by Lipschitz obstacles and by the spheres of equivalent capacitance would be almost the same. 
	
	The first idea is thus the following: one could replace obstacles by balls of equivalent capacitance, and use the asymptotic Galerkin model \eqref{eq:tdsys} for balls, with the basis functions being now obstacle-wise constant (since $\sigma_j^{\re}=1$). Importantly, the corresponding model will preserve its stability properties, since it is still the Galerkin discretization of the single-layer boundary integral operator, see \cite{banjai_sauter}. Moreover, in this case all the Galerkin integrals are known analytically, cf. e.g. \cite{MK} for the circles. Later on, we will argue that this procedure indeed yields \eqref{eq:tdsys2p}, see Proposition \ref{prop:difM1}. 
	
	However, this may potentially lead to a loss of accuracy compared to \eqref{eq:tdsys}: indeed, the model \eqref{eq:tdsys} yields $O(\varepsilon^3)$ error in the far-field, rather than $O(\re^2)$ obtained in the frequency-domain \cite{sini_frequency_domain}. As we will see later, this loss of accuracy can be avoided by adding more terms in the approximation of the far-field and in the data, which requires a rather minor effort from the computational viewpoint. 
	
	\subsubsection{Derivation of the simplified model (\ref{eq:tdsys2}) from (\ref{eq:tdsys})}
	The model \eqref{eq:tdsys2} can be derived also starting with the model \eqref{eq:tdsys}. To see this, we compare the two models, starting with simplest differences: 
	\begin{itemize}[leftmargin=*]
		\item the data in the right-hand side of \eqref{eq:tdsys} is changed to
		\begin{align*}%
			-\int_{\Gamma^{\re}_{\ell}}u^{\operatorname{inc}}(\vec{x},t)\sigma^{\re}_{\ell}(\vec{x})d\Gamma_{\vec{x}}\rightarrow -\int_{\Gamma^{\re}_{\ell}}(u^{\operatorname{inc}}(\vec{c}_{\ell},t)+\nabla u^{\operatorname{inc}}(\vec{c}_{\ell}, t)\cdot (\vec{x}-\vec{c}_{\ell}))\sigma^{\re}_{\ell}(\vec{x})d\Gamma_{\vec{x}}.
		\end{align*}
		This replacement comes from the Taylor expansion of $u^{\operatorname{inc}}(\vec{x},t)$ around $\vec{x}=\vec{c}_{\ell}$.
		\item the convolution operator in the left-hand side of \eqref{eq:tdsys} is approximated via (here we write the corresponding expression in a formal manner \footnote{These expressions should be understood as $\langle K_{G,\ell k}, \varphi\rangle_{\mathcal{D}', \mathcal{D}}=\iint_{\Gamma_{\ell}\times\Gamma_k}\frac{\varphi(|\bx-\by|)}{4\pi|\bx-\by|}d\Gamma_{\bx}d\Gamma_{\by}$})
		\begin{align}
			\label{eq:K_s_lk}
			K^{\re}_{G, \ell k}(t) = \iint_{\Gamma_{\ell}^{\re}\times \Gamma_k^{\re}}&\frac{\delta(t-|\vec{x}-\vec{y}|)}{4\pi|\vec{x}-\vec{y}|}\sigma^{\re}_{\ell}(\vec{x})\sigma^{\re}_k(\vec{y})d\Gamma_{\vec{x}}d\Gamma_{\vec{y}} \nonumber \\	
			&\rightarrow K^{\re}_{s, \ell k}(t) = 4\pi  (c^{\re}_{\ell} c^{\re}_k)^{-1} \iint_{\widetilde{\Gamma}_{\ell}^{\re}\times \widetilde{\Gamma}_k^{\re}}\frac{\delta(t-|\vec{x}-\vec{y}|)}{|\vec{x}-\vec{y}|}d\Gamma_{\vec{x}}d\Gamma_{\vec{y}},
		\end{align} 
		where $\tilde{\Gamma}_{\ell}^{\re}$ is a sphere centered in $\vec{c}_{\ell}$ of a capacitance $c_{\ell}^{\re}$. The last equality is a subject of Proposition \ref{prop:difM1}. 
		
		Remark that while $K_G$ is nothing else but a semi-discretization of  ${S}^{\re}$ on $\Gamma^{\re}$ with the 'asymptotic' Galerkin space $\mathcal{V}_{G}^{\re}$ (see \eqref{eq:SL_BIE_TD}), $K_s$ is the semi-discretization of the single-layer $S^{\re}$ on the spheres centered at $\{\vec{c}_{\ell}\}_{\ell=1}^N$, with capacitances $c_{\ell}^{\re}$, with the asymptotic Galerkin space for the spheres (elements of $\mathbb{S}_0$).
		
		\item finally, the convolution solution operator in \eqref{eq:tdsys}, cf. \eqref{eq:ureg_sc}, is replaced by
		\begin{align*}	\int_{\Gamma_k^{\re}}\frac{\delta(t-|\vec{x}-\vec{y}|)}{4\pi|\vec{x}-\vec{y}|}\sigma^{\re}_k(\vec{x})d\Gamma_{\vec{y}}\rightarrow &\frac{\delta(t-|\vec{x}-\vec{c}_k|)}{4\pi|\vec{x}-\vec{c}_k|}\left(c_k^{\re}+\frac{(\vec{x}-\vec{c}_k)\cdot\vec{p}_k^{\re}}{4\pi|\vec{x}-\vec{c}_k|^3}\right) \\
			&-\frac{\frac{d}{dt} \delta(t-|\vec{x}-\vec{c}_k|)}{4\pi|\vec{x}-\vec{c}_k|^2}(\vec{x}-\vec{c}_k)\cdot \vec{p}_k^{\re}.
		\end{align*}
		The above comes by writing \eqref{eq:ureg_sc} in the Fourier-Laplace domain, with the use of
		\begin{align*}%
			(\mathcal{S}^{\re}_{\omega}\vec{\varphi})(\vec{x})=\sum\limits_{k=1}^N\int_{\Gamma^{\re}_k}\frac{\mathrm{e}^{i\omega|\bx-\by|}}{4\pi|\bx-\by|}\varphi_k(\vec{y})d\Gamma_{\vec{y}},
		\end{align*}
		expanding $\vec{y}\mapsto \frac{\mathrm{e}^{i\omega|\bx-\by|}}{4\pi|\bx-\by|}$ around $\vec{c}_k$ into the Taylor series, neglecting higher-order terms and going back to the time domain. 
	\end{itemize}
	
	\subsubsection{A roadmap of the analysis}
	Like in previous sections, we will derive our estimates in the frequency domain. We will concentrate our efforts on deriving an appropriate error estimate, while the corresponding well-posedness/stability estimate will appear as a by-product of the intermediate results, cf. e.g. Lemma \ref{lemma:stability_bound_lambda_G} which proves Theorem \ref{th:wp}, that, in turn, implies by standard arguments Theorem \ref{theorem:well_posedness}. As before, we will work with boundary densities, and next estimate the error of the scattered field. From the discussion in the previous paragraph, it may seem that this comparison of the boundary densities makes no sense, since, to derive the model, we replace obstacles by balls, and thus the densities are defined on different surfaces! Such a comparison will be done through the coefficients of the expansion of these densities into their respective bases. Recall that, for the 'standard' Galerkin Foldy-Lax model \eqref{eq:tdsys}, we define $\vec{\lambda}_G^{\re}(\vec{x},t)=\sum\limits_{k=1}^N\lambda_{G,k}^{\re}(t)\vec{\sigma}^{\re}_k(\vec{x})$, see \eqref{eq:approximate_density}; in a similar manner, we define a new density, associated to the model \eqref{eq:tdsys2p}, but living on $\Gamma^{\re}$, 
	\begin{align*}%
		\vec{\lambda}_s^{\re}(\vec{x},t):=\sum\limits_{k=1}^N\lambda_{s,k}^{\re}(t)\vec{\sigma}^{\re}_k(\vec{x}), \quad \vec{x}\in \Gamma^{\re}, \, t>0.
	\end{align*}
	Let us introduce the quantities to estimate. As discussed in the previous section, we will take into account the effects of different approximations separately.  
	For this we introduce the following intermediate quantities. First of all, for $(\bx, t) \in \Gamma^{\re} \times \mathbb{R}$,
	\begin{align}
		\label{eq:dens_pert}
		\vec{\lambda}^{\re,(1)}\in \operatorname{TD}(H^{-1/2}(\Gamma^{\re})), \quad \text{s.t.}\quad ({S}^{\re}*\vec{\lambda}^{\re,(1)})(\bx, t)=	\vec{g}^{\re,(1)}(\vec{x},t), \\
		\nonumber
		g^{\re,(1)}_{\ell}(\vec{x},t):=-u^{\operatorname{inc}}(\vec{c}_{\ell},t)-\nabla u^{\operatorname{inc}}(\vec{c}_{\ell},t)\cdot (\vec{x}-\vec{c}_{\ell}),\, (\vec{x}, t) \in \Gamma^{\re}_{\ell} \times \mathbb{R},
	\end{align}
	which is the solution to the original problem \eqref{eq:SL_BIE_TD} with a perturbed right-hand side. 
	Next, we approximate \eqref{eq:dens_pert} with the help of the asymptotic Galerkin method \eqref{eq:tdsys}, which yields another density:
	\begin{align}
		\label{eq:fdsys_corr}
			{\vec{\lambda}}^{\re,(1)}_G\in \operatorname{TD}(\mathcal{V}_{G}^{\re}) \quad \text{s.t.} \quad &\sum\limits_{k=1}^N (K^{\re}_{G, \ell k}*	{\lambda}^{\re,(1)}_{G, k})(t)=\int_{\Gamma^{\re}_{\ell}}{\vec{g}}^{\re,(1)}(\vec{x},t)\sigma^{\re}_{\ell}(\vec{x})d\Gamma_{\vec{x}},\\	
			\nonumber	&{\vec{\lambda}}^{\re,(1)}_G(\vec{x},t)=\sum\limits_{k=1}^N \lambda^{\re,(1)}_{G, k }(t)\vec{\sigma}_k^{\re}(\vec{x}).
	\end{align} 
	Then the error between the exact density $\boldsymbol{\lambda}^{\re}$ and the density $\boldsymbol{\lambda}^{\re}_s$ will be measured by means of comparing them to the above newly introduced quantities. In other words, we define three error terms $\vec{e}_j^{\re,(1)}$, which are simpler to estimate and whose sum allows to bound the sought error:
	\begin{align}
		\label{eq:three_errors}
		\begin{split}
			&\|\vec{\lambda}^{\re}(\cdot,t)-\boldsymbol{\lambda}^{\re}_s(\cdot,t)\|_{H^{-1/2}(\Gamma^{\re})}\leq 	\sum\limits_{j=1}^3\|\vec{e}_j^{\re,(1)}(\cdot,t)\|_{H^{-1/2}(\Gamma^{\re})},\\
			&\vec{e}_1^{\re,(1)}=\boldsymbol{\lambda}^{\re}-\boldsymbol{\lambda}^{\re,(1)},\quad
			\vec{e}_2^{\re,(1)}=\boldsymbol{\lambda}^{\re,(1)}-\boldsymbol{\lambda}^{\re,(1)}_G,\quad
			\vec{e}_3^{\re,(1)}=\boldsymbol{\lambda}^{\re,(1)}_G-\boldsymbol{\lambda}^{\re}_s.
		\end{split}
	\end{align}
	Let us remark that it is the error $\vec{e}_3^{\re,(1)}$ that carries an information about the change in the operators $K$ when passing from (\ref{eq:tdsys}) to (\ref{eq:tdsys2}). 
	
	As soon as the density error is controlled, we will show how they allow us to bound the error in the scattered field, see Section \ref{sec:recovering}. As discussed in the beginning of this section, we will concentrate on the estimates in the frequency domain, which we further translate into the time domain. 
	
	Since most of our analysis relies on the existing estimates, in most of the arguments we will no longer keep track of exact powers of $|\omega|$ and $\Im \omega$, but rather hide them in the notation, by remarking that the dependence is polynomial in $|\omega|$ and inverse-polynomial in $\min(1,\Im \omega)$ where needed. Same refers to the dependence on the quantities $d_*^{\re}$ and $N$.  More precisely, we will write that 
	\begin{align*}%
		a\leq C_{\operatorname{geom}}(N, \underline{d}_*^{\re}) p(|\omega|,\Im \omega)b,
	\end{align*}
	if there exists $m_1,m_2,n_1,n_2\geq 0$ and a constant $C>0$ (independent of $\re\in (0,1]$), s.t. for all $\omega\in \mathbb{C}^+$, 
	\begin{align*}%
		a\leq C N^{n_1}(\underline{d}_*^{\re})^{-n_2} (1+|\omega|)^{m_1}\max(1, (\Im \omega)^{-m_2})b.
	\end{align*}
	\subsubsection{Error stemming from the simplification of the right-hand side $\hat{\vec{e}}_1^{\re, (1)}$}
	\label{sec:ere1}
	Remark that the error $\hat{\vec{e}}_1^{\re, (1)}$ is the error of the replacing the right-hand side in the original problem \eqref{eq:SL_BIE_TD} by a simplified quantity, and thus
	\begin{align}
		\label{eq:crv}
		{S}^{\varepsilon}_{\omega}\hat{\vec{e}}_1^{\re,(1)}=	{S}^{\varepsilon}_{\omega}(\hat{{\blambda}}^{\varepsilon}-\hat{\blambda}^{\varepsilon,(1)})=\hat{{\vec{g}}}^{\re}-\hat{\vec{g}}^{\re,(1)}.
	\end{align}
	To analyze the corresponding errors, we proceed similarly to how it was done in Section \ref{sec:analysis_freq_domain}, by decomposing the error into different components $\hat{\vec{e}}^{\re,(1)}_{1, \sigma}=\mathbb{Q}_{\sigma}\hat{\vec{e}}^{\re,(1)}_1\in \mathcal{V}_G^{\re}$ and $\hat{\vec{e}}^{\re,(1)}_{1, \perp}=\mathbb{Q}_{\perp}\hat{\vec{e}}^{\re,(1)}_1\in H^{-1/2}_{\perp}(\Gamma^{\re})$. Both terms are bounded in a rather direct manner. Testing \eqref{eq:crv} with $\hat{\vec{e}}^{\re,(1)}_{1}$ and using the coercivity bound of Theorem \ref{theorem:coercivity_bound_principal} yields
	\begin{align*}%
		c_S(\varepsilon\|\hat{\vec{e}}^{\re,(1)}_{1,\sigma}\|^2_{H^{-1/2}(\Gamma^{\re})}+\|\hat{\vec{e}}^{\re,(1)}_{1,\perp}\|^2_{H^{-1/2}(\Gamma^{\re})})\lesssim |\langle \,	\hat{{\vec{g}}}^{\re,(1)}-\hat{\vec{g}}^{\re}, \hat{\vec{e}}^{\re,(1)}_{1,\sigma}\rangle| + |\langle \,	{\hat{\vec{g}}}^{\re,(1)}-\hat{\vec{g}}^{\re}, \hat{\vec{e}}^{\re,(1)}_{1,\perp}\rangle|.
	\end{align*}
	Using Propositions \ref{prop:L^2_H_minus_half_bound} and \ref{prop:norm_equiv_perp}, we conclude that
	\begin{align*}%
		c_S	\left(\varepsilon\|\hat{\vec{e}}^{\re,(1)}_{1,\sigma}\|^2_{L^2(\Gamma^{\re})}+\|\hat{\vec{e}}^{\re,(1)}_{1,\perp}\|^2_{H^{-1/2}(\Gamma^{\re})}\right)&\lesssim \|\hat{\vec{g}}^{\re,(1)}-\hat{\vec{g}}^{\re}\|_{L^2(\Gamma^{\re})}\|\hat{\vec{e}}^{\re,(1)}_{1,\sigma}\|_{L^2(\Gamma^{\re})}\\
		&+|\hat{\vec{g}}^{\re,(1)}-\hat{\vec{g}}^{\re}|_{H^{1/2}(\Gamma^{\re})}\|\hat{\vec{e}}^{\re,(1)}_{1,\perp}\|_{H^{-1/2}(\Gamma^{\re})}.
	\end{align*}
	The inequality
	$|a_1b_1+a_2b_2|^2\leq (|a_1|^2/\re+|a_2|^2)(\re|b_1|^2+|b_2|^2)$ yields
	\begin{align*}%
		c_S(\varepsilon\|\hat{\vec{e}}^{\re,(1)}_{1,\sigma}\|^2_{L^2(\Gamma^{\re})}+\| \hat{\vec{e}}^{\re,(1)}_{1,\perp}
		&\|^2_{H^{-1/2}(\Gamma^{\re})})
		\lesssim
		(\varepsilon\|\hat{\vec{e}}^{\re,(1)}_{1,\sigma}\|^2_{L^2(\Gamma^{\re})}+\|\hat{\vec{e}}^{\re,(1)}_{1,\perp}\|^2_{H^{-1/2}(\Gamma^{\re})})^{1/2} \\
		&\times (\varepsilon^{-1}\|\hat{\vec{g}}^{\re,(1)}-\hat{\vec{g}}^{\re}\|_{L^2(\Gamma^{\re})}^2+|\hat{\vec{g}}^{\re,(1)}-\hat{\vec{g}}^{\re}|^2_{H^{1/2}(\Gamma^{\re})})^{1/2},
	\end{align*}
	so that, with the definition of $c_S$ from Theorem \ref{theorem:coercivity_bound_principal}, it holds that
	\begin{align}
		\label{eq:QperpE}
		&\| \hat{\vec{e}}^{\re,(1)}_{1, \perp}\|_{H^{-1/2}(\Gamma^{\re})}
		\leq C_{\operatorname{geom}}(N, \underline{d}_*^{\re})p(|\omega|,\Im\omega) \trinorm{\hat{\vec{g}}^{\re,(1)}-\hat{\vec{g}}^{\re}}_{-1/2}, \\
		\label{eq:ere1}
		&\| \hat{\vec{e}}^{\re,(1)}_{1, \sigma}\|_{L^2(\Gamma^{\re})} \leq \varepsilon^{-1/2}C_{\operatorname{geom}}(N, \underline{d}_*^{\re})p(|\omega|,\Im\omega)\trinorm{\hat{\vec{g}}^{\re,(1)}-\hat{\vec{g}}^{\re}}_{-1/2},
	\end{align}
	with \begin{align}
		\label{eq:trinorm}
		\trinorm{ \vec{\psi} }_{\alpha} := \re^{\alpha} \| \vec{\psi} \|_{L^2(\Gamma^{\re})} + | \vec{\psi} |_{H^{1/2}(\Gamma^{\re})}.
	\end{align}
	\subsubsection{Galerkin Foldy-Lax error $\hat{\vec{e}}_2^{\re, (1)}$}
	\label{sec:ere2}
	For the error term $\hat{\vec{e}}_2^{\re, (1)}$ we apply the same analysis as in Section \ref{sec:analysis_freq_domain}. The only difference in the estimates arises from the replacement of $\hat{\vec{g}}^{\re}$ by $\hat{\vec{g}}^{\re, (1)}$. The inequality \eqref{eq:bound_on_error_components_v0} rewrites, see \eqref{eq:trinorm}: 
	\begin{align*}%
		&\| \hat{\boldsymbol{e}}_{2, \perp}^{\re,(1)}\|_{H^{-1/2}(\Gamma^{\re})}\leq C_{\operatorname{geom},\perp,2}(N, \underline{d}_*^{\re})p_{\perp,2}(|\omega|,\Im \omega) \trinorm{\hat{\vec{g}}^{\re,(1)}}_{3/2},\\
		&\| \hat{\boldsymbol{e}}_{2, \sigma}^{\re,(1)}\|_{H^{-1/2}(\Gamma^{\re})}\leq \varepsilon^{3/2}\times	
		C_{\operatorname{geom},\sigma,2}(N, \underline{d}_*^{\re})p_{\sigma,2}(|\omega|,\Im \omega) \trinorm{\hat{\vec{g}}^{\re,(1)}}_{3/2}.
	\end{align*}
	\subsubsection{Error $\hat{\vec{e}}_3^{\re, (1)}$ stemming from replacing the operator}
	\label{sec:ere3}
	Written in an algebraic manner, the Galerkin Foldy-Lax model \eqref{eq:fdsys_corr} with the right-hand side $\hat{{\boldsymbol{g}}}^{\re,(1)}$ in the frequency domain reads, cf. \eqref{eq:as_model_fd} and  \eqref{eq:approximate_density}: find $\omega\mapsto \vv{\lambda}^{\re,(1)}_G(\omega)\in\mathbb{C}^N$, s.t.  
	\begin{equation}
		\label{eq:problem_M_G}
		\begin{aligned}
			&\mathbb{M}^{\varepsilon}(\omega) \vv{\lambda}^{\re, (1)}_G (\omega)=\vv{q}^{\varepsilon,(1)}(\omega) \quad \text{with} \quad q^{\varepsilon,(1)}_{\ell }(\omega)= \int_{\Gamma^{\varepsilon}_{\ell} }\hat{{g}}^{\varepsilon,(1)}_{\ell} (\bx, \omega){\sigma}_{\ell} ^{\varepsilon}(\bx)d\Gamma_{\bx},\\
			&\mathbb{M}^{\varepsilon}_{\ell k}(\omega)=\iint_{\Gamma^{\varepsilon}_{\ell}\times \Gamma^{\varepsilon}_k} G_{\omega}(|\bx-\by|)\sigma^{\varepsilon}_{\ell} (\bx)\sigma^{\varepsilon}_k(\by)d\Gamma_{\bx}d\Gamma_{\by}.
		\end{aligned} 
	\end{equation}
	We use the notation  $\vv{\lambda}_G^{\re,(1)}$ to underline that we are looking for the coefficients in the expansion ${\hat{\blambda}}_G^{\re,(1)}=\sum\limits_{k=1}^N \lambda_{G,k}^{\re,(1)}\vec{\sigma}_k^{\re}$, rather than densities defined on $\Gamma^{\re}$.  In order to keep the notation less cumbersome, we abuse it by omitting the $\hat{.}$ in $\vv{\lambda}$, keeping in mind that in this and following sections we work in the Fourier-Laplace domain. \\
	Our next step is to replace the matrix in the asymptotic Galerkin model by its counterpart for spherical particles. Proceeding in a similar manner as above, we rewrite the simplified model \eqref{eq:tdsys2} in the frequency domain. Find $\omega \mapsto \vv{\lambda}^{\re}_s(\omega) \in \mathbb{C}^N$, s.t. 
	\begin{equation}
		\label{eq:problem_M_s}
		\begin{aligned}
			&\mathbb{M}^{\re}_s(\omega) \vv{\lambda}^{\re}_s(\omega) = \vv{q}^{\re, (1)}(\omega) \quad \text{with} \quad q^{\re, (1)}_{\ell}(\omega) = \int_{\Gamma^{\varepsilon}_{\ell} }\hat{g}^{\varepsilon,(1)}_{\ell} (\bx, \omega){\sigma}_{\ell} ^{\varepsilon}(\bx)d\Gamma_{\bx},\\
			&\mathbb{M}^{\re}_{s, \ell k}(\omega) = 16  \pi^2 (c^{\re}_{\ell} c^{\re}_k)^{-1} \iint_{\tilde{\Gamma}^{\re}_{\ell} \times \tilde{\Gamma}^{\re}_k} G_{\omega}(|\bx-\by|) d\Gamma_{\bx}d\Gamma_{\by}, 
		\end{aligned}
	\end{equation}
	where $\tilde{\Gamma}^{\re}_k$ denotes a sphere centered at $\bc_k$ with the radius $ (4 \pi )^{-1} c^{\re}_k$.
	
	To state the key result of this section, let us define a new geometry. Let  $\widetilde{\Omega}^{\varepsilon}_k=B(\vec{c}_k, \rho^{\re}_k)$, $\rho^{\re}_k:=(4\pi)^{-1}c_k^{\varepsilon}=(4\pi)^{-1}c_k^1\re$ (see \eqref{eq:sigmakeps}). 
	\begin{proposition}
		\label{prop:difM1_geometry}
		The following holds true:  
		\begin{enumerate}
			\item the equilibrium densities $\widetilde{\sigma}^{\re}_k$ for $\widetilde{\Omega}_k^{\re}$ are constant and equal to $(\rho_k^{\re})^{-1}$;
			\item the capacitances of the obstacles $\Omega^{\varepsilon}_k$ and $\widetilde{\Omega}^{\varepsilon}_k$ are equal;
			\item if $(\Omega^{\re})_{0<\re\leq1}$ is admissible (see Definition \ref{def:admissible}), so is $(\widetilde{\Omega}^{\re})_{0<\re\leq1}$, and $$\operatorname{min}\operatorname{dist}(\widetilde{\Gamma}_j^{\re}, \widetilde{\Gamma}_k^{\re})\geq 
			\operatorname{min}\operatorname{dist}({\Gamma}_j^{\re}, {\Gamma}_k^{\re})=d^{\re}_*, \quad \text{ for all }0<\re\leq 1.$$
		\end{enumerate}
		
	\end{proposition}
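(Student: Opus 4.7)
The approach splits according to the three claims, each of which admits a short direct argument: spherical symmetry plus a shell-theorem calculation for claims (1) and (2), and monotonicity of electrostatic capacity for claim (3).

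For claim (1), I plan to exploit the full rotational invariance of the defining equation $\int_{\widetilde{\Gamma}_k^{\re}}\widetilde{\sigma}_k^{\re}(\vec{y})(4\pi|\vec{x}-\vec{y}|)^{-1}d\Gamma_{\vec{y}}=1$ on the sphere $\widetilde{\Gamma}_k^{\re}=\partial B(\vec{c}_k,\rho_k^{\re})$. The equation and its right-hand side are invariant under rotations about $\vec{c}_k$, and its $H^{-1/2}$-solution is unique by the same Corollary 8.13 of \cite{mclean} already cited in Lemma \ref{lemma:Galerkin_space}; hence the solution inherits the symmetry and must be constant. The value of the constant will come from the classical identity $\int_{\partial B(\vec{c}_k,R)}(4\pi|\vec{x}-\vec{y}|)^{-1}d\Gamma_{\vec{y}}=R$ for $\vec{x}\in\partial B(\vec{c}_k,R)$, which itself follows from harmonicity and rotational symmetry of the single-layer potential of the constant density $1$ inside the ball combined with its $R^2/|\vec{x}|$ far-field behaviour. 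Applied with $R=\rho_k^{\re}$, this yields $\widetilde{\sigma}_k^{\re}\equiv(\rho_k^{\re})^{-1}$. Claim (2) is then an immediate one-line computation, $\widetilde{c}_k^{\re}=(\rho_k^{\re})^{-1}\cdot 4\pi(\rho_k^{\re})^2=4\pi\rho_k^{\re}=c_k^{\re}$, using the very definition $\rho_k^{\re}=c_k^{\re}/(4\pi)$.

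For claim (3), the only real step is the inclusion $\widetilde{\Omega}_k^{\re}\subseteq B_k^{\re}$, equivalently $\rho_k^{\re}\leq r_k^{\re}$. Since $\Omega_k^{\re}\subseteq B_k^{\re}$ by \eqref{eq:omega_ball}, and since the capacity of $B_k^{\re}$ equals $4\pi r_k^{\re}$ by exactly the same computation used in claim (2), the desired bound will follow from the monotonicity of electrostatic capacity with respect to set inclusion: $c_k^{\re}=\mathrm{cap}(\Omega_k^{\re})\leq \mathrm{cap}(B_k^{\re})=4\pi r_k^{\re}$. Dividing by $4\pi$ gives $\rho_k^{\re}\leq r_k^{\re}$, i.e.\ $\widetilde{\Omega}_k^{\re}\subseteq B_k^{\re}$. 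The geometric consequences then follow at once:
\[
\mathrm{dist}(\widetilde{\Gamma}_j^{\re},\widetilde{\Gamma}_k^{\re})\;\geq\;\mathrm{dist}(B_j^{\re},B_k^{\re})\;\geq\;d_*^{\re}\;>\;0,
\]
so that admissibility of $(\widetilde{\Omega}^{\re})_{0<\re\leq 1}$ is inherited from that of $(\Omega^{\re})_{0<\re\leq 1}$.

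The main obstacle I anticipate is capacity monotonicity, which is not developed elsewhere in the paper. I plan to handle it either by citing it as a classical fact, or by giving a short self-contained justification via the exterior capacitary potential $w_\Omega$: the unique bounded harmonic function in $\mathbb{R}^3\setminus\overline{\Omega}$ with $w_\Omega=1$ on $\partial\Omega$ and decaying at infinity, whose leading far-field coefficient is $\mathrm{cap}(\Omega)/(4\pi)$. The maximum principle applied to $w_{\Omega_1}-w_{\Omega_2}$ in $\mathbb{R}^3\setminus\overline{\Omega_2}$ when $\Omega_1\subseteq\Omega_2$ gives $w_{\Omega_1}\leq w_{\Omega_2}$ there, and comparing the $|\vec{x}|^{-1}$ coefficients at infinity yields $\mathrm{cap}(\Omega_1)\leq \mathrm{cap}(\Omega_2)$, which is all that is needed.
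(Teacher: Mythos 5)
Your proof is correct. For claims (1) and (2) it is essentially the paper's argument — the paper quotes the identity $\int_{\partial B(0,r)}(4\pi|\vec{x}-\vec{y}|)^{-1}d\Gamma_{\vec{y}}=r$ for $\vec{x}\in\partial B(0,r)$ and reads off the constant density and the capacitance $4\pi r$; you supply a little more detail (uniqueness in $H^{-1/2}$ plus rotational invariance to conclude the density is constant, then Newton's shell theorem for the constant's value), which is a sound way to justify the identity the paper simply states.

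For claim (3) you take a genuinely different and, in fact, tighter route. The paper invokes the diameter bound from \cite[Exercise 8.12]{mclean}, $c_k^{\re}\leq 4\pi\operatorname{diam}\Gamma_k^{\re}$, to deduce $\rho_k^{\re}\leq\operatorname{diam}\Gamma_k^{\re}$ and then claims $B(\vec{c}_k,\rho_k^{\re})\subset B(\vec{c}_k,\re r_k)$. Since the standing assumption \eqref{eq:omega_ball} only guarantees $r_k\geq\operatorname{diam}\Omega_k/2$, that chain only gives $\rho_k^{\re}\leq 2\re r_k$ and does not literally produce the claimed inclusion. Your argument via monotonicity of electrostatic capacity, $c_k^{\re}=\mathrm{cap}(\Omega_k^{\re})\leq\mathrm{cap}(B_k^{\re})=4\pi\re r_k$, gives $\rho_k^{\re}\leq r_k^{\re}$ directly and hence the inclusion $\widetilde{\Omega}_k^{\re}\subseteq B_k^{\re}$ exactly as needed, without any spurious constant. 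The price is that you must invoke (or, as you sketch, prove via the maximum-principle comparison of capacitary potentials) the monotonicity of capacity, which is standard but not otherwise used in the paper; what you buy is a sharper bound and a cleaner logical chain to $\operatorname{dist}(\widetilde{\Gamma}_j^{\re},\widetilde{\Gamma}_k^{\re})\geq\operatorname{dist}(B_j^{\re},B_k^{\re})=d_*^{\re}>0$. Your maximum-principle justification of monotonicity is correct and self-contained; either approach is acceptable, but yours is arguably the more robust one for this step.
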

	\begin{proof}
		The first two statements follow immediately once we recall that the capacitance of the ball of radius $r$ is $4\pi r$; this latter result follows from the expression 
		\begin{align*}%
			\int_{\partial B(0,r)}\frac{1}{4\pi | \bx-\by |}d\Gamma_{\bx}=r, \quad \vec{x}\in \partial B(0,r)
		\end{align*}
		and thus the associated equilibrium density equals $\sigma(\vec{x})=r^{-1}$. 
		
		To prove the third statement, by \cite[Exercise 8.12, p. 272]{mclean}, $c_k^{\re}\leq 4\pi \operatorname{diam}\Gamma_k^{\re}$, thus $\rho_k^{\re}\leq \operatorname{diam}\Gamma_k^{\re}$, which shows that $B(\vec{c}_k, \rho_k^{\re})\subset B(\vec{c}_k, \re r_k)$, cf. \eqref{eq:omega_ball}. Then $(\widetilde{\Omega}^{\re})_{0<\re\leq 1}$ is admissible since $(\Omega^{\re})_{0<\re\leq 1}$ is, see the discussion after Definition \ref{def:admissible}. 
	\end{proof}
	The next proposition shows that, on one hand $K^{\re}_{s}$ is essentially $K^{\re}_G$ written for $\widetilde{\Omega}^{\re}$, and, on the other hand, that these operators are close as $\re\rightarrow 0$ in a certain sense. 
	\begin{proposition}
		\label{prop:difM1}
		For $K^{\re}_{s,\ell k}$ defined in \eqref{eq:tdsys2}, it holds that 
		\begin{align}
			\label{eq:fm}
			\mathcal{F}\left(\frac{1}{\rho_{\ell}^{\re}\rho_k^{\re}}\int_{\widetilde{\Gamma}_{\ell}^{\re}}\int_{\widetilde{\Gamma}_{k}^{\re}}\frac{\delta(t-|\vec{x}-\vec{y}|)}{4\pi|\vec{x}-\vec{y}|}d\Gamma_{\vec{x}}d\Gamma_{\vec{y}}\right)=\mathcal{F}\left(K^{\re}_{s,\ell k}\right)(\omega)= \mathbb{M}_{s, \ell k}^{\re}(\omega)
		\end{align}
		with
		\begin{align}
			\label{eq:M_simplified}
			\mathbb{M}^{\re}_{s, \ell k}(\omega) = 
			\left\{
			\begin{array}{ll}
				4 \pi i \omega (\rho^{\re}_k)^2 j_0(\omega \rho^{\re}_k) h^{(1)}_0(\omega \rho^{\re}_k), &k = \ell, \\
				4 \pi i \omega \rho^{\re}_k \rho^{\re}_{\ell} j_0(\omega \rho^{\re}_k) j_0(\omega \rho^{\re}_{\ell}) h^{(1)}_0(\omega |\bc_k - \bc_{\ell}| ), &k \neq \ell,
			\end{array}
			\right.
		\end{align}
		where $j_0$, $h^{(1)}_0$ denote the spherical Bessel and Hankel functions of the first kind. 
		
		For all  $0<\re<1$, all $\omega\in \mathbb{C}^+$, the matrix $\mathbb{M}^{\re}_s (\omega)$ is invertible, and the following two estimates hold true: 
		\begin{align}
			\label{eq:widm}
			&\|(\mathbb{M}^{\re}_s)^{-1}\|_{F} \leq \varepsilon^{-1} C_{s, \operatorname{geom}}( N,{\underline{d}}^{\re}_*)p_s(|\omega|,\Im \omega),\\
			\label{eq:widm2}
			&\|\left(\mathbb{M}^{\varepsilon}\right)^{-1}-(\mathbb{M}^{\re}_s)^{-1}\|_{F} \leq \varepsilon {C}_{\operatorname{diff}, \operatorname{geom}}( N,\underline{{d}}_*^{\re}){p}_{\operatorname{diff}}(|\omega|,\Im \omega),
		\end{align}
		where $\|.\|_F$ is the Frobenius matrix norm. 
	\end{proposition}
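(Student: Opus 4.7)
To prove Proposition \ref{prop:difM1}, the plan is to treat the three assertions in turn: the closed-form expression (\ref{eq:M_simplified}), the invertibility with the bound (\ref{eq:widm}), and the difference bound (\ref{eq:widm2}).

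For the explicit formula (\ref{eq:M_simplified}), the main tool is the spherical mean-value property for the Helmholtz equation: if $u$ is a regular Helmholtz solution in a neighbourhood of $B(\vec{\xi},\rho)$, then $(4\pi\rho^2)^{-1}\int_{\partial B(\vec{\xi},\rho)} u\, d\Gamma = j_0(\omega\rho)u(\vec{\xi})$. For $k\neq\ell$, since $\widetilde{B}^\re_k \cap \widetilde{B}^\re_\ell = \emptyset$ by Proposition \ref{prop:difM1_geometry}, the map $\by \mapsto G_\omega(|\bx-\by|)$ is a regular Helmholtz solution on $\widetilde{B}^\re_\ell$ for every $\bx \in \widetilde{\Gamma}^\re_k$, so the inner mean equals $4\pi(\rho^\re_\ell)^2 j_0(\omega\rho^\re_\ell)\, G_\omega(|\bx-\bc_\ell|)$; a second application of the mean-value property to the outer integral, combined with the identity $G_\omega(r) = (i\omega/4\pi)\, h^{(1)}_0(\omega r)$, produces the cross-interaction formula. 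For $k=\ell$, the mean-value argument does not apply; I will instead expand $G_\omega(|\bx-\by|)$ via the addition theorem for spherical waves centred at $\bc_k$, so that only the $n=0$ spherical-harmonic mode survives integration over $\widetilde{\Gamma}^\re_k\times\widetilde{\Gamma}^\re_k$, yielding the self-interaction formula. The identity (\ref{eq:fm}) is then immediate from the definition of $K^\re_{s,\ell k}$ in (\ref{eq:K_s_lk}) and $c^\re_k = 4\pi\rho^\re_k$.

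Invertibility of $\mathbb{M}^\re_s(\omega)$ and the bound (\ref{eq:widm}) follow by identifying $\mathbb{M}^\re_s$ as the asymptotic Galerkin matrix of (\ref{eq:tdsys}) for the auxiliary sphere family $(\widetilde{\Omega}^\re)_{0<\re\leq 1}$: indeed $\widetilde{\sigma}^\re_k = (\rho^\re_k)^{-1}$ is constant on each $\widetilde{\Gamma}^\re_k$, so $\widetilde{\sigma}^\re_\ell\widetilde{\sigma}^\re_k = 16\pi^2/(c^\re_\ell c^\re_k)$ matches exactly the normalisation of (\ref{eq:problem_M_s}). Since, by Proposition \ref{prop:difM1_geometry}, $(\widetilde{\Omega}^\re)$ is admissible with $\widetilde{d}^\re_* \geq d^\re_*$, the operator-level stability estimate of Corollary \ref{corollary:stability_bound} applied to $\widetilde{\Omega}^\re$, together with the norm equivalence (\ref{eq:vecV2}) and Proposition \ref{prop:L^2_H_minus_half_bound}, transfers to the asserted matrix bound on $\|(\mathbb{M}^\re_s)^{-1}\|_F$.

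The main obstacle is the difference bound (\ref{eq:widm2}). The strategy is the matrix identity
\begin{equation*}
(\mathbb{M}^\re)^{-1}-(\mathbb{M}^\re_s)^{-1} = (\mathbb{M}^\re)^{-1}\,(\mathbb{M}^\re_s-\mathbb{M}^\re)\,(\mathbb{M}^\re_s)^{-1},
\end{equation*}
combined with an entry-wise estimate $\|\mathbb{M}^\re - \mathbb{M}^\re_s\|_F = O(\re^3)\,p(|\omega|,\Im\omega)$. For off-diagonal entries $k\neq\ell$, the kernel $G_\omega(|\bx-\by|)$ is smooth on the domain of integration, so I will Taylor-expand it to second order around $(\bc_\ell,\bc_k)$. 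The zeroth-order term contributes $G_\omega(|\bc_\ell-\bc_k|)\,c^\re_\ell c^\re_k$ to both $\mathbb{M}^\re_{\ell k}$ and $\mathbb{M}^\re_{s,\ell k}$ (matching capacitances, Proposition \ref{prop:difM1_geometry}) and cancels in the difference; the first-order terms vanish identically on $\widetilde{\Gamma}^\re$ by spherical symmetry, while on $\Gamma^\re$ they are of size $|\nabla G_\omega|\cdot c^\re\cdot|\vec{p}^\re| = O(\re^3)(\underline{d}^\re_*)^{-2}(1+|\omega|)$, using (\ref{eq:bound_grad_G}) and $|\vec{p}^\re_k|\leq O(\re^2)$; the second-order remainders on both sides are $O(\re^4)$ by direct moment estimates.

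For diagonal entries $k=\ell$ the Taylor expansion is unavailable because of the singularity of $G_\omega$ at $\bx=\by$. Here I will employ the splitting $G_\omega = G_0 + i\omega/(4\pi) + \mathcal{K}_\omega$ already introduced in Section \ref{sec:continuity_bound}. The $G_0$-contribution to $\mathbb{M}^\re_{kk}$ equals $\langle\sigma^\re_k, S^{\re,kk}_0\sigma^\re_k\rangle = \langle\sigma^\re_k,1\rangle = c^\re_k$, and the analogous computation on $\widetilde{\Gamma}^\re_k$ (where $S^{\re,kk}_0\widetilde{\sigma}^\re_k=1$ as well) gives the same value; the constant-$(i\omega/4\pi)$ part yields $(i\omega/4\pi)(c^\re_k)^2$ on both sides. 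These two pieces therefore cancel exactly, leaving only the $\mathcal{K}_\omega$-contribution, which is estimated by Lemma \ref{lemma:kernel_bound}(a) and $\|\sigma^\re_k\|_{L^1(\Gamma^\re_k)} = O(\re)$ (resp. $\|\widetilde{\sigma}^\re_k\|_{L^1(\widetilde{\Gamma}^\re_k)} = O(\re)$) to give an entry-wise bound $O(\re^3|\omega|^2)$. Summing over the $N^2$ entries yields the $O(\re^3)$ Frobenius bound on $\mathbb{M}^\re-\mathbb{M}^\re_s$; combining with $\|(\mathbb{M}^\re)^{-1}\|,\|(\mathbb{M}^\re_s)^{-1}\| = O(\re^{-1})\,p(|\omega|,\Im\omega)$ (from the previous step and its analogue for $\Omega^\re$) produces exactly the factor $\re$ asserted in (\ref{eq:widm2}).
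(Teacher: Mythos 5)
Your proposal is correct, but on the key point --- the difference bound \eqref{eq:widm2} --- it takes a genuinely different route from the paper. The paper writes $\mathbb{M}^{\re}=\mathbb{M}^{\re}_s(\mathbb{I}+(\mathbb{M}^{\re}_s)^{-1}\mathbb{E}^{\re})$ and uses a Neumann-series bound, which is only valid under the smallness condition $\|(\mathbb{M}^{\re}_s)^{-1}\mathbb{E}^{\re}\|_F<1$; this forces a separate ``high-frequency'' branch in which the triangle inequality and the a priori bounds on both inverses are used instead, with the condition \eqref{eq:MEalt_hf} converting $\re^{-1}$ into $\re$ at the cost of extra powers of $|\omega|$ and $(\Im\omega)^{-1}$. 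You instead use the exact second resolvent identity $(\mathbb{M}^{\re})^{-1}-(\mathbb{M}^{\re}_s)^{-1}=(\mathbb{M}^{\re})^{-1}(\mathbb{M}^{\re}_s-\mathbb{M}^{\re})(\mathbb{M}^{\re}_s)^{-1}$, which needs no smallness and hence no case split, only the coercivity-based bounds $\|(\mathbb{M}^{\re})^{-1}\|_F,\|(\mathbb{M}^{\re}_s)^{-1}\|_F\lesssim \re^{-1}p(|\omega|,\Im\omega)$ --- the former being exactly the ingredient the paper itself invokes in its high-frequency branch (``by exactly the same argument as \eqref{eq:invM}''), so nothing new is required and the argument is uniform in $\omega\in\mathbb{C}^+$. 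Your entrywise estimate of $\mathbb{M}^{\re}-\mathbb{M}^{\re}_s$ coincides in substance with the paper's (the $G_0+i\omega/4\pi+\mathcal{K}_\omega$ splitting on the diagonal, Taylor expansion off the diagonal); carrying the Taylor expansion to second order and using $\vec{p}^{\re}_k=O(\re^2)$ and the vanishing first moment on the spheres is more than needed, since the paper's first-order remainder already gives $O(\re^3)$, but it is correct. For the explicit formula \eqref{eq:M_simplified}, your double use of the spherical mean-value property $j_0(\omega\rho)u(\vec{\xi})=(4\pi\rho^2)^{-1}\int_{\partial B(\vec{\xi},\rho)}u$ for the off-diagonal entries is a slicker alternative to the paper's addition theorem plus Funk--Hecke; for the diagonal entry, be careful that on $\widetilde{\Gamma}^{\re}_k\times\widetilde{\Gamma}^{\re}_k$ the addition-theorem series has $r_<=r_>$ and term-by-term integration needs a justification --- the cleanest fix is to quote, as the paper does, the known spectral decomposition $\widetilde{S}^{\re,kk}_{\omega}Y^m_n=i\omega(\rho^{\re}_k)^2 j_n(\omega\rho^{\re}_k)h^{(1)}_n(\omega\rho^{\re}_k)Y^m_n$ and keep only the $n=0$ mode. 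The invertibility and \eqref{eq:widm} follow in your proposal, as in the paper, from the coercivity of the single-layer operator on the admissible sphere family together with \eqref{eq:vecV2}; just note that it is easier to run the coercivity directly at the coefficient level, as the paper does, than to pass through the operator bound of Corollary \ref{corollary:stability_bound}.
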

	\begin{proof}
		\mypar{Proof of \eqref{eq:fm}-\eqref{eq:M_simplified}.}
		We start by rewriting 
		\begin{align}
			\nonumber
			\mathcal{F}\left(\frac{1}{\rho_{\ell}^{\re}\rho_k^{\re}}\int_{\widetilde{\Gamma}_{\ell}^{\re}}\int_{\widetilde{\Gamma}_{k}^{\re}}\frac{\delta(t-|\vec{x}-\vec{y}|)}{4\pi|\vec{x}-\vec{y}|}d\Gamma_{\vec{x}}d\Gamma_{\vec{y}}\right)&= (\rho^{\re}_{\ell} \rho^{\re}_{k})^{-1} \iint_{\tilde{\Gamma}^{\re}_{\ell} \times \tilde{\Gamma}^{\re}_k} G_{\omega} (|\bx - \by|) d\Gamma_{\bx} d\Gamma_{\by} \\
			\label{eq:M_s_int_form}	
			&= (\rho^{\re}_{\ell} \rho^{\re}_{k})^{-1} (\widetilde{S}^{\re, \ell k}_{\omega} 1, 1)_{L^2(\tilde{\Gamma}^{\re}_k)},
		\end{align}
		where we use the $\widetilde{.}$ to indicate that the corresponding operator is defined on $\widetilde{\Gamma}^{\re}$ rather than $\Gamma^{\re}$:   $$(\widetilde{S}^{\re}_{\omega}\vec{\varphi})(\vec{x})=\sum\limits_{k=1}^N\int_{\widetilde{\Gamma}_k^{\re}}G_{\omega}(|\bx-\by|)\varphi_k(\vec{y})d\Gamma_{\by}, \quad \vec{x}\in \widetilde{\Gamma}^{\re}.$$  
		
		\mypar{Case $\ell = k$.}
		For the unit sphere, the spherical harmonics $\{ Y^m_n: n \in \mathbb{N}, ~m = -n,\ldots, n \}$ are eigenfunctions of the single layer boundary operator for the Helmholtz problem, i.e.
		\begin{align*}%
			\widetilde{S}^{\re, kk}_{\omega} Y^m_n = \lambda_n Y^m_n \quad \text{with}\quad \lambda_n = i \omega (\rho^{\re}_k)^2 j_n(\omega \rho^{\re}_k) h^{(1)}_n(\omega \rho^{\re}_k),
		\end{align*} 
		where $j_n$ and $h^{(1)}_n$ are the spherical Bessel and Hankel functions of the first find, receptively, see \cite[p. 437, Section 10]{abramowitz} for details. Using this spectral representation and the orthogonality of spherical harmonics over the unit sphere, only $Y^0_0 = (4\pi)^{-1/2}$ contributes when integrating against a constant. Thus, inserting the above identity into \eqref{eq:M_s_int_form} yields the equiality of \eqref{eq:M_s_int_form} for $\ell=k$ with $\mathbb{M}^{\re}_{s, kk}$.
		
		\mypar{Case $\ell \neq k$.}
		We make use of the Addition Theorem (cf. \cite[\S 10.1, p. 440, Formulas 10.1.45 and 10.1.46]{abramowitz})
		
		\begin{align}
			\label{eq:add_theorem}
			\frac{ e^{i\omega| \boldsymbol{x} - \boldsymbol{y} |}}{4 \pi | \boldsymbol{x} - \boldsymbol{y} |} = \frac{i \omega}{4 \pi} \sum_{n=0}^{+\infty} (2n+1) j_n(\omega \rho^{\re}_{\ell}) h^{(1)}_n (\omega |\by-\bc_{\ell}|) P_n (\widehat{\bx - \bc_{\ell}} \cdot \widehat{\by - \bc_{\ell}}),
		\end{align}
		for $\bx \in \tilde{\Gamma}^{\re}_{\ell}$, $\by \in \tilde{\Gamma}^{\re}_k$, with $ \rho^{\re}_{\ell} = |\bx - \bc_{\ell}|$. Here $P_n$, $n\geq 0$ are the Legendre polynomials. 
		Using the Addition Theorem \eqref{eq:add_theorem}, we obtain 
		\begin{align}
			\label{eq:S_lk_1}
			(\tilde{S}^{\re, \ell k}_{\omega} 1)(\by) =  \int_{\tilde{\Gamma}^{\re}_{\ell}}  G_{\omega}(|\bx-\by|) d\Gamma_{\bx} = \frac{i \omega}{4 \pi} \sum_{m=0}^{\infty} (2m+1) j_m(\omega \rho^{\re}_{\ell}) h^{(1)}_m (\omega|\by-\bc_{\ell}|) \int_{\tilde{\Gamma}^{\re}_{\ell}} P_m(\widehat{\bx - \bc_{\ell}}\cdot \widehat{\by - \bc_{\ell}}) d\Gamma_{\bx}, \quad \by \in \tilde{\Gamma}^{\re}_k
		\end{align}
		In order to compute the integral in the right-hand side of the above identity, we employ the Funk-Hecke formula, see \cite{hecke}: for $f \in L^2(-1,1)$, it holds true the following identity
		\begin{align}
			\label{eq:funk_hecke}
			\int_{\mathbb{S}^2} f(\bx \cdot \by) Y^m_n(\by) d\Gamma_{\by} = 2\pi \int_{-1}^1 f(t) P_{n}(t) dt Y^m_n(\bx),\quad \bx \in \mathbb{S}^2, 
		\end{align}
		for all $n \in \mathbb{N}$ and $m=-n,\ldots, n$ where $\mathbb{S}^2$ denotes the unit sphere. 
		Then, changing the variables in the integral in \eqref{eq:S_lk_1}, namely, $\vec{\tau} = \widehat{\bx - \bc_{\ell}} = (\rho^{\re}_{\ell})^{-1}(\bx - \bc_{\ell})$,  and then applying the Funk-Hecke formula \eqref{eq:funk_hecke} yields 
		\begin{align}
			\label{eq:application_funk_hecke}
			\int_{\tilde{\Gamma}^{\re}_{\ell}} P_m(\widehat{\bx - \bc_{\ell}}\cdot \widehat{\by - \bc_{\ell}}) d\Gamma_{\bx} = ( \rho^{\re}_{\ell})^2 \int_{\mathbb{S}^2} P_m(\vec{\tau} \cdot \widehat{\by - \bc_{\ell}}) d\Gamma_{\vec{\tau}}  = 2\pi ( \rho^{\re}_{\ell})^2 \int_{-1}^1 P_m(t) dt = 4 \pi ( \rho^{\re}_{\ell})^2, 
		\end{align}
		where in the last equality we made use of the fact that the Legendre polynomials are pairwise orthogonal with respect to the standard $L^2(-1,1)$ inner product. Thus, we have
		\begin{align}
			\label{eq:S_lk_h_0}
			(\tilde{S}^{\re, \ell k}_{\omega} 1)(\by) =  i \omega ( \rho^{\re}_{\ell})^2 j_0 (\omega \rho^{\re}_{\ell}) h^{(1)}_0 (\omega|\by-\bc_{\ell}|),\quad \by \in \tilde{\Gamma}^{\re}_k.
		\end{align} 
		Next, using $h^{(1)}_{0} (z) = e^{iz}/z$ (see \cite[p. 438, 10.1.11-10.1.12]{abramowitz}), we apply the Addition Theorem again to $h^{(1)}_0(\omega|\by-\bc_{\ell}|)$:
		\begin{align*}%
			h^{(1)}_{0} (\omega| \by -\bc_{\ell}|) = \sum_{m=0}^{+\infty} (2m+1) j_m(\omega \rho^{\re}_k) h^{(1)}_m(\omega|\bc_k - \bc_{\ell}|) P_{m} (\widehat{\by-\bc_k} \cdot \widehat{\bc_k-\bc_{\ell}}). 
		\end{align*}
		Inserting the above identity into \eqref{eq:S_lk_h_0} and then testing it with a constant, it follows 
		\begin{align*}%
			(\tilde{S}^{\re, \ell k}_{\omega} 1, 1)_{L^2(\tilde{\Gamma}^{\re}_k)} =  i \omega ( \rho^{\re}_{\ell})^2  j_0 (\omega \rho^{\re}_{\ell}) \sum_{m=0}^{+\infty} (2m+1) j_m(\omega \rho^{\re}_k) h^{(1)}_0 (|\bc_{\ell}-\bc_k|) \int_{\tilde{\Gamma}^{\re}_k} P_{m} (\widehat{\by-\bc_k} \cdot \widehat{\bc_k-\bc_{\ell}}) d\Gamma_{\by}.
		\end{align*}
		Computing the above integral in a similar manner like in \eqref{eq:application_funk_hecke} leads
		\begin{align*}%
			(\rho^{\re}_{\ell} \rho^{\re}_{k})^{-1} (\tilde{S}^{\re, \ell k}_{\omega} 1, 1)_{L^2(\tilde{\Gamma}^{\re}_k)} = 4 \pi  i \omega \rho^{\re}_{\ell} \rho^{\re}_{k} j_0 (\omega \rho^{\re}_{\ell})  j_0(\omega \rho^{\re}_k) h^{(1)}_0 (|\bc_{\ell}-\bc_k|).
		\end{align*}
		Let us now prove \eqref{eq:widm}. 
		In virtue of Proposition \ref{prop:difM1_geometry}, the equilibrium density $\widetilde{\vec{\sigma}}^{\re}_k$ considered on $\widetilde{\Gamma}^{\re}$ equals $(\rho^{\re}_k)^{-1} \vec{1}^{\re}_k$, $k=1,\ldots,N$. Let us denote by $\widetilde{\mathcal{I}}^{\re}$ the operator $\mathcal{I}^{\re}$ considered on $\widetilde{\Gamma}^{\re}$, i.e. 	
		\begin{align*}%
			\widetilde{\mathcal{I}}^{\re}: \, \mathbb{C}^N\rightarrow H^{-1/2}(\widetilde{\Gamma}^{\re}), \quad 	\widetilde{\mathcal{I}}^{\re}\vv{v}=\sum\limits_{k=1}^N \frac{v_{k}}{\rho^{\re}_{k}} \vec{1}^{\re}_{k}.
		\end{align*}
		Let $\vv{v}\in \mathbb{C}^N$. Then, by definition of $\mathbb{M}_s^{\re}(\omega)$, it holds that 
		\begin{align*}%
			(\vv{v}, \,\mathbb{M}^{\varepsilon}_{s}(\omega)\vv{v})_{\mathbb{C}^N}=\langle \widetilde{\mathcal{I}}^{\re}\vv{v}, \tilde{S}^{\re}_{\omega} \widetilde{\mathcal{I}}^{\re}\vv{v}\rangle_{-1/2, 1/2, \tilde{\Gamma}^{\re}},
		\end{align*}
		where $\tilde{S}^{\re}_{\omega}$ is the single-layer boundary integral operator refined on $\tilde{\Gamma}^{\re}$, cf. \eqref{eq:fm}. 
		Next, we use the above identity, Theorem \ref{theorem:coercivity_bound_principal} and statement 3 of Proposition \ref{prop:difM1_geometry} to conclude that 
		\begin{align*}%
			\left|	 (\vv{v}, \,\mathbb{M}^{\re}_s(\omega)\vv{v})_{\mathbb{C}^N}\right|&\geq C \re (\underline{{d}}^{\re}_*)^2 (1+|\omega|)^{-2} \min(1, (\Im \omega)^3) \|	\widetilde{\mathcal{I}}^{\re}\vv{v}\|^2_{H^{-1/2}(\tilde{\Gamma}^{\re})}\\
			&\geq C \re (\underline{{d}}^{\re}_*)^2 (1+|\omega|)^{-2} \min(1, (\Im \omega)^3)\|	\vv{v}\|^2_{\mathbb{C}^N},
		\end{align*}
		where the last inequality follows from \eqref{eq:vecV2}. From the above and the equivalence of the induced $2-$matrix and Frobenius norm ($\|(\mathbb{M}^{\re}_s)^{-1}\|_F\leq	N^{1/2}\|(\mathbb{M}^{\re}_s)^{-1}\|_2$), we obtain  \eqref{eq:widm}:
		\begin{align}
			\label{eq:invM}
			\|(\mathbb{M}^{\re}_s)^{-1}\|_F\leq C^{-1} N^{1/2}\re^{-1} (d^{\re}_*)^{-2} (1+|\omega|)^{2} \max(1, (\Im \omega)^{-3}).
		\end{align} 
		\textit{Proof of \eqref{eq:widm2}. }We will prove this result by comparing the matrices $\mathbb{M}^{\re}$ and $\mathbb{M}_s^{\re}$. Recall that $\mathbb{M}^{\re}$ is defined in \eqref{eq:problem_M_G}. 
		
		\mypar{Expansion of the entries of $\mathbb{M}^{\varepsilon}$ as $\varepsilon\rightarrow 0$. }
		Let us expand the entries of the matrix $\mathbb{M}^{\varepsilon}$ in the powers of $\varepsilon$. We start with the diagonal entries, which we develop like in Section \ref{sec:continuity_bound}, recall \eqref{eq:Edefvol},
		\begin{align*}%
			\mathbb{M}^{\varepsilon}_{k k}(\omega)&=\langle S^{\varepsilon,k k}_0\sigma_k^{\varepsilon}+\frac{i\omega}{4\pi}\int_{\Gamma^{\re}_{k}}\sigma_k^{\varepsilon}(\vec{y})d\Gamma_{\vec{y}}, \sigma_k^{\varepsilon}\rangle_{1/2, -1/2, \Gamma^{\re}_k}+ \langle E^{\varepsilon,k k}(\omega)\sigma_k^{\varepsilon}, \sigma_k^{\varepsilon}\rangle_{1/2, -1/2, \Gamma^{\re}_k}.
		\end{align*}
		Since $S^{\varepsilon,kk}_0\sigma^{\re}_k=1$, and recalling that $\int_{\Gamma_k^{\re}}\sigma_k^{\re}(\vec{y})d\Gamma_{\vec{y}}=c_k^{\re}$, we obtain
		\begin{align*}%
			|\mathbb{M}^{\varepsilon}_{kk}(\omega)-c_k^{\varepsilon}-\frac{i\omega}{4\pi}(c_k^{\varepsilon})^2|\leq \|E^{\varepsilon,kk}(\omega)\sigma_k^{\varepsilon}\|_{L^2(\Gamma_k^{\varepsilon})}\|\sigma_k^{\varepsilon}\|_{L^2(\Gamma_k^{\varepsilon})}\leq \varepsilon^3|\omega|^2\|\sigma_k^{\varepsilon}\|^2_{L^2(\Gamma^{\varepsilon}_k)}.
		\end{align*}
		where the last bound follows from \eqref{eq:pel2} . With $\|\sigma_k^{\varepsilon}\|_{L^2(\Gamma_k^{\varepsilon})}\lesssim 1$, see \eqref{eq:sigmakeps}, 
		\begin{align}
			\label{eq:Me}
			|\mathbb{M}^{\varepsilon}_{kk}(\omega)-c_k^{\varepsilon}-\frac{i\omega}{4\pi}(c_k^{\varepsilon})^2|\leq C_{kk} \varepsilon^3|\omega|^2,
		\end{align}
		where $C_{kk}>0$ depends on $\Gamma_k$ only. 
		
		Next, let us consider the off-diagonal entries, which we expand by developing $(\bx,\by)\mapsto G_{\omega}(|\bx-\by|)$ into the Taylor series around $(\bc_{\ell}, \, \bc_k):$
		\begin{align*}%
			\mathbb{M}^{\varepsilon}_{\ell k}(\omega) = G_{\omega}(| \bc_{\ell} - \bc_k|) &\iint\limits_{\Gamma_{\ell}^{\varepsilon}\times \Gamma_k^{\varepsilon}} \sigma^{\re}_{\ell}(\bx) \sigma^{\re}_{k}(\by)d\Gamma_{\vec{x}}d\Gamma_{\vec{y}} 
			+\iint\limits_{\Gamma_{\ell}^{\varepsilon}\times \Gamma_k^{\varepsilon}} R^{\varepsilon}_{\ell k}(\vec{x},\vec{y}) \sigma^{\re}_{\ell}(\bx) \sigma^{\re}_{k}(\by) d\Gamma_{\vec{x}}d\Gamma_{\vec{y}},\\
			\text{ where }\quad	|R^{\varepsilon}_{\ell k}(\bx,\by)| &\leq 
			\sup_{(\bx,\by)\in B_{\ell}^{\varepsilon}\times B_{k}^{\varepsilon}} \sqrt{|\nabla_{\vec{x}} G_{\omega}(|\bx-\by|)|^2+|\nabla_{\vec{y}} G_{\omega}(|\bx-\by|)|^2} \\ 
			&\times \sqrt{| \vec{x}-\vec{c}_{\ell} |^2+| \vec{y}-\vec{c}_{k} |^2} \leq C \re (\underline{d}_*^{\varepsilon})^{-2}(1+|\omega|) ,
		\end{align*}
		where in the last inequality we used \eqref{eq:bound_grad_G} and $\sqrt{| \vec{x}-\vec{c}_{\ell} |^2+| \vec{y}-\vec{c}_{k} |^2}  \leq \re \sqrt{r_{\ell}^2 + r_{k}^2}$.
		
		With this we conclude that, with $C_{\ell k}$ only dependent of $\Gamma_{\ell}$ and $\Gamma_k$,  
		\begin{align}
			\label{eq:MeG}
			|\mathbb{M}^{\varepsilon}_{\ell k}-G_{\omega}(\bc_{\ell}, \bc_{k})c_{\ell}^{\varepsilon}c_k^{\varepsilon}|
			\leq C_{\ell k} \re^3 (\underline{d}^{\re}_*)^{-2} (1+|\omega|),
		\end{align}
		where we used $\|\sigma_k^{\varepsilon}\|_{L^1(\Gamma_k^{\varepsilon})}\leq |\Gamma_k^{\varepsilon}|^{\frac{1}{2}}\|\sigma_k^{\varepsilon}\|_{L^2(\Gamma_k^{\varepsilon})}$ and $\|\sigma_k^{\varepsilon}\|_{L^2(\Gamma_k^{\varepsilon})}\lesssim 1$, see \eqref{eq:sigmakeps}. 
		
		\mypar{Expansion of the entries of $\mathbb{M}^{\re}_{s}$ as $\varepsilon\rightarrow 0$.} We  apply the results of Step 4.1 to the matrix $\mathbb{M}^{\re}_{s, \ell k}$, which is a particular case of $\mathbb{M}^{\re}_{\ell k}$ for the case when all obstacles are balls of the same capacitance as $\Gamma^{\re}_k$; thus, using also Proposition \ref{prop:difM1_geometry}, item 3, 
		\begin{align}
			\label{eq:dev_entries}
			\begin{split}
				&	|\mathbb{M}^{\re}_{s, kk}(\omega)-c_k^{\varepsilon}-\frac{i\omega}{4\pi}(c_k^{\varepsilon})^2|\leq \widetilde{C}_{kk} \varepsilon^3|\omega|^2,\\
				&	|\mathbb{M}^{\re}_{s, \ell k}(\omega)-G_{\omega}(\bc_{\ell}, \bc_{k})c^{\re}_{\ell} c^{\re}_k | \leq \widetilde{C}_{\ell k} \varepsilon^3 ({\underline{d}}^{\re}_*)^{-2} (1+|\omega|).
			\end{split}
		\end{align}
		\mypar{Estimating $(\mathbb{M}^{\varepsilon})^{-1}-(\mathbb{M}^{\re}_{s})^{-1}$.} We start by rewriting 
		\begin{align*}%
			\mathbb{M}^{\re}&=\mathbb{{M}}^{\re}_s+\mathbb{E}^{\re}=\mathbb{M}^{\re}_s\left(\mathbb{I}+(\mathbb{M}^{\re}_{s})^{-1}\mathbb{E}^{\re}\right), \text{ so that }\\
			(\mathbb{M}^{\re})^{-1}&-(\mathbb{M}^{\re}_s)^{-1}=\left( \left(\mathbb{I}+(\mathbb{M}^{\re}_{s})^{-1}\mathbb{E}^{\re}\right)^{-1}-\mathbb{I}\right)(\mathbb{M}^{\re}_{s})^{-1}.
		\end{align*}
		If $\|(\mathbb{M}^{\re}_{s})^{-1}\mathbb{E}^{\re}\|_F<1$, the standard Neumann series argument yields the bound:
		\begin{align*}%
			\|  \left(\mathbb{I}+(\mathbb{M}^{\re}_{s})^{-1}\mathbb{E}^{\re}\right)^{-1}-\mathbb{I} \|_F  
			&\leq \| (\mathbb{M}^{\re}_{s})^{-1}\mathbb{E}^{\re} \|_F (1 - \| (\mathbb{M}^{\re}_{s})^{-1}\mathbb{E}^{\re} \|_F)^{-1},
		\end{align*}
		and it holds that 
		\begin{align}
			\label{eq:MtildeM}
			\|(\mathbb{M}^{\re})^{-1}-\left(\mathbb{M}^{\re}_s\right)^{-1}\|_F\leq (1-\|(\mathbb{M}^{\re}_{s})^{-1}\mathbb{E}^{\re}\|_F)^{-1}	\|(\mathbb{M}^{\re}_{s})^{-1}\|^2_F\|\mathbb{E}^{\re}\|_F.
		\end{align}
		This is the bound that we would like to use. However, as we will see later, it appears to be impossible to apply for high frequencies $\omega$, and thus we will have to resort to other types of bounds in this case. We start by examining how the condition $\|(\mathbb{M}^{\re}_{s})^{-1}\mathbb{E}^{\re}\|_F<1$ can be ensured. 
		
		\mypar{An estimate on $\|\mathbb{E}^{\re}\|_F$.}
		Comparing \eqref{eq:dev_entries}, \eqref{eq:MeG} and \eqref{eq:Me} yields, with $C>0$, 
		\begin{align*}%
			&|\mathbb{E}^{\re}_{kk}(\omega)|\leq C \varepsilon^3 |\omega|^2, \quad 	|\mathbb{E}^{\re}_{\ell k}(\omega)|\leq C \varepsilon^3 (\underline{d}^{\re}_*)^{-2} (1+|\omega|), \quad \ell \neq k, \text{ so that }\\
			&\|\mathbb{E}^{\re} \|_F \leq CN\varepsilon^3 	(\underline{d}_*^{\re})^{-2} (1+|\omega|)^2.
		\end{align*}
		\mypar{'Low-frequency' estimate. }In view of the above bound and \eqref{eq:invM}, under the condition 
		\begin{equation}
			\label{eq:ME}
			\begin{aligned}
				\|(\mathbb{M}^{\re}_s)^{-1} \mathbb{E}^{\re}\|_F &\leq \| (\mathbb{M}^{\re}_s)^{-1} \|_F \| \mathbb{E}^{\re} \|_F \\
				&\leq \re^2 \times CN^{3/2} (\underline{d}_*^{\re})^{-4} (1+|\omega|)^4 \max(1, (\Im \omega)^{-3})<c_{\operatorname{lf}}<1,
			\end{aligned}
		\end{equation}
		we have, by \eqref{eq:MtildeM}, 
		\begin{align}
			\label{eq:MtildeM2}
			\| (\mathbb{M}^{\re})^{-1}-\left(\mathbb{{M}}^{\re}_s\right)^{-1}\|_F \leq \re C (1-c_{\operatorname{lf}})^{-1}N^{2} \left(\underline{d}_*^{\re}\right)^{-6} (1+|\omega|)^6 \max(1, (\Im \omega)^{-6}).
		\end{align}
		
		Recall the condition  \eqref{eq:ME}, which ensures the validity of \eqref{eq:MtildeM}:
		\begin{align}
			\label{eq:MEalt}
			\re^2 \times CN^{3/2} (\underline{d}^{\re}_{*})^{-4} (1+|\omega|)^4 \max(1, (\Im \omega)^{-3})<c_{\operatorname{lf}}<1.
		\end{align}
		Let us now argue that the bound analogous to \eqref{eq:MtildeM} holds true also in the case when \eqref{eq:MEalt} does not hold, in other words, when 
		\begin{align}
			\label{eq:MEalt_hf}
			\re^{-2}\leq c^{-1}_{\operatorname{lf}}CN^{3/2}(\underline{d}^{\re}_{*})^{-4} (1+|\omega|)^4 \max(1, (\Im \omega)^{-3}),
		\end{align} 
		or, loosely speaking, $|\omega|$ is sufficiently large. \\	
		\mypar{'High-frequency' estimate. }
		We assume \eqref{eq:MEalt_hf} and use the triangle inequality
		\begin{align*}%
			\|(\mathbb{M}^{\re})^{-1}-\left(\mathbb{M}^{\re}_s \right)^{-1}\|_F \leq
			\|(\mathbb{M}^{\re})^{-1}\|_F+\|(\mathbb{M}^{\re}_s )^{-1}\|_F.
		\end{align*}
		Using \eqref{eq:invM} and remarking that by exactly same argument the bound analogous to \eqref{eq:invM} holds as well for $\left(\mathbb{{M}}^{\re}\right)^{-1}$, we obtain 
		\begin{align*}%
			\|(\mathbb{M}^{\re})^{-1}-\left(\mathbb{M}^{\re}_s \right)^{-1}\|_F&\leq \re^{-1} N^{1/2}\widetilde{C} (\underline{d}^{\re}_{*})^{-2} (1+|\omega|)^2 \max(1, (\Im \omega)^{-3}). 
		\end{align*}
		Next, we rewrite the above as 
		\begin{align}
			\nonumber
			\|(\mathbb{M}^{\re})^{-1}-\left(\mathbb{M}^{\re}_s \right)^{-1}\|_F&\leq \re \times \re^{-2}N^{1/2} \widetilde{C} (\underline{d}^{\re}_{*})^{-2} (1+|\omega|)^2 \max(1, (\Im \omega)^{-3}) \\
			\label{eq:hf_bd}
			&\leq \re  \widetilde{C}c^{-1}_{\operatorname{lf}}CN^{2}(\underline{d}^{\re}_{*})^{-6} (1+|\omega|)^6 \max(1, (\Im \omega)^{-6}),
		\end{align}
		where in the last bound we used the condition \eqref{eq:MEalt_hf}. 
		
		To deduce the desired bound \eqref{eq:widm2}, it is sufficient to combine the bound \eqref{eq:hf_bd}, valid under \eqref{eq:MEalt_hf}, and \eqref{eq:MtildeM2}, valid under \eqref{eq:MEalt}.   
	\end{proof}
	The above bounds allow us to quantify the error between the coefficients $ \vv{\lambda}^{\re, (1)}_G$ and $\vv{\lambda}^{\re}_s$, which we translate as the error between the interpolated densities.
	\begin{corollary}
		\label{cor:freq_fin}	
		Let $\omega\in \mathbb{C}^+$, $0<\re\leq 1$. 
		Let $\vv{\lambda}_G^{\re, (1)}: \, \mathbb{C}^+\rightarrow \mathbb{C}^N$ satisfy 
		\begin{align*}%
			{\mathbb{M}}^{\re}(\omega)\vv{\lambda}^{\re,(1)}_G(\omega)=\vv{q}^{\re,(1)}(\omega),
		\end{align*}
		with $\vv{q}^{\re,(1)}(\omega)$ defined in \eqref{eq:problem_M_G},	and $\vv{\lambda}^{\re}_s: \, \mathbb{C}^+\rightarrow \mathbb{C}^N$  a unique solution to
		\begin{align}%
			\label{eq:ds}
			\mathbb{M}^{\re}_s(\omega)\vv{\lambda}^{\re}_s(\omega)=\vv{q}^{\re, (1)}(\omega).
		\end{align}
		Then 
		\begin{align} 
			\label{eq:bound_density_simplified}
			\|\vv{\lambda}^{\re}_s\|_{\mathbb{C}^N} \leq\re^{-1} C_{\operatorname{geom},st}(N, \underline{d}^{\re}_*)p_{st}(|\omega|, \Im \omega) \|\hat{\vec{g}}^{\re,(1)}\|_{L^2(\Gamma^{\re})}, 
		\end{align}
		and the densities $\hat{\blambda}_G^{\re,(1)}=\mathcal{I}^{\re}\vv{\lambda}_G^{\re,(1)}$ and $\hat{\blambda}_s^{\re}=\mathcal{I}^{\re}\vv{\lambda}_G^{\re}$ satisfy the following error bound:
		\begin{align}
			\label{eq:lmb}
			\|\hat{\vec{\lambda}}^{\re, (1)}_G-\hat{\vec{\lambda}}^{\re}_s \|_{H^{-1/2}(\Gamma^{\re})}\leq \re  C_{\operatorname{geom},conv}(N, \underline{d}^{\re}_*)p_{conv}(|\omega|, \Im \omega) \|\hat{\vec{g}}^{\re,(1)}\|_{L^2(\Gamma^{\re})}.
		\end{align}
	\end{corollary}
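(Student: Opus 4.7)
The two claimed bounds are essentially direct consequences of the matrix estimates in Proposition \ref{prop:difM1}, combined with the norm equivalence \eqref{eq:vecV2} for $\mathcal{I}^{\re}$ and a Cauchy--Schwarz bound on the right-hand side $\vv{q}^{\re,(1)}$. My plan is to first establish the control on the data vector, then chain it with the matrix inverse bounds, and finally lift the algebraic bound on coefficients to an $H^{-1/2}(\Gamma^{\re})$-bound on densities.

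\textbf{Step 1: bounding $\|\vv{q}^{\re,(1)}\|_{\mathbb{C}^N}$ by $\|\hat{\vec{g}}^{\re,(1)}\|_{L^2(\Gamma^{\re})}$.} Recall that $q^{\re,(1)}_{\ell} = \langle \hat{g}^{\re,(1)}_{\ell}, \sigma^{\re}_{\ell}\rangle_{L^2(\Gamma^{\re}_{\ell})}$. By Cauchy--Schwarz on each $\Gamma^{\re}_{\ell}$ and by the scaling identity $\|\sigma^{\re}_{\ell}\|_{L^2(\Gamma^{\re}_{\ell})} = \|\sigma^1_{\ell}\|_{L^2(\Gamma_{\ell})}$ from \eqref{eq:sigmakeps}, we get $|q^{\re,(1)}_{\ell}| \leq C\,\|\hat{g}^{\re,(1)}_{\ell}\|_{L^2(\Gamma^{\re}_{\ell})}$ with $C$ depending only on the shapes $\Gamma_j$. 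Summing squares over $\ell$ gives $\|\vv{q}^{\re,(1)}\|_{\mathbb{C}^N} \leq C\,\|\hat{\vec{g}}^{\re,(1)}\|_{L^2(\Gamma^{\re})}$.

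\textbf{Step 2: stability estimate \eqref{eq:bound_density_simplified}.} Since \eqref{eq:ds} yields $\vv{\lambda}^{\re}_s = (\mathbb{M}^{\re}_s)^{-1}\vv{q}^{\re,(1)}$, we bound
\begin{align*}
\|\vv{\lambda}^{\re}_s\|_{\mathbb{C}^N} \leq \|(\mathbb{M}^{\re}_s)^{-1}\|_F\,\|\vv{q}^{\re,(1)}\|_{\mathbb{C}^N},
\end{align*}
and combining Step 1 with the inverse bound \eqref{eq:widm} from Proposition \ref{prop:difM1} produces the $\re^{-1}$-factor together with the required polynomial dependence on $|\omega|$, $\Im\omega$, $N$, $(\underline{d}^{\re}_*)^{-1}$, yielding \eqref{eq:bound_density_simplified}. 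In particular, existence and uniqueness of $\vv{\lambda}^{\re}_s$ come from the invertibility of $\mathbb{M}^{\re}_s(\omega)$ asserted in Proposition \ref{prop:difM1}.

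\textbf{Step 3: convergence estimate \eqref{eq:lmb}.} Both $\vv{\lambda}^{\re,(1)}_G$ and $\vv{\lambda}^{\re}_s$ solve linear systems with the same right-hand side $\vv{q}^{\re,(1)}$, so
\begin{align*}
\vv{\lambda}^{\re,(1)}_G - \vv{\lambda}^{\re}_s = \bigl((\mathbb{M}^{\re})^{-1} - (\mathbb{M}^{\re}_s)^{-1}\bigr)\vv{q}^{\re,(1)}.
\end{align*}
Applying the Frobenius-norm difference bound \eqref{eq:widm2} from Proposition \ref{prop:difM1} together with Step 1 yields $\|\vv{\lambda}^{\re,(1)}_G - \vv{\lambda}^{\re}_s\|_{\mathbb{C}^N} \leq \re\,C_{\operatorname{geom},conv}p_{conv}(|\omega|,\Im\omega)\,\|\hat{\vec{g}}^{\re,(1)}\|_{L^2(\Gamma^{\re})}$. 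Finally, since $\hat{\vec{\lambda}}^{\re,(1)}_G - \hat{\vec{\lambda}}^{\re}_s = \mathcal{I}^{\re}(\vv{\lambda}^{\re,(1)}_G - \vv{\lambda}^{\re}_s)$, the right inequality in the norm equivalence \eqref{eq:vecV2} transfers the $\mathbb{C}^N$-estimate into the $H^{-1/2}(\Gamma^{\re})$-estimate \eqref{eq:lmb}, up to a harmless shape-dependent constant absorbed into $C_{\operatorname{geom},conv}$.

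\textbf{Expected obstacle.} All technical difficulty has already been concentrated in Proposition \ref{prop:difM1}, whose proof required the careful low/high-frequency splitting needed to control $(\mathbb{M}^{\re})^{-1}-(\mathbb{M}^{\re}_s)^{-1}$. Given those bounds, the only minor subtlety here is bookkeeping: ensuring that the Frobenius-to-Euclidean passage, the Cauchy--Schwarz step on $\vv{q}^{\re,(1)}$, and the $\mathcal{I}^{\re}$ norm-equivalence each contribute only constants depending on shapes (not on $\re$), so that the explicit $\re$-scalings $\re^{-1}$ and $\re$ from \eqref{eq:widm} and \eqref{eq:widm2} are preserved intact in the final estimates.
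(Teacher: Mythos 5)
Your proposal is correct and follows essentially the same route as the paper: Cauchy--Schwarz with the scaling \eqref{eq:sigmakeps} to bound $\|\vv{q}^{\re,(1)}\|_{\mathbb{C}^N}$ by $\|\hat{\vec{g}}^{\re,(1)}\|_{L^2(\Gamma^{\re})}$, the inverse bounds \eqref{eq:widm} and \eqref{eq:widm2} from Proposition \ref{prop:difM1} for the stability and difference estimates, and the norm equivalence \eqref{eq:vecV2} for $\mathcal{I}^{\re}$ to pass from coefficients to densities. No gaps.
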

	\begin{proof}
		We start by proving \eqref{eq:bound_density_simplified}. It holds that $$\|\vv{\lambda}^{\re}_s\|_{\mathbb{C}^N} \leq \| (\mathbb{M}^{\re}_s)^{-1} \|_F \| \vv{q}^{\re,(1)}\|_{\mathbb{C}^N},$$ and next estimate the right-hand side. By the Cauchy-Schwarz inequality and \eqref{eq:sigmakeps}, with $C_{\sigma} := \max_{k=1,\ldots,N}\|\sigma_k^{\re}\|^2_{L^2(\Gamma^{\re})}\lesssim 1$,
		\begin{align}%
			\label{eq:bound_q}
			\| \vv{q}^{\re,(1)}(\omega)\|^2_{\mathbb{C}^N}=\sum\limits_{k=1}^N\left|\int_{\Gamma_k^{\re}}\hat{g}_k^{\re,(1)}(\vec{x},\omega)\sigma_i^{\re}(\vec{x})d\Gamma_{\vec{x}}\right|^2&\leq C_{\sigma} \|\hat{\vec{g}}^{\re,(1)}(\omega)\|_{L^2(\Gamma^{\re})}^2.
		\end{align}	
		The desired bound follows from \eqref{eq:widm} along with \eqref{eq:bound_q}.
		
		\mypar{Step 2.2 Bound on $\hat{\vec{\lambda}}^{\re, (1)}_G -\hat{\vec{\lambda}}^{\re}_s$.}
		Using \eqref{eq:widm2} and \eqref{eq:bound_q}, we obtain 
		\begin{align}
			\label{eq:lambdas}
			\begin{split}
				\| \vv{\lambda}^{\re, (1)}_G - \vv{\lambda}^{\re}_s \|_{\mathbb{C}^N} &\leq \| (\mathbb{M}^{\re})^{-1} - (\mathbb{M}^{\re}_s)^{-1} \|_F \| \vv{q}^{\re,(1)} \|_{\mathbb{C}^N} \\
				&\leq \re C_{\sigma}^{1/2} {C}_{\operatorname{geom}, conv}( N,\underline{d}_*^{\re}) {p}_{\operatorname{diff}}(|\omega|,\Im \omega) \| \hat{\vec{g}}^{\re, (1)} \|_{L^2(\Gamma^{\re})}.
			\end{split}
		\end{align} 
		To conclude, we use the fact that $\mathcal{I}^{\re}$ behaves like an isometry, cf. \eqref{eq:vecV2}.
	\end{proof}
	
	\begin{remark}
		Using the stability bound \eqref{eq:bound_density_simplified} with the isomorphism $\mathcal{I}^{\re}$, the density $\hat{\vec{\lambda}}^{\re}_{s}$ satisfies a stability bound of the type $\eqref{eq:bound_density_simplified}$. This implies the time-domain well-posedness of the simplified model \eqref{eq:tdsys2}, see the reasoning in Section \ref{sec:error_analysis}. 
	\end{remark}
	\subsubsection{Frequency-domain error of the scattered field}
	\label{sec:recovering}
	Recall that the scattered field is obtained via \eqref{eq:scatap}, which we rewrite in the Fourier-Laplace domain as 
	\begin{align}
		\label{eq:fdsys2}
		\begin{split}
			\hat{u}^{\re}_{s,\operatorname{app}}(\vec{x},\omega)&= \sum\limits_{k=1}^N\frac{\mathrm{e}^{i\omega|\bx-\bc_k|}}{4\pi|\bx-\bc_k|}\hat{\lambda}_{s,k}^{\re} c^{\re}_k +\\
			&+\sum\limits_{k=1}^N (\vec{c}_k-\vec{x})\cdot \vec{p}_k^{\re}\frac{\mathrm{e}^{i\omega|\bx-\bc_k|}}{4\pi|\bx-\bc_k|^3}(i\omega|\bx-\bc_k|-1)\hat{\lambda}_{s,k}^{\re}, \quad \vec{x}\in \Omega^{\re,c}.
		\end{split}
	\end{align}
	Let the operator $\mathcal{S}^{\re}_{s}(\omega): \, \mathbb{C}^N\rightarrow L^2(\Omega^{\re,c})$ be defined via 
	\begin{align*}%
		\vv{\varphi} \mapsto 	\left(\mathcal{S}^{\re}_{s}(\omega) \vv{\varphi}\right)(\vec{x})&=\sum\limits_{k=1}^N\frac{\mathrm{e}^{i\omega|\bx-\bc_k|}}{4\pi|\bx-\bc_k|}\varphi_kc_k^{\re} + \\
		&+\sum\limits_{k=1}^N\varphi_k (\vec{c}_k-\vec{x})\cdot \vec{p}_k^{\re}\frac{\mathrm{e}^{i\omega|\bx-\bc_k|}}{4\pi|\bx-\bc_k|^3}(i\omega|\bx-\bc_k|-1), \quad \vec{x}\in \Omega^{\re,c}.
	\end{align*}
	The error between the exact and the scattered field then reads 
	\begin{align*}%
		\hat{u}^{\re}(\vec{x},\omega)-\hat{u}^{\re}_{s, \operatorname{app}}(\vec{x},\omega)&=  (\mathcal{S}^{\re}(\omega)\hat{\vec{\blambda}}^{\re})(\bx)-(\mathcal{S}^{\re}_{s}(\omega)\vv{\lambda}^{\re}_s)(\bx).
	\end{align*}
	We rewrite this error by using the previously introduced quantities \eqref{eq:three_errors}
	\begin{equation}
		\label{eq:four_split}
		\begin{aligned}
			\hat{u}^{\re}(\vec{x},\omega)-\hat{u}^{\re}_{s, \operatorname{app}}(\vec{x},\omega)&=\mathcal{S}^{\re}(\omega)(\hat{\vec{\lambda}}^{\re}-\hat{\vec{\lambda}}^{\re,(1)}) (\bx)
			+\mathcal{S}^{\re}(\omega)(\hat{\vec{\lambda}}^{\re,(1)}-\hat{\vec{\lambda}}^{\re,(1)}_G) (\bx)\\
			&	+\mathcal{S}^{\re}(\omega)(\hat{\vec{\lambda}}^{\re,(1)}_G-\hat{\vec{\lambda}}^{\re}_s)(\bx)+(\mathcal{S}^{\re}_G(\omega)-\mathcal{S}^{\re}_{s}(\omega)) \vv{\lambda}^{\re}_s(\bx),  
		\end{aligned}
	\end{equation}
	where in the latter expression we have introduced the notation 
	\begin{align*}%
		\mathcal{S}_G^{\re}(\omega):=\mathcal{S}^{\re}(\omega)\mathcal{I}^{\re}, 
	\end{align*}
	and have used that  $\hat{\blambda}^{\re}_s=\mathcal{I}^{\re}\vv{\lambda}_s^{\re}$. 
	
	The first three terms can be estimated by combining the stability result of Lemma \ref{lem:slp} used to bound $\mathcal{S}^{\re}(\omega)$ and of Sections \ref{sec:ere1}, \ref{sec:ere2}, \ref{sec:ere3}, recall also \eqref{eq:three_errors}. We thus need to bound the last term in the above. In what follows we will need a more explicit expression of $\mathcal{S}^{\re}_G(\omega): \, \mathbb{C}^N\rightarrow L^2(\Omega^{\re,c})$,
	\begin{align*}%
		\vv{\varphi} \mapsto 	\left(\mathcal{S}^{\re}_G(\omega)\vv{\varphi}\right)(\vec{x})=\sum\limits_{k=1}^N\int_{\Gamma^{\re}_k}\frac{\mathrm{e}^{i\omega|\bx-\by|}}{4\pi|\bx-\by|}\varphi_k\sigma_k^{\re}(\vec{y})d\Gamma_{\vec{y}}, \quad \vec{x}\in \Omega^{\re,c}.
	\end{align*}
	\begin{lemma}[Approximating the volume operator]
		\label{lem:appx_slp_vol}
		Let $K\subset B^c$ be a compact set. Then, there exists $C_K>0$, s.t. for all $\re\in(0,1]$ and $\omega\in \mathbb{C}^+$,
		\begin{align*}%
			\|\mathcal{S}^{\re}_G(\omega)-	\mathcal{S}^{\re}_{s}(\omega)\|_{\mathcal{L}(\mathbb{C}^N, L^{\infty}(K))}\leq \varepsilon^3 \times C_K N^{1/2} (1+|\omega|)^2 .
		\end{align*}
	\end{lemma}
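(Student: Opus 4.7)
The strategy is to recognize that $\mathcal{S}^{\re}_s(\omega)\vv{\varphi}$ is exactly the second-order Taylor expansion of $\mathcal{S}^{\re}_G(\omega)\vv{\varphi}$ about the obstacle centres $\bc_k$, so that the difference is a Taylor remainder. For fixed $\bx\in K$ and each $k$, write
\begin{align*}
G_{\omega}(|\bx-\by|) = G_{\omega}(|\bx-\bc_k|) + \nabla_{\by}G_{\omega}(|\bx-\by|)\big|_{\by=\bc_k}\cdot(\by-\bc_k) + R_k^{\re}(\bx,\by),
\end{align*}
with the integral-form remainder $R_k^{\re}(\bx,\by)=\tfrac{1}{2}\int_0^1(1-s)\,(\by-\bc_k)^\top D_{\by}^2 G_{\omega}(|\bx-\by_s|)(\by-\bc_k)\,ds$, $\by_s:=\bc_k+s(\by-\bc_k)$. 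First I would integrate this expansion against $\varphi_k\sigma_k^{\re}(\by)\,d\Gamma_{\by}$: the zeroth-order term produces $G_{\omega}(|\bx-\bc_k|)\,\varphi_k\,c_k^{\re}$ by the definition of the capacity, while the first-order term, using $\int_{\Gamma_k^{\re}}(\by-\bc_k)\sigma_k^{\re}(\by)d\Gamma_{\by}=\vec{p}_k^{\re}$ from \eqref{eq:ckre2} and the elementary identity $\nabla_{\by}G_{\omega}(|\bx-\by|)=\frac{e^{i\omega r}(i\omega r-1)}{4\pi r^3}(\by-\bx)$ with $r=|\bx-\by|$, produces exactly the second summand in the definition of $\mathcal{S}^{\re}_s(\omega)$. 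Hence
\begin{align*}
\bigl((\mathcal{S}^{\re}_G(\omega)-\mathcal{S}^{\re}_s(\omega))\vv{\varphi}\bigr)(\bx) = \sum_{k=1}^N\varphi_k\int_{\Gamma_k^{\re}} R_k^{\re}(\bx,\by)\,\sigma_k^{\re}(\by)\,d\Gamma_{\by}.
\end{align*}

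Next I would bound the remainder. Because $K\subset B^c$ is compact and $K\cap\overline{B}=\emptyset$, one has $\delta_K:=\operatorname{dist}(K,B)>0$ depending only on $K$ and the (unscaled) $\Gamma_j$. Since $\by_s\in B_k^{\re}\subset B_k$ for any $s\in[0,1]$, it follows that $|\bx-\by_s|\geq \delta_K$ uniformly. A direct computation of $D_{\by}^2 G_{\omega}(|\bx-\by|)$ then gives a pointwise bound of the form $\|D_{\by}^2 G_{\omega}(|\bx-\by_s|)\|\leq C_K(1+|\omega|)^2$, where $C_K$ collects negative powers of $\delta_K$ (this is analogous to \eqref{eq:bound_grad_G} but one order higher). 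Combined with $|\by-\bc_k|\leq \re r_k\lesssim \re$, this yields the uniform remainder bound $|R_k^{\re}(\bx,\by)|\leq C_K\re^2(1+|\omega|)^2$ for all $\bx\in K$, $\by\in\Gamma_k^{\re}$.

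Finally I would assemble the estimate. Bounding the $\by$-integral by $\|R_k^{\re}(\bx,\cdot)\|_{L^\infty(\Gamma_k^{\re})}\|\sigma_k^{\re}\|_{L^1(\Gamma_k^{\re})}$ and using $\|\sigma_k^{\re}\|_{L^1(\Gamma_k^{\re})}\leq |\Gamma_k^{\re}|^{1/2}\|\sigma_k^{\re}\|_{L^2(\Gamma_k^{\re})}\lesssim \re$ (from $|\Gamma_k^{\re}|\lesssim\re^2$ together with \eqref{eq:sigmakeps}) produces the factor $\re^3(1+|\omega|)^2$ for each $k$. Summing in $k$ and applying the discrete Cauchy--Schwarz inequality $\sum_{k=1}^N|\varphi_k|\leq N^{1/2}\|\vv{\varphi}\|_{\mathbb{C}^N}$ gives the announced bound with constant $C_K N^{1/2}(1+|\omega|)^2\re^3$.

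The only delicate point is the pointwise control of $D_{\by}^2 G_{\omega}$ with the correct $(1+|\omega|)^2$ dependence; this is handled entirely by differentiating the explicit kernel twice and using $|\bx-\by|\geq \delta_K>0$, so all negative powers of $|\bx-\by|$ are absorbed into $C_K$ and no additional factors of $|\omega|^{-1}$ or $(\Im\omega)^{-1}$ appear. Everything else is a bookkeeping of the two Taylor terms against the definitions \eqref{eq:ckre2}.
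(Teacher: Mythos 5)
Your proposal is correct and follows essentially the same route as the paper: a second-order Taylor expansion of the kernel in $\by$ about $\bc_k$, identification of the zeroth- and first-order moments with $c_k^{\re}$ and $\vec{p}_k^{\re}$, a pointwise bound on the second derivatives of $G_{\omega}$ of order $(1+|\omega|)^2$ uniform in $\omega\in\mathbb{C}^+$ thanks to $|\bx-\by_s|\geq\operatorname{dist}(K,B)>0$, and the scalings $|\by-\bc_k|^2\lesssim\re^2$, $\|\sigma_k^{\re}\|_{L^1(\Gamma_k^{\re})}\lesssim\re$ together with discrete Cauchy--Schwarz in $k$ to produce $\re^3 N^{1/2}(1+|\omega|)^2$. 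The only cosmetic difference is that the paper pairs the remainder with $\sigma_k^{\re}$ via an $L^2$--$L^2$ Cauchy--Schwarz (putting the $\re^3$ on $\|(\cdot-\bc_k)^{\vec{\alpha}}R_{\vec{\alpha}}\|_{L^2(\Gamma_k^{\re})}$) rather than your $L^\infty$--$L^1$ pairing, which changes nothing in the result.
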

	
	\begin{proof}
		Let $\bx \in K$ and $\by \in \Gamma^{\re}_k$. Expanding the integral kernel in the Taylor series around $\vec{c}_k$ yields
		\begin{align*}%
			\frac{\mathrm{e}^{i\omega|\bx-\by|}}{4\pi|\bx-\by|} =\frac{\mathrm{e}^{i\omega|\bx-\vec{c}_k|}}{4\pi|\bx-\vec{c}_k|}\left(1+(\vec{y}-\vec{c}_k)\cdot \frac{(\vec{c}_k-\vec{x})}{|\vec{x}-\vec{c}_k|} \left(i\omega-\frac{1}{|\vec{x}-\vec{c}_k|}\right)\right)+&\\
			+\sum\limits_{|\vec{\alpha}|=2}R_{\vec{\alpha}}(\vec{x}, \vec{y}, \vec{c}_k)(\vec{y}-\vec{c}_k)^{\vec{\alpha}},&
		\end{align*}
		with the Taylor's series remainder given by, for $|\vec{\alpha}| = 2$, 
		\begin{align*}%
			R_{\vec{\alpha}}(\bx, \by, \bc_k) = \frac{|\vec{\alpha}|}{\vec{\alpha}!} \int_0^1(1-t)^{|\vec{\alpha}|-1} D^{\vec{\alpha}}_{\by} G_{\omega}(|\bx - \bc_k + t(\by-\bc_k)|) dt, \quad D^{\vec{\alpha}}_{\by} := \frac{\partial^{|\vec{\alpha}|}}{\partial \by^{\vec{\alpha}}}.
		\end{align*}
		By the definitions \eqref{eq:ckre2} of $c_k^{\re}$ and $\vec{p}_k^{\re}$ and using the above expansion, we arrive at
		\begin{align}
			\label{eq:Sreg_SreGappa}
			( ( \mathcal{S}^{\re}_G(\omega)-\mathcal{S}^{\re}_{s}(\omega)) \vv{\varphi} )(\vec{x})=\sum\limits_{k=1}^N \varphi_k\int_{\Gamma_k^{\re}}\sum_{|\vec{\alpha}|=2}R_{\vec{\alpha}}(\vec{x}, \vec{y}, \vec{c}_k)(\vec{y}-\vec{c}_k)^{\vec{\alpha}}\sigma_k^{\re}(\vec{y})d\Gamma_{\vec{y}}.
		\end{align}
		Applying the $\ell^2$ Cauchy-Schwarz inequality to \eqref{eq:Sreg_SreGappa} yields 
		\begin{align}
			\label{eq:Sreg_SreGappa_bound}
			| ( ( \mathcal{S}^{\re}_G(\omega)-\mathcal{S}^{\re}_{s}(\omega)) \vv{\varphi} )(\vec{x}) |^2 \leq \| \vv{\varphi} \|^2_{\mathbb{C}^N} \sum_{k=1}^N |\mathbb{R}_k[\sigma^{\re}_k]|^2,
		\end{align}
		with $\mathbb{R}_k[\sigma^{\re}_k]:=\int_{\Gamma^{\re}_k} \sum_{|\vec{\alpha}|=2}R_{\vec{\alpha}}(\vec{x}, \vec{y}, \vec{c}_k)(\vec{y}-\vec{c}_k)^{\vec{\alpha}}\sigma_k^{\re}(\vec{y})d\Gamma_{\vec{y}}$. It is straightforward to see that, since for $|\vec{\alpha}|=2$, it holds that  $|(\vec{y}-\vec{c}_k)^{\vec{\alpha}}|\leq \re^2 r_k^2$, we have
		\begin{align}%
			\label{eq:Rksigma}
			\begin{split}
				|\mathbb{R}_k[\sigma^{\re}_k] |&\leq \| \sum_{|\vec{\alpha}| = 2} R_{\vec{\alpha}} (\bx, \cdot, \bc_k) (\cdot - \bc_k)^{\vec{\alpha}} \|_{L^2(\Gamma^{\re}_k)} \| \sigma^{\re}_k \|_{L^2(\Gamma^{\re}_k)} \\
				&\lesssim \re^3 \max_{|\vec{\alpha}|=2}\max_{\vec{y}\in \Gamma_k^{\re}}\left|R_{\vec{\alpha}}(\vec{x}, \vec{y}, \vec{c}_k)\right|   \| \sigma^{\re}_k \|_{L^2(\Gamma^{\re}_k)}.
			\end{split}
		\end{align}
		Next, we use that, with \eqref{eq:sigmakeps}, $\| \sigma^{\re}_k \|_{L^2(\Gamma^{\re}_k)} \lesssim 1$, and 
		\begin{align*}%
			\left|R_{\vec{\alpha}}(\vec{x}, \vec{y}, \vec{c}_k)\right|\lesssim 
			\min(1, \operatorname{dist}(\bx, B^{\re}_k))^{-3}(1+|\omega|)^2.
		\end{align*}
		Inserting the above into \eqref{eq:Rksigma}, and the result into \eqref{eq:Sreg_SreGappa_bound} yields the desired bound in the statement of the theorem. 
	\end{proof}
	With the above lemma, we obtain the following counterpart of Theorem \ref{th:conv}, modulo the bounds on the data. 
	\begin{theorem}
		\label{th:conv2}
		Let $K\subset B^c$ be compact; $\omega\in \mathbb{C}^+$ and $\hat{u}^{\inc}(\omega)\in H^5(B)$. The error between $\hat{u}^{\re}$ and $\hat{u}^{\re}_{s,\operatorname{app}}$, defined in \eqref{eq:SL_potential} and \eqref{eq:fdsys2}, is bounded by
		\begin{align*}
			\|\hat{u}^{\re} - \hat{u}^{\re}_{s, \operatorname{app}}\|_{L^{\infty}(K)}\leq C_KC_{\operatorname{geom}}(N, \underline{d}_*^{\re}) \, p(|\omega|, \Im \omega)\|\hat{u}^{\inc}\|_{H^5(B)},
		\end{align*}
		where $C_K>0$ depends on $K$.
	\end{theorem}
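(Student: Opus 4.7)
The plan is to proceed from the four-term splitting \eqref{eq:four_split}:
\begin{align*}
\hat{u}^{\re}(\bx,\omega) - \hat{u}^{\re}_{s,\operatorname{app}}(\bx,\omega) = \sum_{j=1}^{3}\bigl(\mathcal{S}^{\re}(\omega)\hat{\vec{e}}_j^{\re,(1)}\bigr)(\bx) + \bigl((\mathcal{S}^{\re}_G(\omega)-\mathcal{S}^{\re}_s(\omega))\vv{\lambda}^{\re}_s\bigr)(\bx),
\end{align*}
and to bound each of the four summands by recycling the density-level and operator-level estimates already assembled in Sections \ref{sec:ere1}--\ref{sec:recovering}. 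The target is a bound of order $\re^3$ times polynomial factors in $(1+|\omega|)$, $\max(1,(\Im\omega)^{-1})$, $N$ and $(\underline{d}^{\re}_*)^{-1}$, matching the convergence order $p=2$ claimed in Theorem \ref{theorem:convergence_of_models}.

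For the first three summands, each is the single-layer potential applied to a density error, so I decompose $\hat{\vec{e}}_j^{\re,(1)} = \mathbb{Q}_{\sigma}\hat{\vec{e}}_j^{\re,(1)} + \mathbb{Q}_{\perp}\hat{\vec{e}}_j^{\re,(1)}$ and apply the pointwise bounds \eqref{eq:s1}--\eqref{eq:s2} of Lemma \ref{lem:slp}, which contribute factors $\re$ and $\re^{3/2}(1+|\omega|)$ respectively. The $\sigma$- and $\perp$-components of the density errors themselves are already controlled: \eqref{eq:ere1} and \eqref{eq:QperpE} for $\hat{\vec{e}}_1^{\re,(1)}$; the displayed estimates in Section \ref{sec:ere2} for $\hat{\vec{e}}_2^{\re,(1)}$, which are a direct transcription of \eqref{eq:bound_on_error_components_v0} with $\hat{\vec{g}}^{\re}$ replaced by $\hat{\vec{g}}^{\re,(1)}$; and \eqref{eq:lmb} for $\hat{\vec{e}}_3^{\re,(1)}$, interpreted through the near-isometry \eqref{eq:vecV2}. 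The fourth summand is estimated by combining Lemma \ref{lem:appx_slp_vol}, which already supplies an explicit $\re^3$, with the $\mathbb{C}^N$-stability bound \eqref{eq:bound_density_simplified} on $\vv{\lambda}^{\re}_s$.

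The main remaining obstacle is to bound the three data quantities $\trinorm{\hat{\vec{g}}^{\re}-\hat{\vec{g}}^{\re,(1)}}_{-1/2}$, $\trinorm{\hat{\vec{g}}^{\re,(1)}}_{3/2}$ and $\|\hat{\vec{g}}^{\re,(1)}\|_{L^2(\Gamma^{\re})}$ in terms of $\|\hat{u}^{\inc}\|_{H^5(B)}$, which is precisely what forces two extra orders of spatial regularity relative to Proposition \ref{prop:data}. Since $\hat{\vec{g}}^{\re,(1)}$ is minus the first-order Taylor polynomial of $\hat{u}^{\inc}$ around each $\vec{c}_{\ell}$ (see \eqref{eq:dens_pert}), the remainder $\hat{\vec{g}}^{\re}-\hat{\vec{g}}^{\re,(1)}$ has pointwise modulus $O(\re^2\|D^2\hat{u}^{\inc}\|_{L^{\infty}(B^{\re}_k)})$ and first-derivative modulus $O(\re\|D^2\hat{u}^{\inc}\|_{L^{\infty}(B^{\re}_k)})$; plugging these into Lemma \ref{lem:norml2} and Lemma \ref{lem:normh12} and invoking Sobolev embedding as in the proof of Proposition \ref{prop:data}, but shifted by two derivatives, yields $\trinorm{\hat{\vec{g}}^{\re}-\hat{\vec{g}}^{\re,(1)}}_{-1/2}=O(\re^{5/2}\|\hat{u}^{\inc}\|_{H^5(B)})$, while the analogous simpler computation gives $\trinorm{\hat{\vec{g}}^{\re,(1)}}_{3/2}=O(\re^{3/2}\|\hat{u}^{\inc}\|_{H^3(B)})$ and $\|\hat{\vec{g}}^{\re,(1)}\|_{L^2(\Gamma^{\re})}=O(\re^{1/2}\|\hat{u}^{\inc}\|_{H^2(B)})$. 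Substituting these into the four bounds above, the $\re^{-1/2}$ in the $\sigma$-component of $\hat{\vec{e}}_1^{\re,(1)}$ is absorbed by the $\re^{5/2}$ of the data and combines with the $\re$ from $\mathcal{S}^{\re}\mathbb{Q}_{\sigma}$ to produce $\re^3$; the $\hat{\vec{e}}_2^{\re,(1)}$ contribution matches Theorem \ref{th:conv} applied to $\hat{\vec{g}}^{\re,(1)}$; the third term inherits $\re$ from \eqref{eq:lmb} multiplied by $\re^{3/2}$ and $\re^{1/2}$ from the data; and the fourth is $\re^3$ by Lemma \ref{lem:appx_slp_vol} directly. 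The splitting into $\mathbb{Q}_{\sigma}/\mathbb{Q}_{\perp}$ is again essential: without it, one would obtain only $\re^{5/2}$, exactly as noted in the remark following \eqref{eq:bound_field_FD}.
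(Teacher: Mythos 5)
Your overall route is exactly the paper's: start from the four-term splitting \eqref{eq:four_split}, bound the first three terms by applying Lemma \ref{lem:slp} to the $\mathbb{Q}_{\sigma}$/$\mathbb{Q}_{\perp}$ components of the density errors estimated in Sections \ref{sec:ere1}--\ref{sec:ere3} and Corollary \ref{cor:freq_fin}, bound the fourth term by Lemma \ref{lem:appx_slp_vol} together with \eqref{eq:bound_density_simplified}, and then control the three data quantities in terms of $\|\hat{u}^{\inc}\|_{H^5(B)}$ (this last step is precisely the paper's Proposition \ref{prop:td2}, which you essentially re-derive). So the architecture and all the key lemmas coincide with the paper's proof.

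However, your bookkeeping contains a scaling error that, as written, breaks the $\re^3$ count in one place. You claim $\|\hat{\vec{g}}^{\re,(1)}\|_{L^2(\Gamma^{\re})}=O(\re^{1/2}\|\hat{u}^{\inc}\|_{H^2(B)})$; the correct scaling is $O(\re\,\|\hat{u}^{\inc}\|_{H^3(B)})$, since Lemma \ref{lem:norml2} carries the factor $|\Gamma_k^{\re}|^{1/2}=\re\,|\Gamma_k|^{1/2}$ (this is the first bound of Proposition \ref{prop:td2}). With your $\re^{1/2}$, the fourth term is \emph{not} $\re^3$ ``by Lemma \ref{lem:appx_slp_vol} directly'': the stability bound \eqref{eq:bound_density_simplified} costs $\re^{-1}$, so the fourth term is $\re^{3}\cdot\re^{-1}\cdot\|\hat{\vec{g}}^{\re,(1)}\|_{L^2}$, which with your data bound only yields $\re^{5/2}$; the missing half power is exactly what the correct $O(\re)$ data bound supplies. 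Relatedly, in the third term you multiply the $\re$ of \eqref{eq:lmb} by ``$\re^{3/2}$'', but $\hat{\vec{e}}_3^{\re,(1)}=\hat{\vec{\lambda}}_G^{\re,(1)}-\hat{\vec{\lambda}}_s^{\re}\in\mathcal{V}_G^{\re}$, so only the bound \eqref{eq:s1} with factor $\re$ is available (\eqref{eq:s2} concerns $\mathbb{Q}_{\perp}$, which annihilates this error); the $\re^{3/2}$ is unjustified and merely compensates your weak data bound. Once you replace your data scaling by the correct $\|\hat{\vec{g}}^{\re,(1)}\|_{L^2(\Gamma^{\re})}\lesssim \re\,\|\hat{u}^{\inc}\|_{H^3(B)}$, the third term becomes $\re\cdot\re\cdot\re=\re^3$ and the fourth term $\re^3\cdot\re^{-1}\cdot\re=\re^3$, and your argument then coincides with the paper's proof; the remaining data bounds you state ($\trinorm{\hat{\vec{g}}^{\re}-\hat{\vec{g}}^{\re,(1)}}_{-1/2}=O(\re^{5/2})$, $\trinorm{\hat{\vec{g}}^{\re,(1)}}_{3/2}=O(\re^{3/2})$) agree with Proposition \ref{prop:td2} and are fine.
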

	\begin{proof}
		Let $\vec{x}\in K$. 
		We start with \eqref{eq:four_split}. With the help of Lemma \ref{lem:slp} used to bound $\mathcal{S}^{\re}(\omega)$ and Lemma \ref{lem:appx_slp_vol} to bound the last term in  \eqref{eq:four_split}, we obtain
		\begin{align*}%
			\|\hat{u}^{\re}(\vec{x},\omega) - \hat{u}^{\re}_{s, \operatorname{app}}(\vec{x},\omega)\|_{L^{\infty}(K)} &\leq 
			C_K C_{\operatorname{geom}}(N, \underline{d}_*^{\re}) \, p(|\omega|, \Im \omega)  \\
			&\times\big[ \re\big( \|\hat{\vec{e}}^{\re,(1)}_{1,\sigma}\|_{L^2(\Gamma^{\re})} + \|\hat{\vec{e}}^{\re,(1)}_{2,\sigma}\|_{L^2(\Gamma^{\re})} \big) \\
			&\quad + \re^{3/2} \big( \|\hat{\vec{e}}^{\re,(1)}_{1,\perp}\|_{H^{-1/2}(\Gamma^{\re})} + \|\hat{\vec{e}}^{\re,(1)}_{2,\perp}\|_{H^{-1/2}(\Gamma^{\re})} \big) \\
			&\quad + \re \|\hat{\vec{e}}^{\re,(1)}_3\|_{H^{-1/2}(\Gamma^{\re})} + \re^3 \| \vv{\lambda}^{\re}_s \|_{\mathbb{C}^N} \big].
		\end{align*}
		Then, by using  
		the bounds \eqref{eq:QperpE}, \eqref{eq:ere1}, the bounds of Section \ref{sec:ere2} and of Corollary \ref{cor:freq_fin}  (namely \eqref{eq:lmb}, \eqref{eq:bound_density_simplified}), we arrive at the following bound:
		\begin{align}
			\label{eq:bound_field_s_FD}
			\begin{split}
				|\hat{u}^{\re}(\vec{x},\omega) - &\hat{u}^{\re}_{s, \operatorname{app}}(\vec{x},\omega)| \leq 
				C_KC_{\operatorname{geom}}(N, \underline{d}_*^{\re}) \, p(|\omega|, \Im \omega) \times    \\
				&\times \left(\re^{1/2}\trinorm {\hat{\vec{g}}^{\re}-\hat{\vec{g}}^{\re,(1)}}_{-1/2}+\re^{3/2}\trinorm {\hat{\vec{g}}^{\re,(1)}}_{3/2}+\re^2\|\hat{\vec{g}}^{\re,(1)}\|_{L^2(\Gamma^{\re})}\right). 
			\end{split}
		\end{align}
		Next, we employ a counterpart of Proposition \ref{prop:data}, namely, Proposition \ref{prop:td2}, to bound the above in terms of $\hat{u}$. 
	\end{proof}
	
	\subsubsection{Proof of Theorem \ref{theorem:convergence_of_models} for the model \eqref{eq:tdsys2}}
	\label{sec:convmod}
	The well-posedness proof follows the same lines as the proof of Theorem \ref{theorem:well_posedness}, cf. the statement of Corollary \ref{cor:freq_fin} and Theorem \ref{th:wp}. 
	
	To prove convergence, we proceed like in the proof of Theorem \ref{theorem:convergence_of_models} for \eqref{eq:tdsys}, see Section \ref{sec:error_analysis}, however, this time work with the estimate of Theorem \ref{th:conv2}.

	\section{Numerical experiments}
	\label{sec:numerical_experiments}
	In all numerical experiments, we employ the convolution quadrature method for the time discretization (see \cite{lubich88_1, lubich88_2, lubich94} for details). As a reference solution, we use results obtained via the Galerkin BEM.
	\subsection{Convergence of the asymptotic models}
	This section provides numerical validation of the results established in Theorem \ref{theorem:convergence_of_models}. We consider a configuration of $N = 5$ obstacles with cavities, positioned in the origin and in the vertices of a regular tetrahedron; see Figure \ref{fig:geom_config} for the geometric setup. 
	\begin{figure}[h!]
		\centering
		\includegraphics[scale=0.45]{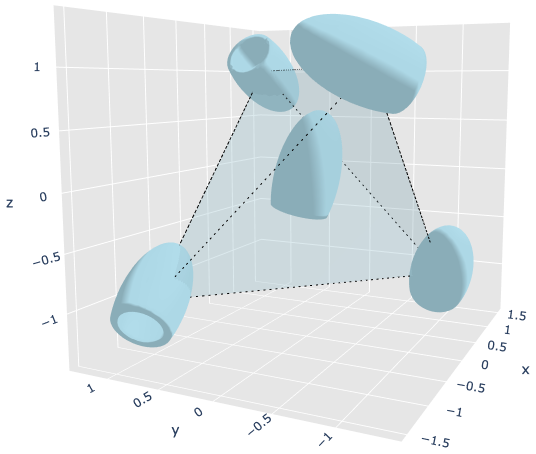}		
		\caption{Geometric configuration of the numerical experiments for $\re=0.5$. The obstacles are centred at $\bc_1 = (0,0,0), ~ \bc_2 = (-1,-1,1), ~\bc_3 = (-1,1,1), ~ \bc_4 = (1,-1,-1),~ \bc_5 = (1,1,1)$.}
		\label{fig:geom_config}
		\vspace{-15pt}
	\end{figure}
	For the incident field, we use a modulated Gaussian pulse (MGP) given by
	\begin{align}
		\label{eq:modulated_gaussian_pulse}
		u^{\operatorname{inc}}(\vec{x}, t) = \cos\left( \omega(t - \vec{x} \cdot \vec{d}) \right) \, e^{-\sigma(t - \vec{x} \cdot \vec{d} - \mu)^2}.
	\end{align}
	with $\omega = 2\pi, \sigma = 3,  \mu = 3, \vec{d} = \frac{1}{\sqrt{3}}(1, -1, 1)$. The simulations are carried out over the time interval $(0, T)$, with final time $T = 13$. We vary the parameter $\re$ and compare the reference solution $u^{\re}(\bx_0, t)$ with the approximate scattered fields $u^{\re}_{\operatorname{app}}(\bx_0, t)$ computed using the three models introduced in Section \ref{sec:principal_result}. The scattered fields are measured at the observation point $\bx_0 = (-1, -1, -1)$. Figure \ref{figure:error_sca_field} illustrates the absolute error of the scattered field for different values of $\re$ and the temporal evolution of the scattered field.
	The numerical results validate the theoretical result stated in Theorem \ref{theorem:convergence_of_models}. In particular, we observe that the absolute error behaves as $O(\re^3)$ for both the asymptotic model \eqref{eq:tdsys} and the simplified model \eqref{eq:tdsys2}, while for the Born approximation \eqref{eq:tdsysborn}, the observed absolute error is of order $O(\re^2)$, consistent with theoretical expectations.
	\begin{figure}[h!]
		\centering
		\includegraphics{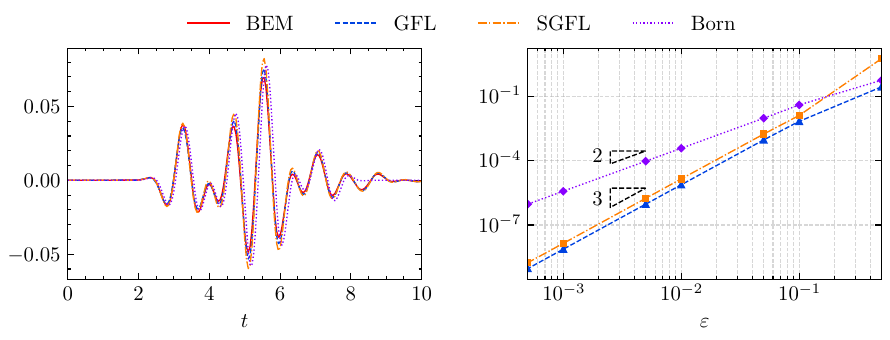}
		\caption{(Left) Time evolution of the scattered field for $\re = 0.1$. 
			(Right) Absolute error of the scattered field computed using the three models described in Section \ref{sec:principal_result}: the Galerkin-Foldy-Lax (GFL), Simplified Galerkin-Foldy-Lax (SGFL), and Born models.}
		\label{figure:error_sca_field}
		\vspace{-15pt}
	\end{figure}
	\subsection{Stability under periodic perturbations}
	\label{sec:long_time_exp}
	In the second experiment, we study the behavior of the scattered field  under periodic perturbations. To this end, we consider $N=4$ non-rotated ellipsoids with principal semi-axes $(0.5, 0.5, 1)$ positioned in the vertices of the regular tetrahedron, see Figure \ref{fig:geom_config}. For the incident field, we set
	\begin{align*}%
		u^{\operatorname{inc}}(\bx, t) = \sin(2\pi(t-\bx\cdot\vec{d}-3)) \sigma(t-\bx\cdot\vec{d}-3).
	\end{align*}
	with $ \sigma(x) = \frac{1}{1+e^{-2x}}, \vec{d} = \frac{1}{\sqrt{3}}(1, -1, 1)$.
	\begin{figure}[h!]
		\centering
		\includegraphics{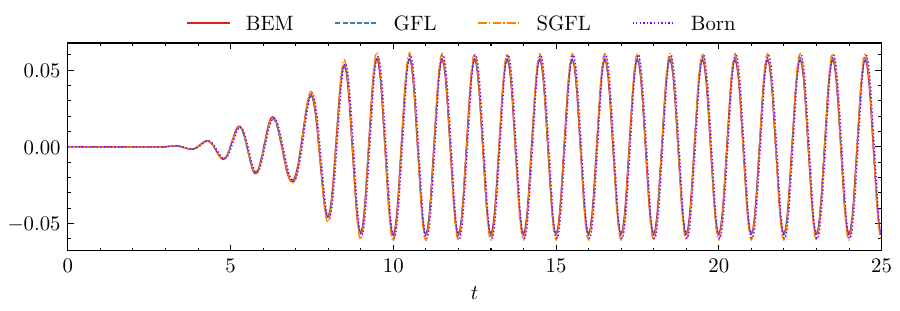}
		\label{figure:long_time}
		\caption{Dependence of the scattered field on time for $\re = 0.05$.
		}
		\label{fig:long_time}
	\end{figure}
	The simulations were made on the time interval $(0, T)$, for the final time $T=25$. The scattered fields are measured at the origin $\bx_0=(0,0,0)$. Figure \ref{fig:long_time} illustrates dependence of the scattered fields on time.
	\subsection{Simplified Model vs Born Approximation}
	\label{sec:many_non_convex_particles}
	In our third experiment, we compare the scattered field computed using the simplified Galerkin-Foldy-Lax model and the Born approximation for a large number of particles. Specifically, we consider a geometric configuration consisting of $N = 71$ non-convex obstacles distributed on the surface of a sphere with radius $r = 1.5$ using the Fibonacci lattice method \cite[Section 3.1]{gonzalez}; see Figure \ref{fig:many_non_convex_particles}.
	
	As the incident field, we again use the MGP \eqref{eq:modulated_gaussian_pulse} with the following parameters:
	\begin{align*}%
		\omega = 2\pi, \quad \sigma = 100, \quad \mu = 2, \quad \vec{d} = (1, -1, 1)/\sqrt{3}.
	\end{align*}
	
	The simulations are performed over the time interval $(0, T)$ with $T = 11$, and the observation point is set at $\bx_0 = (-1.5, -1.5, -1.5)$. The time evolution of the scattered field at this point is presented in Figure \ref{fig:many_non_convex_particles}. According to the theoretical estimates stated in Theorem \ref{theorem:convergence_of_models}, the difference between the scattered fields produced by the simplified model and the Born approximation is of $O(\re^2)$.
	This estimate is confirmed by the numerical results shown in Figure \ref{fig:many_non_convex_particles}. 
	
	
	\begin{figure}[h!]
		\centering
		\begin{minipage}{0.49\textwidth}
			\centering
			\includegraphics[scale=0.42]{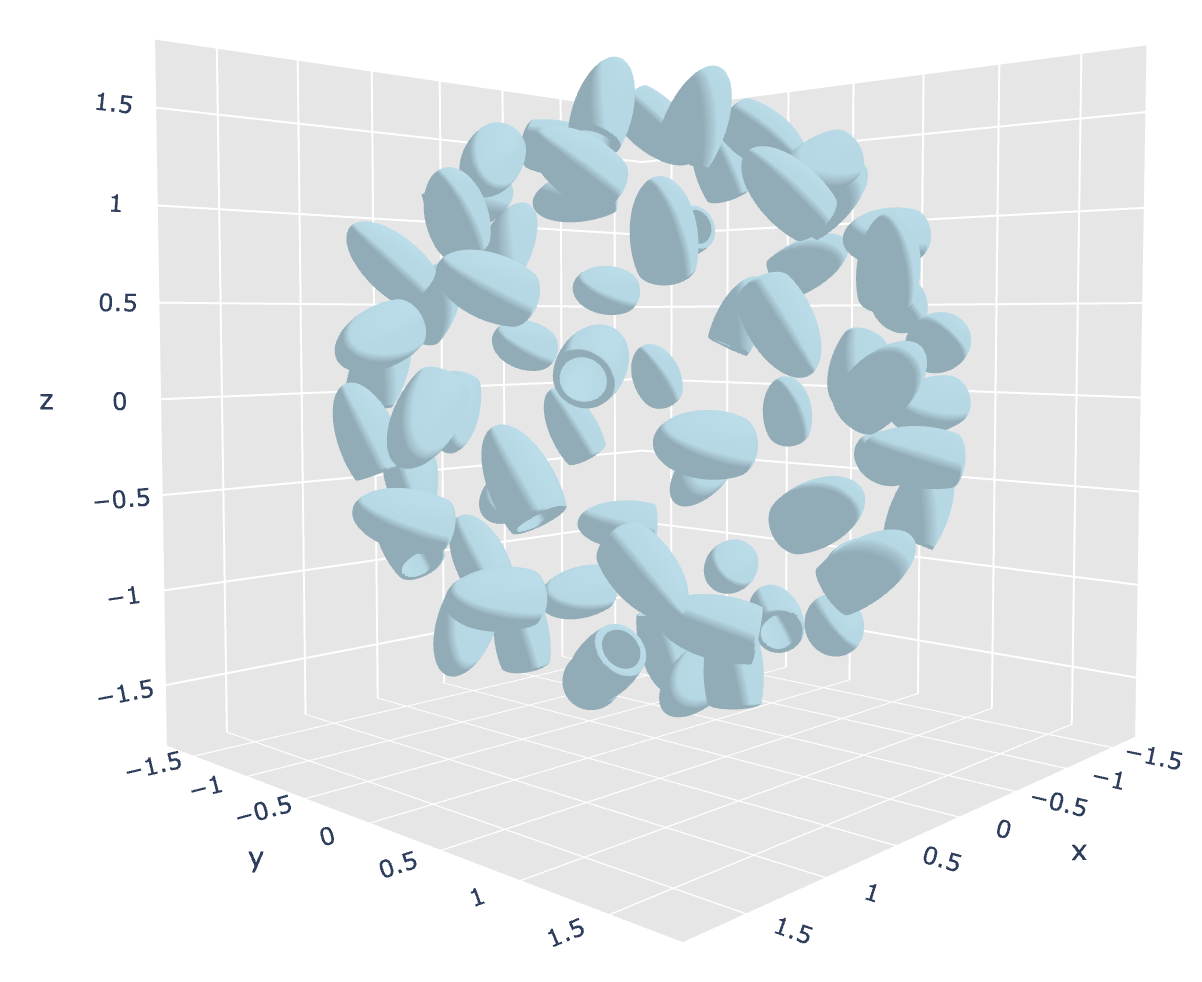}
		\end{minipage}
		\hfill
		\begin{minipage}{0.49\textwidth}
			\centering
			\includegraphics{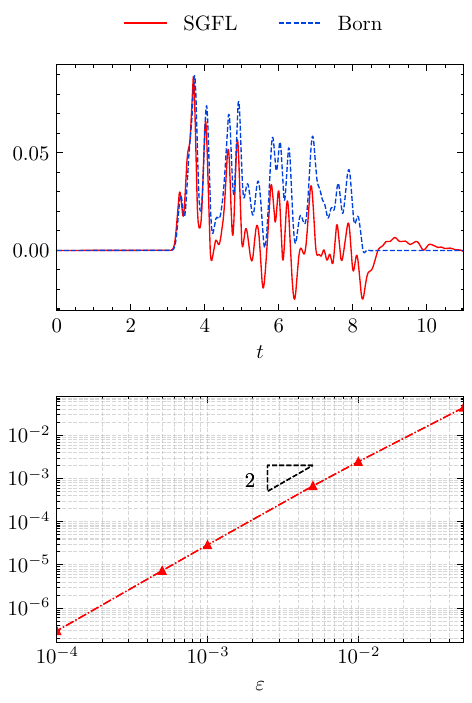}
		\end{minipage}
		\caption{(Left) Geometric configuration from Section \ref{sec:many_non_convex_particles} with $\re = 0.25$; (Top right) Time evolution of the scattered field at $\bx_0 = (-1.5, -1.5, -1.5)$ for $\re = 0.05$. (Bottom right) An absolute error between the scattered field obtained from the simplified and Born models, measured in the $L^{\infty}(0,T)$ norm. 
		}
		\label{fig:many_non_convex_particles}
	\end{figure}
	
	\newpage
	\begin{appendices}

		\section{Scaling of norms}	
		\paragraph{Domains  $\mathcal{O}$ and $\mathcal{O}^{\re}$} Let $\mathcal{O} \subset \mathbb{R}^3$ be a Lipschitz domain such that $\boldsymbol{0} \in \mathcal{O}$. For $\re \in (0, 1]$, let us introduce 
		\begin{align}
			\label{eq:domain_O}
			\mathcal{O}^{\re} :=  \{ \boldsymbol{x} \in \mathbb{R}^3: \boldsymbol{x}/\re  \in \mathcal{O}\}, \text{ so that }
			\mathcal{O}=\mathcal{O}^1, 
			\text{ and } |\partial\mathcal{O}^{\re}|:=\int_{\partial\mathcal{O}^{\re}}d\Gamma_{\bx}.
		\end{align}
		The following results are immediate from the definitions of the corresponding norms and a scaling argument, and thus we leave their proofs to the reader. 
		\begin{lemma}
			\label{lem:norml2}
			Assume $\varphi\in C(\overline{\mathcal{O}})$, then
			\begin{align*}%
				\|\gamma_0 \varphi\|_{L^2( \partial \mathcal{O})}\leq  \|\varphi\|_{L^{\infty}(\mathcal{O})} |\partial \mathcal{O} |^{1/2},
			\end{align*}
			where $|\partial \mathcal{O} | = \int_{\partial \mathcal{O}} d\Gamma_{\bx}$. In particular, if $\partial\mathcal{O}=\Gamma^{\re}_k$, for some $k$, it holds that 
			\begin{align*}
				\| \gamma_0 \varphi \|_{L^2(\Gamma^{\re}_k)} \lesssim \varepsilon \|\varphi\|_{L^{\infty}(B^{\re}_k)}.
			\end{align*}
		\end{lemma}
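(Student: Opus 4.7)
The plan is to observe that both statements are immediate consequences of elementary pointwise bounds together with a scaling argument, so there is no real obstacle here—the main task is just to organise the two estimates cleanly.

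For the first inequality, I would simply bound the integrand pointwise. Since $\varphi \in C(\overline{\mathcal{O}})$, the trace $\gamma_0\varphi$ is just the restriction to $\partial\mathcal{O}$, so
\begin{align*}
\|\gamma_0\varphi\|_{L^2(\partial\mathcal{O})}^2 \;=\; \int_{\partial\mathcal{O}} |\varphi(\bx)|^2\, d\Gamma_{\bx} \;\leq\; \|\varphi\|_{L^\infty(\mathcal{O})}^2 \int_{\partial\mathcal{O}} d\Gamma_{\bx} \;=\; \|\varphi\|_{L^\infty(\mathcal{O})}^2\, |\partial\mathcal{O}|.
\end{align*}
Taking square roots yields the first claim.

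For the specialisation to $\partial\mathcal{O} = \Gamma_k^{\re}$, I would apply the first bound with $\mathcal{O} = \Omega_k^{\re}$ together with two standard observations. First, from the definition \eqref{eq:rescaled_omega_k} and a change of variables $\bx = \re(\hat\bx - \bc_k) + \bc_k$, one has $|\Gamma_k^{\re}| = \re^2 |\Gamma_k|$, so $|\Gamma_k^{\re}|^{1/2} \lesssim \re$ with a constant depending only on the shape $\Gamma_k$. Second, the assumption \eqref{eq:omega_ball} gives $\Omega_k^{\re} \subseteq B_k^{\re}$, hence $\|\varphi\|_{L^\infty(\Omega_k^{\re})} \leq \|\varphi\|_{L^\infty(B_k^{\re})}$. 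Combining these yields
\begin{align*}
\|\gamma_0\varphi\|_{L^2(\Gamma_k^{\re})} \;\leq\; \|\varphi\|_{L^\infty(B_k^{\re})} |\Gamma_k^{\re}|^{1/2} \;\lesssim\; \re\, \|\varphi\|_{L^\infty(B_k^{\re})},
\end{align*}
which is the second claim.

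No step is difficult; the only subtlety worth flagging explicitly in the write-up is the scaling identity $|\Gamma_k^{\re}| = \re^2 |\Gamma_k|$, which is the origin of the factor $\re$ (rather than, say, $\re^{3/2}$) and which hides the constant depending on the fixed reference shape $\Gamma_k$ inside the $\lesssim$ symbol, as permitted by the convention introduced in Section on auxiliary notation.
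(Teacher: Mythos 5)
Your proof is correct and follows essentially the same route as the paper: pointwise bound of the boundary values by $\|\varphi\|_{L^{\infty}(\mathcal{O})}$ (valid by continuity on $\overline{\mathcal{O}}$), then the change of variables giving $|\Gamma_k^{\re}|=\re^2|\Gamma_k|$ for the scaling. Nothing to add.
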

		\begin{proof}
			The desired bound follows immediately from the continuity of $\varphi$ on $\overline{\O}$. The desired scaling follows from the changing of variables in integral, i.e. $|\Gamma^{\re}_k | = \re^2 |\Gamma_k| $.
			\begin{align*}
			\end{align*}
		\end{proof}
		Since  $\vec{y}\mapsto\int_{\partial\mathcal{O}}|\vec{x}-\vec{y}|^{-1}d\Gamma_{\vec{x}}\in L^{\infty}(\partial\mathcal{O})$,  see \cite[Theorem 3.3.5]{sauter_schwab}, we also have
		\begin{lemma}
			\label{lem:normh12}
			Let $B$ be a convex set containing  $\operatorname{conv}\operatorname{hull}\mathcal{O}$.  For all $\varphi\in W^{1,\infty}(\overline{{B}})$,  
			\begin{align*}%
				|\gamma_0 \varphi|_{H^{1/2}(\partial\mathcal{O})}\leq \|\nabla \varphi\|_{L^{\infty}(B)}\left(\int_{\partial\mathcal{O}}\int_{\partial\mathcal{O}}\frac{1}{| \vec{x}-\vec{y} |}d\Gamma_{\vec{x}}d\Gamma_{\vec{y}}\right)^{1/2}.
			\end{align*}
			In particular, if $\partial\mathcal{O}=\Gamma^{\re}_k$, for some $k$, it holds that $|\gamma_0 \varphi|_{H^{1/2}(\Gamma^{\re}_k)} \lesssim \re^{3/2} \|\nabla \varphi\|_{L^{\infty}(B^{\re}_k)}$. 
		\end{lemma}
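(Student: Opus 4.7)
The proof plan is to combine the mean-value-type inequality for $W^{1,\infty}$ functions on convex sets with the explicit form of the Sobolev--Slobodeckij seminorm \eqref{eq:norm_H_half}. First, since $B$ is convex and contains $\operatorname{conv}\operatorname{hull}\mathcal{O}\supseteq \partial\mathcal{O}$, for any pair $\vec{x},\vec{y}\in\partial\mathcal{O}$ the segment $[\vec{x},\vec{y}]$ lies in $B$, so the standard mean-value inequality for $W^{1,\infty}$ functions on convex sets gives
\begin{align*}
    |\varphi(\vec{x})-\varphi(\vec{y})|\leq \|\nabla\varphi\|_{L^{\infty}(B)}\,|\vec{x}-\vec{y}|,\qquad \vec{x},\vec{y}\in\partial\mathcal{O}.
\end{align*}
Squaring and dividing by $|\vec{x}-\vec{y}|^3$, one of the factors of $|\vec{x}-\vec{y}|$ cancels, leaving a kernel $1/|\vec{x}-\vec{y}|$. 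Plugging this into the definition \eqref{eq:norm_H_half} of the $H^{1/2}(\partial\mathcal{O})$-seminorm and taking the square root yields the first claimed inequality directly.

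For the scaling statement, I would specialize to $\partial\mathcal{O}=\Gamma^{\re}_k$ and $B=B^{\re}_k$ (which is convex since it is a ball and contains $\operatorname{conv}\operatorname{hull}\Omega^{\re}_k$ by \eqref{eq:omega_ball}). The remaining task is to bound the double integral $\iint_{\Gamma^{\re}_k\times\Gamma^{\re}_k}|\vec{x}-\vec{y}|^{-1}\,d\Gamma_{\vec{x}}d\Gamma_{\vec{y}}$. Using the change of variables $\vec{x}=\re\hat{\vec{x}}+\vec{c}_k(1-\re)$ and similarly for $\vec{y}$, one has $d\Gamma_{\vec{x}}=\re^2 d\Gamma_{\hat{\vec{x}}}$, $d\Gamma_{\vec{y}}=\re^2 d\Gamma_{\hat{\vec{y}}}$ and $|\vec{x}-\vec{y}|=\re|\hat{\vec{x}}-\hat{\vec{y}}|$, which transforms the integral into $\re^{3}\iint_{\Gamma_k\times\Gamma_k}|\hat{\vec{x}}-\hat{\vec{y}}|^{-1}d\Gamma_{\hat{\vec{x}}}d\Gamma_{\hat{\vec{y}}}$. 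The finiteness of this reference integral (depending only on the shape $\Gamma_k$) follows from the cited result \cite[Theorem 3.3.5]{sauter_schwab}, which asserts that $\hat{\vec{y}}\mapsto \int_{\Gamma_k}|\hat{\vec{x}}-\hat{\vec{y}}|^{-1}d\Gamma_{\hat{\vec{x}}}$ is in $L^{\infty}(\Gamma_k)$. Taking the square root finally yields the stated $\re^{3/2}$-factor.

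There is essentially no obstacle here: the argument is a direct application of the convex mean-value inequality, followed by an elementary scaling. The only point requiring minor care is ensuring that the integrability of $1/|\vec{x}-\vec{y}|$ on $\partial\mathcal{O}\times\partial\mathcal{O}$ is controlled uniformly in the shape of $\mathcal{O}$ up to a constant depending only on $\Gamma_k$, which is exactly what the Sauter--Schwab reference provides.
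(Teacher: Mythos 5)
Your proof is correct and is essentially the paper's own argument: the mean-value inequality on the convex set $B$ applied inside the Sobolev--Slobodeckij seminorm, followed by the change of variables $\vec{x}=\re(\hat{\vec{x}}-\vec{c}_k)+\vec{c}_k$ giving the factor $\re^{3}$ in the double integral and hence $\re^{3/2}$ after the square root, with finiteness of the reference integral from \cite[Theorem 3.3.5]{sauter_schwab}. No gaps.
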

		\begin{proof}
			The bounds immediately follows from  the definition of the $H^{1/2} (\bO)$ seminorm and applying the mean value theorem to $\varphi$:
			\begin{align*}
				|\gamma_0 \varphi|^2_{H^{1/2}(\bO)}  &= \iint_{\bO \times \bO} \frac{|\varphi(\bx) - \varphi (\by)|^2}{|\bx - \by|^3} d\Gamma_{\bx} d\Gamma_{\by}\\ &\leq \| \nabla \varphi \|_{L^{\infty}(B)} \iint_{\bO \times \bO} \frac{1}{|\bx - \by|} d\Gamma_{\bx} d\Gamma_{\by}.
			\end{align*}
			Replacing $\bO$ by $\Gamma^{\re}_k$ and changing variables in the above integrals, we derive the desired scaling.
		\end{proof}
		\subsubsection*{Orthogonal projectors associated to $\partial \mathcal{O}^{\re}$ and related estimates}  In this section we will consider the action of $\mathbb{P}_0$, $\mathbb{P}_{\perp}$ (see Section \ref{sec:orthogonal_projectors}) on $\Omega^{\re}$ for the case when $N=1$. To distinguish this case we will state all the results for the re-scaled domain $\mathcal{O}^{\re}$, defined in \eqref{eq:domain_O}. We  update the notation for the counterparts of $\mathbb{S}_0$ and $H^{1/2}_{\perp}$: 
		\begin{align*}%
			\mathbb{S}_0(\partial \mathcal{O}^{\re}) := \operatorname{span}\{1\}, \quad H^{1/2}_{\perp}(\partial \mathcal{O}^{\re}) := \{ \psi \in H^{1/2}(\partial \mathcal{O}^{\re}) : (\psi, 1)_{L^2(\partial \mathcal{O}^{\re})} = 0 \}.
		\end{align*} 
		
		
		%
		\begin{definition}
			\label{def:isomorphism}
			We define the scaling isomorphism between the spaces $L^2(\partial\O^{\re})$ and $L^2(\partial\O)$  (resp. $H^s(\partial\O^{\re})$ and $H^s(\partial\O)$, $s\geq 0$) via:
			\begin{align*}%
				\mathcal{J}^{\re}:\, H^s(\partial\O^{\re})\ni\varphi\mapsto\Phi\in H^s(\partial\O): \, \Phi(\vec{x})=\varphi(\re\vec{x}), \quad \vec{x}\in \partial\O.
			\end{align*}
		\end{definition}
		It indeed holds that $\operatorname{Ran} \left.\mathcal{J}^{\re}\right|_{\mathbb{S}_0(\partial\O^{\re})}=\mathbb{S}_0(\partial\O)$, and $\operatorname{Ran} \left.\mathcal{J}^{\re}\right|_{H^{1/2}_{\perp}(\partial\O^{\re})}=H^{1/2}_{\perp}(\partial\O)$.
		
		%
		The following identities will be of importance later:
		\begin{align}
			\label{eq:ident}
			\mathbb{P}_0\mathcal{J}^{\re}=\mathcal{J}^{\re}\mathbb{P}_0, \quad 	\mathbb{P}_{\perp}\mathcal{J}^{\re}=\mathcal{J}^{\re}\mathbb{P}_{\perp}.
		\end{align}
		Both of them are a corollary of 
		\begin{align*}
			\mathbb{P}_0\mathcal{J}^{\re}\varphi=1_{\partial\mathcal{O}}\frac{1}{|\partial\mathcal{O}|}\int_{\partial\mathcal{O}}\mathcal{J}^{\re}\varphi d\Gamma_{\vec{x}}=1_{\partial\mathcal{O}}\frac{1}{|\partial\mathcal{O}^{\re}|}\int_{\partial\mathcal{O}^{\re}}\varphi(\vec{x}) d\Gamma_{\vec{x}}=\mathcal{J}^{\re}\mathbb{P}_0\varphi.
		\end{align*}
		\begin{lemma}
			\label{lemma:scaling_norm_l2}
			For all $\varphi,\,\psi \in L^2(\partial \mathcal{O}^{\re})$, 
			$(\varphi,\psi)_{L^2(\partial\mathcal{O}^{\re})}= \re^2 (\mathcal{J}^{\re}\varphi,\mathcal{J}^{\re}\psi)_{L^2(\partial\mathcal{O})}$.
		\end{lemma}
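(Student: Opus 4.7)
The plan is to prove this directly by a change of variables in the surface integral defining the $L^2$ inner product. By the definition of the scaling isomorphism $\mathcal{J}^{\re}$ in Definition \ref{def:isomorphism}, we have $(\mathcal{J}^{\re}\varphi)(\vec{x}) = \varphi(\re\vec{x})$ for $\vec{x}\in\partial\mathcal{O}$, and by the definition \eqref{eq:domain_O} of $\mathcal{O}^{\re}$, the map $\vec{x}\mapsto\re\vec{x}$ is a bijection from $\partial\mathcal{O}$ onto $\partial\mathcal{O}^{\re}$.

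First I would write out
\begin{equation*}
(\varphi,\psi)_{L^2(\partial\mathcal{O}^{\re})} = \int_{\partial\mathcal{O}^{\re}} \varphi(\vec{y})\,\overline{\psi(\vec{y})}\,d\Gamma_{\vec{y}},
\end{equation*}
and then perform the change of variables $\vec{y}=\re\vec{x}$ with $\vec{x}\in\partial\mathcal{O}$. The key observation is that, since $\partial\mathcal{O}$ is a two-dimensional Lipschitz surface embedded in $\mathbb{R}^3$, the surface measure transforms as $d\Gamma_{\vec{y}} = \re^2\, d\Gamma_{\vec{x}}$ (the Jacobian of a uniform dilation by factor $\re$ restricted to a $2$-surface contributes $\re^2$). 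This yields
\begin{equation*}
\int_{\partial\mathcal{O}^{\re}} \varphi(\vec{y})\,\overline{\psi(\vec{y})}\,d\Gamma_{\vec{y}} = \re^2\int_{\partial\mathcal{O}} \varphi(\re\vec{x})\,\overline{\psi(\re\vec{x})}\,d\Gamma_{\vec{x}} = \re^2 (\mathcal{J}^{\re}\varphi,\mathcal{J}^{\re}\psi)_{L^2(\partial\mathcal{O})},
\end{equation*}
which is exactly the claimed identity.

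There is essentially no obstacle here: the result is a one-line change of variables, justified rigorously by the definition of the Hausdorff surface measure on a Lipschitz boundary and the fact that Lipschitz parametrizations of $\partial\mathcal{O}^{\re}$ are obtained from those of $\partial\mathcal{O}$ by dilation, so their induced first fundamental forms scale by $\re^2$. The only thing worth being careful about is restricting to real or conjugate-linear conventions consistently, but as the inner product on the right-hand side is evaluated under the same convention as on the left, this is not a real issue.
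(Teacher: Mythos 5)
Your proof is correct and coincides with the paper's own argument, which is exactly the one-line change of variables $\vec{y}=\re\vec{x}$ with the surface measure scaling $d\Gamma_{\vec{y}}=\re^2\,d\Gamma_{\vec{x}}$. The extra remarks on Lipschitz parametrizations and conjugation conventions are fine but not needed beyond what the paper states.
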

		\begin{proof}
			By the change of variables, $(\varphi,\psi)_{L^2(\partial\mathcal{O}^{\re})}=\re^2\int_{\partial\mathcal{O}}\mathcal{J}^{\re}\varphi(\bx) \, \overline{\mathcal{J}^{\re}\psi}(\bx) d\Gamma_{\bx}$. 
		\end{proof}	
		\begin{lemma}
			\label{lem:scaling_norm}
			For all $\varphi \in H^{1/2}(\partial \mathcal{O}^{\re})$, $
			| \varphi |_{H^{1/2}(\partial \mathcal{O}^{\re})} =  \re^{1/2} | \mathcal{J}^{\re}\varphi |_{H^{1/2}(\partial \mathcal{O})}.	$
		\end{lemma}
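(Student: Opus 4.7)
The claim is a purely scaling/change-of-variables identity for the Sobolev–Slobodeckij seminorm, in the spirit of the two preceding lemmas on scaling of $L^2$- and $H^{1/2}$-quantities. The plan is to start from the definition \eqref{eq:norm_H_half} applied on $\partial\mathcal{O}^{\re}$, perform the push-forward $\bx = \re\hat\bx$, $\by = \re\hat\by$ from $\partial\mathcal{O}^{\re}$ onto $\partial\mathcal{O}$, and read off the resulting scaling factor.

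More concretely, the proof will proceed as follows. First, write out
\begin{align*}
|\varphi|_{H^{1/2}(\partial\mathcal{O}^{\re})}^2 = \iint_{\partial\mathcal{O}^{\re}\times \partial\mathcal{O}^{\re}} \frac{|\varphi(\bx)-\varphi(\by)|^2}{|\bx-\by|^3}\, d\Gamma_{\bx}\,d\Gamma_{\by}.
\end{align*}
Next, since $\partial\mathcal{O}^{\re}$ is the image of $\partial\mathcal{O}$ under the dilation $\hat\bx\mapsto \re\hat\bx$, the surface measure pulls back as $d\Gamma_{\bx} = \re^{2}\,d\Gamma_{\hat\bx}$ (and likewise for $\by$), while $|\bx-\by|^3 = \re^{3}|\hat\bx-\hat\by|^3$. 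By Definition \ref{def:isomorphism}, $\varphi(\bx)=\varphi(\re\hat\bx)=(\mathcal{J}^{\re}\varphi)(\hat\bx)$, so the integrand transforms into $|(\mathcal{J}^{\re}\varphi)(\hat\bx)-(\mathcal{J}^{\re}\varphi)(\hat\by)|^2 / |\hat\bx-\hat\by|^3$ times the Jacobian factor $\re^{4}/\re^{3}=\re$.

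Assembling the pieces then gives $|\varphi|_{H^{1/2}(\partial\mathcal{O}^{\re})}^2 = \re\,|\mathcal{J}^{\re}\varphi|_{H^{1/2}(\partial\mathcal{O})}^2$, i.e.\ the stated identity. There is no real obstacle here: the only point to be a little careful about is the exponent bookkeeping, namely that the two surface elements contribute $\re^{4}$ and the cube in the denominator contributes $\re^{3}$, leaving a net factor of $\re$ (and hence $\re^{1/2}$ after taking square roots). This matches the exponent expected from the general dimensional analysis of the $H^{s}$-seminorm on an $(n-1)$-dimensional surface in $\mathbb{R}^n$ with $s=1/2$ and $n=3$, which gives scaling $\re^{(n-1)-2s} = \re^{1}$ inside the square.
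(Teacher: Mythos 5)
Your proposal is correct and coincides with the paper's own argument: a direct change of variables in the double integral defining the seminorm, with the surface elements contributing $\re^{4}$, the kernel $|\bx-\by|^{-3}$ contributing $\re^{-3}$, and $\varphi(\re\hat\bx)=(\mathcal{J}^{\re}\varphi)(\hat\bx)$, yielding the net factor $\re$ inside the square. Nothing further is needed.
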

		\begin{proof}
			By definition of the seminorm \eqref{eq:norm_H_half} and the change of variables, with $\Phi=\mathcal{J}^{\re}\varphi$, it holds that
			\begin{align*}%
				|\varphi |^2_{H^{1/2}(\partial \mathcal{O}^{\re})} &= \int_{\partial \mathcal{O}^{\re}} \int_{\partial \mathcal{O}^{\re}} \frac{|\varphi(\boldsymbol{x})-\varphi(\boldsymbol{y})|^2}{|\boldsymbol{x}-\boldsymbol{y}|^3} d\Gamma_{\boldsymbol{x}} d\Gamma_{\boldsymbol{y}} \\
				&= \re^{4} \re^{-3}  \int_{\partial \mathcal{O}} \int_{\partial \mathcal{O}} \frac{|\Phi(\hat{\boldsymbol{x}})-\Phi(\hat{\boldsymbol{y}})|^2}{|\hat{\boldsymbol{x}}-\hat{\boldsymbol{y}}|^3} d\Gamma_{\hat{\boldsymbol{x}}} d\Gamma_{\hat{\boldsymbol{y}}} = \re |\Phi|^2_{1/2, \partial \mathcal{O}}.
			\end{align*}
		\end{proof}
		We also have a Poincar\`e-type inequaility with a constant explicit in $\re$. 
		\begin{lemma}
			\label{lem:L^2_to_half}
			There exists $C>0$, s.t. for all $0<\re\leq 1$, all $\varphi\in H^{1/2}(\partial\mathcal{O}^{\re})$, 
			$\| \mathbb{P}_{\perp} \varphi \|_{L^2(\partial\mathcal{O}^{\re})} \leq  \re^{1/2} \times C| \varphi |_{H^{1/2}(\partial \mathcal{O}^{\re})}.$ 
		\end{lemma}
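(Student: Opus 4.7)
The plan is to reduce the estimate on the $\re$-dependent domain $\partial\mathcal{O}^\re$ to a fixed-geometry Poincaré-type inequality on $\partial\mathcal{O}$ via the scaling isomorphism $\mathcal{J}^\re$, and then track the $\re$-factors that arise from the scaling of the $L^2$- and $H^{1/2}$-seminorms.

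First I would rewrite the left-hand side using Lemma \ref{lemma:scaling_norm_l2} together with the commutation identity \eqref{eq:ident}:
\begin{equation*}
\|\mathbb{P}_\perp \varphi\|_{L^2(\partial\mathcal{O}^\re)}
= \re \,\|\mathcal{J}^\re \mathbb{P}_\perp \varphi\|_{L^2(\partial\mathcal{O})}
= \re \,\|\mathbb{P}_\perp \mathcal{J}^\re \varphi\|_{L^2(\partial\mathcal{O})}.
\end{equation*}
Then I would invoke the classical Poincaré-type inequality on the fixed Lipschitz boundary $\partial\mathcal{O}$: there is $C>0$, depending only on $\partial\mathcal{O}$, such that
\begin{equation*}
\|\mathbb{P}_\perp \Phi\|_{L^2(\partial\mathcal{O})} \leq C\,|\Phi|_{H^{1/2}(\partial\mathcal{O})},
\qquad \forall\,\Phi\in H^{1/2}(\partial\mathcal{O}).
\end{equation*}
Applied with $\Phi=\mathcal{J}^\re \varphi$ and combined with the seminorm scaling of Lemma \ref{lem:scaling_norm}, $|\mathcal{J}^\re\varphi|_{H^{1/2}(\partial\mathcal{O})}=\re^{-1/2}|\varphi|_{H^{1/2}(\partial\mathcal{O}^\re)}$, this gives exactly
\begin{equation*}
\|\mathbb{P}_\perp \varphi\|_{L^2(\partial\mathcal{O}^\re)}
\leq \re\cdot C\cdot \re^{-1/2}\,|\varphi|_{H^{1/2}(\partial\mathcal{O}^\re)}
= \re^{1/2}\,C\,|\varphi|_{H^{1/2}(\partial\mathcal{O}^\re)},
\end{equation*}
with a constant independent of $\re\in(0,1]$.

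The only nontrivial ingredient is the fixed-domain Poincaré inequality, which I would justify by a standard compactness-contradiction argument: if it failed, there would exist a sequence $\Phi_n\in H^{1/2}_\perp(\partial\mathcal{O})$ with $\|\Phi_n\|_{L^2(\partial\mathcal{O})}=1$ and $|\Phi_n|_{H^{1/2}(\partial\mathcal{O})}\to 0$; the Sobolev-Slobodeckij seminorm controls the full $H^{1/2}$-norm up to the bounded $L^2$-norm, so $(\Phi_n)$ is bounded in $H^{1/2}(\partial\mathcal{O})$, and by the compact embedding $H^{1/2}(\partial\mathcal{O})\hookrightarrow L^2(\partial\mathcal{O})$ a subsequence converges in $L^2$ to some $\Phi$ with $|\Phi|_{H^{1/2}(\partial\mathcal{O})}=0$ (hence $\Phi$ constant), while remaining in the closed subspace $H^{1/2}_\perp(\partial\mathcal{O})$; this forces $\Phi=0$, contradicting $\|\Phi\|_{L^2}=1$.

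The main (and essentially only) subtlety is tracking the correct power of $\re$: the $L^2$-norm picks up a factor $\re$ and the $H^{1/2}$-seminorm a factor $\re^{1/2}$, and the net $\re^{1/2}$ in the final bound comes precisely from the mismatch between these two scalings. Everything else is mechanical, so this step is the proof.
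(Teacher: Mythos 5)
Your proof is correct, but it takes a different route from the paper. The paper proves this lemma directly on $\partial\mathcal{O}^{\re}$: it writes $\mathbb{P}_{\perp}\varphi=\varphi-\langle\varphi\rangle$, applies the Cauchy--Schwarz inequality to the mean-deviation integral, bounds $|\bx-\by|^{3}\leq(\operatorname{diam}\partial\mathcal{O}^{\re})^{3}$ to reinsert the Sobolev--Slobodeckij kernel, and then uses $|\partial\mathcal{O}^{\re}|=\re^{2}|\partial\mathcal{O}|$ and $\operatorname{diam}\partial\mathcal{O}^{\re}=\re\operatorname{diam}\partial\mathcal{O}$ to extract the factor $\re$ in the squared estimate; this is fully elementary and yields the explicit constant $C=\operatorname{diam}(\partial\mathcal{O})^{3/2}/|\partial\mathcal{O}|^{1/2}$, with no compactness input. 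You instead transport the inequality to the fixed boundary $\partial\mathcal{O}$ via $\mathcal{J}^{\re}$, using Lemma \ref{lemma:scaling_norm_l2}, Lemma \ref{lem:scaling_norm} and the commutation \eqref{eq:ident}, and then invoke a fixed-domain Poincar\'e inequality proved by a compactness--contradiction argument; your bookkeeping of the powers of $\re$ ($\re$ from the $L^2$-norm, $\re^{-1/2}$ from the seminorm) is exactly right, and the compactness argument is sound (note that the kernel of the $H^{1/2}$-seminorm consists of globally constant functions even if $\partial\mathcal{O}$ is disconnected, since the double integral couples all pairs of points, so no connectedness hypothesis is needed). What the paper's route buys is an explicit, computable constant and independence from the compact embedding $H^{1/2}(\partial\mathcal{O})\hookrightarrow L^{2}(\partial\mathcal{O})$; what your route buys is consistency with the scaling machinery the appendix already sets up (it is the same pattern used in Corollary \ref{cor:pperp}) and applicability whenever a fixed-domain inequality is available but a direct computation on the scaled boundary would be less transparent.
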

		\begin{proof}
			We use $\mathbb{P}_{\perp} \varphi = \varphi - \mathbb{P}_0 \varphi$, and $\mathbb{P}_0 \varphi = 1_{\partial\O^{\re}}\langle \varphi \rangle=1_{\partial\O^{\re}}\frac{1}{|\partial \mathcal{O}^{\re}|} \int_{\partial \mathcal{O}^{\re}} \varphi (\boldsymbol{x}) d\Gamma_{\boldsymbol{x}}$.
			Applying the Cauchy-Schwarz inequality in the third line below yields 
			\begin{align*}%
				\|\mathbb{P}_{\perp} \varphi \|^2_{L^2(\partial\mathcal{O}^{\re})} &= \int_{\partial\mathcal{O}^{\re}} \left|\varphi (\boldsymbol{x}) - \langle \varphi \rangle \right|^2 d \Gamma_{\boldsymbol{x}} \\&= \int_{\partial\mathcal{O}^{\re}} \left|\frac{1}{| \partial\mathcal{O}^{\re} |} \int_{\partial\mathcal{O}^{\re}} \varphi (\boldsymbol{x})  d \Gamma_{\boldsymbol{y}} - \frac{1}{|\partial\mathcal{O}^{\re}|} \int_{\partial\mathcal{O}^{\re}} \varphi(\boldsymbol{y}) d \Gamma_{\boldsymbol{y}} \right|^2 d \Gamma_{\boldsymbol{x}} \\
				&\leq \frac{1}{|\partial\mathcal{O}^{\re}|} \int_{\partial\mathcal{O}^{\re}} \int_{\partial\mathcal{O}^{\re}} | \varphi(\boldsymbol{x}) - \varphi (\boldsymbol{y}) |^2 d \Gamma_{\boldsymbol{x}} d \Gamma_{\boldsymbol{y}} \\
				&\leq  \frac{(\text{diam}(\partial \mathcal{O}^{\re}))^3}{|\partial\mathcal{O}^{\re}|} \int_{\partial\mathcal{O}^{\re}} \int_{\partial\mathcal{O}^{\re}} \frac{| \varphi (\boldsymbol{x}) - \varphi (\boldsymbol{y})|^2}{|\boldsymbol{x} - \boldsymbol{y}|^3} d \Gamma_{\boldsymbol{x}} d \Gamma_{\boldsymbol{y}} \lesssim \re |\varphi_{\perp} |^2_{H^{1/2}(\partial \mathcal{O}^{\re})},
			\end{align*}
			where we used $|\partial\mathcal{O}^{\re}| = \re^2 \times |\partial\mathcal{O}|$ and $\text{diam}(\partial \mathcal{O}^{\re}) = \re  \text{diam}(\partial \mathcal{O})$.
		\end{proof}
		We also have the following observation, which will be used in the subsequent results.
		\begin{corollary}
			\label{corollary:split_norm_H_half}
			Let $0<\re\leq 1$. It holds that  $\|\varphi\|^2_{H^{1/2}(\O^{\re})}=\|\mathbb{P}_0\varphi\|^2_{L^2(\partial\mathcal{O}^{\re})}+\|\mathbb{P}_{\perp}\varphi\|^2_{H^{1/2}(\partial\mathcal{O}^{\re})}$. 
		\end{corollary}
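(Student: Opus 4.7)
The plan is simple, since this corollary is essentially the single-particle analogue of the last identity stated in Proposition \ref{prop:norm_equivalence}, adapted to the rescaled geometry $\partial\mathcal{O}^{\re}$. I would proceed in three small steps.

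First, I would decompose $\varphi = \mathbb{P}_0\varphi + \mathbb{P}_\perp\varphi$, which is valid because $\mathbb{P}_0$ is defined as the $L^2(\partial\mathcal{O}^{\re})$-orthogonal projection onto $\mathbb{S}_0(\partial\mathcal{O}^{\re}) = \operatorname{span}\{1\}$, and $\mathbb{P}_\perp = \operatorname{Id} - \mathbb{P}_0$. From the $L^2$-orthogonality of this decomposition I immediately get the Pythagorean identity
\begin{equation*}
\|\varphi\|_{L^2(\partial\mathcal{O}^{\re})}^2 = \|\mathbb{P}_0\varphi\|_{L^2(\partial\mathcal{O}^{\re})}^2 + \|\mathbb{P}_\perp\varphi\|_{L^2(\partial\mathcal{O}^{\re})}^2.
\end{equation*}

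Second, I would observe that $\mathbb{P}_0\varphi$ is a constant function on $\partial\mathcal{O}^{\re}$, so that in the Sobolev--Slobodeckij integrand defining the seminorm $|\cdot|_{H^{1/2}(\partial\mathcal{O}^{\re})}$ (see \eqref{eq:norm_H_half}) the differences $\mathbb{P}_0\varphi(\bx) - \mathbb{P}_0\varphi(\by)$ vanish identically. Hence $(\mathbb{P}_0\varphi + \mathbb{P}_\perp\varphi)(\bx) - (\mathbb{P}_0\varphi + \mathbb{P}_\perp\varphi)(\by) = \mathbb{P}_\perp\varphi(\bx) - \mathbb{P}_\perp\varphi(\by)$, and consequently
\begin{equation*}
|\varphi|_{H^{1/2}(\partial\mathcal{O}^{\re})}^2 = |\mathbb{P}_\perp\varphi|_{H^{1/2}(\partial\mathcal{O}^{\re})}^2.
\end{equation*}

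Finally, I would combine the two identities using the definition of the full norm $\|\cdot\|_{H^{1/2}(\partial\mathcal{O}^{\re})}^2 = \|\cdot\|_{L^2(\partial\mathcal{O}^{\re})}^2 + |\cdot|_{H^{1/2}(\partial\mathcal{O}^{\re})}^2$, also applied to $\mathbb{P}_\perp\varphi$ (noting that the latter has no $L^2$ contribution ``missing'' from $\mathbb{P}_0\varphi$, which in turn has zero seminorm). This yields the announced identity directly. There is no real obstacle: the only point worth emphasizing in the write-up is that constants have zero $H^{1/2}$-seminorm, which is why the $L^2$-orthogonal splitting produces an orthogonal splitting with respect to the full $H^{1/2}$-norm as well.
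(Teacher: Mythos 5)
Your proof is correct and follows essentially the same route as the paper: decompose $\varphi=\mathbb{P}_0\varphi+\mathbb{P}_{\perp}\varphi$, use the $L^2(\partial\mathcal{O}^{\re})$-orthogonality of the splitting, and exploit that the constant $\mathbb{P}_0\varphi$ has vanishing $H^{1/2}$-seminorm. The only cosmetic difference is that the paper invokes the $H^{1/2}$-orthogonality of the decomposition from Proposition \ref{prop:norm_equivalence} and then drops the seminorm of $\mathbb{P}_0\varphi$, whereas you verify the same identity directly from the Sobolev--Slobodeckij definition.
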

		\begin{proof}
			By Proposition \ref{prop:norm_equivalence}, we have that $\varphi=\mathbb{P}_0\varphi+\mathbb{P}_{\perp}\varphi$, for $\varphi \in H^{1/2}(\bO^{\re})$. Moreover, this decomposition is orthogonal with respect to the $H^{1/2}(\partial\mathcal{O}^{\re})$ scalar product, i.e.
			\begin{align*}
				\|\varphi\|^2_{H^{1/2}(\partial\mathcal{O}^{\re})}=\| \mathbb{P}_0\varphi\|^2_{H^{1/2}(\partial\mathcal{O}^{\re})}+\|\mathbb{P}_{\perp}\varphi\|^2_{H^{1/2}(\partial\mathcal{O}^{\re})},
			\end{align*} 
			The desired identity follows from the fact $|\mathbb{P}_0\varphi|_{H^{1/2}(\bO^{\re})} = 0$.
		\end{proof}
		\begin{corollary}
			\label{cor:equiv_norm}
			There exists $C>0$, s.t. for all $0<\re\leq 1$, $\|\mathbb{P}_{\perp}\varphi\|_{H^{1/2}(\partial \mathcal{O}^{\re})}\leq C|\varphi|_{H^{1/2}(\partial \mathcal{O}^{\re})}$, for all $\varphi\in H^{1/2}(\partial\mathcal{O}^{\re})$. 
		\end{corollary}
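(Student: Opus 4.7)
The plan is to directly combine Lemma \ref{lem:L^2_to_half} with the standard decomposition of the full $H^{1/2}$-norm into its $L^2$-part and its seminorm part, exploiting the two crucial features of $\mathbb{P}_{\perp}$: its range sits in the zero-mean subspace (where the Poincar\'e-type inequality of Lemma \ref{lem:L^2_to_half} applies), and its complement $\mathbb{P}_0$ projects onto constants (which carry no $H^{1/2}$ seminorm).

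First I would write
\begin{align*}
\|\mathbb{P}_{\perp}\varphi\|^2_{H^{1/2}(\partial \mathcal{O}^{\re})} = \|\mathbb{P}_{\perp}\varphi\|^2_{L^2(\partial \mathcal{O}^{\re})} + |\mathbb{P}_{\perp}\varphi|^2_{H^{1/2}(\partial \mathcal{O}^{\re})},
\end{align*}
by the very definition of the Sobolev--Slobodeckij norm. For the seminorm term, observe that $\mathbb{P}_{\perp}\varphi = \varphi - \mathbb{P}_0\varphi$, where $\mathbb{P}_0\varphi$ is a constant on $\partial\mathcal{O}^{\re}$, hence has vanishing $H^{1/2}$-seminorm; therefore $|\mathbb{P}_{\perp}\varphi|_{H^{1/2}(\partial\mathcal{O}^{\re})} = |\varphi|_{H^{1/2}(\partial\mathcal{O}^{\re})}$.

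Next, for the $L^2$-term, I would invoke Lemma \ref{lem:L^2_to_half} which yields, for some $C'>0$ independent of $\re$,
\begin{align*}
\|\mathbb{P}_{\perp}\varphi\|_{L^2(\partial\mathcal{O}^{\re})} \leq C' \re^{1/2}\, |\varphi|_{H^{1/2}(\partial\mathcal{O}^{\re})} \leq C'\, |\varphi|_{H^{1/2}(\partial\mathcal{O}^{\re})},
\end{align*}
where in the last inequality I use the hypothesis $\re \leq 1$ to absorb the $\re^{1/2}$ factor. Combining this with the seminorm identity,
\begin{align*}
\|\mathbb{P}_{\perp}\varphi\|^2_{H^{1/2}(\partial \mathcal{O}^{\re})} \leq (C')^2 |\varphi|^2_{H^{1/2}(\partial \mathcal{O}^{\re})} + |\varphi|^2_{H^{1/2}(\partial \mathcal{O}^{\re})} = ((C')^2+1)\,|\varphi|^2_{H^{1/2}(\partial \mathcal{O}^{\re})},
\end{align*}
which gives the claim with $C := \sqrt{(C')^2+1}$, independent of $\re\in(0,1]$.

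There is no real obstacle here: all the substantial work has already been done in Lemma \ref{lem:L^2_to_half}, which provides the $\re$-uniform Poincar\'e-type bound via an explicit scaling argument. The only subtlety worth flagging is that the $\re^{1/2}$ factor obtained from Lemma \ref{lem:L^2_to_half} is better than what one needs for a $\re$-uniform constant; this improvement is not exploited at the level of Corollary \ref{cor:equiv_norm}, but it is essential in the earlier sharp scaling estimates (e.g.\ Proposition \ref{prop:norm_equiv_perp}) that drive the convergence analysis.
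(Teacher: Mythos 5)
Your proof is correct and follows essentially the same route as the paper: split $\|\mathbb{P}_{\perp}\varphi\|^2_{H^{1/2}(\partial\mathcal{O}^{\re})}$ into its $L^2$ and seminorm parts, use $|\mathbb{P}_{\perp}\varphi|_{H^{1/2}(\partial\mathcal{O}^{\re})}=|\varphi|_{H^{1/2}(\partial\mathcal{O}^{\re})}$ since $\mathbb{P}_0\varphi$ is constant, and control the $L^2$ part via the Poincar\'e-type bound of Lemma \ref{lem:L^2_to_half}, absorbing the $\re^{1/2}$ factor for $\re\leq 1$. The only cosmetic difference is that you appeal directly to the definition of the Sobolev--Slobodeckij norm where the paper cites Corollary \ref{corollary:split_norm_H_half}, which changes nothing of substance.
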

		\begin{proof}
			Since $
			\|\mathbb{P}_{\perp} \varphi \|^2_{H^{1/2}(\partial \O^{\re})}  \equiv\|\mathbb{P}_{\perp}\varphi\|^2_{L^2(\partial \O^{\re})}+|\mathbb{P}_{\perp}\varphi|^2_{H^{1/2}(\partial \O^{\re})}$, see Corollary \ref{corollary:split_norm_H_half},
			the desired result follows by Lemma \ref{lem:L^2_to_half} and $|\mathbb{P}_{\perp}\varphi|_{H^{1/2}(\partial \O^{\re})}=|\varphi-\mathbb{P}_0\varphi|_{H^{1/2}(\partial \O^{\re})}=|\varphi|_{H^{1/2}(\partial \O^{\re})}$.
		\end{proof}
		We also have the following, see Proposition \ref{prop:norm_equivalence}: 
		\begin{align}
			\label{eq:split_norm_H_half}
			\|\varphi\|^2_{H^{1/2}(\O^{\re})}=\|\mathbb{P}_0\varphi\|^2_{L^2(\partial\mathcal{O}^{\re})}+\|\mathbb{P}_{\perp}\varphi\|^2_{H^{1/2}(\partial\mathcal{O}^{\re})}.
		\end{align}
		\begin{remark}
			\label{rem:general_rsult}
			We will use the following result: given an orthogonal decomposition of the Hilbert space $E=E_0\overset{\perp}{+}E_{\perp}$, with $E_0$ and $E_{\perp}$ closed subspaces, the corresponding orthogonal projections $L_0$,  $L_{\perp}$, and the adjoint projectors $L_0^*$ and $L^*_{\perp}$, it holds that 
			\begin{align*}
				\|L_{\perp}^*\varphi\|_{E'}=\sup\limits_{\psi\in E_{\perp}: \,\|\psi\|_E=1}\langle \varphi, L_{\perp}\psi\rangle_{E', E}
			\end{align*}
			This is evident by remarking that, on one hand, 
			\begin{align*}
				\|L_{\perp}^*\varphi\|_{E'}\geq \sup\limits_{\psi\in E_{\perp}: \,\|\psi\|_E=1}\langle \varphi, L_{\perp}\psi\rangle_{E', E},
			\end{align*}
			and, on the other hand, for each $\psi\in E: \, \|\psi\|_{E}=1$, the projection $\psi_{\perp}:=L_{\perp}\psi$ is either $0$ or satisfies $\|\psi_{\perp}\|_E\leq 1$, so that $
			|\langle \varphi, L_{\perp}\psi\rangle|\leq \|\psi_{\perp}\|^{-1}_E	|\langle \varphi, \psi_{\perp}\rangle|,$ 
			and thus 
			\begin{align*}
				\|L_{\perp}^*\varphi\|_{E'}\leq \sup\limits_{\psi_{\perp}\in E_{\perp}: \,\|\psi_{\perp}\|_E=1}\langle \varphi, \psi_{\perp}\rangle_{E', E}\equiv \sup\limits_{\psi_{\perp}\in E_{\perp}: \,\|\psi_{\perp}\|_E=1}\langle \varphi, L_{\perp}\psi_{\perp}\rangle_{E', E}.
			\end{align*}
		\end{remark}
		\begin{corollary}
			\label{cor:pperp}
			There exists $C>0$, s.t. for all $0<\re\leq 1$, $\varphi\in H^{-1/2}(\partial \O^{\re})$,  $\|\mathbb{P}_{\perp}^*{\varphi}\|_{H^{-1/2}(\partial\mathcal{O}^{\re})}\leq C\re^{3/2}\|\mathbb{P}_{\perp}^*\mathcal{J}^{\re}{\varphi}\|_{H^{-1/2}(\partial\mathcal{O})}$.
		\end{corollary}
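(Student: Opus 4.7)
The plan is to express the left-hand-side $H^{-1/2}$-norm as a supremum over test functions in $H^{1/2}_\perp(\partial\mathcal{O}^\re)$, transport the pairing to $\partial\mathcal{O}$ via the scaling isomorphism $\mathcal{J}^\re$, and then track all the $\re$-dependent factors that appear.

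First, I would invoke Remark \ref{rem:general_rsult} with $E=H^{1/2}(\partial\mathcal{O}^\re)$, $E_\perp=H^{1/2}_\perp(\partial\mathcal{O}^\re)$, $L_\perp=\mathbb{P}_\perp$, and $L_\perp^\ast=\mathbb{P}_\perp^\ast$, which together with $\mathbb{P}_\perp\psi=\psi$ on $E_\perp$ gives
\begin{equation*}
\|\mathbb{P}_\perp^\ast\varphi\|_{H^{-1/2}(\partial\mathcal{O}^\re)} \;=\; \sup_{\substack{\psi\in H^{1/2}_\perp(\partial\mathcal{O}^\re)\\ \|\psi\|_{H^{1/2}(\partial\mathcal{O}^\re)}=1}} |\langle\varphi,\psi\rangle_{-1/2,1/2;\partial\mathcal{O}^\re}|.
\end{equation*}
Next, by density of $L^2$ in $H^{-1/2}$ combined with Lemma \ref{lemma:scaling_norm_l2}, the duality pairing on $\partial\mathcal{O}^\re$ transports to the one on $\partial\mathcal{O}$ with a factor $\re^2$: for $\psi\in H^{1/2}(\partial\mathcal{O}^\re)$ and $\varphi\in H^{-1/2}(\partial\mathcal{O}^\re)$,
\begin{equation*}
\langle\varphi,\psi\rangle_{-1/2,1/2;\partial\mathcal{O}^\re} \;=\; \re^2\,\langle\mathcal{J}^\re\varphi,\mathcal{J}^\re\psi\rangle_{-1/2,1/2;\partial\mathcal{O}}.
\end{equation*}

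Second, I would use the commutation relations \eqref{eq:ident} to conclude that for $\psi\in H^{1/2}_\perp(\partial\mathcal{O}^\re)$, the transported function $\Psi:=\mathcal{J}^\re\psi$ belongs to $H^{1/2}_\perp(\partial\mathcal{O})$; consequently $\mathbb{P}_\perp\Psi=\Psi$ and therefore $\langle\mathcal{J}^\re\varphi,\Psi\rangle=\langle\mathbb{P}_\perp^\ast\mathcal{J}^\re\varphi,\Psi\rangle$. Applying the definition of the dual norm on $\partial\mathcal{O}$ yields
\begin{equation*}
|\langle\varphi,\psi\rangle_{-1/2,1/2;\partial\mathcal{O}^\re}| \;\le\; \re^2\,\|\mathbb{P}_\perp^\ast\mathcal{J}^\re\varphi\|_{H^{-1/2}(\partial\mathcal{O})}\,\|\Psi\|_{H^{1/2}(\partial\mathcal{O})}.
\end{equation*}

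Finally I would control $\|\Psi\|_{H^{1/2}(\partial\mathcal{O})}$ by a factor of the $H^{1/2}(\partial\mathcal{O}^\re)$-norm of $\psi$ with the right power of $\re$. Since $\Psi\in H^{1/2}_\perp(\partial\mathcal{O})$, Corollary \ref{cor:equiv_norm} applied on the fixed reference surface $\partial\mathcal{O}$ gives $\|\Psi\|_{H^{1/2}(\partial\mathcal{O})}\lesssim |\Psi|_{H^{1/2}(\partial\mathcal{O})}$, and the scaling identity in Lemma \ref{lem:scaling_norm} rewrites this as $\re^{-1/2}|\psi|_{H^{1/2}(\partial\mathcal{O}^\re)}$, which is bounded by $\re^{-1/2}\|\psi\|_{H^{1/2}(\partial\mathcal{O}^\re)}=\re^{-1/2}$ thanks to the unit-norm choice (or directly the decomposition in Corollary \ref{corollary:split_norm_H_half}, noting $\mathbb{P}_0\psi=0$). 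Collecting the factors $\re^2\cdot\re^{-1/2}=\re^{3/2}$ and taking the supremum over admissible $\psi$ yields the claimed inequality.

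The only genuinely subtle point is to remember to deploy $\mathbb{P}_\perp^\ast$ on $\partial\mathcal{O}$ ``for free'' using $\Psi\in H^{1/2}_\perp(\partial\mathcal{O})$, because without this step one would bound the duality pairing by $\|\mathcal{J}^\re\varphi\|_{H^{-1/2}(\partial\mathcal{O})}$, which is too crude. Everything else is bookkeeping of the scaling factors coming from $\mathcal{J}^\re$.
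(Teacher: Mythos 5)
Your proposal is correct and follows essentially the same route as the paper: write $\|\mathbb{P}_{\perp}^*\varphi\|_{H^{-1/2}(\partial\mathcal{O}^{\re})}$ as a supremum over test functions in $H^{1/2}_{\perp}(\partial\mathcal{O}^{\re})$, transport the pairing to $\partial\mathcal{O}$ via $\mathcal{J}^{\re}$ using the $\re^{2}$ scaling of the $L^2$ pairing and the $\re^{1/2}$ scaling of the $H^{1/2}$-seminorm, and invoke the uniform norm--seminorm equivalence on the perp space together with the commutation \eqref{eq:ident}. The only (immaterial) difference is that you apply Corollary \ref{cor:equiv_norm} on the reference boundary $\partial\mathcal{O}$ after rescaling, whereas the paper passes to the seminorm on $\partial\mathcal{O}^{\re}$ first; since that equivalence is uniform in $\re$, both yield the same $C\re^{3/2}$ bound.
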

		\begin{proof}
			By the density argument it suffices to prove the result for $\varphi\in L^2(\partial\O^{\re})$. 
			By definition of $\mathbb{P}_{\perp}^*$, it holds that 
			\begin{align*}
				\|\mathbb{P}_{\perp}^*{\varphi}\|_{H^{-1/2}(\partial\mathcal{O}^{\re})}&=\sup\limits_{{\psi}\in H^{1/2}(\partial\mathcal{O}^{\re})}\frac{\langle{\varphi}, \overline{\mathbb{P}_{\perp}{\psi}}\rangle_{-1/2,1/2,\partial\mathcal{O}^{\re}}}{\|{\psi}\|_{H^{1/2}(\partial\mathcal{O}^{\re})}}\\
				&\leq \sup\limits_{{\psi}\in H^{1/2}(\partial\mathcal{O}^{\re})}\frac{\langle{\varphi}, \overline{\mathbb{P}_{\perp}{\psi}}\rangle_{-1/2,1/2,\partial\mathcal{O}^{\re}}}{\sqrt{\|\mathbb{P}_0\psi\|^2_{L^2(\partial\mathcal{O}^{\re})}+|\mathbb{P}_{\perp}\psi|^2_{H^{1/2}(\partial\mathcal{O}^{\re})}}}, 
			\end{align*}
			as it is immediate from Corollary \ref{corollary:split_norm_H_half} and $\|\mathbb{P}_{\perp}\varphi\|_{H^{1/2}(\partial\mathcal{O}^{\re})}\geq |\mathbb{P}_{\perp}\varphi|_{H^{1/2}(\partial\mathcal{O}^{\re})}=|\varphi|_{H^{1/2}(\partial\mathcal{O}^{\re})}$, cf. the proof of Corollary \ref{cor:equiv_norm}. Then 
			\begin{align*}
				\|\mathbb{P}_{\perp}^*{\varphi}\|_{H^{-1/2}(\partial\mathcal{O}^{\re})}&\leq C\sup\limits_{{\psi}\in H^{1/2}_{\perp}(\partial\mathcal{O}^{\re})}\frac{\langle{\varphi}, \overline{\mathbb{P}_{\perp}{\psi}}\rangle_{-1/2,1/2,\partial\mathcal{O}^{\re}}}{|\mathbb{P}_{\perp}\psi|_{H^{1/2}(\partial\mathcal{O}^{\re})}}\\
				&=C\sup\limits_{{\psi}\in H^{1/2}_{\perp}(\partial\mathcal{O}^{\re})}\frac{({\varphi}, {\mathbb{P}_{\perp}{\psi}})_{L^2(\partial\mathcal{O}^{\re})}}{|\mathbb{P}_{\perp}\psi|_{H^{1/2}(\partial\mathcal{O}^{\re})}}, 
			\end{align*} 
			and next we use  Lemma \ref{lemma:scaling_norm_l2}, Lemma \ref{lem:scaling_norm} and \eqref{eq:ident} to obtain 
			\begin{align*}
				\|\mathbb{P}_{\perp}^*&\varphi\|_{H^{-1/2}(\partial\mathcal{O}^{\re})}\leq C\re^2 \sup\limits_{{\psi}\in H^{1/2}_{\perp}(\partial\mathcal{O}^{\re})}\frac{(\mathcal{J}^{\re}{\varphi}, {\mathbb{P}_{\perp}\mathcal{J}^{\re}{\psi}})_{L^2(\partial\mathcal{O})}}{\re^{1/2}|\mathbb{P}_{\perp}\mathcal{J}^{\re}\psi|_{H^{1/2}(\partial\mathcal{O})}}\\
				&=C\re^{3/2} \sup\limits_{{\psi}\in H^{1/2}_{\perp}(\partial\mathcal{O}^{\re})}\frac{\langle\mathbb{P}_{\perp}^*\mathcal{J}^{\re}{\varphi}, {\mathbb{P}_{\perp}\mathcal{J}^{\re}{\psi}}\rangle_{-1/2,1/2,\partial\mathcal{O}}}{|\mathbb{P}_{\perp}\mathcal{J}^{\re}\psi|_{H^{1/2}(\partial\mathcal{O})}}\leq C\re^{3/2}\|\mathbb{P}_{\perp}^*\mathcal{J}^{\re}{\varphi}\|_{H^{-1/2}(\partial\O)},
			\end{align*}
			where we employed as well Corollary \ref{cor:equiv_norm}.
			%
			
		\end{proof}
		The following observation is straightforward: for all $\varphi\in L^2(\partial\O^{\re})$, 	
		\begin{align}
			\label{eq:p00}
			\|\mathbb{P}_0^*\varphi\|_{H^{-1/2}(\partial\O^{\re})}=\|\mathbb{P}_0^*\varphi\|_{L^2(\partial\O^{\re})}, 	
		\end{align} 
		cf. the proof of Corollary \ref{cor:pperp}.
		This is quite trivial in view of 
		\begin{align*}
			\|\mathbb{P}_0^*\varphi\|_{H^{-\frac{1}{2}}(\partial\O^{\re})}&=\sup\limits_{\psi\in H^{\frac{1}{2}}(\partial\O^{\re}), \psi\neq 0}\frac{\langle \varphi, \mathbb{P}_0\psi\rangle}{\|\psi\|_{H^{\frac{1}{2}}(\partial\O^{\re})}}\\
			&=\sup_{\psi\in H^{\frac{1}{2}}(\partial\mathcal{O}^{\re}), \psi\neq 0}\frac{\langle \varphi, \mathbb{P}_0\psi\rangle}{\sqrt{\|\mathbb{P}_0\psi\|^2_{L^2(\partial\O^{\re})}+|\mathbb{P}_{\perp}\psi|^2_{H^{\frac{1}{2}}(\partial\O^{\re})}}},
		\end{align*}
		as follows from Corollary \ref{corollary:split_norm_H_half}. From the above it follows that
		\begin{align*}
			\|\mathbb{P}_0^*\varphi\|_{H^{-1/2}(\partial\O^{\re})}&=\sup_{\psi\in \mathbb{S}_0(\partial\mathcal{O}^{\re}),\, \psi\neq 0}\frac{\langle \varphi, \mathbb{P}_0\psi\rangle}{\|\mathbb{P}_0\psi\|_{L^2(\partial\O^{\re})}}\\
			&=\sup_{\psi\in \mathbb{S}_0(\partial\mathcal{O}^{\re}), \psi\neq 0}\frac{(\mathbb{P}_0\varphi, \psi)_{L^2(\bO^{\re})}}{\|\psi\|_{L^2(\partial\O^{\re})}}=\|\mathbb{P}_0\varphi\|_{L^2(\partial\O^{\re})}
		\end{align*}
		where we used that  $\varphi\in L^2(\partial\O^{\re})$. Hence the desired conclusion.

		\section{Properties of the projectors $\mathbb{Q}_{\sigma}$, $\mathbb{Q}_{\perp}$,  $\mathbb{Q}^*_{\sigma}$, $\mathbb{Q}^*_{\perp}$}
		\label{app:prop_proj_Q}
		For the results that follow, we will need the following observations. 
		First of all, by \eqref{eq:sigmakeps}, with $\mathcal{J}^{\re}_k$ being the scaling transform defined on $\Gamma_k^{\re}$, it holds that $\mathcal{J}^{\re}\sigma_k^{\re}=\re^{-1}\sigma_k^1$. With Corollary \ref{cor:pperp}, we have that  
		\begin{align}
			\label{eq:pperpsigma}
			\|\mathbb{P}_{\perp}^*\sigma_k^{\re}\|_{H^{-1/2}(\Gamma^{\re}_k)}\leq C \re^{1/2}\|\mathbb{P}_{\perp}^*\sigma_k^1\|_{H^{-1/2}(\Gamma_k)}.
		\end{align}
		Since $c_k^{\re}=\int_{\Gamma_k^{\re}}\sigma_k^{\re}d\Gamma_{\bx}=\re c_k^1$, cf. \eqref{eq:sigmakeps}, 
		\begin{align}
			\label{eq:pp0sigma}
			\|\mathbb{P}_0^*\sigma_k^{\re}\|_{H^{-1/2}(\Gamma^{\re}_k)}\equiv		\|\mathbb{P}_0^*\sigma_k^{\re}\|_{L^2(\Gamma^{\re}_k)}=c_k^{\re}/|\Gamma_k^{\re}|^{1/2}=c_k^1/|\Gamma_k|^{1/2}=	\|\mathbb{P}_0^*\sigma_k\|_{L^2(\Gamma_k)}.
		\end{align}
		\begin{proposition}
			\label{prop:bound_Q_perp_P_dual_0}
			There exist $C>0$, s.t. for all $\re \in (0,1)$, all $\boldsymbol{\psi} \in H^{-1/2}(\Gamma^{\re})$, it holds that $
			\| \mathbb{Q}_{\perp} \mathbb{P}^*_0 \boldsymbol{\psi} \|_{H^{-1/2}(\Gamma^{\re})} \leq C \re^{1/2} \| \mathbb{P}^*_0 \boldsymbol{\psi} \|_{L^2(\Gamma^{\re})}.$
		\end{proposition}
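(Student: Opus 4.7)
The plan is to reduce the bound to a single-obstacle scaling estimate on $\mathbb{P}_{\perp}^* \sigma_k^{\re}$ already quantified in~\eqref{eq:pperpsigma}. Since $\mathbb{P}_0^*\boldsymbol{\psi}\in\mathbb{S}_0$, I will write $\mathbb{P}_0^*\boldsymbol{\psi}=\sum_{k=1}^N\alpha_k\vec{1}_k^{\re}$, with $\alpha_k\in\mathbb{C}$, so that
\begin{align*}
\|\mathbb{P}_0^*\boldsymbol{\psi}\|_{L^2(\Gamma^{\re})}^2=\sum_{k=1}^N|\alpha_k|^2|\Gamma_k^{\re}|.
\end{align*}
Using $\mathbb{Q}_{\perp}=\mathbb{I}-\mathbb{Q}_{\sigma}$ together with the explicit formula from Lemma~\ref{lem:expl_char}, and noting that $\langle \vec{1}_k^{\re},\vec{1}_j^{\re}\rangle=|\Gamma_k^{\re}|\delta_{jk}$, I obtain the obstacle-wise identity
\begin{align*}
\mathbb{Q}_{\perp}\vec{1}_k^{\re}=\vec{1}_k^{\re}-|\Gamma_k^{\re}|(c_k^{\re})^{-1}\vec{\sigma}_k^{\re}\in H^{-1/2}(\Gamma_k^{\re}).
\end{align*}

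The key algebraic observation is that this last expression is, up to a factor, exactly $\mathbb{P}_{\perp}^*\vec{\sigma}_k^{\re}$. Indeed, a direct computation of $\mathbb{P}_0^*\vec{\sigma}_k^{\re}$ via averaging gives $\mathbb{P}_0^*\vec{\sigma}_k^{\re}=|\Gamma_k^{\re}|^{-1}c_k^{\re}\vec{1}_k^{\re}$, so that
\begin{align*}
\vec{1}_k^{\re}-|\Gamma_k^{\re}|(c_k^{\re})^{-1}\vec{\sigma}_k^{\re}=-|\Gamma_k^{\re}|(c_k^{\re})^{-1}\mathbb{P}_{\perp}^*\vec{\sigma}_k^{\re}.
\end{align*}
This rewriting turns a seemingly inconvenient object into something for which we already have good scaling estimates.

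The remaining step is purely a scaling computation. Using $|\Gamma_k^{\re}|=\re^{2}|\Gamma_k|$ and $c_k^{\re}=\re\, c_k^{1}$ (see \eqref{eq:sigmakeps}), the prefactor is of order $\re$, and by \eqref{eq:pperpsigma} one has $\|\mathbb{P}_{\perp}^*\vec{\sigma}_k^{\re}\|_{H^{-1/2}(\Gamma_k^{\re})}\lesssim\re^{1/2}\|\mathbb{P}_{\perp}^*\vec{\sigma}_k^{1}\|_{H^{-1/2}(\Gamma_k)}$, where the last norm depends only on the reference shape~$\Gamma_k$. Combining these factors yields
\begin{align*}
\|\mathbb{Q}_{\perp}\vec{1}_k^{\re}\|_{H^{-1/2}(\Gamma_k^{\re})}\lesssim \re^{3/2}\lesssim \re^{1/2}|\Gamma_k^{\re}|^{1/2}.
\end{align*}
Finally, since the functions $\mathbb{Q}_{\perp}\vec{1}_k^{\re}$ are supported on pairwise disjoint boundary components $\Gamma_k^{\re}$, the global $H^{-1/2}(\Gamma^{\re})$-norm decouples via \eqref{eq:Hminhalf}, and summing $|\alpha_k|^2\|\mathbb{Q}_{\perp}\vec{1}_k^{\re}\|_{H^{-1/2}(\Gamma_k^{\re})}^2$ delivers the claimed bound with the $L^2$-norm of $\mathbb{P}_0^*\boldsymbol{\psi}$ on the right-hand side. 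The only mildly delicate point is checking the geometric/scaling factors correctly; nothing here is expected to be a genuine obstacle, as the reduction to a shape-dependent constant is immediate once the identity relating $\mathbb{Q}_{\perp}\vec{1}_k^{\re}$ and $\mathbb{P}_{\perp}^*\vec{\sigma}_k^{\re}$ is in place.
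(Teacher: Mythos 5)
Your proof is correct and follows essentially the same route as the paper's: both reduce $\mathbb{Q}_{\perp}\mathbb{P}_0^*\boldsymbol{\psi}$ obstacle-wise to (a multiple of) $\mathbb{P}_{\perp}^*\vec{\sigma}_k^{\re}$ and then invoke the scaling bound \eqref{eq:pperpsigma} together with $c_k^{\re}=\re c_k^1$, $|\Gamma_k^{\re}|=\re^2|\Gamma_k|$. The paper derives the key identity via the operator relation $\mathbb{Q}_{\perp}\mathbb{P}_0^*=-\mathbb{P}_{\perp}^*\mathbb{Q}_{\sigma}\mathbb{P}_0^*$ while you compute $\mathbb{Q}_{\perp}\vec{1}_k^{\re}$ directly from Lemma~\ref{lem:expl_char}, but these are the same calculation.
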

		
		\begin{proof}
			Recall that $\mathbb{Q}_{\perp}=\operatorname{Id}-\mathbb{Q}_{\sigma}$, on one hand, and, on the other hand, $\Im \mathbb{Q}_{\perp}=H^{-1/2}_{\perp}(\Gamma^{\re})$, thus $
			\mathbb{Q}_{\perp}=(\mathbb{P}_0^*+\mathbb{P}_{\perp}^*)\mathbb{Q}_{\perp}=\mathbb{P}_{\perp}^*\mathbb{Q}_{\perp}=\mathbb{P}_{\perp}^*(\operatorname{Id}-\mathbb{Q}_{\sigma}).$ 
			Therefore, $
			\mathbb{Q}_{\perp}\mathbb{P}_0^*=\mathbb{P}_{\perp}^*\mathbb{Q}_{\sigma}\mathbb{P}_0^*.$ 
			Using Lemma \ref{lem:expl_char}, for $\vec{\psi}_0=\mathbb{P}_0^*\vec{\psi}$, it holds that 
			\begin{align*}
				\mathbb{Q}_{\perp}\vec{\psi}_0=\sum_{k=1}^N \psi_{0,k}|\Gamma_k^{\re}|(\mathbb{P}_{\perp}^*\sigma_k^{\re}) (c_k^{\re})^{-1}.
			\end{align*}
			With \eqref{eq:pperpsigma} and $c_k^{\re}=\re c_k^1$, we have $
			\|	\mathbb{Q}_{\perp}\vec{\psi}_0\|_{H^{-1/2}(\Gamma^{\re})}^2\lesssim \re^{3}\sum\limits_{k=1}^N |\psi_{0,k}|^2\lesssim \re\|\vec{\psi}_0\|_{L^2(\Gamma^{\re})}^2.$
		\end{proof}
		
		\begin{corollary}
			\label{proof:bound_control}
			There exist $C_{\sigma}, C_{\perp}>0$, s.t., for all $0<\re\leq 1$, all  $\vec{\varphi}\in H^{-1/2}(\Gamma^{\re})$, it holds that
			\begin{align*}%
				\|\mathbb{Q}_{\sigma}\boldsymbol{\varphi}\|_{H^{-1/2}(\Gamma^{\re})} &\leq C_{\sigma} \|\mathbb{P}_0^*\boldsymbol{\varphi}\|_{L^2(\Gamma^{\re})} , \\
				\|\mathbb{Q}_{\perp}\boldsymbol{\varphi}\|_{H^{-1/2}(\Gamma^{\re})} &\leq C_{\perp}(\re^{1/2}\|\mathbb{P}_0^*\boldsymbol{\varphi}\|_{L^2(\Gamma^{\re})}+
				\|\mathbb{P}_{\perp}^*\boldsymbol{\varphi}\|_{H^{-1/2}(\Gamma^{\re})}). 
			\end{align*}
		\end{corollary}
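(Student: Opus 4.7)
The plan is to exploit the decomposition $\boldsymbol{\varphi}=\mathbb{P}_0^*\boldsymbol{\varphi}+\mathbb{P}_{\perp}^*\boldsymbol{\varphi}$ from Proposition \ref{ortho_decompositionHm12}, apply $\mathbb{Q}_{\sigma}$ or $\mathbb{Q}_{\perp}$ to each summand, and invoke the ``commutation/annihilation'' identities \eqref{eq:qperp_pperp} together with Proposition \ref{prop:bound_Q_perp_P_dual_0}.

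For the first inequality, I would first argue that $\mathbb{Q}_{\sigma}\mathbb{P}_{\perp}^*=0$: this follows from $\Im\mathbb{P}_{\perp}^*=H^{-1/2}_{\perp}(\Gamma^{\re})=\operatorname{Ker}\mathbb{Q}_{\sigma}$, see \eqref{eq:KerQsigma}. Hence $\mathbb{Q}_{\sigma}\boldsymbol{\varphi}=\mathbb{Q}_{\sigma}\mathbb{P}_0^*\boldsymbol{\varphi}$, and by Lemma \ref{lem:expl_char}
\[
\mathbb{Q}_{\sigma}\boldsymbol{\varphi}=\sum_{k=1}^N\langle\mathbb{P}_0^*\boldsymbol{\varphi},\vec{1}_k^{\re}\rangle(c_k^{\re})^{-1}\vec{\sigma}_k^{\re}.
\]
Since the $\vec{\sigma}_k^{\re}$ have disjoint supports, \eqref{eq:Hminhalf} lets me split the square of the norm into a sum over $k$. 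The Cauchy--Schwarz inequality on each $\Gamma_k^{\re}$ gives $|\langle\mathbb{P}_0^*\boldsymbol{\varphi},\vec{1}_k^{\re}\rangle|\leq|\Gamma_k^{\re}|^{1/2}\|\mathbb{P}_0^*\boldsymbol{\varphi}\|_{L^2(\Gamma_k^{\re})}$, and using the scaling identities $|\Gamma_k^{\re}|=\re^2|\Gamma_k|$, $c_k^{\re}=\re c_k^1$ and $\|\vec{\sigma}_k^{\re}\|_{L^2(\Gamma_k^{\re})}^2=\|\vec{\sigma}_k^1\|_{L^2(\Gamma_k)}^2$ from \eqref{eq:sigmakeps}, the factors of $\re$ cancel. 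Summing over $k$ then produces the desired bound with a constant depending only on the unit-scale shapes $\Gamma_k$.

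For the second inequality, I again split $\boldsymbol{\varphi}$ and write
\[
\mathbb{Q}_{\perp}\boldsymbol{\varphi}=\mathbb{Q}_{\perp}\mathbb{P}_0^*\boldsymbol{\varphi}+\mathbb{Q}_{\perp}\mathbb{P}_{\perp}^*\boldsymbol{\varphi}=\mathbb{Q}_{\perp}\mathbb{P}_0^*\boldsymbol{\varphi}+\mathbb{P}_{\perp}^*\boldsymbol{\varphi},
\]
where the last equality uses \eqref{eq:qperp_pperp}(a). The triangle inequality then reduces the estimate to bounding $\|\mathbb{Q}_{\perp}\mathbb{P}_0^*\boldsymbol{\varphi}\|_{H^{-1/2}(\Gamma^{\re})}$, which is exactly the content of Proposition \ref{prop:bound_Q_perp_P_dual_0} and contributes the $\re^{1/2}\|\mathbb{P}_0^*\boldsymbol{\varphi}\|_{L^2(\Gamma^{\re})}$ term; the remaining term $\|\mathbb{P}_{\perp}^*\boldsymbol{\varphi}\|_{H^{-1/2}(\Gamma^{\re})}$ appears unchanged.

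There is no real obstacle here: both bounds follow by straightforward algebraic manipulation of the projectors, using two already-established tools (Lemma \ref{lem:expl_char} for the explicit form of $\mathbb{Q}_{\sigma}$, and Proposition \ref{prop:bound_Q_perp_P_dual_0} which already packages the hardest estimate, that of $\mathbb{Q}_{\perp}\mathbb{P}_0^*$). The only book-keeping to watch is the consistent use of the $\re$-scalings from \eqref{eq:sigmakeps} to verify that constants stay uniform in $\re\in(0,1]$.
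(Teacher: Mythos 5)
Your proof is correct and follows essentially the same route as the paper: decompose $\boldsymbol{\varphi}=\mathbb{P}_0^*\boldsymbol{\varphi}+\mathbb{P}_{\perp}^*\boldsymbol{\varphi}$, use the algebraic identities \eqref{eq:qperp_pperp} (your observation $\mathbb{Q}_{\sigma}\mathbb{P}_{\perp}^*=0$ is exactly \eqref{eq:qperp_pperp}(b)), and control $\mathbb{Q}_{\perp}\mathbb{P}_0^*$ by Proposition \ref{prop:bound_Q_perp_P_dual_0}. The only cosmetic differences are that you re-derive inline the $\re$-uniform bound on $\mathbb{Q}_{\sigma}$ (which the paper simply cites as Proposition \ref{prop:norm_projectors_Q}) and that you use the exact identity $\mathbb{Q}_{\perp}\mathbb{P}_{\perp}^*=\mathbb{P}_{\perp}^*$ where the paper invokes boundedness of $\mathbb{Q}_{\perp}$; both are fine.
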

		\begin{proof}
			To obtain the first bound in the statement of proposition, we use \eqref{eq:qperp_pperp}(b), according to which  $\mathbb{Q}_{\sigma}=\mathbb{Q}_{\sigma}\mathbb{P}_0^*$. The desired result follows by the above combined with Proposition \ref{prop:norm_projectors_Q}. 
			
			For the second bound, we again make use  of the decomposition provided by Proposition \ref{ortho_decompositionHm12}:
			\begin{align*}%
				\mathbb{Q}_{\perp}\boldsymbol{\varphi}=\mathbb{Q}_{\perp}\mathbb{P}_0^*\boldsymbol{\varphi}+\mathbb{Q}_{\perp}\mathbb{P}_{\perp}^*\boldsymbol{\varphi}.
			\end{align*}
			For the first term in the right-hand side, we use the bound of Proposition \ref{prop:bound_Q_perp_P_dual_0}, and for the second one the bound of Proposition \ref{prop:norm_projectors_Q}. 
		\end{proof}
		\begin{proposition}
			\label{prop:L^2_H_minus_half_mbound_proof}
			There exists $C>0$, s.t. for all $0<\re\leq 1$, it holds that
			\begin{align*}%
				\|\vec{\psi} \|_{H^{-1/2}(\Gamma^{\re})}\leq 	\|\vec{\psi}  \|_{L^2(\Gamma^{\re})} \leq C \|\vec{\psi} \|_{H^{-1/2}(\Gamma^{\re})}, \quad \forall \vec{\psi}\in \mathcal{V}_G^{\re}.
			\end{align*}
		\end{proposition}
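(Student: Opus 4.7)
The first inequality $\|\vec{\psi}\|_{H^{-1/2}(\Gamma^{\re})} \leq \|\vec{\psi}\|_{L^2(\Gamma^{\re})}$ is standard and holds for all $\vec{\psi} \in L^2(\Gamma^{\re})$, not only for elements of $\mathcal{V}_G^{\re}$. It follows directly from the definition of the $H^{-1/2}$ norm as the dual of $H^{1/2}$ via the Cauchy--Schwarz inequality in $L^2(\Gamma^{\re})$ together with the continuous embedding $H^{1/2}(\Gamma^{\re})\hookrightarrow L^2(\Gamma^{\re})$ with embedding constant $1$ (coming from $\|\varphi\|_{L^2}\leq \|\varphi\|_{H^{1/2}}$). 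No $\re$-dependence appears.

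For the nontrivial reverse inequality, I would exploit the fact that $\mathcal{V}_G^{\re}$ is $N$-dimensional with a basis that behaves trivially under scaling. Write $\vec{\psi} = \sum_{k=1}^{N} \alpha_k \vec{\sigma}_k^{\re}$. Since each $\vec{\sigma}_k^{\re}$ is supported on $\Gamma_k^{\re}$, both norms decouple obstacle-wise:
\begin{align*}
\|\vec{\psi}\|_{L^2(\Gamma^{\re})}^2 = \sum_{k=1}^{N} |\alpha_k|^2 \|\sigma_k^{\re}\|_{L^2(\Gamma_k^{\re})}^2,
\qquad
\|\vec{\psi}\|_{H^{-1/2}(\Gamma^{\re})}^2 = \sum_{k=1}^{N} |\alpha_k|^2 \|\sigma_k^{\re}\|_{H^{-1/2}(\Gamma_k^{\re})}^2,
\end{align*}
where the second identity uses \eqref{eq:Hminhalf}. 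It therefore suffices to prove, for each $k$ separately, a uniform-in-$\re$ bound $\|\sigma_k^{\re}\|_{L^2(\Gamma_k^{\re})} \leq C_k \|\sigma_k^{\re}\|_{H^{-1/2}(\Gamma_k^{\re})}$, from which the global statement follows with $C := \max_k C_k$.

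The key observation is that testing against the constant function $1$ on $\Gamma_k^{\re}$ already captures enough mass: by definition of the dual norm,
\begin{align*}
\|\sigma_k^{\re}\|_{H^{-1/2}(\Gamma_k^{\re})} \geq \frac{\langle \sigma_k^{\re}, 1\rangle_{-1/2,1/2,\Gamma_k^{\re}}}{\|1\|_{H^{1/2}(\Gamma_k^{\re})}} = \frac{c_k^{\re}}{|\Gamma_k^{\re}|^{1/2}},
\end{align*}
since the $H^{1/2}$ seminorm of a constant vanishes, so $\|1\|_{H^{1/2}(\Gamma_k^{\re})}=|\Gamma_k^{\re}|^{1/2}$, and $\langle \sigma_k^{\re},1\rangle = c_k^{\re}$ by \eqref{eq:remark_cap_positive}. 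Using the scaling identities $c_k^{\re} = \re \, c_k^{1}$ from \eqref{eq:sigmakeps} and $|\Gamma_k^{\re}|^{1/2} = \re\,|\Gamma_k|^{1/2}$, the factors of $\re$ cancel and give an $\re$-independent lower bound $c_k^{1}/|\Gamma_k|^{1/2}$. Combined with the $\re$-independent identity $\|\sigma_k^{\re}\|_{L^2(\Gamma_k^{\re})} = \|\sigma_k^{1}\|_{L^2(\Gamma_k)}$ from \eqref{eq:sigmakeps}, this yields
\begin{align*}
\|\sigma_k^{\re}\|_{L^2(\Gamma_k^{\re})} \leq \frac{\|\sigma_k^{1}\|_{L^2(\Gamma_k)}\,|\Gamma_k|^{1/2}}{c_k^{1}}\,\|\sigma_k^{\re}\|_{H^{-1/2}(\Gamma_k^{\re})},
\end{align*}
with a constant depending only on the shape $\Gamma_k$, and hence concludes the proof after assembling the obstacle-wise bounds.

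There is no real obstacle here; the only point that requires care is the scaling check, which is why the test function $1$ (for which the $H^{1/2}$ seminorm drops out) is the right choice. Any attempt to pass to the reference domain $\Gamma_k$ via the isometry $\mathcal{J}^{\re}$ produces mismatched powers of $\re$ in the $L^2$ part and the seminorm part of the $H^{1/2}$ norm, cf.\ Lemma \ref{lem:scaling_norm} and Lemma \ref{lemma:scaling_norm_l2}, and gives a suboptimal $\re^{1/2}$ loss; this is avoided precisely because our test function lives in $\mathbb{S}_0$, on which only the $L^2$ part of the $H^{1/2}$ norm is active.
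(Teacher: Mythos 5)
Your proof is correct, and it takes a slightly more elementary route than the paper's. Both arguments reduce to the per-obstacle bound $\|\sigma_k^{\re}\|_{L^2(\Gamma_k^{\re})}\lesssim \|\sigma_k^{\re}\|_{H^{-1/2}(\Gamma_k^{\re})}$ and rest on the same core observation, namely that the capacitance $c_k^{\re}$ together with $|\Gamma_k^{\re}|$ produces an $\re$-independent ratio. But where the paper goes through the adjoint projector $\mathbb{P}_0^*$ — invoking \eqref{eq:pp0sigma}, the identity \eqref{eq:p00} equating $H^{-1/2}$ and $L^2$ norms of constants, and the uniform-in-$\re$ operator bound $\|\mathbb{P}_0^*\|_{H^{-1/2}\to H^{-1/2}}\lesssim 1$ from Corollary \ref{corollary:split_norm_H_half} — you collapse all of this into a single test with the constant function $1$, observing directly that $\|1\|_{H^{1/2}(\Gamma_k^{\re})}=|\Gamma_k^{\re}|^{1/2}$ because the seminorm of a constant vanishes. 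This is logically the same content (testing a dual norm against the $\mathbb{S}_0$ direction is exactly what $\mathbb{P}_0^*$ encodes) but bypasses the projector calculus, and in particular does not require the uniform boundedness of $\mathbb{P}_0^*$. The trade-off is that the paper's version packages the argument into a reusable operator-norm statement, which they exploit repeatedly elsewhere (e.g.\ Propositions \ref{prop:bound_control}, \ref{prop:norm_dual_Q_perp}), whereas your version is leaner but single-purpose. Your closing remark about why the isomorphism $\mathcal{J}^{\re}$ alone cannot deliver the result — the $L^2$ and seminorm parts of $H^{1/2}$ scale with different powers of $\re$ — is accurate and identifies the genuine obstruction.
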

		\begin{proof}
			\label{proof:L^2_H_minus_half_mbound}
			Since the set $\{\vec{\sigma}_k^{\re}, \, k=1,\ldots,N\}$ constitutes a basis in $\mathcal{V}_G^{\re}$, each $\vec{\sigma}_k^{\re}$ is supported on $\Gamma_k^{\re}$, and due to the definition of the norm $H^{-1/2}(\Gamma^{\re})$ \eqref{eq:Hminhalf}, it suffices to prove the inequality in the statement of the Corollary for $\vec{\sigma}_k^{\re}$, $k=1,\ldots, N$. 
			
			The first bound $\|\vec{\sigma}_k^{\re} \|_{H^{-1/2}(\Gamma^{\re})}\leq 	\|\vec{\sigma}_k^{\re}  \|_{L^2(\Gamma^{\re})}$ is immediate from the characterization of the $H^{-1/2}(\Gamma^{\re})$-norm and $\sigma_k^{\re}\in L^2(\Gamma_k^{\re})$:
			\begin{align*}%
				\|\vec{\sigma}_k^{\re} \|_{H^{-1/2}(\Gamma^{\re})}=\|\sigma_k^{\re}\|_{H^{-1/2}(\Gamma^{\re}_k)}=\sup\limits_{v\in H^{1/2}(\Gamma_k^{\re}) \setminus \{0\}}\frac{|\langle \sigma_k^{\re}, v\rangle_{-1/2, 1/2, \Gamma^{\re}_k}|}{\|v\|_{H^{1/2}(\Gamma^{\re}_k)}}\leq \|\sigma_k^{\re}\|_{L^2(\Gamma_k^{\re})},
			\end{align*}
			where the last identity follows from the Cauchy-Schwarz inequality and the definition \eqref{eq:norm_H_half} of the norm $H^{1/2}(\Gamma^{\re}_k)$.
			
			Next, let us argue that $\|\vec{\sigma}_k^{\re}\|_{L^2(\Gamma^{\re})}\leq C_k\|\vec{\sigma}_k^{\re}\|_{H^{-1/2}(\Gamma^{\re})}$. With \eqref{eq:sigmakeps}, we have that   
			\begin{align*}
				\|{\sigma}_k^{\re}\|^2_{L^2(\Gamma^{\re}_k)}=\|{\sigma}_k^{1}\|^2_{L^2(\Gamma_k)}=\frac{\|{\sigma}_k^{1}\|^2_{L^2(\Gamma_k)}}{\|\mathbb{P}_0^*\sigma_k^1\|^2_{H^{-1/2}(\Gamma_k)}}\|\mathbb{P}_0^*\sigma_k^1\|^2_{L^2(\Gamma_k)}= C_k\|\mathbb{P}_0^*\sigma_k^1\|^2_{L^2(\Gamma_k)}, 
			\end{align*}
			where $0<C_k:=\frac{\|{\sigma}_k^{1}\|^2_{L^2(\Gamma_k)}}{\|\mathbb{P}_0^*\sigma_k^1\|^2_{L^2(\Gamma_k)}}<+\infty$, since $\mathbb{P}_0^*\sigma_k^1=\frac{1}{|\Gamma_k|}\int_{\Gamma_k}\sigma_k^1 d\Gamma_{\bx}> 0$, cf. Lemma \ref{lem:expl_char}. By  \eqref{eq:pp0sigma}, the above rewrites, with $C_*:=\max_{k=1, \ldots, N}C_k$, 
			\begin{align*}
				\|{\sigma}_k^{\re}\|^2_{L^2(\Gamma^{\re}_k)}\leq C_*\|\mathbb{P}_0^*\sigma^{\re}_k\|_{H^{-1/2}(\Gamma_k^{\re})}\leq C_*C\|\sigma^{\re}_k\|_{H^{-1/2}(\Gamma_k^{\re})},
			\end{align*}
			where the last result follows by $\|\mathbb{P}_0^*\|_{H^{-1/2}(\Gamma_k^{\re})\rightarrow H^{-1/2}(\Gamma_k^{\re})}=\|\mathbb{P}_0\|_{H^{1/2}(\Gamma_k^{\re})\rightarrow H^{1/2}(\Gamma_k^{\re})}\equiv \|\mathbb{P}_0\|_{H^{1/2}(\Gamma_k^{\re})\rightarrow L^2(\Gamma_k^{\re})}$ and the uniform boundedness of the latter norm in $\re$, cf. Corollary \ref{corollary:split_norm_H_half}.
		\end{proof}
		
		\section{Lifting lemma}
		\label{app:lifting_lemma}
		It can be shown, cf. e.g. Corollary \ref{cor:pperp} and the observation given after its proof, that the scaling of the $H^{-1/2}$ norm differ on the space of constant distributions $\mathbb{S}_0$ and its complement $H^{-1/2}_{\perp}$. As we will see further, this affects coercivity estimates. Thus, we derive lifting lemmas separately for two cases.
		
		
		\mypar{Trace operators and jumps.} 
		We introduce the following spaces, to be used later for the definition of trace operators:
		\begin{align*}%
			H^1(\Delta;\mathbb{R}^3 \setminus \Gamma^{\re})&=H^1(\Delta; \Omega^{\re,c})\oplus H^1(\Delta; \Omega^{\re}), \text{ where }\\
			H^1(\Delta; \mathcal{O})&=\{\varphi\in H^1(\mathcal{O}): \, \Delta \varphi \in L^2(\mathcal{O})\}, \quad \mathcal{O}\in \{\Omega^{\re}, \, \Omega^{\re,c}\}.
		\end{align*}
		Let $\vec{n}$ denote the outward unit normal to $\Omega^{\re}$. 
		
		For $\varphi \in H^1(\Delta;\mathbb{R}^3 \setminus \Gamma^{\re})$, we denote by $\gamma^{-}_0 \varphi$ and $\gamma^{-}_1 \varphi$ its interior trace and its interior normal trace, and by $\gamma^{+}_0 \varphi$ and $\gamma^{+}_1 \varphi$ its exterior trace and its exterior normal trace. For sufficiently regular $\varphi$, we have $\gamma^{\pm }_1 \varphi = \vec{n} \cdot \nabla \gamma^{\pm}_{0} \varphi$. By $[\gamma_0 \phi]$ and $[\gamma_1 \varphi]$, we denote its jumps:
		\begin{align*}%
			[\gamma_0 \varphi] := \gamma^{-}_0 \varphi - \gamma^{+}_0 \varphi, \quad [\gamma_1 \varphi] := \gamma^{-}_1 \varphi - \gamma^{+}_1 \varphi.
		\end{align*}
		\mypar{Auxiliary lifting lemmas.}
		We start with the following set of auxiliary lifting lemmas that we will use in our further proofs. The first result dates back to the seminal work of A. Bamberger and T. Ha Duong \cite{bamberger_ha_duong}. We recall one of its versions below.
		\begin{proposition}[Proposition 2.5.1 in \cite{sayas}]
			\label{prop:sayas}
			Let $\mathcal{O}$ be a Lipschitz domain. Then, there exists $C_{\mathcal{O}}$ such that for all $\xi \in H^{1/2}( \partial \mathcal{O})$ and $a>0$, the unique solution $u \in H^1(\mathcal{O})$ of the Dirichlet boundary value problem 
			\begin{align*}%
				- \Delta u + a^2 u = 0, \quad  &\text{in} ~ \mathcal{O}, \\
				\gamma_0 u = \xi, \quad &\text{on} ~ \partial \mathcal{O},
			\end{align*}
			satisfies the following bound: 
			\begin{align}
				\label{eq:energy_norm_bound}
				\| u \|_{a, \mathcal{O}} \leq C_{\mathcal{O}} \max(1, a^{1/2}) \| \xi \|_{H^{1/2}(\partial \mathcal{O})}.
			\end{align}
			
		\end{proposition}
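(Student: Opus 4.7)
\textbf{Proof proposal for Proposition \ref{prop:sayas}.}

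The plan is to combine a standard Lax--Milgram argument with a sharp lifting whose energy norm scales as $\max(1,a^{1/2})$. Existence and uniqueness are the easy part: the sesquilinear form $a_{a}(u,v):=\int_{\mathcal O}\nabla u\cdot\overline{\nabla v}+a^2\int_{\mathcal O} u\,\overline{v}$ is continuous on $H^1(\mathcal O)$ and satisfies $a_a(v,v)=\|v\|_{a,\mathcal O}^2$ with $\|\cdot\|_{a,\mathcal O}$ equivalent (for fixed $a>0$) to the $H^1(\mathcal O)$--norm. Given any lifting $\tilde u\in H^1(\mathcal O)$ with $\gamma_0\tilde u=\xi$, the Lax--Milgram lemma yields a unique $u_0\in H^1_0(\mathcal O)$ with $a_a(u_0,v)=-a_a(\tilde u,v)$ for all $v\in H^1_0(\mathcal O)$; setting $u:=u_0+\tilde u$ gives the unique weak solution. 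Testing the equation with $u-\tilde u\in H^1_0(\mathcal O)$ (which is a valid test function) and using $a_a(u,u-\tilde u)=0$ yields $\|u\|_{a,\mathcal O}^2=a_a(u,\tilde u)\le \|u\|_{a,\mathcal O}\|\tilde u\|_{a,\mathcal O}$, i.e.\ $\|u\|_{a,\mathcal O}\le \|\tilde u\|_{a,\mathcal O}$. The whole problem therefore reduces to constructing \emph{one} lifting satisfying $\|\tilde u\|_{a,\mathcal O}\lesssim \max(1,a^{1/2})\|\xi\|_{H^{1/2}(\partial\mathcal O)}$.

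Next I would construct the sharp lifting first on the half--space model. Given $\eta\in H^{1/2}(\mathbb R^{2})$, define $\tilde v\in H^1(\mathbb R^3_+)$ by the Fourier prescription $\widehat{\tilde v}(\xi',x_3)=\widehat\eta(\xi')\,e^{-x_3\sqrt{|\xi'|^2+a^2}}$, which solves $-\Delta\tilde v+a^2\tilde v=0$ in $\mathbb R^3_+$ with trace $\eta$. A direct Plancherel computation gives
\begin{equation*}
\|\tilde v\|_{a,\mathbb R^3_+}^2=\int_{\mathbb R^2}\sqrt{|\xi'|^2+a^2}\,|\widehat\eta(\xi')|^2\,d\xi'.
\end{equation*}
The elementary inequality $\sqrt{|\xi'|^2+a^2}\le \max(1,a)\,(1+|\xi'|^2)^{1/2}$ then yields $\|\tilde v\|_{a,\mathbb R^3_+}\le \max(1,a^{1/2})\|\eta\|_{H^{1/2}(\mathbb R^2)}$, which is precisely the target scaling. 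The key point is that both the $L^2$--mass and the gradient of $\tilde v$ concentrate in a boundary layer of width $\sim 1/a$, and this self--adjusts to give the geometric mean $a^{1/2}$ in the energy norm.

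The remaining step is to transfer this local construction to a general Lipschitz domain $\mathcal O$. For this I would use a standard partition of unity $\{\chi_j\}$ subordinate to a finite cover of $\partial\mathcal O$ by boundary charts flattening $\partial\mathcal O$ to pieces of $\mathbb R^2$ via bi--Lipschitz maps $\Phi_j$, write $\xi=\sum_j\chi_j\xi$, extend each piece $(\chi_j\xi)\circ\Phi_j^{-1}$ by zero to all of $\mathbb R^2$, apply the half--space lifting (pulled back via $\Phi_j$) to obtain local pieces $\tilde u_j$ supported near $\partial\mathcal O$, and sum them up to obtain $\tilde u$. The $\chi_j$ are $a$--independent, and the bi--Lipschitz maps distort $\|\cdot\|_{a,\cdot}$ only by an $a$--independent constant (they change $\nabla$ and the measure by bounded factors, both uniformly in $a$), which is what preserves the $\max(1,a^{1/2})$ scaling under the change of variables. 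The hidden constant $C_{\mathcal O}$ absorbs the partition, the Lipschitz constants of the charts, the $H^{1/2}(\partial\mathcal O)$ vs.\ $H^{1/2}(\mathbb R^2)$ norm equivalences on the chart images, and the universal constant from the half--space estimate.

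The principal obstacle in the argument is the transfer step: one must verify carefully that the bi--Lipschitz change of variables does not introduce a hidden $a$--dependent factor when controlling the $L^2$ and $H^1$ contributions simultaneously, and that the localization by $\chi_j$ (whose gradient commutator $(\nabla\chi_j)\tilde u_j$ is of zero order in $a$) is controlled by $\|\tilde u_j\|_{a,\cdot}$ without loss. Once these bookkeeping points are handled, the combination of the Lax--Milgram reduction with the half--space Fourier computation delivers the stated bound \eqref{eq:energy_norm_bound}.
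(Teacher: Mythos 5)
The paper does not prove this proposition at all: it is quoted verbatim from the reference (Proposition 2.5.1 in Sayas' monograph), so there is no in-paper argument to compare against, and your proposal should be judged on its own. On its own it is essentially correct, and its structure (reduce via Lax--Milgram and the energy-minimality of the solution among all $H^1$ liftings of $\xi$, then exhibit one lifting with energy norm $\lesssim\max(1,a^{1/2})\|\xi\|_{H^{1/2}(\partial\mathcal{O})}$) is the standard route; your half-space Fourier lifting with the exact identity $\|\tilde v\|_{a,\mathbb{R}^3_+}^2=\int\sqrt{|\xi'|^2+a^2}\,|\hat\eta|^2\,d\xi'$ is a clean, self-contained way to get the sharp $a^{1/2}$ scaling. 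The one point you flag yourself is the only real issue: the claim that the commutator terms $(\nabla\chi_j)\tilde u_j$ arising from the volume cutoffs are ``controlled by $\|\tilde u_j\|_{a,\cdot}$ without loss'' is not literally true, since bounding $\|\tilde u_j\|_{L^2}$ by the energy norm costs a factor $a^{-1}$, which is harmful precisely when $a$ is small. This is easily repaired in either of two ways: (i) observe that for $a\le 1$ the stated bound is trivial, because any fixed $a$-independent trace lifting $\tilde u$ satisfies $\|\tilde u\|_{a,\mathcal{O}}\le\|\tilde u\|_{H^1(\mathcal{O})}\le C_{\mathcal{O}}\|\xi\|_{H^{1/2}(\partial\mathcal{O})}$, so the half-space construction and all cutoff bookkeeping are only needed for $a\ge 1$, where the $a^{-1}$ loss is harmless; or (ii) use the explicit half-space formula $\|\tilde v\|_{L^2(\mathbb{R}^3_+)}^2=\int|\hat\eta|^2\,(2\sqrt{|\xi'|^2+a^2})^{-1}d\xi'$, which for the compactly supported chart pieces is bounded uniformly in $a$ by a constant times $\|\eta\|_{L^2}^2$, so the zero-order commutator terms are controlled directly without passing through the energy norm. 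With either fix the argument closes and delivers \eqref{eq:energy_norm_bound}.
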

		In the above the dependence of $C_{\mathcal{O}}$ on the domain is implicit, while we need our estimates to be explicit in the parameter $\varepsilon$. Therefore, we resort to a certain type of a scaling argument. From now on let us assume that $\mathcal{O}$ is a fixed Lipschitz domain, s.t., without loss of generality, $\vec{0}\in\O$, and  $\operatorname{diam}\mathcal{O}=1$. We define the family of the rescaled domains $\O^{\re}:=\{\re\vec{x}, \quad \vec{x}\in \O\}$, $0<\re<1$, and first present a counterpart of Proposition \ref{prop:sayas} for such domains. As discussed before, we consider different possibilities for the boundary value $\xi$, namely, $\xi$ is constant or belongs to the space $H^{1/2}_{\perp}(\partial\O^{\re})$. 
		\begin{lemma}[Lifting lemma for the single-particle case]
			\label{lemma:lifting}
			Let $a > 0 $, $g\in H^{1/2}(\partial\O^{\re})$. Consider, for $0<\re<1$, the family of functions  $(V^{\re})_{\re>0}\subset H^1(\mathbb{R}^3)$ which satisfy the interior boundary-value problem 
			\begin{align*}%
				- \Delta V^{\re} + a^2 V^{\re} = 0~\text{in}~\mathcal{O}^{\re}, \quad \quad \gamma^{-}_0 V^{\re} = g,
			\end{align*}
			and the exterior boundary-value problem 
			\begin{align*}%
				- \Delta V^{\re} + a^2 V^{\re} = 0~\text{in}~\mathcal{O}^{\re,c}, \quad \quad \gamma^{+}_0 V^{\re} = g.
			\end{align*}
			Then there exists $C>0$, independent of $g, \, a, \re$, but depending on $\O$, s.t. for all $a>0$,  $0<\re<1$, the function $V^{\re}$ satisfies the following bound:
			\begin{align*}%
				\| V^{\re} \|^2_{a,\O^{\re}\cup\O^{\re,c}} \leq C \max(1,\re a) \left(\re^{-1} \| \mathbb{P}_0g\|^2_{L^2(\partial \mathcal{O}^{\re})}+ \| \mathbb{P}_{\perp}g\|^2_{H^{1/2}(\partial \mathcal{O}^{\re})}\right).
			\end{align*}
		\end{lemma}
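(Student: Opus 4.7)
The strategy is to reduce the problem on $\mathcal{O}^{\re}$ (resp.\ $\mathcal{O}^{\re,c}$) to a problem on the fixed reference domain $\mathcal{O}$ (resp.\ $\mathcal{O}^c$) via the scaling $\hat{V}(\hat{\vec{x}}) := V^{\re}(\re \hat{\vec{x}})$. A direct computation shows that $\hat{V}$ satisfies $-\Delta \hat{V} + (\re a)^2 \hat{V} = 0$ in $\mathcal{O}$ (resp.\ $\mathcal{O}^c$) with Dirichlet data $\mathcal{J}^{\re} g$ on $\partial \mathcal{O}$. Applying Proposition \ref{prop:sayas} (and its well-known exterior counterpart, obtained by the same energy argument) with parameter $\re a$ in place of $a$ yields, with $C_{\mathcal{O}}$ \emph{independent of $\re$},
\begin{align*}
\| \hat{V} \|^2_{\re a, \mathcal{O} \cup \mathcal{O}^c} \leq C_{\mathcal{O}}^2 \max(1, \re a) \| \mathcal{J}^{\re} g \|^2_{H^{1/2}(\partial \mathcal{O})}.
\end{align*}
A straightforward change of variables relates $\| V^{\re} \|^2_{a, \mathcal{O}^{\re} \cup \mathcal{O}^{\re,c}} = \re \| \hat{V} \|^2_{\re a, \mathcal{O} \cup \mathcal{O}^c}$, so it remains to bound $\| \mathcal{J}^{\re} g \|_{H^{1/2}(\partial \mathcal{O})}$ explicitly in terms of $\re$ and the two pieces of $g$.

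By linearity, decompose $g = \mathbb{P}_0 g + \mathbb{P}_{\perp} g$ and estimate the corresponding lifted solutions $V^{\re}_0$ and $V^{\re}_{\perp}$ separately, using the triangle inequality at the end. For the constant piece, $\mathcal{J}^{\re}(\mathbb{P}_0 g) = \mathbb{P}_0 g$, so its $H^{1/2}(\partial \mathcal{O})$-seminorm vanishes and, by Lemma \ref{lemma:scaling_norm_l2}, $\| \mathcal{J}^{\re}(\mathbb{P}_0 g) \|^2_{L^2(\partial \mathcal{O})} = \re^{-2} \| \mathbb{P}_0 g \|^2_{L^2(\partial \mathcal{O}^{\re})}$. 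Combining this with the reference estimate and the $\re$-prefactor above gives the claimed $\re^{-1}$-weighted bound for the $\mathbb{P}_0 g$-component. For the perpendicular piece, Lemma \ref{lem:scaling_norm} provides $|\mathcal{J}^{\re}(\mathbb{P}_{\perp} g)|_{H^{1/2}(\partial \mathcal{O})} = \re^{-1/2} |\mathbb{P}_{\perp} g|_{H^{1/2}(\partial \mathcal{O}^{\re})}$, while Lemmas \ref{lemma:scaling_norm_l2} and \ref{lem:L^2_to_half} together give $\| \mathcal{J}^{\re}(\mathbb{P}_{\perp} g) \|_{L^2(\partial \mathcal{O})} \lesssim \re^{-1/2} |\mathbb{P}_{\perp} g|_{H^{1/2}(\partial \mathcal{O}^{\re})}$. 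Hence $\| \mathcal{J}^{\re}(\mathbb{P}_{\perp} g) \|^2_{H^{1/2}(\partial \mathcal{O})} \lesssim \re^{-1} \| \mathbb{P}_{\perp} g \|^2_{H^{1/2}(\partial \mathcal{O}^{\re})}$ (using Corollary \ref{cor:equiv_norm} to absorb the $L^2$ part into the seminorm), and the $\re^{-1}$ cancels the $\re$-prefactor, giving the $\mathbb{P}_{\perp}g$-bound without any $\re^{-1}$.

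The only mild subtlety is the exterior problem, where Proposition \ref{prop:sayas} is stated for a bounded Lipschitz $\mathcal{O}$; the same energy argument extends to the exterior Dirichlet problem for $-\Delta + (\re a)^2$ with the same scaling in $\re a$, so the exterior contribution obeys an analogous bound on the fixed reference set $\mathcal{O}^c$. Once both contributions are combined by the triangle inequality, the two bounds for the constant and perpendicular components are summed to produce the stated inequality. No step involves a delicate trace-theoretic argument; the only care required is the bookkeeping of scaling exponents and ensuring that the constants from the reference-domain estimate do not depend on $\re$.
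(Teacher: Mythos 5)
Your proposal is correct and follows essentially the same route as the paper: rescale to the reference domain, apply Proposition \ref{prop:sayas} (with $a$ replaced by $\re a$) on $\mathcal{O}$ and $\mathcal{O}^c$, relate the energy norms by $\| V^{\re} \|^2_{a,\O^{\re}\cup\O^{\re,c}} = \re \| \hat V \|^2_{\re a,\O\cup\O^c}$, and then track the $\re$-scaling of the pieces via Lemmas \ref{lemma:scaling_norm_l2}, \ref{lem:scaling_norm}, \ref{lem:L^2_to_half} and the commutation $\mathcal{J}^{\re}\mathbb{P}_0 = \mathbb{P}_0\mathcal{J}^{\re}$. The only (cosmetic) difference is that you decompose $g = \mathbb{P}_0 g + \mathbb{P}_{\perp}g$ at the outset, solve two liftings and recombine with the triangle inequality, whereas the paper applies the reference-domain estimate once to the full $g$ and then expands $\|\mathcal{J}^{\re}g\|^2_{H^{1/2}(\partial\O)}$ using the $L^2$- and $H^{1/2}$-orthogonality of $\mathbb{S}_0 \dotplus H^{1/2}_{\perp}$ (Proposition \ref{prop:norm_equivalence}), which avoids the triangle inequality; both yield the same bound up to an unimportant constant.
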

		\begin{proof}
			First of all we remark that both BVPs are well-posed by a standard argument (Lax-Milgram lemma).  Let us define a new function on $\O\cup\O^c$:
			\begin{align*}%
				V^1(\vec{x}):=V^{\re}(\re \vec{x}), \quad \vec{x}\in \O\cup\O^{c}.
			\end{align*}
			Next, we relate energy norms of $V^{\re}$ and $V^1$. We recall the definition 
			\begin{align*}%
				\|V^{\re}\|_{a,\O^{\re}}^2&=a^2\int_{\O^{\re}}|V^{\re}(\vec{x})|^2 d\vec{x}+\int_{\O^{\re}}|\nabla V^{\re}(\vec{x})|^2 d\vec{x},
			\end{align*}
			and after a change of variables $\vec{x}':=\varepsilon^{-1}\vec{x}$ in the integral,
			we obtain 
			\begin{align}
				\label{eq:aore}
				\|V^{\re}\|_{a,\O^{\re}}^2&=\varepsilon\left((a\varepsilon)^2\int_{\O}|V^{1}(\vec{x}')|^2 d\vec{x}'+\int_{\O}|\nabla V^{1}(\vec{x}')|^2 d\vec{x}'\right)=\varepsilon\|V^1\|_{a\varepsilon, \O}^2,
			\end{align}
			where we used $\nabla_{\vec{x}}V^{\varepsilon}(\vec{x})=\varepsilon^{-1}\left.\nabla_{\vec{x'}} V^1(\vec{x'})\right|_{\vec{x'}=\varepsilon^{-1}\vec{x}}$. In a similar manner, using that $\O^{\re,c}=\{\re \vec{x}, \, \vec{x}\in \O^{c}\}$, we prove  an analogous identity on $\O^{\re,c}$:
			\begin{align}
				\label{eq:vre}
				\|V^{\re}\|_{a,\O^{\re,c}}^2=\varepsilon\|V^1\|_{a\varepsilon, \O^c}^2.
			\end{align}
			
			Let us now remark that $V^1$ is defined on $\re$-independent domain $\O\cup\O^{c}$ and is indeed  $H^1(\mathbb{R}^3)$. It satisfies the following BVP, obtained by replacing $V^{\re}(\vec{x})$ by $V^{1}(\re^{-1}\vec{x})$ in the BVPs in the statement of the lemma:
			\begin{align*}%
				&-\Delta V^1+(\varepsilon a)^2 V^1=0 \text{ in }\O\cup\O^c, \\
				&\left.V^1\right|_{\partial\O}=\mathcal{J}^{\re}g,
			\end{align*}
			where $\mathcal{J}^{\re}$ is an isomorphism from Definition \ref{def:isomorphism}. 
			Comparing the above to the statement of Proposition \ref{prop:sayas}, we see that 
			\begin{align*}%
				\|V^1\|_{a\re,\mathcal{D}}\leq C_{\mathcal{D}}\max(1,(\re a)^{1/2})\|\mathcal{J}^{\re}g\|_{H^{1/2}(\partial \mathcal{D})},\quad \mathcal{D} \in \{\O, \O^c\}.
			\end{align*}
			Substituting the above two bounds into \eqref{eq:aore} and \eqref{eq:vre} yields 
			\begin{align*}%
				\|V^{\re}\|_{a,\O^{\re}\cup\O^{\re,c}}^2\leq C\max(1, \re a)\re  \|\mathcal{J}^{\re}g\|_{H^{1/2}(\partial\O)}^2.
			\end{align*} 
			We rewrite  
			\begin{align*}
				\| \mathcal{J}^{\re} g \|^2_{H^{\frac{1}{2}}(\bO)} &= \|  \mathcal{J}^{\re}(\mathbb{P}_0g + \mathbb{P}_{\perp} g) \|^2_{H^{\frac{1}{2}}(\bO)}  = \| \mathbb{P}_0  \mathcal{J}^{\re}g\|^2_{L^2(\bO)} + \| \mathbb{P}_{\perp}  \mathcal{J}^{\re}g\|^2_{H^{\frac{1}{2}}(\bO)} \\
				& = \re^{-2} \| \mathbb{P}_0 g\|^2_{L^2(\bO^{\re})} + \re^{-2}\| \mathbb{P}_{\perp} g\|^2_{L^2(\bO^{\re})}+ \re^{-1} | \mathbb{P}_{\perp} g|^2_{H^{\frac{1}{2}}(\bO^{\re})},
			\end{align*}
			where, in the second equality, we used property \eqref{eq:ident} along with Proposition \ref{prop:norm_equivalence}; in the last line, we applied Lemmas \ref{lemma:scaling_norm_l2} and \ref{lem:scaling_norm}. It remains to conclude by employing Corollary \ref{cor:equiv_norm} to bound $\re^{-2}\| \mathbb{P}_{\perp} g\|^2_{L^2(\bO^{\re})}\lesssim \re^{-1}|\mathbb{P}_{\perp}g|^2_{H^{\frac{1}{2}}(\partial\O^{\re})}$. 
		\end{proof}	
		The above lemma can be generalized to the many-particle case, for a price of an estimate depending on the distance between the particles (see \cite{hassan_stamm} or \cite[Proposition 4.4]{MK}\footnote{In \cite[Equation 4.20, p. 19]{MK}, the definition of the cut-off functions in the proof of Proposition 4.4 contains a minor inaccuracy. This does not affect the correctness of the main result.} for an analogous result).
		\begin{lemma}[Lifting lemma for many particles]
			\label{lifting_many_particles_proof}
			Let $a > 0$, $\vec{\lambda}^{\re} \in H^{1/2}(\Gamma^{\re})$ and $\Lambda^{\re} \in H^1(\mathbb{R}^3)$ satisfy the following interior and exterior boundary problems
			\begin{align*}%
				&-\Delta \Lambda^{\re} + a^2 \Lambda^{\re} = 0 ~in ~ \Omega^{\re}, \quad \gamma^-_0 \Lambda^{\re} = \vec{\lambda}^{\re}, \\ 
				&-\Delta \Lambda^{\re} + a^2 \Lambda^{\re} = 0 ~in ~ \Omega^{\re,c}, \quad \gamma^+_0 \Lambda^{\re} = \vec{\lambda}^{\re}.
			\end{align*} 
			Then, 
			
			\begin{align*}%
				\| \Lambda^{\re} \|^2_{a,\mathbb{R}^3} \leq c (\underline{d}^{\re}_*)^{-2} (1+a) \max(1, a^{-2}) \left(\re^{-1}  \| \mathbb{P}_0\vec{\lambda}^{\re} \|^2_{L^2(\Gamma^{\re})}+ \| \mathbb{P}_{\perp}\vec{\lambda}^{\re} \|^2_{H^{1/2}(\Gamma^{\re})}\right),
			\end{align*}
			
			and $c>0$ is a constant independent of $N, \re, d^{\re}_{ij}, a$. 
		\end{lemma}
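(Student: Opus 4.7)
The plan is to split $\mathbb{R}^3 = \Omega^{\re} \cup \Omega^{\re,c}$ and treat the two regions separately: the interior piece is essentially disjoint, so Lemma B.1 applies per obstacle, while the exterior requires a cut-off construction that exploits the minimal separation $d^{\re}_*$. First, since the $\Omega^{\re}_k$ are pairwise disjoint, the interior Dirichlet problem decouples as $N$ independent single-particle problems, each with boundary data $\lambda^{\re}_k = \vec{\lambda}^{\re}|_{\Gamma^{\re}_k}$. Applying Lemma \ref{lemma:lifting} to each and summing over $k$ yields
\begin{align*}
\|\Lambda^{\re}\|_{a,\Omega^{\re}}^2 \lesssim \max(1,\re a)\sum_{k=1}^N\Bigl(\re^{-1}\|\mathbb{P}_0\lambda^{\re}_k\|^2_{L^2(\Gamma^{\re}_k)} + \|\mathbb{P}_\perp\lambda^{\re}_k\|^2_{H^{1/2}(\Gamma^{\re}_k)}\Bigr),
\end{align*}
and the right-hand side equals, up to constants, $\max(1,\re a)$ times the $\vec{\lambda}^{\re}$-term in the target estimate (using that $\mathbb{P}_0$ and $\mathbb{P}_\perp$ act obstacle-wise, so the squared norms split).

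For the exterior part, the key observation is that $\Lambda^{\re}|_{\Omega^{\re,c}}$ is the $\|\cdot\|_{a,\Omega^{\re,c}}$-minimizer among all $H^1(\Omega^{\re,c})$ functions with trace $\vec{\lambda}^{\re}$ on $\Gamma^{\re}$ (Dirichlet principle for $-\Delta+a^2$), so I can freely replace it by any convenient competitor. I will build one as $\tilde\Lambda^{\re} = \sum_{k=1}^N \chi_k u_k$, where $u_k \in H^1(\mathbb{R}^3\setminus\Omega^{\re}_k)$ solves the exterior single-particle Helmholtz problem with trace $\lambda^{\re}_k$ on $\Gamma^{\re}_k$ (controlled by Lemma \ref{lemma:lifting}), and $\chi_k\in C^\infty_c(\mathbb{R}^3)$ is a cut-off equal to $1$ on a neighborhood of $\overline{B^{\re}_k}$, supported in a $d^{\re}_*/3$-collar of $B^{\re}_k$, and satisfying $\|\nabla\chi_k\|_{L^\infty}\lesssim (d^{\re}_*)^{-1}$. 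The hypothesis $\overline{B^{\re}_k}\cap\overline{B^{\re}_\ell}=\emptyset$ with gap $d^{\re}_*$ guarantees that the supports of $\chi_k$ are pairwise disjoint, so that $\gamma^+_0\tilde\Lambda^{\re}=\vec{\lambda}^{\re}$ on $\Gamma^{\re}$ and
\begin{align*}
\|\tilde\Lambda^{\re}\|_{a,\Omega^{\re,c}}^2 = \sum_{k=1}^N \|\chi_k u_k\|_{a,\Omega^{\re,c}}^2.
\end{align*}

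A direct product-rule estimate gives
\begin{align*}
\|\chi_k u_k\|^2_{a,\Omega^{\re,c}} \lesssim \bigl(1 + (d^{\re}_*)^{-2}\,a^{-2}\bigr)\|u_k\|^2_{a,\mathbb{R}^3\setminus\Omega^{\re}_k} \lesssim (\underline{d}^{\re}_*)^{-2}\max(1,a^{-2})\|u_k\|^2_{a,\mathbb{R}^3\setminus\Omega^{\re}_k},
\end{align*}
where I used $\|u_k\|_{L^2}^2 \leq a^{-2}\|u_k\|_{a,\mathbb{R}^3\setminus\Omega^{\re}_k}^2$ to absorb the term with $\nabla\chi_k$, and then $1+(d^{\re}_*)^{-2} \lesssim (\underline{d}^{\re}_*)^{-2}$. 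Applying Lemma \ref{lemma:lifting} to $u_k$ on $\mathbb{R}^3\setminus\Omega^{\re}_k$ and summing over $k$, then combining with the interior estimate via $\|\Lambda^{\re}\|^2_{a,\mathbb{R}^3}=\|\Lambda^{\re}\|^2_{a,\Omega^{\re}}+\|\Lambda^{\re}\|^2_{a,\Omega^{\re,c}}$ and $\max(1,\re a)\leq 1+a$, yields the stated bound. The main technical point — and the reason for the $(\underline{d}^{\re}_*)^{-2}\max(1,a^{-2})$ factor rather than a cleaner constant — is the trade-off in the product-rule step: $|\nabla\chi_k|^2$ scales like $(d^{\re}_*)^{-2}$, and converting the resulting $L^2$-norm of $u_k$ into an energy norm costs $a^{-2}$, so low frequencies and closely packed obstacles both worsen the constant.
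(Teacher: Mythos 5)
Your proof is correct and essentially the paper's argument: both rest on energy minimality, a competitor built from single-particle liftings multiplied by disjointly supported cutoffs with transition width $\sim \underline{d}^{\re}_*$ and $\|\nabla\chi_k\|_{L^{\infty}}\lesssim (\underline{d}^{\re}_*)^{-1}$, the product rule with the $a^{-2}$ absorption of the $L^2$ term, and Lemma \ref{lemma:lifting} applied obstacle-wise. The only immaterial difference is that you decouple the interior exactly and invoke the Dirichlet principle only in $\Omega^{\re,c}$, whereas the paper applies minimality on all of $\mathbb{R}^3\setminus\Gamma^{\re}$ with the single competitor $\sum_k\chi^{\re}_k\Lambda^{\re}_k$.
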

		\begin{proof}
			The function $\Lambda^{\re}$ defined as above minimizes the squared energy norm $v\mapsto \|v\|^2_{a,\mathbb{R}^3\setminus\Gamma^{\re}}$ for functions $v\in H^1(\mathbb{R}^3\setminus\Gamma^{\re})$, s.t. $\gamma_0 v=\vec{\lambda}^{\re}$. In other words, 
			\[\Lambda^{\re} = \arg\min_{\substack{v \in H^1(\mathbb{R}^3): \\ \gamma_0 v = \boldsymbol{\Lambda}^{\re}}}\| v \|_{a, \mathbb{R}^3}.\]
			If there exists $v^{\re}\in H^1(\mathbb{R}^3)$ s.t. $\gamma_0 v^{\re}=\vec{\lambda}^{\re}$ and  $\|v^{\re}\|_{a,\mathbb{R}^3}\leq C_{\re}\|\vec{\lambda}^{\re}\|_{H^{1/2}(\Gamma^{\re})}$, then, in virtue of the above observation, 
			\begin{equation}
				\label{eq:lambadre}
				\| \Lambda^{\re} \|_{a, \mathbb{R}^3} \leq \| v^{\re} \|_{a,\mathbb{R}^3} \leq C_{\re} \| \vec{\lambda}^{\re} \|_{H^{1/2}(\Gamma^{\re})}.
			\end{equation}
			
			Our goal is now to construct such a function $v$, for which $C_{\re}$ will be easy to compute. The key idea is to define $v$ as a solution of a certain BVP in the vicinity of $\Omega_k^{\re}$, and $0$ otherwise. This relies on an introduction of certain cutoff functions. 
			
			We will need cutoff functions parametrized by two parameters $r, \, d$. First of all, let  
			\begin{align*}%
				\chi_{d}\in C^{\infty}(\mathbb{R}; [0,1]), \quad	\chi_{d}(\rho):=\left\{
				\begin{array}{ll}
					1, & \rho\leq d/8\\
					0, & \rho\geq d/4,\\
					\in [0,1], &\text{ otherwise },
				\end{array}	
				\right.
			\end{align*}
			Evidently, $\chi_d(\rho)=\chi_1(\rho/d)$. Such a function $\chi_1$ can be constructed e.g. as an appropriate mollification of $\mathbbm{1}_{\rho<1/8}$. 
			Next, we define $\chi_{r,d}(\rho):=\chi_d(\rho-r)=\chi_1(\frac{\rho-r}{d})$, $\rho\geq 0$. 
			
			With the above definition, let us introduce a truncation function associated to $\Omega^{\re}_k$:
			\begin{align*}%
				\chi_k^{\re}(\boldsymbol{x}) := \chi_{r^{\re}_k, \underline{d}^{\re}_*} (|\boldsymbol{x} - \boldsymbol{c}_k|), \quad \boldsymbol{x} \in \mathbb{R}^3.
			\end{align*}	 	
			Note that, by construction 
			\begin{align}
				\label{eq:chione}
				\text{supp} \,\chi_k^{\re} &\subseteq B(\boldsymbol{c}_k, r^{\re}_k+\underline{d}^{\re}_*/4),\quad \text{ therefore, }\quad\overline{\Omega^{\re}_k}\subset \operatorname{supp}\chi_k^{\re},
				\quad 
				\left.\chi_k^{\re}\right|_{\overline{\Omega^{\re}_k}}=1,\\
				\label{eq:supchione}
				\text{supp}\,\chi_k^{\re}&  \cap \text{supp}\,\chi_{\ell}^{\re} = \emptyset, \quad \text{if}~k \neq \ell.
			\end{align} 
			Moreover, because $\chi_{r,d}(\rho)=\chi_1\left(\frac{\rho-r}{d}\right)$, with some $C>0$, it holds that  
			\begin{align}
				\label{eq:bounds_on_cut_off}
				\| \chi_k^{\re} \|_{L^{\infty}(\mathbb{R}^3)} \leq 1, \quad \| \nabla \chi_k^{\re} \|_{L^{\infty}(\mathbb{R}^3)}\equiv \|\partial_{\rho}\chi_{r_k^{\re}, \underline{d}^{\re}_*}\|_{L^{\infty}(\mathbb{R}^3)} \leq C (\underline{d}^{\re}_*)^{-1}.	
			\end{align}
			Using the above definition of cut-off functions, let us introduce a lifting $\Lambda^{\re}_{\chi}$ of $\vec{\lambda}^{\re}$ (this will play a role of the function $v^{\re}$ in the beginning of the proof) as follows:
			\begin{align*}%
				v^{\re}:=\Lambda^{\re}_{\chi} := \sum_{k=1}^N \chi_k^{\re} \Lambda^{\re}_k,
			\end{align*}
			where $\Lambda^{\re}_k \in H^{1}(\mathbb{R}^3)$ satisfies the following Dirichlet boundary problem:
			\begin{align*}%
				&- \Delta \Lambda^{\re}_k + a^2  \Lambda^{\re}_k = 0 \text{ in } \Omega^{\re}_k, \quad  \gamma^{-}_0 \Lambda^{\re}_k = \lambda^{\re}_k, \\
				&- \Delta \Lambda^{\re}_k + a^2  \Lambda^{\re}_k = 0 \text{ in } \Omega^{\re, c}_k, \quad  \gamma^{+}_0 \Lambda^{\re}_k = \lambda^{\re}_k.
			\end{align*}
			The properties \eqref{eq:chione} and \eqref{eq:supchione} ensure that  $\gamma_0\Lambda_{\chi}^{\re}=\vec{\lambda}^{\re}$. Let us now estimate the norm of $\Lambda^{\re}_{\chi}$ by definition. In virtue of \eqref{eq:supchione}, $\chi_k\Lambda^{\re}_k$ are mutually orthogonal in $H^1(\mathbb{R}^3)$, and thus 
			\begin{align*}%
				\| \Lambda^{\re}_{\chi} \|^2_{a, \mathbb{R}^3} &=  \|  \sum_{k=1}^N \chi_k^{\re} \Lambda^{\re}_k \|^2_{a,\mathbb{R}^3} = \sum_{k=1}^N \|  \chi_k^{\re} \Lambda^{\re}_k \|^2_{a,\mathbb{R}^3}\\ 
				&=\sum_{k=1}^N \left( a^2 \|  \chi_k \Lambda^{\re}_k \|^2_{L^2(\mathbb{R}^3)}+\|  \nabla\chi_k^{\re} \Lambda^{\re}_k+\chi_k^{\re}\nabla \Lambda^{\re}_k \|^2_{L^2(\mathbb{R}^3)}\right)\\
				&\leq 2\sum_{k=1}^{N}  \left[a^2 \| \chi_k \|^2_{L^{\infty}(\mathbb{R}^3)} \| \Lambda^{\re}_k \|^2_{L^2(\mathbb{R}^3)}  + \| \nabla \chi_k \|^2_{L^{\infty}(\mathbb{R}^3)} \| \Lambda^{\re}_k \|^2_{L^2(\mathbb{R}^3)} + \| \chi_k \|^2_{L^{\infty}(\mathbb{R}^3)} \|\nabla \Lambda^{\re}_k \|^2_{L^2(\mathbb{R}^3)} \right]\\
				&\overset{	\eqref{eq:bounds_on_cut_off}}{\leq }2C\sum_{k=1}^{N} \left[\| \Lambda^{\re}_k \|^2_{a, \mathbb{R}^3} + (\underline{d}^{\re}_*)^{-2} \| \Lambda^{\re}_k \|^2_{L^2(\mathbb{R}^3)}\right] \\
				&\leq 2C \sum_{k=1}^{N} \left[\| \Lambda^{\re}_k \|^2_{a, \mathbb{R}^3} + ( \underline{d}^{\re}_*)^{-2} a^{-2} \| \Lambda^{\re}_k \|^2_{a, \mathbb{R}^3}\right] \\
				&\leq 2C(1+ a^{-2}(\underline{d}^{\re}_*)^{-2}) \sum_{k=1}^{N} \| \Lambda^{\re}_k \|^2_{a, \mathbb{R}^3}\leq 2C(1+ a^{-2})\max(1, (\underline{d}^{\re}_*)^{-2})\sum_{k=1}^{N} \| \Lambda^{\re}_k \|^2_{a, \mathbb{R}^3}\\
				&= 4C\max(1,a^{-2}) (\underline{d}^{\re}_*)^{-2}\sum_{k=1}^{N} \| \Lambda^{\re}_k \|^2_{a, \mathbb{R}^3}.
			\end{align*}
			Next, we apply Lemma \ref{lemma:lifting} to bound each $\Lambda_k^{\re}$ in terms of $\lambda^{\re}_k$ with the bound $\max(1, \re a) \leq (1+a)$, and the desired result follows at once by \eqref{eq:lambadre} with $v^{\re}=\Lambda^{\re}_{\chi}$.
		\end{proof}

		\section{A counterpart of Proposition \ref{prop:data}}
		\begin{proposition}
			\label{prop:td2}
			Let $B:= \cup_{k=1}^N B_k$ and $\hat{u}^{\operatorname{inc}} \in H^5(B)$. Then, there exists $C > 0$, such that the following bounds hold true:
			\begin{align*}%
				\|\hat{\boldsymbol{g}}^{\re,(1)}\|_{L^2(\Gamma^{\re})}&\leq CN^{1/2}\re\|\hat{u}^{\inc}\|_{H^3(B)}, \\
				| \hat{\boldsymbol{g}}^{\re,(1)}|_{H^{1/2}(\Gamma^{\re})}&\leq CN^{1/2}\re^{3/2}\|\hat{u}^{\inc}\|_{H^3(B)},\\
				\| \hat{\boldsymbol{g}}^{\re}-\hat {\boldsymbol{g}}^{\re,(1)}\|_{L^2(\Gamma^{\re})}&\leq CN^{1/2}\varepsilon^3 \|\hat{ u}^{\operatorname{inc}}\|_{H^4(B)}, \\
				| \hat{\boldsymbol{g}}^{\re}-\hat {\boldsymbol{g}}^{\re,(1)}|_{H^{1/2}(\Gamma^{\re})}&\leq CN^{1/2}\varepsilon^{5/2}\|\hat{ u}^{\operatorname{inc}}\|_{H^5(B)}. 
			\end{align*}
		\end{proposition}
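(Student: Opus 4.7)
The plan is to derive all four bounds from Taylor expansions of $\hat{u}^{\inc}$ around each center $\bc_\ell$, combined with the scaling lemmas (Lemma \ref{lem:norml2} and Lemma \ref{lem:normh12}) and Sobolev embeddings on the fixed domain $B$. Throughout, by the Sobolev embedding \cite[Theorem 4.12]{adams}, in three dimensions $H^3(B)\hookrightarrow C^{1,\alpha}(\overline{B})$, $H^4(B)\hookrightarrow C^{2,\alpha}(\overline{B})$, and $H^5(B)\hookrightarrow C^{3,\alpha}(\overline{B})$ for suitable $0<\alpha<1/2$, so all pointwise derivatives of $\hat{u}^{\inc}$ appearing below are under control by the corresponding Sobolev norms.

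For the first two bounds, I observe that on each $\Gamma_\ell^{\re}$, the function $\hat{g}_\ell^{\re,(1)}(\bx)=-\hat{u}^{\inc}(\bc_\ell)-\nabla \hat{u}^{\inc}(\bc_\ell)\cdot(\bx-\bc_\ell)$ is an affine polynomial. Thus $\|\hat{g}_\ell^{\re,(1)}\|_{L^{\infty}(B_\ell^{\re})}\lesssim\|\hat{u}^{\inc}\|_{C^1(\overline{B})}\lesssim\|\hat{u}^{\inc}\|_{H^3(B)}$, and its gradient $\nabla \hat{g}_\ell^{\re,(1)}=-\nabla \hat{u}^{\inc}(\bc_\ell)$ is a constant vector of the same size. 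Applying Lemma \ref{lem:norml2} on each $\Gamma_\ell^{\re}$ gives $\|\hat{g}_\ell^{\re,(1)}\|_{L^2(\Gamma_\ell^{\re})}\lesssim \re \|\hat{u}^{\inc}\|_{H^3(B)}$, and Lemma \ref{lem:normh12} gives $|\hat{g}_\ell^{\re,(1)}|_{H^{1/2}(\Gamma_\ell^{\re})}\lesssim \re^{3/2}\|\hat{u}^{\inc}\|_{H^3(B)}$. Summing in $\ell^2$ over $\ell=1,\ldots,N$ yields the first two bounds with the factor $N^{1/2}$.

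For the last two bounds, the key point is that $\hat{g}^{\re}-\hat{g}^{\re,(1)}$ is, up to sign, precisely the order-two Taylor remainder of $\hat{u}^{\inc}$ at $\bc_\ell$:
\begin{align*}
(\hat{g}_\ell^{\re}-\hat{g}_\ell^{\re,(1)})(\bx)=-R_\ell(\bx),\qquad R_\ell(\bx)=\hat{u}^{\inc}(\bx)-\hat{u}^{\inc}(\bc_\ell)-\nabla\hat{u}^{\inc}(\bc_\ell)\cdot(\bx-\bc_\ell).
\end{align*}
The integral form of the remainder gives $|R_\ell(\bx)|\lesssim |\bx-\bc_\ell|^2\|D^2\hat{u}^{\inc}\|_{L^{\infty}(B)}\lesssim \re^2\|\hat{u}^{\inc}\|_{H^4(B)}$ for $\bx\in B_\ell^{\re}$, and, differentiating once, $|\nabla R_\ell(\bx)|=|\nabla\hat{u}^{\inc}(\bx)-\nabla\hat{u}^{\inc}(\bc_\ell)|\lesssim |\bx-\bc_\ell|\,\|D^2\hat{u}^{\inc}\|_{W^{1,\infty}(B)}\lesssim \re\,\|\hat{u}^{\inc}\|_{H^5(B)}$. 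Applying Lemma \ref{lem:norml2} to $R_\ell$ yields $\|\hat{g}_\ell^{\re}-\hat{g}_\ell^{\re,(1)}\|_{L^2(\Gamma_\ell^{\re})}\lesssim \re\cdot\re^2\|\hat{u}^{\inc}\|_{H^4(B)}=\re^3\|\hat{u}^{\inc}\|_{H^4(B)}$, and Lemma \ref{lem:normh12} applied to $R_\ell$ yields $|\hat{g}_\ell^{\re}-\hat{g}_\ell^{\re,(1)}|_{H^{1/2}(\Gamma_\ell^{\re})}\lesssim \re^{3/2}\cdot\re\,\|\hat{u}^{\inc}\|_{H^5(B)}=\re^{5/2}\|\hat{u}^{\inc}\|_{H^5(B)}$. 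A final $\ell^2$-summation over $\ell$ produces the $N^{1/2}$ prefactor, giving the stated bounds.

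No step here is an obstacle in the usual sense; the main subtlety is merely bookkeeping of the powers of $\re$: each application of Lemma \ref{lem:norml2} contributes a factor $\re$ from the measure of $\Gamma_\ell^{\re}$, each application of Lemma \ref{lem:normh12} contributes $\re^{3/2}$, and each order of the Taylor remainder contributes another $\re$ (since $|\bx-\bc_\ell|\leq \re r_\ell$ on $B_\ell^{\re}$). Matching these exponents with the claimed right-hand sides confirms each of the four estimates. The Sobolev order ($H^3$, $H^4$, $H^5$) is just large enough for the relevant Sobolev embedding into the appropriate $C^k$-space to control the pointwise derivatives used in each Taylor bound.
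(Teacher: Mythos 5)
Your proof is correct and follows essentially the same strategy as the paper's: bound $\hat{g}^{\re,(1)}$ by applying Lemmas \ref{lem:norml2} and \ref{lem:normh12} to the affine polynomial on each $\Gamma_k^{\re}$, and bound $\hat{g}^{\re}-\hat{g}^{\re,(1)}$ by recognizing it as the second-order Taylor remainder of $\hat{u}^{\inc}$ at $\bc_k$ and applying the same two lemmas to the remainder and its gradient. The only cosmetic difference is that for the $H^{1/2}$-seminorm of $\hat{\boldsymbol{g}}^{\re,(1)}$ the paper computes the seminorm directly by a change of variables in the double integral, whereas you invoke Lemma \ref{lem:normh12}; both yield the same $\re^{3/2}$ factor, and your route is arguably slightly more uniform with the treatment of the other three bounds.
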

		\begin{proof}
			The argument follows the same lines as the proof of Proposition \ref{prop:data}. Remark that, since $\hat{u}^{\inc} \in H^5(B)$, by the Sobolev embedding \cite[Theorem 4.12, part 2]{adams}, $\hat{u}^{\inc}\in C^{3,\alpha}(\overline{B})$, $\alpha<1/2$. 
			
			Next, we recall that 
			\begin{align*}
				\hat{g}^{\re, (1)}_k(\bx) := - \hat{u}^{\inc}(\bc_k) - \nabla \hat{u}^{\inc}(\bc_k) \cdot (\bx - \bc_k), \quad \bx \in \Gamma^{\re}_k.
			\end{align*}
			Using the above identity and Lemma \ref{lem:norml2}, we have that 
			\begin{align*}
				\| \hat{\vec{g}}^{\re, (1)} \|^2_{L^2(\Gamma^{\re})} \leq C \re^2 \sum_{k=1}^N \| \hat{u}^{\inc} \|^2_{W^{1, \infty}(B^{\re}_k)} \leq C N \re^2 \| \hat{u}^{\inc} \|_{H^{3}(B)}.
			\end{align*}
			Next, we remark that
			\begin{align*}
				\hat{g}^{\re, (1)}_k(\bx) - \hat{g}^{\re, (1)}_k(\by) = - \nabla \hat{u}^{\inc}(\bc_k) \cdot (\bx -\by), \quad \bx, \by \in \Gamma^{\re}_k.
			\end{align*} 
			Using the above identity, we have that, for $C>0$,
			\begin{align*}
				|\hat{g}^{\re, (1)}_k|^2_{H^{1/2}(\Gamma^{\re}_k)} &= \iint_{\Gamma^{\re}_k \times \Gamma^{\re}_k} \frac{|\nabla \hat{u}^{\inc}(\bc_k) \cdot (\bx -\by)|^2}{|\bx-\by|^3} d\Gamma_{\bx} d\Gamma_{\by}    \\
				& \leq \re^{3/2} \iint_{\Gamma_k \times \Gamma_k} \frac{|\nabla \hat{u}^{\inc}(\bc_k) |^2  |\hat{\bx} -\hat{\by}|^2}{|\hat{\bx}-\hat{\by}|^3} d\Gamma_{\hat{\bx}} d\Gamma_{\hat{\by}} \\
				&\leq C \re^3 \| \hat{u}^{\inc}  \|^2_{W^{1,\infty}(B_k)},
			\end{align*}
			which implies 
			\begin{align*}
				| \hat{\vec{g}}^{\re, (1)} |^2_{H^{1/2}(\Gamma^{\re})} \leq  C \re^3 \sum_{k=1}^N \| \hat{u}^{\inc}  \|^2_{W^{1,\infty}(B_k)} \leq C N \re^{3} \| \hat{u}^{\inc}  \|^2_{H^{3}(B)}.
			\end{align*}
			
			Let us prove last two bounds. It is straightforward to verify that 
			\begin{align*}%
				\varphi_k(\bx) := \hat{g}_k^{\re}(\bx)-\hat{g}_k^{\re,(1)}(\bx)&=\hat{u}^{\inc}(\bx)-\hat{u}^{\inc}(\bc_k)-\nabla \hat{u}^{\inc}(\bc_k)\cdot(\vec{x}-\vec{c}_k) \\&=\sum\limits_{|\vec{\alpha}|=2}R_{k,\vec{\alpha}}(\bx) (\vec{x}-\vec{c}_k)^{\vec{\alpha}}, \quad \bx \in \Gamma^{\re}_k,
			\end{align*} 
			where $R_{k,\vec{\alpha}}(\vec{x})=\frac{2}{\vec{\alpha}!}\int_0^1(1-s)D^{\vec{\alpha}}_{\vec{x}}\hat{u}^{\inc}(\vec{c}_k+s(\vec{x}-\vec{c}_k))ds$. Evidently, $\|R_{k,\vec{\alpha}}\|_{L^{\infty}(B^{\re}_k)}\leq  C \|\hat{u}^{\inc}\|_{W^{2,\infty}(B^{\re}_k)}$ and $(\bx-\bc_k)^{\alpha} = \re^2 (\hat{\bx}-\bc_k)^{\alpha}$ for $\hat{\bx} \in B_{k}$ and $|\alpha|=2$. 
			
			Then, using Lemma \ref{lem:norml2}, we obtain
			\begin{align*}
				\| \varphi_k \|_{L^2(\Gamma^{\re}_k)} \leq C \re \| \varphi_k \|_{L^{\infty}(B^{\re}_k)} \leq C \re^3 \| \hat{u}^{\inc} \|_{W^{2,\infty}(B^{\re}_k)} \leq C \re^3\| \hat{u}^{\inc} \|_{H^4(B)},
			\end{align*}
			which immediately implies the desired bound.
			%
			
			Similarly, using Lemma \ref{lem:normh12} yields 
			\begin{align*}
				| \varphi_k |_{H^{1/2}(\Gamma^{\re}_k)} \leq C \re^{3/2} \| \nabla \varphi_k \|_{L^{\infty}(\overline{B^{\re}_k})} \leq  C \re^{5/2}  \|\hat{u}^{\inc}\|_{W^{3,\infty}(\overline{B^{\re}_k})} \leq  C\re^{5/2} \|\hat{u}^{\inc}\|_{H^5(B)}, 
			\end{align*}
			from which the desired bound follows directly.
		\end{proof}
		
		\section{Derivation and analysis of the Born model \eqref{eq:tdsysborn}}
		\label{app:born_analysis}
		The key idea in the analysis of the Born model \eqref{eq:tdsysborn} lays in rewriting it in the form suggested by \eqref{eq:tdsys} and \eqref{eq:tdsys2} and analysing the underlying error just like in Section \ref{sec:simplified}. It is straightforward to see that \eqref{eq:tdsysborn} is equivalent to
		\begin{align*}
			u^{\re}_{B, \operatorname{app}} (\bx, t) =\sum\limits_{k=1}^N \frac{\lambda_{B,k}^{\re}(t-|\vec{x}-\vec{c}_k|)}{4\pi|\vec{x}-\vec{c}_k|}c_k^{\re}, \quad (\bx, t) \in \Gamma^{\re} \times \mathbb{R}_{>0},
		\end{align*}
		where the time-dependent functions $\{ \lambda_{B,k}^{\re} (t) \}_{k=1}^N$ solves the following convolutional in time system  
		\begin{align}
			\label{eq:tdnewborn}
			&	\sum_{k=1}^{N}(K_{B,\ell k}^{\re}*\lambda_{B,k}^{\re})(t)=-u^{\operatorname{inc}}(\vec{c}_{\ell} , t)c_{\ell}^{\re}, \quad\text{with}\\
			&	K_{B,\ell k}^{\re}(t)= 
			\left\{
			\begin{array}{ll}
				\delta(t)c_k^{\re}, &\ell = k, \\
				0, & \ell \neq k.
			\end{array}
			\right.\nonumber
		\end{align}
		To obtain a suitable convergence estimate, we again work in the frequency domain, and compare the above model to \eqref{eq:tdsys2}, which we have shown to be convergent of order $p=2$.
		\subsubsection*{Density error}
		In the frequency domain,  the Born model rewrites
		\begin{align}
			\label{eq:born_problem}
			\begin{split}
				&\text{Find}~\vv{{\lambda}}^{\re}_B(\omega) \in \mathbb{C}^N \quad \text{s.t.} \quad \mathbb{B}^{\re}(\omega) \vv{\lambda}^{\re}_B(\omega) = \vv{q}^{\re, (2)}(\omega) \quad \text{with} \\
				&\mathbb{B}^{\re}_{\ell k}(\omega) = c^{\re}_k \delta_{\ell k}, \quad q^{\re, (2)}_{\ell}(\omega) = -\int_{\Gamma^{\varepsilon}_{\ell} }\hat{u}^{\operatorname{inc}}(\vec{c}_{\ell} , \omega){\sigma}_{\ell} ^{\varepsilon}(\bx)d\Gamma_{\bx}.
			\end{split}
		\end{align}
		As a reference point, we take \eqref{eq:problem_M_s}:
		\begin{align*}%
			&\mathbb{M}^{\re}_s(\omega) \vv{\lambda}^{\re}_s(\omega) = \vv{q}^{\re, (1)}(\omega).	
		\end{align*}
		Our goal is to obtain the bound on $\| \vv{\lambda}^{\re}_s-\vv{\lambda}^{\re}_B \|_{\mathbb{C}^N}$. 
		We start with the following result about the difference of the matrix inverses. 
		\begin{lemma}
			The matrix $\mathbb{B}^{\re}$
			is obviously invertible and satisfies
			\begin{align*}%
				&\| (\mathbb{B}^{\re})^{-1} \|_F \leq \re^{-1} \times C_{\operatorname{geom}}(N, \underline{d}_*^{\re})p(|\omega|, \Im \omega), \\
				&\|(\mathbb{B}^{\re})^{-1}-(\mathbb{M}^{\re})^{-1}\|_F \leq C_{\operatorname{geom}}(N, \underline{d}_*^{\re})p(|\omega|, \Im \omega),
			\end{align*}
			for all $\omega \in \mathbb{C}^{+}$.
		\end{lemma}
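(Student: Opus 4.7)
The plan is to handle the two bounds independently, with the first following from direct computation and the second from a resolvent identity combined with the expansions already derived in Proposition~\ref{prop:difM1}.

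For the stability bound on $(\mathbb{B}^{\re})^{-1}$, I would simply observe that $\mathbb{B}^{\re}$ is diagonal with entries $c_k^{\re} = \re c_k^1$, cf.~\eqref{eq:sigmakeps}. Hence $(\mathbb{B}^{\re})^{-1}$ is diagonal with entries $(\re c_k^1)^{-1}$, so
$$\|(\mathbb{B}^{\re})^{-1}\|_F = \Big(\sum_{k=1}^N (\re c_k^1)^{-2}\Big)^{1/2} \leq \re^{-1} N^{1/2} \max_{k}(c_k^1)^{-1},$$
which yields the first claim with a constant depending only on $N$ and the shapes $\Gamma_j$ (no dependence on $\omega$ or $\underline{d}_*^{\re}$).

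For the second bound, I would use the resolvent identity
$$(\mathbb{B}^{\re})^{-1} - (\mathbb{M}^{\re})^{-1} = (\mathbb{B}^{\re})^{-1}(\mathbb{M}^{\re} - \mathbb{B}^{\re})(\mathbb{M}^{\re})^{-1},$$
so that
$$\|(\mathbb{B}^{\re})^{-1} - (\mathbb{M}^{\re})^{-1}\|_F \leq \|(\mathbb{B}^{\re})^{-1}\|_F \, \|\mathbb{M}^{\re} - \mathbb{B}^{\re}\|_F \, \|(\mathbb{M}^{\re})^{-1}\|_F.$$
The first factor is $\lesssim \re^{-1}$ by the preceding step, and the third factor is $\lesssim \re^{-1} N^{1/2} (\underline{d}_*^{\re})^{-2}(1+|\omega|)^2 \max(1,(\Im\omega)^{-3})$, since by the remark inside the proof of Proposition~\ref{prop:difM1} the estimate \eqref{eq:invM} applies verbatim with $\mathbb{M}^{\re}_s$ replaced by $\mathbb{M}^{\re}$.

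The middle factor is the only new piece. For the diagonal entries, the expansion \eqref{eq:Me} of Proposition~\ref{prop:difM1} gives
$$|\mathbb{M}^{\re}_{kk}(\omega) - c_k^{\re}| \leq \tfrac{|\omega|}{4\pi}(c_k^{\re})^2 + C\re^3|\omega|^2 \lesssim \re^2(1+|\omega|)^2.$$
For the off-diagonal entries (where $\mathbb{B}^{\re}_{\ell k}=0$), the expansion \eqref{eq:MeG} combined with $|G_{\omega}(\bc_\ell,\bc_k)| \leq (4\pi d_*^{\re})^{-1}$ for $\omega\in\mathbb{C}^+$ (since $|e^{i\omega r}|\leq 1$) and $c_\ell^{\re} c_k^{\re} \lesssim \re^2$ gives
$$|\mathbb{M}^{\re}_{\ell k}(\omega)| \lesssim \re^2 (\underline{d}_*^{\re})^{-2}(1+|\omega|).$$
Summing the $N + N(N-1)$ entries squared yields $\|\mathbb{M}^{\re} - \mathbb{B}^{\re}\|_F \lesssim N\re^2 (\underline{d}_*^{\re})^{-2}(1+|\omega|)^2$. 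Assembling the three factors, the powers of $\re$ cancel as $\re^{-1} \cdot \re^2 \cdot \re^{-1} = 1$, leaving the stated $\re$-independent bound with polynomial dependence in $|\omega|$, $(\Im\omega)^{-1}$, $N$, and $(\underline{d}_*^{\re})^{-1}$.

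I do not expect any substantive obstacle: unlike Proposition~\ref{prop:difM1}, where a Neumann-series argument was needed and had to be supplemented by a ``high-frequency'' bound when $\|(\mathbb{M}^{\re}_s)^{-1}\mathbb{E}^{\re}\|_F$ could exceed one, here the direct resolvent identity already yields a product structure, so no case distinction on the size of $|\omega|$ is required. The only minor technicality is making sure the exponents of $|\omega|$ and $(\Im\omega)^{-1}$ collected from the three factors are absorbed into the abstract polynomial $p(|\omega|,\Im\omega)$ declared at the start of Section~\ref{sec:simplified}.
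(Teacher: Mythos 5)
Your proposal is correct, and it differs from the paper's proof only in the concluding step. The paper proceeds exactly as you do for the first bound (diagonality and $c_k^{\re}=\re c_k^1$ give $\|(\mathbb{B}^{\re})^{-1}\|_F\lesssim N^{1/2}\re^{-1}$) and for the entrywise difference bounds, which it obtains from the expansions \eqref{eq:Me}--\eqref{eq:MeG} (equivalently \eqref{eq:dev_entries}), arriving at the same $O(\re^2(1+|\omega|)^2)$ diagonal and $O(\re^2(\underline{d}_*^{\re})^{-2}(1+|\omega|))$ off-diagonal estimates you derive; it then simply says ``we conclude like in Proposition~\ref{prop:difM1}'', i.e.\ it reuses the Neumann-series perturbation argument with its low-/high-frequency case split. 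You instead invoke the exact factorization $(\mathbb{B}^{\re})^{-1}-(\mathbb{M}^{\re})^{-1}=(\mathbb{B}^{\re})^{-1}(\mathbb{M}^{\re}-\mathbb{B}^{\re})(\mathbb{M}^{\re})^{-1}$ together with the quantitative inverse bound for $\mathbb{M}^{\re}$ (the analogue of \eqref{eq:invM}, which the paper indeed records inside the proof of Proposition~\ref{prop:difM1} and which follows from coercivity and \eqref{eq:vecV2}); since this identity requires only invertibility and not smallness of the perturbation, your route legitimately dispenses with the case distinction and yields the same $\re$-independent bound with the powers of $\re$ cancelling as you indicate. One cosmetic remark: the paper's statement (and its later use in \eqref{eq:dens_error}) writes $(\mathbb{M}^{\re})^{-1}$ while its proof actually compares $\mathbb{B}^{\re}$ with $\mathbb{M}^{\re}_s$ — the matrix that actually defines $\vv{\lambda}^{\re}_s$ — so one of the two is a typo; your argument covers either reading verbatim, since both $\|(\mathbb{M}^{\re})^{-1}\|_F$ and $\|(\mathbb{M}^{\re}_s)^{-1}\|_F$ obey the $\re^{-1}$-type bound and both differ from $\mathbb{B}^{\re}$ by $O(\re^2)$ entrywise.
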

		\begin{proof}
			The matrix $\mathbb{B}^{\re}$ is evidently invertible since $c_k^{\re}>0$ for all $k$. Also, by \eqref{eq:sigmakeps} we have that $c_k^{\re}=\re c_k^1$, thus $\|(\mathbb{B}^{\re})^{-1}\|_F\leq C\sqrt{N}\re^{-1}$. Following the proof of Proposition \ref{prop:difM1}, we see that, cf. \eqref{eq:dev_entries}, 
			\begin{align*}%
				|\mathbb{B}_{kk}^{\re}-\mathbb{M}_{s,kk}^{\re}|\leq (4\pi)^{-1}|\omega|(c_k^{\re})^2+\varepsilon^3|\omega|^2\leq C\varepsilon^2(1+|\omega|)^2.
			\end{align*}
			Similarly, for the off-diagonal elements, 
			\begin{align*}%
				|\mathbb{B}_{\ell k}^{\re}-\mathbb{M}_{s,\ell k}^{\re}|\leq |G_{\omega}(\vec{c}_{\ell}, \bc_k)|c_{\ell}^{\re}c_k^{\re}+\widetilde{C}_{\ell k}\re^3 (\underline{d}_*^{\re})^{-2}(1+|\omega|)\leq C\re^2 (\underline{d}_*^{\re})^{-2}(1+|\omega|).
			\end{align*}
			We conclude like in Proposition \ref{prop:difM1}.  
		\end{proof}
				
		With this bound we have the following estimate: 
		\begin{align}
			\label{eq:dens_error}
			\begin{split}
				\|\vv{\lambda}_s^{\re}-\vv{\lambda}_B^{\re} \|_{\mathbb{C}^N}&\leq \|(\mathbb{B}^{\re})^{-1}\|_F \|\vv{q}^{\re,(2)}-\vv{q}^{\re,(1)}\|_{\mathbb{C}^N}+\|(\mathbb{B}^{\re})^{-1}-(\mathbb{M}^{\re})^{-1}\|_F \|\vv{q}_{\ell}^{\re,(1)}\|_{\mathbb{C}^N}\\
				&\leq C_{\operatorname{geom}}(N, \underline{d}_*^{\re})p(|\omega|, \Im \omega)\left(\re^{-1}\|\vv{q}^{\re,(2)}-\vv{q}^{\re,(1)}\|_{\mathbb{C}^N}+\|\vv{q}_{\ell}^{\re,(1)}\|_{\mathbb{C}^N}\right).
			\end{split}
		\end{align}
		Introducing 
		\begin{align*}%
			\hat{\boldsymbol{g}}^{\re,(2)}:=\sum\limits_{k=1}^N \hat{u}^{\operatorname{inc}}(\vec{c}_k)\mathbf{1}_{\vec{c}_k},
		\end{align*}
		we estimate, using the $L^2(\Gamma_{\ell}^{\re})$-Cauchy-Schwarz together with $\|\sigma_{\ell}^{\re}\|_{L^2(\Gamma_{\ell}^{\re})}\lesssim 1$, cf. \eqref{eq:sigmakeps}:
		\begin{align*}%
			\|\vv{q}^{\re,(2)}-\vv{q}^{\re,(1)}\|_{\mathbb{C}^N} \lesssim \|	\hat{\vec{g}}^{\re,(2)}-	\hat{\vec{g}}^{\re,(1)}\|_{L^2(\Gamma^{\re})}, \quad 
			\|\vv{q}_{\ell}^{\re,(1)}\|_{\mathbb{C}^N}\lesssim \| \hat{\vec{g}}^{\re,(1)}\|_{L^2(\Gamma^{\re})},
		\end{align*}
		and \eqref{eq:dens_error} rewrites 
		\begin{align}
			\label{eq:dens_error_main}
			\begin{split}
				\|\vv{\lambda}_s^{\re}-\vv{\lambda}_B^{\re} \|_{\mathbb{C}^N}
				&\leq C_{\operatorname{geom}}(N, \underline{d}_*^{\re})p(|\omega|, \Im \omega)\left(\re^{-1}\|	\hat{\vec{g}}^{\re,(2)}-	\hat{\vec{g}}^{\re,(1)}\|_{L^2(\Gamma^{\re})}+ \|	\hat{\vec{g}}^{\re,(1)}\|_{L^2(\Gamma^{\re})}\right).
			\end{split}
		\end{align}
		
		\subsubsection*{The error of the field}
		In the frequency domain, the Born model yields the following scattered field approximation
		\begin{align}
			\label{eq:born}
			\hat{u}^{\re}_{B,\operatorname{app}}(\bx,\omega)=\sum\limits_{k=1}^N\frac{\mathrm{e}^{i\omega|\bx-\bc_k|}}{4\pi|\bx-\bc_k|}c_k^{\re}\hat{\lambda}_{B,k}^{\re}.
		\end{align}
		Again, we will compare the above to the approximation obtained with \eqref{eq:tdsys2}:
		\begin{align*}%
			\hat{u}^{\re}_{s,\operatorname{app}}(\bx,\omega)=\mathcal{S}^{\re}_s(\omega)\vv{\lambda}_s^{\re}.
		\end{align*}
		Introducing the operator 
		\begin{align*}%
			\mathcal{S}^{\re}_{B}(\omega): \, \mathbb{C}^N\rightarrow L^2(\Omega^{\re,c}), \quad \vv{\varphi}\mapsto 	\left(\mathcal{S}^{\re}_{B}(\omega)\vv{\varphi}\right)(\vec{x})=\sum\limits_{k=1}^N\frac{\mathrm{e}^{i\omega|\bx-\bc_k|}}{4\pi|\bx-\bc_k|}c_k^{\re}\varphi_k, 
		\end{align*}
		and using bounds \eqref{eq:dens_error_main} along with \eqref{eq:bound_density_simplified}, we quantify the error between the two models via 
		\begin{align}
			\label{eq:dens_err}
			\begin{split}
				\|	\hat{u}^{\re}_{B,\operatorname{app}} - \hat{u}^{\re}_{s,\operatorname{app}}\|_{L^{\infty}(K)} &\leq 
				\|\mathcal{S}^{\re}_B\|_{\mathbb{C}^N\rightarrow L^{\infty}(K)} \|\vv{\lambda}_s^{\re}-\vv{\lambda}_B^{\re}\|_{\mathbb{C}^N}+\|\mathcal{S}^{\re}_B-\mathcal{S}^{\re}_s\|_{\mathbb{C}^N\rightarrow L^{\infty}(K)}\|\vv{\lambda}_s^{\re}\|_{\mathbb{C}^N} \\
				&\leq C(N, \underline{d}_*^{\re},|\omega|, \Im \omega) \left[ \|\mathcal{S}^{\re}_B\|_{\mathbb{C}^N\rightarrow L^{\infty}(K)} \left(\re^{-1}\|	\hat{\vec{g}}^{\re,(2)}-	\hat{\vec{g}}^{\re,(1)}\|_{L^2(\Gamma^{\re})}+ \|	\hat{\vec{g}}^{\re,(1)}\|_{L^2(\Gamma^{\re})}\right) \right.\\
				&\left.+ \re^{-1} \|\mathcal{S}^{\re}_B-\mathcal{S}^{\re}_s\|_{\mathbb{C}^N\rightarrow L^{\infty}(K)} \|\hat{\vec{g}}^{\re,(1)}\|_{L^2(\Gamma^{\re})} \right],
			\end{split}
			\end{align} 
			where $C(N, \underline{d}_*^{\re},|\omega|, \Im \omega)>0$ independent of $\re$.
		Next, we introduce a counterpart of Lemma \ref{lem:appx_slp_vol}.
		\begin{lemma}[Approximating the volume operator]
			\label{lem:appx_slp}
			Let $K\subset \Omega^c$ be a compact set. There exists $C_K>0$, s.t. the following bounds hold true for all $0<\re<1$, $\omega\in \mathbb{C}^+$:
			\begin{align*}%
				&\|\mathcal{S}^{\re}_B\|_{\mathcal{L}(\mathbb{C}^N, L^{\infty}(K))}\leq \re  C_KN^{1/2},\\
				&\|\mathcal{S}^{\re}_s-\mathcal{S}^{\re}_B\|_{\mathcal{L}(\mathbb{C}^N, L^{\infty}(K))}\leq \varepsilon^2N^{1/2}C_K(1+|\omega|).
			\end{align*}
		\end{lemma}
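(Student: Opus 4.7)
\textbf{Proof proposal for Lemma \ref{lem:appx_slp}.} The plan is to estimate both operators pointwise on $K$ and then apply the $\ell^2$ Cauchy--Schwarz inequality over the index $k$. For $\bx\in K\subset\Omega^c$, the point $\bc_k\in\Omega_k$ is separated from $\bx$ by a positive distance $\delta_K:=\min_k\operatorname{dist}(K,\{\bc_k\})>0$, so all negative powers of $|\bx-\bc_k|$ that appear below are uniformly bounded on $K$ by constants $C_K$ depending on $K$ and the geometry. This is the only place where $K$ enters; everything else is a careful bookkeeping of the $\re$-scaling.

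For the first bound, I would start from the definition of $\mathcal{S}^{\re}_B$ and use $|G_\omega(|\bx-\bc_k|)|\le (4\pi|\bx-\bc_k|)^{-1}\le C_K$ together with $c_k^{\re}=\re c_k^1\lesssim \re$, cf.\ \eqref{eq:sigmakeps}. The Cauchy--Schwarz inequality in $\mathbb{C}^N$ then gives
\[
|(\mathcal{S}^{\re}_B\vv{\varphi})(\bx)|\ \le\ \frac{1}{4\pi}\Big(\sum_{k=1}^N\frac{(c_k^{\re})^2}{|\bx-\bc_k|^2}\Big)^{1/2}\|\vv{\varphi}\|_{\mathbb{C}^N}\ \le\ C_K\,\re\,N^{1/2}\,\|\vv{\varphi}\|_{\mathbb{C}^N},
\]
which is the claimed estimate.

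For the second bound, I observe that $\mathcal{S}^{\re}_s-\mathcal{S}^{\re}_B$ is precisely the second summand in the definition of $\mathcal{S}^{\re}_s$, namely
\[
((\mathcal{S}^{\re}_s-\mathcal{S}^{\re}_B)\vv{\varphi})(\bx)\ =\ \sum_{k=1}^N\varphi_k\,(\bc_k-\bx)\cdot\vec{p}_k^{\re}\,\frac{e^{i\omega|\bx-\bc_k|}}{4\pi|\bx-\bc_k|^3}\,(i\omega|\bx-\bc_k|-1).
\]
The key quantitative ingredient is a bound $|\vec{p}_k^{\re}|\lesssim \re^2$. This follows from the Cauchy--Schwarz inequality applied to the defining integral \eqref{eq:ckre2}: $|\vec{p}_k^{\re}|\le \re\, r_k\,\|\sigma_k^{\re}\|_{L^1(\Gamma_k^{\re})}\le \re\,r_k\,|\Gamma_k^{\re}|^{1/2}\|\sigma_k^{\re}\|_{L^2(\Gamma_k^{\re})}\lesssim \re^2$, where I used $|\Gamma_k^{\re}|\lesssim \re^2$ and $\|\sigma_k^{\re}\|_{L^2(\Gamma_k^{\re})}\lesssim 1$ from \eqref{eq:sigmakeps}. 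Next, on the compact $K$ the factor $|i\omega|\bx-\bc_k|-1|\le 1+|\omega||\bx-\bc_k|\le C_K(1+|\omega|)$, while $|\bx-\bc_k|\cdot|\bx-\bc_k|^{-3}\le C_K$. Putting these estimates together inside the sum and applying the Cauchy--Schwarz inequality in $\mathbb{C}^N$ gives
\[
|((\mathcal{S}^{\re}_s-\mathcal{S}^{\re}_B)\vv{\varphi})(\bx)|\ \lesssim\ C_K\,\re^2\,(1+|\omega|)\,N^{1/2}\,\|\vv{\varphi}\|_{\mathbb{C}^N},
\]
which is exactly the claim.

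There is no real obstacle here; the proof is a one-page estimate. The only mild subtlety is the dipole moment bound $|\vec{p}_k^{\re}|\lesssim \re^2$, which is the reason that the difference between $\mathcal{S}^{\re}_s$ and $\mathcal{S}^{\re}_B$ gains one extra power of $\re$ compared to $\mathcal{S}^{\re}_B$ itself; everything else is direct bounding of the Helmholtz kernel and its first-order Taylor expansion at $\bc_k$ on the compact set $K$.
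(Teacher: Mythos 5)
Your argument is correct and follows essentially the same route as the paper: the first bound is exactly the $\ell^2$ Cauchy--Schwarz estimate with $c_k^{\re}=\re c_k^1$, and the second bound is the direct estimate of the dipole term that the paper leaves to the reader (referring to the Taylor-expansion bookkeeping of Lemma \ref{lem:appx_slp_vol}), with your explicit bound $|\vec{p}_k^{\re}|\lesssim\re^2$ from \eqref{eq:ckre2} and \eqref{eq:sigmakeps} supplying precisely the missing detail.
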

		\begin{proof}
			The first bound is obtained by the $\ell_2$ Cauchy-Schwarz inequality together with  $c^{\re}_k=\re c_k^1$, see \eqref{eq:sigmakeps}. The second bound is obtained like in Lemma \ref{lem:appx_slp_vol}, and thus we leave its proof for the reader.  
		\end{proof}
		\begin{lemma}
			\label{lem:bound_rhs_born}
			Let $\hat{u}^{\inc}\in H^3(B)$. Then, there exists $C>0$ such that for all $\re \in(0,1]$, we have that
			\begin{align*}
				\|	\hat{\vec{g}}^{\re,(2)}-	\hat{\vec{g}}^{\re,(1)}\|_{L^2(\Gamma^{\re})} \leq \re^{2} \times CN^{1/2}\| \hat{u}^{\inc} \|_{H^3(B)}.
			\end{align*}
		\end{lemma}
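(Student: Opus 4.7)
The plan is essentially a straightforward calculation in the same spirit as Proposition \ref{prop:td2}: one writes out the pointwise difference $\hat{g}^{\re,(2)}_k(\bx)-\hat{g}^{\re,(1)}_k(\bx)$ on $\Gamma^{\re}_k$ explicitly, and then converts the $L^\infty$ bound into an $L^2$ bound via Lemma \ref{lem:norml2}. No non-trivial projector, coercivity, or scaling arguments are needed.

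First, I would rewrite the integrand. By the definition of $\hat{\vec{g}}^{\re,(1)}$ and $\hat{\vec{g}}^{\re,(2)}$, for $\bx\in \Gamma^{\re}_k$ one has (up to sign convention)
\begin{align*}
\hat{g}^{\re,(2)}_k(\bx)-\hat{g}^{\re,(1)}_k(\bx)=\nabla \hat{u}^{\inc}(\bc_k)\cdot(\bx-\bc_k),
\end{align*}
because the two data differ exactly by the first-order Taylor term that is kept in $\hat{\vec{g}}^{\re,(1)}$ but dropped in the Born approximation $\hat{\vec{g}}^{\re,(2)}$.

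Next, since $\Gamma^{\re}_k\subset B^{\re}_k=B(\bc_k,\re r_k)$ by \eqref{eq:omega_ball}, the function $\bx\mapsto \nabla\hat{u}^{\inc}(\bc_k)\cdot(\bx-\bc_k)$ is bounded on $B^{\re}_k$ by $\re r_k\,|\nabla\hat{u}^{\inc}(\bc_k)|$. Applying Lemma \ref{lem:norml2} to this continuous function (scaling $\|\cdot\|_{L^2(\Gamma^{\re}_k)}\lesssim \re\|\cdot\|_{L^\infty(B^{\re}_k)}$) yields
\begin{align*}
\|\hat{g}^{\re,(2)}_k-\hat{g}^{\re,(1)}_k\|_{L^2(\Gamma^{\re}_k)}\lesssim \re^2\,|\nabla\hat{u}^{\inc}(\bc_k)|.
\end{align*}
Summing the squares over $k=1,\dots,N$ and taking a square root produces the factor $N^{1/2}$.

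The last step is to replace the pointwise quantity $|\nabla \hat{u}^{\inc}(\bc_k)|$ by a Sobolev norm. Since $\hat{u}^{\inc}\in H^3(B)$ and $B$ is a bounded Lipschitz set, the Sobolev embedding $H^3(B)\hookrightarrow C^{1,\alpha}(\overline{B})$ (used already in Section \ref{sec:data_bound2} and in the proof of Proposition \ref{prop:td2}) gives $\|\nabla \hat{u}^{\inc}\|_{L^\infty(B)}\leq C\|\hat{u}^{\inc}\|_{H^3(B)}$, hence $|\nabla \hat{u}^{\inc}(\bc_k)|\leq C\|\hat{u}^{\inc}\|_{H^3(B)}$ uniformly in $k$. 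Combining the three steps yields the claimed estimate
\begin{align*}
\|\hat{\vec{g}}^{\re,(2)}-\hat{\vec{g}}^{\re,(1)}\|_{L^2(\Gamma^{\re})}\leq C N^{1/2}\re^2\|\hat{u}^{\inc}\|_{H^3(B)}.
\end{align*}
There is no real obstacle here: the whole argument is the routine Taylor-plus-scaling estimate that has already been used for the first-order case in Proposition \ref{prop:td2}; the only change is that the discarded term is now first-order rather than second-order, which is why the power of $\re$ drops from $3$ to $2$.
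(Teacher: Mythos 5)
Your proposal is correct and follows essentially the same route as the paper: the same identity $\hat{g}^{\re,(2)}_k-\hat{g}^{\re,(1)}_k=\nabla\hat{u}^{\inc}(\bc_k)\cdot(\bx-\bc_k)$ on $\Gamma^{\re}_k$, the $\re^2$ scaling (the paper bounds $\|\nabla\hat{u}^{\inc}\|_{L^\infty(B_k)}\|\,\cdot-\bc_k\|_{L^2(\Gamma^{\re}_k)}$ directly, while you invoke Lemma \ref{lem:norml2} with the $L^\infty$ bound $\re r_k|\nabla\hat{u}^{\inc}(\bc_k)|$ — an equivalent computation), the Sobolev embedding $H^3(B)\hookrightarrow C^{1,\alpha}(\overline{B})$, and the $\ell^2$ summation over $k$ giving $N^{1/2}$. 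No gap.
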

		\begin{proof}
			We remark that 
			\begin{align*}
				\varphi_{k}(\bx) := \hat{g}^{\re,(2)}_k(\bx) - \hat{g}^{\re,(1)}_k(\bx) = \nabla \hat{u}^{\inc}(\bc_k) \cdot (\bx-\bc_k), \quad \bx \in \Gamma^{\re}_k.
			\end{align*}
			which yields 
			\begin{align*}
				\| \varphi_{k} \|_{L^2(\Gamma^{\re}_k)} \leq \| \nabla \hat{u}^{\inc}\|_{L^{\infty}(B_k)} \| \bx-\bc_k \|_{L^2(\Gamma^{\re}_k)} \leq C\re^2\| \hat{u}^{\inc} \|_{H^{3}(B)},
			\end{align*}
			where the last inequality holds true by the Sobolev embedding \cite[Theorem 4.12, part 2]{adams}, in particular, we have that $\hat{u}^{\inc}\in C^{1,\alpha}(\overline{B})$, $\alpha<1/2$. Hence, the desired bound follows immediately. 
		\end{proof}
		Combining \eqref{eq:dens_err} with Lemma \ref{lem:appx_slp}, \ref{lem:bound_rhs_born} and Proposition \ref{prop:td2}, we obtain the following result.
		\begin{theorem}
			\label{th:conv3}
			Let $K\subset B^c$ be compact; $\omega\in \mathbb{C}^+$ and $\hat{u}^{\inc}(\omega)\in H^3(B)$. The error between $\hat{u}^{\re}_{s,\operatorname{app}}$ and $\hat{u}^{\re}_{B,\operatorname{app}}$, defined in \eqref{eq:fdsys2} and \eqref{eq:born}, is bounded by
			\begin{align*}
				\|\hat{u}^{\re}_{s, \app} - \hat{u}^{\re}_{B, \app}\|_{L^{\infty}(K)}\leq \re^2 \times C_KC_{\operatorname{geom}}(N, \underline{d}_*^{\re}) \, p(|\omega|, \Im \omega)\|\hat{u}^{\inc}\|_{H^3(B)},
			\end{align*}
			where $C_K>0$ depends on $K$.
		\end{theorem}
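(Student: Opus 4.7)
My plan is to directly combine the three estimates listed in the excerpt (Lemmas \ref{lem:appx_slp}, \ref{lem:bound_rhs_born} and Proposition \ref{prop:td2}) with the master inequality \eqref{eq:dens_err}. Since all of the ``hard'' work, namely comparing the Born matrix $\mathbb{B}^{\re}$ to $\mathbb{M}^{\re}_s$ and the potential operator $\mathcal{S}^{\re}_B$ to $\mathcal{S}^{\re}_s$, is already packaged in those lemmas, the proof reduces to a bookkeeping exercise in the powers of $\re$. The strategy is to fix $\bx\in K$ and $\omega\in \mathbb{C}^+$, write
\begin{align*}
	\hat{u}^{\re}_{s,\app}(\bx,\omega)-\hat{u}^{\re}_{B,\app}(\bx,\omega)
	=\mathcal{S}^{\re}_s(\omega)\vv{\lambda}_s^{\re}(\omega)-\mathcal{S}^{\re}_B(\omega)\vv{\lambda}_B^{\re}(\omega),
\end{align*}
and add/subtract $\mathcal{S}^{\re}_B(\omega)\vv{\lambda}_s^{\re}(\omega)$ so that the difference splits into (i) an operator error $(\mathcal{S}^{\re}_s-\mathcal{S}^{\re}_B)\vv{\lambda}_s^{\re}$ and (ii) a density error $\mathcal{S}^{\re}_B(\vv{\lambda}_s^{\re}-\vv{\lambda}_B^{\re})$. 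This is exactly the splitting already used in the excerpt leading to \eqref{eq:dens_err}.

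For step (i), Lemma \ref{lem:appx_slp} provides $\|\mathcal{S}^{\re}_s-\mathcal{S}^{\re}_B\|_{\mathbb{C}^N\to L^\infty(K)}\lesssim \re^2(1+|\omega|)$, while the stability bound \eqref{eq:bound_density_simplified} combined with the $L^2$ bound on $\hat{\vec{g}}^{\re,(1)}$ in Proposition \ref{prop:td2} (yielding $\|\hat{\vec{g}}^{\re,(1)}\|_{L^2(\Gamma^{\re})}\lesssim \re\|\hat{u}^{\inc}\|_{H^3(B)}$) gives $\|\vv{\lambda}^{\re}_s\|_{\mathbb{C}^N}\lesssim \re^{-1}\cdot \re\|\hat{u}^{\inc}\|_{H^3(B)}=\|\hat{u}^{\inc}\|_{H^3(B)}$ up to $C_{\operatorname{geom}}p(|\omega|,\Im\omega)$. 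Multiplying the two factors produces the desired $\re^2$. For step (ii), Lemma \ref{lem:appx_slp} gives $\|\mathcal{S}^{\re}_B\|\lesssim \re$, and from \eqref{eq:dens_error_main} the density error is bounded by $\re^{-1}\|\hat{\vec{g}}^{\re,(2)}-\hat{\vec{g}}^{\re,(1)}\|_{L^2(\Gamma^{\re})}+\|\hat{\vec{g}}^{\re,(1)}\|_{L^2(\Gamma^{\re})}$; with Lemma \ref{lem:bound_rhs_born} the first summand is $\lesssim \re$, and with Proposition \ref{prop:td2} the second is also $\lesssim \re$, so the whole term contributes $\re\cdot \re = \re^2$.

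Summing the two contributions yields
\begin{align*}
	|\hat{u}^{\re}_{s,\app}(\bx,\omega)-\hat{u}^{\re}_{B,\app}(\bx,\omega)|
	\leq \re^2\, C_K\, C_{\operatorname{geom}}(N,\underline{d}_*^{\re})\, p(|\omega|,\Im\omega)\,\|\hat{u}^{\inc}\|_{H^3(B)},
\end{align*}
uniformly in $\bx\in K$, which is exactly the claim of Theorem \ref{th:conv3}. The $C_{\operatorname{geom}}$ and polynomial-in-$\omega$ factors are absorbed using the shorthand notation introduced at the end of Section \ref{sec:simplified}, so no tracking of exact exponents is needed.

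I do not expect any real obstacle here: the nontrivial analytic work (invertibility of $\mathbb{M}^{\re}_s$, Frobenius-norm comparison of the two matrices, Taylor expansion of the kernel around $\bc_k$, and the Sobolev-embedding-based data bounds) has been handled once and for all in the earlier sections. The only point that deserves a brief sanity check is the cancellation of the $\re^{-1}$ factors coming from $\|(\mathbb{B}^{\re})^{-1}\|_F$ and $\|(\mathbb{M}^{\re}_s)^{-1}\|_F$ against the $\re$-order of the data and of $\|\mathcal{S}^{\re}_B\|$; the arithmetic above shows that all three terms in \eqref{eq:dens_err} scale consistently as $\re^2$, which is the sharp order one expects, matching the Born model's known accuracy for a single spherical scatterer in \cite{barucq}.
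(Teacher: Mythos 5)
Your proposal is correct and follows exactly the route the paper takes: the paper's proof of Theorem~\ref{th:conv3} is precisely the one-line instruction to combine the master inequality~\eqref{eq:dens_err} with Lemmas~\ref{lem:appx_slp}, \ref{lem:bound_rhs_born} and Proposition~\ref{prop:td2}, and your bookkeeping of the $\re$-powers (each of the two terms in the splitting contributing $O(\re^2)$) spells that combination out faithfully.
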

		\subsubsection*{Proof of Theorem \ref{theorem:convergence_of_models} for the model (\ref{eq:tdsysborn})}
		
		Finally, to prove the desired statement of Theorem \ref{theorem:convergence_of_models} for the Born model, we follow the lines of the proof of Theorem \ref{theorem:convergence_of_models} for the model \eqref{eq:tdsys2}, see Section \ref{sec:convmod}, employing Theorem \ref{th:conv3}.
		Since the model \eqref{eq:tdsys2} is convergent of order $p=2$, we conclude about the validity of the statement of Theorem \ref{theorem:convergence_of_models} for the model \eqref{eq:tdsysborn}. 
		
	\end{appendices}
	
	\bibliographystyle{plain} 
	\bibliography{ref}
\end{document}